\newif\ifmarek
 \title{\normalsize
{\textbf     {LIMIT THEORY FOR RANDOM WALKS IN DEGENERATE TIME-DEPENDENT RANDOM ENVIRONMENTS }}}
\date{}
\numberwithin{equation}{section}
\normalfont\fontsize{12}{15}\bfseries}{\thesubsection}{1em}{}
\let\OLDthebibliography\thebibliography
\renewcommand\thebibliography[1]{
  \OLDthebibliography{#1}
  \setlength{\parskip}{0pt}
  \setlength{\itemsep}{0pt plus 0.3ex}
}
\newtheoremstyle{thm}{1.5ex}{1.5ex}{\itshape\rmfamily}{}
{\bfseries\rmfamily}{}{2ex}{}
\newtheoremstyle{def}{1.5ex}{1.5ex}{\slshape\rmfamily}{}
{\bfseries\rmfamily}{}{2ex}{}
\newtheoremstyle{rem}{1.3ex}{1.3ex}{\rmfamily}{}
{\itshape}
{} {1.5ex}{}
\newenvironment{proofsect}[1]
{\vskip0.1cm\noindent{\rmfamily\itshape#1.}}{\qed\vspace{0.15cm}}
\theoremstyle{thm}
\newtheorem{theorem}{Theorem}[section]
\newtheorem{lemma}[theorem]{Lemma}
\newtheorem{proposition}[theorem]{Proposition}
\newtheorem{assumption}[theorem]{Assumption}
\newtheorem{corollary}[theorem]{Corollary}
\newtheorem{problem}[theorem]{Problem}
\theoremstyle{def}
\newtheorem{definition}[theorem]{Definition}
\theoremstyle{rem}
\newtheorem{remark}[theorem]{\itshape Remark}
\renewcommand{\theequation}{\arabic{section}.\arabic{equation}}
\renewcommand{\subsection}{\secdef\subsct\sbsect}
\newcommand{\subsct}[2][default]{\refstepcounter{subsection}
\nopagebreak\vspace{0.45\baselineskip} {\flushleft\bf
\thesection.\arabic{subsection}~\bf #1.~}
\\*[3mm]\noindent
\nopagebreak}
\newcommand{\sbsect}[1]{\vspace{0.1cm}\noindent
\textbf{#1.~}\vspace{0.1cm}}
\newcommand{\supp}{\operatorname{supp}}
\newcommand{\textd}{\text{\rm d}\mkern0.5mu}
\newcommand{\EE}{\mathcal E}
\newcommand{\FF}{\mathcal F}
\newcommand{\E}{\mathbb E}
\newcommand{\N}{\mathbb N}
\newcommand{\BbbP}{\mathbb P}
\newcommand{\R}{\mathbb R}
\newcommand{\Z}{\mathbb Z}
\newcommand{\twoeqref}[2]{{\rm (\ref{#1}--\ref{#2})}}
\newcommand{\1}{{1\mkern-4.5mu\textrm{l}}}
\renewcommand{\1}{\text{\sf 1}}
\newcommand{\e}{{\text{\rm e}}}
\newcommand{\hate}{e}
\newcommand{\czero}{c_0}
\newcommand{\ci}{c_1}
\newcommand{\cii}{c_2}
\newcommand{\ciii}{c_3}
\newcommand{\civ}{c_4}
\newcommand{\cv}{c_5}
\newcommand{\cvi}{c_6}
\newcommand{\cvii}{c_7}
\newcommand{\cviii}{c_8}
\newcommand{\cix}{c_9}
\newcommand{\cx}{c_{10}}
\newcommand{\cxi}{c_{11}}
\newcommand{\vertiii}[1]{{\left\vert\kern-0.25ex\left\vert\kern-0.25ex\left\vert #1 
    \right\vert\kern-0.25ex\right\vert\kern-0.25ex\right\vert}}
\newcommand{\@fatnormbar}{\vrule\@width 1.2\p@}
\newcommand{\fatnorm}{\@ifstar\@xfatnorm\@fatnorm}
\newcommand{\@xfatnorm}[1]{%
  \left.\kern-\nulldelimiterspace
  \@fatnormbar
  #1
  \@fatnormbar
  \right.\kern-\nulldelimiterspace
}
\newcommand\@fatnorm[2][]{%
  \mathopen{\vphantom{#1|}\mkern2mu\@fatnormbar\mkern2mu}
  #2
  \mathclose{\vphantom{#1|}\mkern2mu\@fatnormbar\mkern2mu}
}
\newcommand{\bigdot}[1]{\overset{\text{\vbox to 0pt{\hbox{\bf.}}}}{#1}}
\newcommand{\vvert}{\vert\mkern-2.6mu\vert\mkern-2.6mu\vert}
\newcommand{\bvvert}{\big\vert\mkern-3.5mu\big\vert\mkern-3.5mu\big\vert}
\newcommand{\cc}{{\text{\rm c}}}
\begin{document}

\maketitle

\begin{center}
\vspace{-1cm}
Marek Biskup$^{1,2}$ and Pierre-Fran\c cois Rodriguez$^1$ 

\end{center}
\vspace{0.5cm}
\begin{abstract}
\vspace{0.5cm}
\noindent We study continuous-time (variable speed) random walks in random environments on~$\Z^d$, $d\ge2$, where, at time~$t$, the walk at~$x$ jumps across edge~$(x,y)$ at time-dependent rate~$a_t(x,y)$. The rates, which we assume stationary and ergodic with respect to space-time shifts, are symmetric and bounded but possibly degenerate in the sense that the total jump rate from a vertex may vanish over finite intervals of time. We formulate conditions on the environment under which the law of diffusively-scaled random walk path tends to Brownian motion for almost every sample of the rates. The proofs invoke Moser iteration to prove sublinearity of the corrector in pointwise sense; a key additional input is a conversion of certain weighted energy norms to ordinary ones. Our conclusions apply to random walks on dynamical bond percolation and interacting particle systems as well as to random walks arising from the Helffer-Sj\"ostrand representation of gradient models with certain non-strictly convex potentials. 
\end{abstract}

\thispagestyle{empty}

\vspace{3.5cm}

\begin{flushleft}

$^1$Department of Mathematics \hfill December 18, 2017 \\
University of California, Los Angeles \\
Los Angeles, CA, USA \\

\vspace{0.5cm}

$^2$Center for Theoretical Study\\
Charles University \hfill\texttt{biskup@math.ucla.edu} \\
Prague, Czech Republic \hfill\texttt{rodriguez@math.ucla.edu}
\end{flushleft}



\vspace{1cm}

\noindent\fontsize{9.6}{9.6}\selectfont\copyright\,\textrm{2017}\ \textrm{M.~Biskup, P.-F.~Rodriguez.
Reproduction, by any means, of the entire
article for non-commercial purposes is permitted without charge.}

\ifmarek\else
\newpage\fi
\normalsize	
\mbox{}
\thispagestyle{empty}

\newpage

\section{Introduction}

\subsection{Model and assumptions}
The aim of this note is to study the long-time behavior of random walks on~$\Z^d$, $d\ge2$, in a class of dynamical random environments given as a family of non-negative random variables
\begin{equation}
\label{E:1.1a}
\bigl\{a_t(e)\colon e\in E(\Z^d),\, t\in\R\bigr\}\,,
\end{equation}
where~$E(\Z^d)$ denotes the set of (unordered) nearest-neighbor edges of~$\Z^d$. For each sample of these random variables, referred to as conductances, we consider the continuous time Markov chain  $\{X_t\colon t\ge0\}$ on~$\Z^d$ with the instantaneous generator~$L_t$ acting on functions~$f\colon\Z^d\to\R$~as
\begin{equation}
\label{E:1.1}
L_tf(x):=\sum_{y\colon |y-x|=1}a_t(x,y)\bigl[f(y)-f(x)\bigr].
\end{equation}
The variable $a_t(e)=a_t(x,y)$, i.e., the jump rate of the walk across edge~$e=(x,y)$ at time~$t$, is assumed to obey $a_t(e)\in[0,1]$ with $a_t(e)=0$ allowed for non-trivial finite intervals of time.
Our aim is to describe the long-time behavior of such random walks and, in particular, to show that their path distribution, scaled diffusively, tends to a non-degenerate Brownian motion.

A representative example of the above setting is the variable-speed random walk on dynamical bond percolation on~$\Z^d$. In this case $a_t(e)$ is, for each~$e\in E(\Z^d)$, an independent copy of a stationary continuous-time process on~$\{0,1\}$ with joint invariant distribution (product) Bernoulli($p$) for some prescribed~$p\in(0,1)$. We interpret $a_t(e)=1$ as the event that edge~$e$ is occupied at time~$t$ and $a_t(e)=0$ as the event that edge~$e$ is vacant. The random walk then jumps at rate 1 across edges incident with its current position that are occupied at that instant of time. When the site where the walk is located has no incident occupied edges, the walk does not move. 

It is clear that some mixing properties of the conductances \eqref{E:1.1a} in both space and time are required for the desired convergence to Brownian motion to be possible. We will work under the following set of technical assumptions:

\begin{assumption}
\label{ass1}
The family $\{a_t(e)\colon e\in E(\Z^d),\, t\in\R\}$ is realized as coordinate projections on the product space $\Omega:=[0,\infty)^{\R\times E(\Z^d)}$ endowed with the product Borel $\sigma$-algebra~$\FF$ and the probability distribution denoted by~$\BbbP$. In addition, we assume: 
\begin{enumerate}
\item[(1)] $t\mapsto a_t(e)$ obeys
\begin{equation}
a_t(e)\in[0,1]
\end{equation}
for each~$e\in E(\Z^d)$ and each $t\in\R$,
\item[(2)] letting $\tau_{s,x}\colon\Omega\to\Omega$ denote the map
\begin{equation}
(\tau_{s,x}a)_t(y,z):=a_{t+s}(y+x,z+x),\qquad (y,z)\in E(\Z^d),\, t\in\R,
\end{equation}
the law $\BbbP$ is invariant and jointly ergodic under $\{\tau_{t,x}\colon t\in\R,\,x\in\Z^d\}$,
\item[(3)] denoting, for each $e\in E(\Z^d)$,
\begin{equation}
\label{E:1.5}
T_e:=\inf\Bigl\{t\ge0\colon \int_0^t\textd s\,\, a_s(e)\ge1\Bigr\}
\end{equation}
we have $T_e<\infty$, $\BbbP$\text{\rm-a.s.}
\end{enumerate}
We will write~$\E$ to denote expectation with respect to~$\BbbP$.
\end{assumption}

\noindent 
We remark that joint ergodicity in~(2) means that any measurable subset of~$\Omega$ preserved by~$\tau_{t,x}$ for all $t\in\R$ and~$x\in\Z^d$ is a zero-one event under~$\BbbP$.
The restriction to conductances bounded by~$1$ is only a matter of convenience; any uniform constant upper bound will suffice (and ensure that~$X$ is non-explosive). Additional moment conditions on~$T_e$ will need to be assumed in the statement of our main result. However, no assumptions will be made on the dynamics of the conductances and/or the law of its time reversal (which is stationary but possibly unrelated to~$\BbbP$).

Besides dynamical percolation, the setting of Assumption~\ref{ass1} accommodates various other examples of interest. For instance, one can consider the random walk on the symmetric exclusion process $\{\eta_t(x)\colon x\in\Z^d\}$, where $\eta_t(x)$ is the indicator that site~$x$ is occupied by a particle at time~$t$ and the configuration~$t\mapsto\eta_t$ evolves by swaps $\eta_t(x)\leftrightarrow\eta_t(y)$ at endpoints~$x$ and~$y$ of edges in~$E(\Z^d)$ whenever an independent exponential clock rings at that edge. We then set, e.g.,
\begin{equation}
a_t(e):=c\eta_t(x)\eta_t(y)\quad\text{whenever}\quad e=(x,y)
\end{equation}
for some~$c>0$. The walk is thus active only at times when it resides on an occupied site and the transitions are only between occupied vertices. Other particle systems such as the voter model or the contact process can of course be considered~as~well.

Another interesting class of random walks arises in the context of Helffer-Sj\"ostrand representations of gradient models with convex, but not uniformly strictly convex, potentials~$V$. The representative examples covered by our theory include
\begin{equation}
\label{E:1.6a}
V(\eta):=\beta\log\cosh(\eta)
\end{equation}
with any~$\beta>0$, or even
\begin{equation}
\label{E:1.7a}
V(\eta):=\begin{cases}
\frac12|\eta|^2,\qquad&\text{if }|\eta|\le1,
\\
|\eta|-\frac12,\qquad&\text{else}.
\end{cases}
\end{equation}
In this case the random environment is a family of diffusions $\{\phi(x)\colon x\in\Z^d\}$ evolving according to the Langevin dynamics
\begin{equation}
\textd \phi_t(x) = \sum_{y\colon |y-x|=1}V'\bigl(\phi_t(y)-\phi_t(x)\bigr)\,\textd t+\sqrt 2\,\textd B_t(x)\,,
\end{equation}
where $\{B(x)\colon x\in\Z^d\}$ is a family of independent standard Brownian motions. The random walk jump rates are then given by
\begin{equation}
a_t(e):=V''\bigl(\phi_t(y)-\phi_t(x)\bigr)\quad\text{whenever}\quad e=(x,y).
\end{equation}
In both \eqref{E:1.6a} and \eqref{E:1.7a}, $a_t(e)$ is non-negative and bounded yet not bounded away from~zero.

\subsection{Main result}
In order to give a statement of our main result, we need some additional notation. Let $D([0,\infty))$ denote the space of c\`adl\`ag functions $\omega\colon[0,\infty)\to\R$ endowed (disregarding the standard notation for the Skorokhod space) with the norm
\begin{equation}
\Vert\omega\Vert_{D([0,\infty))}:=\sum_{n\ge1}2^{-n}\sup_{t\in[0,n]}|\omega(t)|\wedge1\,.
\end{equation}
The space of continuous functions $C([0,\infty))$, a set that supports the law of the Brownian motion, is naturally embedded in $D([0,\infty))$ and is, in fact, a closed (and thus measurable) subset thereof in the topology induced by the above norm.
Our main conclusion regarding the Markov chain~$\{X_t\colon t\ge0\}$ defined via \eqref{E:1.1} is as follows:

\begin{theorem}
\label{thm-1.2}
Let $d\ge2$ and suppose that Assumption~\ref{ass1} holds and, in addition, the quantity in \eqref{E:1.5} obeys
\begin{equation}
\label{E:q_ass}
\exists\vartheta>4d\colon\qquad\E(T_e^{\vartheta})<\infty,\quad e\in E(\Z^d).
\end{equation}
Then, for $\BbbP$-a.e.\ random environment, the law of $t\mapsto n^{-1/2}X_{tn}$ on $D([0,\infty))$ tends, as $n\to\infty$, to the law of Brownian motion $\{B_t\colon t\ge0\}$ with 
\begin{equation}
E(B_t)=0\quad\text{and}\quad
E((v\cdot B_t)^2)=v\cdot\Sigma v,\qquad v\in\R^d,
\end{equation}
where~$\Sigma=\{\Sigma_{ij}\}_{i,j=1}^d$ is  a non-degenerate (deterministic) covariance matrix.
\end{theorem}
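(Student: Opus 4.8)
The plan is to follow the now-standard "corrector method" adapted to the time-dependent, degenerate setting. First, I would pass to the point of view of the particle: let $\omega_t:=\tau_{t,X_t}\,a$ denote the environment seen from the walk. Because the conductances are symmetric and the law $\BbbP$ is stationary and ergodic under space-time shifts, $\BbbP$ itself should be a stationary (and ergodic) measure for $\{\omega_t\}$, which allows the use of the ergodic theorem along the trajectory. The key object is the \emph{corrector} $\chi\colon\Omega\times\Z^d\to\R^d$, constructed so that
\begin{equation}
\label{E:harmonic}
\Psi(\omega,x):=x+\chi(\omega,x)
\end{equation}
is, component-wise, space-time harmonic for the environment process, i.e. the dynamic generator annihilates $\Psi$. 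One builds $\chi$ in the usual way as an $L^2$-limit via Lax--Milgram / projection arguments in the appropriate weighted space $L^2(\Omega,\BbbP)$ with inner product weighted by the (space-time) Dirichlet form; the degeneracy $a_t(e)\in[0,1]$ with possible vanishing is precisely why one needs the moment hypothesis \eqref{E:q_ass} on $T_e$ to control the directions in which the weight is small. Once $\chi$ exists, the process $M_t:=\Psi(\omega_t,X_t)=X_t+\chi(\omega_t,X_t)$ is a martingale (in the joint filtration) with stationary increments, and the martingale CLT — together with the ergodic theorem identifying the quadratic variation with a deterministic matrix $\Sigma$ — yields the invariance principle for $M_{tn}/\sqrt n$ under the annealed (and then, via ergodicity, the quenched) law.

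The reduction of the theorem to an assertion about $\chi$ is then: it suffices to show that, for $\BbbP$-a.e.\ environment,
\begin{equation}
\label{E:sublin}
\lim_{n\to\infty}\ \sup_{|x|\le n}\ \frac{|\chi(a,x)|}{n}=0,
\end{equation}
i.e.\ the corrector is sublinear in the pointwise (sup-norm) sense along the walk's space-time trajectory; combined with the martingale CLT this gives $X_{tn}/\sqrt n\to B_t$ with covariance $\Sigma$. Non-degeneracy of $\Sigma$ follows because $\Sigma\ge cI$ for a deterministic $c>0$: if $v\cdot\Sigma v=0$ for some $v$ then $v\cdot\chi(\cdot,x)+v\cdot x$ would have to vanish identically, which is incompatible with the construction of $\chi$ as a genuine $L^2$-gradient correction (here one uses that in $d\ge2$ the conductances are not so degenerate as to disconnect space-time in a single direction — again a consequence of \eqref{E:q_ass}).

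The main obstacle — and, per the abstract, the technical heart of the paper — is proving the pointwise sublinearity \eqref{E:sublin}. The natural route is Moser iteration applied to the (discrete-space, continuous-time) parabolic equation satisfied by $\chi$: one first establishes an $L^2$ (or weak) sublinearity statement, essentially by the ergodic theorem applied to the space-time-stationary field $\nabla\chi$ and its energy density, and then upgrades it to $L^\infty$ by a De Giorgi--Nash--Moser iteration on parabolic cylinders. The difficulty is that the iteration naturally produces \emph{weighted} energy/Sobolev inequalities — with weights built from $a_t(e)$ — whereas to run the Moser scheme one needs \emph{unweighted} Sobolev and Poincar\'e inequalities on $\Z^d$. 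The decisive new input, which I would isolate as a separate lemma, is a mechanism for converting the weighted energy norm $\sum_e a_t(e)|\nabla f_t(e)|^2$ (integrated in time) into a genuine $\ell^2$-type norm of $f$ over a slightly shrunk space-time cylinder; this should exploit exactly the definition \eqref{E:1.5} of $T_e$ — on any time interval of length comparable to $\sup_e T_e$ over the cylinder, every edge accumulates total conductance at least $1$, so a Cauchy--Schwarz / averaging argument recovers control of the plain gradient at the cost of a factor governed by moments of $\max_e T_e$, whence the quantitative threshold $\vartheta>4d$. Controlling $\E(\max_{|e|\le n}T_e^{?})$ via \eqref{E:q_ass} and feeding this into the iteration — together with the usual bookkeeping of how many iteration steps are needed and how the constants accumulate — is where the real work lies; everything else (environment process, martingale decomposition, tightness in $D([0,\infty))$, transfer from annealed to quenched) is by now routine.
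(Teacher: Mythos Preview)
Your overall architecture is right --- corrector, martingale CLT, reduce to pointwise sublinearity, Moser iteration --- but the mechanism you propose for the ``conversion'' step is not the one that works, and this is the crux of the whole paper.

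You describe the obstacle as having to convert the $a$-weighted energy $\sum_e a_t(e)|\nabla f_t(e)|^2$ into an unweighted $\ell^2$-norm of~$f$, and you propose to do so by time-averaging over intervals of length $\sim\max_e T_e$ plus control of $\E(\max_{|e|\le n}T_e^{?})$. This misidentifies both the direction of the difficulty and its resolution. In the Moser scheme one applies the (unweighted) $\ell^1$-Sobolev inequality and then H\"older to get $\|f\|_p\le\bigl(\sum_e b(e)^{-1}\bigr)^{1/2}\bigl(\sum_e b(e)|\nabla f(e)|^2\bigr)^{1/2}$ for \emph{some} weight~$b$. Taking $b=a_t$ fails because $a_t(e)^{-1}$ has no moments. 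The paper's fix is to introduce auxiliary weights $w_t(e):=\int_t^\infty k_{s-t}\,a_s(e)\,\textd s$ with a polynomially decaying kernel~$k$; these are a.s.\ positive and, via $w_0(e)\ge k_{T_e}$, satisfy $\E(w_0(e)^{-q})<\infty$ once $\E(T_e^{q\mu})<\infty$ for $k_t\sim t^{-\mu}$. One then runs Sobolev/H\"older with $b=w$, obtaining the $w$-weighted energy $\EE^{w,\zeta}(f)$, and the genuinely new lemma (drawn from Mourrat--Otto) is that \emph{for solutions of the heat equation} one has $\EE^{w,\zeta}(u)\le C\,\EE^{a,\zeta}(u)+\text{lower order}$: write $u(t,x)-u(t,y)$ as $[u(s,x)-u(s,y)]$ plus time increments, and control the latter via $\partial_t u=L_t u$. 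So the conversion goes $\EE^w\le C\,\EE^a$, not the reverse, and never invokes $\max_e T_e$; only spatial ergodic averages of $w^{-q}$ are used. The threshold $\vartheta>4d$ then arises from needing $\mu>4$ (for the kernel conditions linking~$k$ and the time-cutoff~$\zeta$) together with $q=\vartheta/\mu$ large enough for the Sobolev exponents --- not from a union bound on~$T_e$.

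Two smaller points: the corrector here genuinely depends on time, $\chi(t,x,\cdot)$, and the sublinearity must be proved over diffusive space-time cylinders $Q(n)=[0,n^2]\times B_n$, not just spatial boxes; and the paper constructs~$\chi$ by a probabilistic regularization (killed walk in the $\epsilon$-perturbed environment) and weak limits rather than Lax--Milgram, since in this degenerate, time-asymmetric setting the usual coercivity is unavailable.
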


We note that this is a \emph{quenched} statement (i.e., one for~$\BbbP$-a.e.\ environment). The corresponding \emph{annealed} (or \emph{averaged}) statement follows from the fact that the limiting covariance~$\Sigma$ is non-random. The covariance actually admits the usual representation
\begin{equation}
\label{E:1.14}
v\cdot\Sigma v = \E\biggl(\,\sum_{e\colon |e|=1}a_0(e)\bigl|v\cdot\psi(0,e,\cdot)\bigr|^2\biggr)\,,
\quad v\in\R^d,
\end{equation}
where $\psi\colon\R\times\Z^d\times\Omega\to\R^d$ is the harmonic coordinate constructed in Section~\ref{sec2}. However, unlike for the static situations, the harmonic coordinate is not obtained by minimizing Dirichlet energy; instead one has to solve the heat equation \eqref{E:3.1} using a suitable limit procedure.

\subsection{Connections and main ideas}
Theorem~\ref{thm-1.2} is an example of a quenched invariance principle which has been a topic of persistent interest over the past few decades. In the realm of \emph{static} environments, the studied examples include uniformly elliptic random conductance models (Kipnis and Varadhan~\cite{KV86}, Boivin~\cite{B93}, Boivin and Depauw~\cite{BD03}, Sidoravicius and Sznitman~\cite{SS04}), the random walk on the supercritical percolation cluster (De Masi, Ferrari, Goldstein and Wick~\cite{DFGW89a,DFGW89b}, Sidoravicius and Sznitman~\cite{SS04}, Berger and Biskup~\cite{BB07}, Mathieu and Piatnitski~\cite{MP07}), non-elliptic i.i.d.\ random conductance models (Mathieu~\cite{M08}, Biskup and Prescott~\cite{BP07}, Barlow and Deuschel~\cite{BD12}, Andres, Barlow, Deuschel and Hambly~\cite{ABDH13}), balanced models (Lawler~\cite{L82}, Guo and Zeitouni~\cite{GZ12}, Berger and Deuschel~\cite{BD14}), environments admitting finite cycle decompositions (Deuschel and K\"osters~\cite{DK08}). Recently, an elliptic regularity-based theory was developed that covers general random conductance models subject to moment conditions on the conductance tails at zero and infinity (Andres, Slowik and Deuschel~\cite{ASD15}).

Significant advances have occured also for random walks in \emph{dynamical} random environments. Here a line of attack focused on Markovian environments under various mixing conditions (Boldri\-ghini, Minlos and Pellegrinotti~\cite{BMP07}, Bandyopadhyay and Zeitouni~\cite{BZ06}, Dolgopyat, Keller and Liverani~\cite{DKL08}, Redig and V\"ollering~\cite{RV13}) while other approaches worked under other structural assumptions on the environment such as independence and directionality (Rassoul-Agha and Sepp\"alainen~\cite{RS05}) or ergodicity and uniform ellipticity (Andres~\cite{A14}). Random walks on dynamical percolation have been studied by Peres, Stauffer and Steif~\cite{PSS15} but the objective there were mixing properties rather than the scaling limit. An annealed invariance principle for random walks on the symmetric exclusion has been proved by~Avena~\cite{A12}. 

The sharpest conclusions concerning scaling to Brownian motion for dynamical environments of the kind \eqref{E:1.1a} appear at present in the work of Andres, Chiarini, Deuschel and Slowik~\cite{ACDS16}. Indeed, a quenched invariance principle has been shown there to hold whenever
\begin{equation}
\label{E:1.13a}
\E\bigl(a_t(e)^p\bigr)<\infty\quad\text{and}\quad \E\bigl(a_t(e)^{-q}\bigr)<\infty
\end{equation}
are true for some $p,q>1$ with
\begin{equation}
\label{E:1.14a}
\frac1{p-1}+\frac1{(p-1)q}+\frac1q<\frac2d\,.
\end{equation}
(Somewhat weaker, albeit harder-to-state, conditions actually suffice.) Although our rates are bounded (i.e., we can set~$p:=\infty$ above), the principal novelty of our work is that we allow $a_t(e)=0$ with positive probability (which rules out existence of any~$q$ as above). This is quite important in applications; e.g., we can reach previously unattainable examples such as the random walk on dynamical percolation or the Helffer-Sj\"ostrand walks for potentials \eqref{E:1.7}, and even \eqref{E:1.6} for any~$\beta>0$ (note that \twoeqref{E:1.13a}{E:1.14a} apply only for~$\beta$ sufficiently~large). 

Our approach is technically based on combining an enhanced version of the methods of the aforementioned article~\cite{ACDS16} with an observation from Proposition~4.6 in Mourrat and Otto~\cite{MO16}. The latter work proves a heat-kernel estimate (a.k.a.\ return probability) for random walks covered by our Assumption~\ref{ass1}. The former in turn addresses random walks among random conductances satisfying \twoeqref{E:1.13a}{E:1.14a} with the aim of proving that these scale to Brownian motion. The strategy there is fairly standard: prove that the key object of stochastic homogenization, the corrector, scales sublinearly in space and sub-diffusively in time.

The technical approach of~\cite{ACDS16} (drawing on its precursor~\cite{ASD15} for static environments) is to control the corrector in supremum norm by way of Moser iteration starting only from \textit{a priori} estimates in~$L^1$-norm. A key point is that the condition on the negative moment of~$a_t(e)$ from \eqref{E:1.13a} is used only in a handful of places, and that typically for a conversion of a bound on a weighted $L^2$-norm to a bound on an $L^1$-norm, but these seem absolutely irreplaceable in the whole argument. This is where the said observation from \cite{MO16} enters for us as this work shows that, under suitable averaging over time, one can control the heat kernel using energy norms where the ``naked'' $a_t(e)$ is substituted by the weights
\begin{equation}
\label{E:1.16}
w_t(e):=\int_t^\infty \textd s\,\,k_{s-t}\,a_s(e)
\end{equation}
for some positive, polynomially decaying function $t\mapsto k_t$. The crucial input from \cite[Proposition~4.6]{MO16} is that these weighted energy norms can, for solutions of relevant Poisson equations, be again bounded by the ordinary energy norms (i.e., those where~$a_t(e)$ replaces~$w_t(e)$).

Under the condition $T_e<\infty$ a.s.\ we have $w_t(e)>0$ a.s.\ and since we will even require finiteness of some moments of~$T_e$, we can count on having suitable moments of~$w_t(e)^{-1}$. The basic strategy of our proofs is thus to demonstrate that one can substitute $a_t(e)^{-1}$ by $w_t(e)^{-1}$ in those few places in the argument of \cite{ACDS16} where finiteness and moments of these quantities are crucially required. However, this would in itself be an understatement of our contribution. Indeed, we have to carefully adapt the Moser iteration from \cite{ACDS16} which is based on conversion (via an inequality from Kru\v zkov and Kolodi\u{\i}~\cite{KK77}) of certain space-time norms of the corrector into an $L^\infty$-norm in time. This in turn requires generalizing \cite[Proposition~4.6]{MO16} to include arbitrary moments of the solutions. In addition, we also need to devise an alternative construction, and prove the \emph{a priori} $L^1$-estimate, of the corrector. Unlike for \cite{ACDS16}, these will again hinge on the aforementioned conversion of the energy norms.

\subsection{Remarks and open questions}
We proceed with a couple of remarks and open questions. First off, our aim here has been to find a way to prove convergence to Brownian motion under \emph{some} reasonable (moment) conditions on the environment and so we have not tried to tune these conditions to get optimal control. It is thus of interest to solve:

\begin{problem}
Find out whether sharp moment conditions on~$T_e$ exist for an invariance principle to hold for all environments satisfying Assumption~\ref{ass1}. 
\end{problem}

\noindent
We note that this includes both quenched and annealed statements. To see that we should hope to get better than \eqref{E:q_ass}, we note:

\begin{lemma}
\label{lemma-moments}
Suppose Assumption~\ref{ass1} holds and, in addition, assume that~$\BbbP$ is separately ergodic with respect to time shifts $\{\tau_{t,0}\colon t\in\R\}$ alone. Then for each~$q>0$,
\begin{equation}
\label{E:1.18u}
\E\bigl(T_e^{q+1}\bigr)\le\bigl[\E\bigl(a_0(e)^{-q}\bigr)\bigr]^{\frac{q+1}q}.
\end{equation}
\end{lemma}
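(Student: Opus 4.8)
The plan is to reduce \eqref{E:1.18u} to a deterministic inequality that holds along $\BbbP$-a.e.\ path, and then to control the resulting expectation by exploiting the time-stationarity of $\BbbP$ through a change of the time variable adapted to the edge clock.

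First, fix $e$ and assume $\BbbP(a_0(e)=0)=0$, since otherwise the right-hand side of \eqref{E:1.18u} is infinite; then, by Fubini and time-stationarity, a.s.\ $a_s(e)>0$ for Lebesgue-a.e.\ $s$. On the a.s.\ event $\{T_e<\infty\}$, the map $t\mapsto\int_0^t\textd s\, a_s(e)$ is continuous and non-decreasing, so the definition \eqref{E:1.5} gives $\int_0^{T_e}\textd s\, a_s(e)=1$ (and $T_e\ge1$ since $a_s(e)\le1$). Applying Hölder's inequality on $[0,T_e]$ with exponents $\frac{q+1}{q}$ and $q+1$,
\begin{equation*}
T_e=\int_0^{T_e}\textd s\, a_s(e)^{\frac{q}{q+1}}a_s(e)^{-\frac{q}{q+1}}\le\Bigl(\int_0^{T_e}\textd s\, a_s(e)\Bigr)^{\frac{q}{q+1}}\Bigl(\int_0^{T_e}\textd s\, a_s(e)^{-q}\Bigr)^{\frac1{q+1}},
\end{equation*}
so that, using the normalization, one gets the pathwise bound $T_e^{q+1}\le\int_0^{T_e}\textd s\, a_s(e)^{-q}$ a.s.

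Second, I would take expectations and substitute $u=\int_0^s\textd r\, a_r(e)$, which maps $[0,T_e]$ onto $[0,1]$; writing $\sigma(u)$ for the inverse clock (well defined since $a_s(e)>0$ a.e.), this yields
\begin{equation*}
\E(T_e^{q+1})\le\E\Bigl(\int_0^{T_e}\textd s\, a_s(e)^{-q}\Bigr)=\E\Bigl(\int_0^1\textd u\, a_{\sigma(u)}(e)^{-(q+1)}\Bigr)=\int_0^1\textd u\,\E\bigl(a_{\sigma(u)}(e)^{-(q+1)}\bigr).
\end{equation*}
The crucial structural fact is that the environment seen from the accumulated $e$-clock is stationary: by the standard time change for additive functionals of a stationary flow, $u\mapsto\tau_{\sigma(u),0}$ intertwines the time flow with a flow preserving the size-biased measure $\widetilde\BbbP:=\E(a_0(e))^{-1}a_0(e)\,\textd\BbbP$ (this uses only time-invariance of $\BbbP$; separate time-ergodicity of $\BbbP$ moreover makes the time-changed flow ergodic). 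Hence $u\mapsto a_{\sigma(u)}(e)$ is stationary under $\widetilde\BbbP$, and, via $\textd\BbbP=\E(a_0(e))\,a_0(e)^{-1}\,\textd\widetilde\BbbP$ and $a_{\sigma(u)}(e)=a_0(e)\circ\tau_{\sigma(u),0}$, each term $\E(a_{\sigma(u)}(e)^{-(q+1)})$ becomes $\E(a_0(e))$ times a $\widetilde\BbbP$-expectation of the product of $a_0(e)^{-1}$ at clock-time $0$ and $a_0(e)^{-(q+1)}$ at clock-time $u$.

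The main obstacle is precisely this last expectation. Because $\sigma$ size-biases towards instants where $a_s(e)$ is small, $\int_0^{T_e}\textd s\, a_s(e)^{-q}$ is positively correlated with $T_e$, so a crude bound by $\E(T_e)\,\E(a_0(e)^{-q})$ fails; passing to $\widetilde\BbbP$, under which the clock-indexed process is genuinely stationary, is what disentangles this. One then closes the estimate by a final Hölder inequality in the $\widetilde\BbbP$-direction combined with stationarity of $u\mapsto a_{\sigma(u)}(e)$ — which lets one replace the value at clock-time $u$ by the value at clock-time $0$ — and an untilting back to $\BbbP$, producing a moment of $a_0(e)^{-1}$ of the order appearing in \eqref{E:1.18u}; the only delicate point is to choose the exponents in that last Hölder step so as to land on exactly the claimed power.
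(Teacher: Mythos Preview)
Your pathwise H\"older step $T_e^{q+1}\le\int_0^{T_e}\textd s\,a_s(e)^{-q}$ is correct and is exactly what the paper uses. The gap is in how you bound the expectation of the right-hand side. After the time change you arrive at
\[
\E\bigl(a_{\sigma(u)}(e)^{-(q+1)}\bigr)=\E\bigl(a_0(e)\bigr)\,\widetilde\E\Bigl(a_0(e)^{-1}\cdot\bigl(a_0(e)^{-(q+1)}\circ\tilde\theta_u\bigr)\Bigr),
\]
a genuine two-time correlation under~$\widetilde\BbbP$. Any H\"older splitting with conjugate exponents $p,p'$ followed by untilting produces $[\E(a_0(e)^{1-p'})]^{1/p'}[\E(a_0(e)^{1-(q+1)p})]^{1/p}$, and since $1/p+1/p'=1$ the total exponent you can put on a single moment of $a_0(e)^{-1}$ is at most~$1$, whereas \eqref{E:1.18u} requires the exponent $(q+1)/q>1$. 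Concretely, if you set $p'=q+1$ to make the first factor land on $\E(a_0(e)^{-q})$, the second factor becomes $[\E(a_0(e)^{1-(q+1)^2/q})]^{q/(q+1)}$, a strictly higher negative moment. No choice of exponents avoids this; the H\"older step is intrinsically lossy here because the size-biasing by $a_0(e)$ at clock-time~$0$ breaks the clock-time stationarity under~$\BbbP$, so the correlation does not factor.

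The paper sidesteps this entirely. From separate time-ergodicity it computes the Ces\`aro limit $\frac1t\int_0^t a_s(e)^{-q}\,\textd s$ in two ways: once as $\E(a_0(e)^{-q})$ by the ergodic theorem, and once as $\E(\int_0^{T_e\wedge M}a_t(e)^{-q}\,\textd t)/\E(T_e\wedge M)$ by renewal-reward. This yields the \emph{identity} $\E(\int_0^{T_e\wedge M}a_t(e)^{-q}\,\textd t)=\E(T_e\wedge M)\,\E(a_0(e)^{-q})$, which combined with your pathwise bound gives $\E((T_e\wedge M)^{q+1})\le\E(T_e\wedge M)\,\E(a_0(e)^{-q})$. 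One then closes with Jensen, $\E(T_e\wedge M)\le[\E((T_e\wedge M)^{q+1})]^{1/(q+1)}$, and sends $M\to\infty$. The point is that the needed information about $\E(\int_0^{T_e}a_s(e)^{-q}\,\textd s)$ is an \emph{equality}, not an inequality, and it already separates $T_e$ from the conductance moment; your time-change machinery can in principle recover this Kac-type identity too, but not via H\"older on the correlation.
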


We relegate the (easy) proof to the Appendix. 

\begin{remark}
\label{rem-conditions}
Under the conditions \twoeqref{E:1.13a}{E:1.14a} with $p:=\infty$, which requires $q>d/2$, we thus get finiteness of moments of~$T_e$ of order larger than $\frac d2+1$. We note that this is less than~$4d$ in all~$d\ge2$ so our condition \eqref{E:q_ass} is generally quite a bit stronger than \twoeqref{E:1.13a}{E:1.14a}.
\end{remark}

As we will see in Lemma~\ref{L:nu_w}, our conditions on~$T_e$ imply conditions on the negative moments of~$w_0(e)$ which then serve as technical input for the rest of the proofs. Noting that integrability of~$t\mapsto k_t$ and Jensen's inequality imply
\begin{equation}
\label{E:1.19u}
\E\bigl(w_0(e)^{-q}\bigr)\le c\, \E\bigl(a_0(e)^{-q}\bigr)
\end{equation}
for some~$c=c(k)\in(0,\infty)$, these moment conditions on $w(e)^{-1}$ are directly implied by the corresponding moment conditions on $a_0(e)^{-1}$. The bounds \twoeqref{E:1.18u}{E:1.19u} indicate that the setting of \cite{ACDS16} is naturally included in ours, except that (as was just noted in Remark~\ref{rem-conditions}) our conditions are more stringent than those in~\cite{ACDS16}. We take this as a suggestion for potential improvements of our techniques.

\smallskip
Another aspect left out in our study are environments where~$a_t(e)$ is unbounded from above. These include some very interesting cases; in fact, our initial motivation was to understand a specific model where~$t\mapsto a_t(e)$ is zero except for some random times when it has a Dirac-delta singularity. 
Our proofs require boundedness of~$a_t$ in a number of places and we do not know how to overcome these restrictions.

Yet another aspect where our study falls short is our choice of time-parametrization of the walk. Indeed, our choice of the generator \eqref{E:1.1} corresponds to the so-called variable-speed random walk but other parametrizations, e.g., the constant-speed random walk, are of interest as well.  In particular, we would like to solve: 

\begin{problem}
Extend our conclusions to discrete-time random walks among (discrete) time dependent random conductances subject to (analogues of) Assumption~\ref{ass1}.
\end{problem}

\noindent
A  somewhat unexpected feature of time-dependent random environments is that different time-parametrizations are not directly related and so our proofs do not shed any light on those either. We consider this to be one of the most challenging open problems of this subject area.

As an attentive reader has surely noticed, our results are stated under the restriction to spatial dimensions~$d\ge2$. This is dictated by the fact that the parameters space-time Sobolev inequalities behave differently in~$d=1$ than in~$d\ge2$. Although we think that these differences can be overcome, we have decided to skip the~$d=1$ case in order to avoid having to deal with annoying provisos and keep the paper to a manageable length. Under the moment conditions \twoeqref{E:1.13a}{E:1.14a}, the one-dimensional case has been addressed in \cite{DS16}.

Finally, although we work with elliptic regularity techniques, we have not touched the subject of heat-kernel estimates; i.e., Gaussian-type upper/lower bounds on the probability $p^{t,s}(x,y)$ that the walk conditional on being at~$x$ at time~$t$ is at~$y$ at a later time~$s$. Unlike for the static environments, such bounds are much less regular and various pathologies may arise (cf.\ Huang and Kumagai~\cite{HK16}). 
As already mentioned, for our class of environments upper bounds on the diagonal term $p^{t,s}(x,x)$ have been derived in Mourrat and Otto~\cite{MO16}. In analogy with the static case, we expect that our proof of the invariance principle with non-degenerate diffusion matrix should imply an on-diagonal lower bound. However, we have not been able to conclude this rigorously.

\subsection{Outline}
The remainder of the paper is organized as follows. In Section~\ref{sec-2a} we develop the functional-theoretical tools underpinning the proofs in later sections. This, in particular, includes the introduction of Sobolev inequalities and conversion of the Dirichlet energies mentioned above. In Section~\ref{sec2}, we then construct the harmonic coordinate, which one can think of as an embedding of~$\Z^d$ on which the random walk is a martingale. The change in the embedding is expressed by the said {corrector}, which is a fundamental quantity in all standard treatments of random conductance models (see, e.g, recent reviews by Biskup~\cite{B11} and Kumagai~\cite{Kumagai}). The above mentioned \emph{a priori} $L^1$-estimates on the corrector are also derived here using methods of independent interest. In Section~\ref{sec3} we give a proof of the main result subject to a pointwise sublinearity estimate on the corrector. This estimate is then substantiated in Sections~\ref{sec6}--\ref{sec6a} by combining the \emph{a priori} $L^1$-bounds with Moser iteration. The Appendix collects some estimates that would be a distraction in the main line of a proof.

Let us make the following convention about the use of constants. We denote by $c,c',\dots$ positive and finite constants which can change from place to place. Numbered constants $c_1, c_2,\dots$ become fixed whenever they first appear. Their dependence on all parameters will always be explicit.


\section{Sobolev inequalities and weighted energies}
\label{sec-2a}\nopagebreak\noindent
Here we introduce the necessary functional-theoretical tools for our later proofs. A reader preferring to avoid technicalities until they are actually used may consider skipping this section and returning to it only while reading the rest of the paper.

\subsection{The $\ell^1$-Sobolev inequality}
The control of the corrector in stochastic homogenization seems to always require a kind of coercive-type estimate for its Dirichlet energy in terms of a suitable norm. Historically this was done (e.g., in Sidoravicius and Sznitman~\cite{SS04}, drawing on Delmotte~\cite{D99} and Barlow~\cite{B04}) via the Poincar\'e inequality. This is easy and elegant in uniformly elliptic cases but becomes less so when one deals with non-elliptic environments and, particularly, wishes to work under moment assumptions on the conductances only. In this line of thought, Andres, Deuschel and Slowik~\cite{ASD15} devised a powerful approach based on Sobolev inequalities which we will follow here as well. The starting point of this approach is:

\begin{lemma}[$\ell^1$-Sobolev inequality]
For each~$d\ge2$ there is $c=c(d)\in(0,\infty)$ such that any $f\colon\Z^d\to\R$ with finite support, 
\begin{equation}
\label{E:2.27a}
\Bigl(\,\sum_{x\in\Z^d}\bigl|f(x)\bigr|^{\frac{d}{d-1}}\Bigr)^{\frac{d-1}d}
\le c(d)\sum_{(x,y)\in E(\Z^d)}\bigl|f(x)-f(y)\bigr|\,.
\end{equation}
\end{lemma}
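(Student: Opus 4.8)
The plan is to prove \eqref{E:2.27a} by the discrete version of Gagliardo's slicing argument, whose one genuinely non-trivial ingredient is the Loomis--Whitney (multilinear H\"older) inequality. Fix $f\colon\Z^d\to\R$ with finite support and, for $i\in\{1,\dots,d\}$, let $\mathbf e_i$ be the $i$-th coordinate vector and $L_i(x):=\{x+t\mathbf e_i\colon t\in\Z\}$ the line through $x$ in direction~$i$. Since $f$ vanishes off a finite set, telescoping along $L_i(x)$ in both directions and adding the two resulting one-sided bounds gives
\[
2\,|f(x)|\le\sum_{t\in\Z}\bigl|f(x+(t+1)\mathbf e_i)-f(x+t\mathbf e_i)\bigr|=:G_i(\widehat x_i),
\]
where $\widehat x_i:=(x_1,\dots,x_{i-1},x_{i+1},\dots,x_d)$; the key points are that $G_i$ depends on $x$ \emph{only} through the $d-1$ coordinates $\widehat x_i$, and that summing $G_i$ over all lines recovers the directional part of the right side of \eqref{E:2.27a}:
\[
\sum_{\widehat x_i\in\Z^{d-1}}G_i(\widehat x_i)=\sum_{\substack{(x,y)\in E(\Z^d)\\ x-y=\pm\mathbf e_i}}\bigl|f(x)-f(y)\bigr|\ \le\ \sum_{(x,y)\in E(\Z^d)}\bigl|f(x)-f(y)\bigr|=:\Vert\nabla f\Vert_1.
\]

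Next I would write $|f(x)|^{d/(d-1)}=\prod_{i=1}^d|f(x)|^{1/(d-1)}$, insert the line bounds to obtain $|f(x)|^{d/(d-1)}\le 2^{-d/(d-1)}\prod_{i=1}^d G_i(\widehat x_i)^{1/(d-1)}$, and sum over $x\in\Z^d$. The heart of the matter is then the discrete Loomis--Whitney inequality
\[
\sum_{x\in\Z^d}\,\prod_{i=1}^d G_i(\widehat x_i)^{1/(d-1)}\ \le\ \prod_{i=1}^d\Bigl(\,\sum_{\widehat x_i\in\Z^{d-1}}G_i(\widehat x_i)\Bigr)^{1/(d-1)},
\]
valid for any nonnegative $G_i\colon\Z^{d-1}\to[0,\infty)$. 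This follows from the classical Gagliardo slicing argument: sum over $x_1$, pull out the factor with $i=1$ (which does not depend on $x_1$), apply the generalized H\"older inequality with all exponents equal to $d-1$ to the remaining $d-1$ factors; then sum over $x_2$ and repeat, and so on, so that after summing over all $d$ coordinates each factor appears raised to the power $1/(d-1)$ with its variable fully summed out. The base case $d=2$ is the identity $\sum_{x_1,x_2}G_1(x_2)G_2(x_1)=(\sum_{x_2}G_1)(\sum_{x_1}G_2)$. If one prefers to avoid reproving it, this can simply be cited as a standard combinatorial fact.

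Combining the three displays yields $\sum_{x\in\Z^d}|f(x)|^{d/(d-1)}\le 2^{-d/(d-1)}\,\Vert\nabla f\Vert_1^{\,d/(d-1)}$, and raising to the power $(d-1)/d$ gives \eqref{E:2.27a} with $c(d)=\tfrac12$ --- in particular a dimension-free constant, although only existence of some finite $c(d)$ is claimed. The only genuinely non-trivial step is the Loomis--Whitney/iterated-H\"older estimate; the line-variation bound and the final bookkeeping are routine. An entirely acceptable alternative is to import the lemma wholesale from the literature, in which case the role of the above is merely to fix notation for its use in the sequel.
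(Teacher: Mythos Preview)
Your proof is correct and complete; the Gagliardo slicing argument with Loomis--Whitney is a standard and clean route to the discrete $\ell^1$-Sobolev inequality, and as a bonus you extract the sharp constant $c(d)=\tfrac12$.

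The paper takes a genuinely different route: it goes through the co-area formula and the isoperimetric inequality. Writing $\sum_x|f(x)|^{d/(d-1)}=\int_0^\infty\sum_x|f(x)|^{1/(d-1)}1_{\{|f(x)|>s\}}\,\textd s$ and applying H\"older, one reduces to bounding $\int_0^\infty|\{|f|>s\}|^{(d-1)/d}\,\textd s$; the isoperimetric inequality $|\Lambda|^{(d-1)/d}\le c(d)|\partial\Lambda|$ then converts each level set into an edge count, and integrating in~$s$ recovers $\sum_e|\nabla f(e)|$. This argument is shorter but imports the isoperimetric constant, so it does not give your explicit~$\tfrac12$. Its payoff is modularity: the paper immediately reuses the same template in the next lemma (the Poincar\'e-type inequality on a box), simply swapping the full-space isoperimetric inequality for the relative one in~$B$. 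Your Loomis--Whitney argument does not localize to a box quite as directly, so if you go your route you may need a separate argument for Lemma~\ref{lemma-Sob}.
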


\begin{proofsect}{Proof}
This is very standard, but we give a proof as it is short and instructive and, also, as we will reuse the argument in the next lemma. 
First off we use H\"older's inequality to get
\begin{equation}
\label{E:2.28}
\begin{aligned}
\sum_{x\in\Z^d}\bigl|f(x)\bigr|^{\frac{d}{d-1}}
&=\int_0^\infty\textd s\,\sum_{x\in\Z^d}\bigl|f(x)\bigr|^{\frac{1}{d-1}}1_{\{|f(x)|>s\}}
\\
&\le 
\Bigl(\,\sum_{x\in\Z^d}\bigl|f(x)\bigr|^{\frac{d}{d-1}}\Bigr)^{1/d}\int_0^\infty\textd s
\,\bigl|\{x\in\Z^d\colon |f(x)|>s\}\bigr|^{\frac{d-1}{d}}\,.
\end{aligned}
\end{equation}
By the isoperimetric inequality in~$\Z^d$, for any finite~$\Lambda\subset\Z^d$, we have $|\Lambda|^{\frac{d-1}d}\le c(d)|\partial\Lambda|$, where $\partial\Lambda$ is the set of edges with exactly one endpoint in~$\Lambda$ and $|A|$ denotes the cardinality of $A$, for any $A \subset \Z^d$. Using this for $\Lambda:=\{x\in\Z^d\colon |f(x)|>s\}$ in \eqref{E:2.28}, shows
\begin{equation}
\label{E:2.29}
\Bigl(\,\sum_{x\in\Z^d}\bigl|f(x)\bigr|^{\frac{d}{d-1}}\Bigr)^{\frac{d-1}d}
\le c(d)\int_0^\infty\textd s\,\sum_{\begin{subarray}{c}
x,y\in\Z^d\\|x-y|=1
\end{subarray}}
1_{\{|f(x)|>s\ge|f(y)|\}}.
\end{equation}
Performing the integral and using that $\bigl||a|-|b|\bigr|\le|a-b|$ now yields \eqref{E:2.27a}.
\end{proofsect}

We note (as our proof above attests) that the $\ell^1$-Sobolev inequality is equivalent to the isoperimetric inequality. The restriction to~$f$ with finite support is sometimes inconvenient and one might wish to work instead in a finite box. The following lemma addressing this setting will be quite useful. No surprise, it is still based on isoperimetry but this time in a finite box:

\begin{lemma}
\label{lemma-Sob}
For~$d\ge2$ there is~$c'=c'(d)\in(0,\infty)$ such that for any $f\colon\Z^d\to\R$, any $n\ge1$ and any translate~$B$ of~$[0,n]^d\cap\Z^d$,
\begin{equation}
\sum_{x\in B}\bigl|f(x)-\bar f_{B}\bigr|
\le c'(d)\,|B|^{1/d}\sum_{\begin{subarray}{c}
(x,y)\in E(\Z^d)\\x,y\in B
\end{subarray}}
\bigl|f(x)-f(y)\bigr|,
\end{equation}
where $\bar f_{B}:=|B|^{-1}\sum_{x\in B}f(x)$.
\end{lemma}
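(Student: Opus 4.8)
\noindent The plan is to run the layer-cake scheme from the proof of the $\ell^1$-Sobolev inequality, but with the global isoperimetric inequality in $\Z^d$ replaced by its relative (Cheeger-type) counterpart in the box~$B$. First I would trade the mean for a median. Fix $f\colon\Z^d\to\R$ and let $B$ be a translate of $[0,n]^d\cap\Z^d$, and pick a median $m$ of $f$ on $B$, i.e.\ a number with $|\{x\in B\colon f(x)\ge m\}|\ge|B|/2$ and $|\{x\in B\colon f(x)\le m\}|\ge|B|/2$. Since $|\bar f_B-m|\le|B|^{-1}\sum_{x\in B}|f(x)-m|$, the triangle inequality yields $\sum_{x\in B}|f(x)-\bar f_B|\le2\sum_{x\in B}|f(x)-m|$, so it suffices to bound the latter. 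Splitting $|f(x)-m|=(f(x)-m)_+ +(m-f(x))_+$ and using the layer-cake identity as in \eqref{E:2.28}, one gets $\sum_{x\in B}(f(x)-m)_+=\int_0^\infty\textd s\,|\Lambda_s|$ with $\Lambda_s:=\{x\in B\colon f(x)>m+s\}$, and similarly with the sublevel sets $\{x\in B\colon f(x)<m-s\}$ for the negative part; by the choice of~$m$, all of these sets have cardinality at most $|B|/2$.

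The key input is then the relative isoperimetric inequality in the box: there is $c=c(d)\in(0,\infty)$ such that every $\Lambda\subseteq B$ with $|\Lambda|\le|B|/2$ obeys $|\partial_B\Lambda|\ge c(d)\,|B|^{-1/d}\,|\Lambda|$, where $\partial_B\Lambda$ is the set of edges $(x,y)\in E(\Z^d)$ with $x,y\in B$ and exactly one of $x,y$ in $\Lambda$ (equivalently, the Cheeger constant of $[0,n]^d\cap\Z^d$ is at least $c(d)/n$). Granting this, I would apply it to $\Lambda_s$, integrate over $s$, and collapse the resulting edge sum using that $t\mapsto(t-m)_+$ is $1$-Lipschitz — exactly the passage from \eqref{E:2.29} to \eqref{E:2.27a} — which gives $\int_0^\infty\textd s\,|\partial_B\Lambda_s|\le\sum_{(x,y)\in E(\Z^d),\,x,y\in B}|f(x)-f(y)|$, and likewise for the negative part. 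Combining and recalling $|B|^{1/d}=n+1$ yields the claim, e.g.\ with $c'(d)=4/c(d)$. A fully self-contained alternative that bypasses the isoperimetric input is available: route, for each ordered pair $x,y\in B$, the coordinate-by-coordinate lattice path, bound its edge congestion by $C(d)\,n\,|B|$, and feed this into $\sum_{x\in B}|f(x)-\bar f_B|\le|B|^{-1}\sum_{x,y\in B}|f(x)-f(y)|$; this reproduces the same estimate directly.

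The only genuinely non-routine ingredient is the relative isoperimetric inequality (or, in the alternative route, the congestion bound). It is classical, but worth stressing that it does \emph{not} follow by applying the already-proved $\ell^1$-Sobolev inequality to $(f-\bar f_B)\Bbbone_B$: that produces on the right-hand side an uncontrolled boundary term $\sum_{x\in B,\,y\notin B,\,|x-y|=1}|f(x)-\bar f_B|$, which is precisely what the relative version is designed to avoid. Once that ingredient is in place, the rest is the bookkeeping already performed for the $\ell^1$-Sobolev inequality.
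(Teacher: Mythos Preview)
Your argument is correct, but it follows a somewhat different path from the paper's. The paper does not pass through a median: it works directly with~$\bar f_B$, observes that $\sum_{x\in B}|f(x)-\bar f_B|=2\sum_{x\in\Lambda}(f(x)-\bar f_B)$ where~$\Lambda$ is the smaller of the two level sets $\{f>\bar f_B\}$, $\{f<\bar f_B\}$ (so $|\Lambda|\le|B|/2$), and then reruns the H\"older step~\eqref{E:2.28} to produce an extra factor $|\Lambda|^{1/d}\le(|B|/2)^{1/d}$ before invoking the \emph{strong} relative isoperimetric inequality $|\Lambda'|^{(d-1)/d}\le\tilde c(d)|\partial^B\Lambda'|$ inside the box. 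Your version is more direct: after centring at the median you apply the coarea formula to $\int_0^\infty|\Lambda_s|\,\textd s$ and need only the linear (Cheeger) bound $|\Lambda|\le c(d)^{-1}|B|^{1/d}|\partial^B\Lambda|$, which is the weaker of the two isoperimetric statements and arguably easier to verify for the discrete box. The paper's route has the advantage of literally recycling the mechanism of the $\ell^1$-Sobolev proof (and, in principle, yields the sharper $L^{d/(d-1)}$ version too); yours has the advantage of requiring a strictly weaker geometric input. Your closing remark --- that one cannot simply apply the global $\ell^1$-Sobolev inequality to $(f-\bar f_B)\Bbbone_B$ because of the uncontrolled boundary term --- is exactly the point, and both proofs circumvent it by working with subsets of size $\le|B|/2$.
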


\begin{proofsect}{Proof}
Replacing~$f$ by~$-f$ if needed, we may assume without loss of generality that
\begin{equation}
\bigl|\{x\in B\colon f(x)>\bar f_B\}\bigr|\le \bigl|\{x\in B\colon f(x)<\bar f_B\}\bigr|\,.
\end{equation}
Let~$\Lambda$ denote the set on the left-hand side.
Since $\sum_{x\in B}(f(x)-\bar f_B)=0$, we have
\begin{equation}
\label{E:2.32}
\sum_{x\in B}\bigl|f(x)-\bar f_B\bigr|=2\sum_{x\in \Lambda}\bigl(f(x)-\bar f_B\bigr).
\end{equation}
Jensen's inequality along with the argument in \eqref{E:2.28} then show
\begin{equation}
\sum_{x\in \Lambda}\bigl(f(x)-\bar f_B\bigr)
\le|\Lambda|^{1/d}\int_0^\infty\textd s\,\bigl|\{x\in\Lambda\colon f(x)-\bar f_B>s\}\bigr|^{\frac{d-1}d}\,.
\end{equation}
Since~$|\Lambda|\le\frac12|B|$, the isoperimetric inequality in~$B$ yields $|\Lambda'|^{\frac{d-1}d}\le\tilde c(d)|\partial^B\Lambda'|$ for any $\Lambda'\subset\Lambda$, where $\partial^B\Lambda'$ is the set of edges in~$\partial\Lambda'$ that have both endpoints in~$B$. Using this as in \eqref{E:2.29} and plugging the result into  \eqref{E:2.32} yields the claim with~$c'(d):=2^{1-1/d}\tilde c(d)$.
\end{proofsect}

\subsection{Sobolev inequalities with weighted energies}
Our next goal will be a conversion of the $\ell^1$-Sobolev inequality into a more useful form. Given any Lebesgue measurable $\zeta\colon\R\to[0,\infty)$, for any measurable $f\colon\R\times\Z^d\to\R$, with the value at $(t,x)$ denoted by~$f_t(x)$, any~$B\subset\Z^d$ and any~$p,q\in(0,\infty)$, define the norms
\begin{equation}
\label{eq:norm_pp'}
\Vert f\Vert_{p,q;B,\zeta}:=\Biggl(\int\textd t\,\zeta(t)\Bigl(\sum_{x\in B}\bigl|f_t(x)\bigr|^p\Bigr)^{q/p}\Biggr)^{1/q}\,.
\end{equation}
Recalling our notation $E(\Z^d)$ for the set of unordered edges (with each edge included only once) in~$\Z^d$ and writing~$E(B)$ for the set of edges in~$E(\Z^d)$ with at least one endpoint in~$B$, we will use the notation $\Vert f\Vert_{p,q;E(B),\zeta}$ to denote the corresponding object for functions $f\colon \R\times E(\Z^d)\to\R$; just replace sum over~$x\in B$ by sum over $e\in E(B)$. 

For any~$B\subset\Z^d$, any $t\in\R$, any $f\colon\R\times\Z^d\to\R$ and any collection of non-negative weights $\{w_t(e)\colon e\in\Z^d\}$ we define
\begin{equation}
\label{E:2.9.0}
\EE^w_{t,B}(f_t):=\sum_{(x,y)\in E(B)}
w_t(x,y)\bigl[\,f_t(y)-f_t(x)\bigr]^2.
\end{equation}
The notation $\EE^a_{t,B}(f_t)$ will be reserved for the specific situation when the weights are given by the conductances~$a_t(e)$. Assuming in addition that $t\mapsto w_t(e)$ is Borel measurable for each~$e$, we then define the integrated forms of these via
\begin{equation}
\label{E:2.9}
\EE^{w,\zeta}_B(f):=\int\textd t\,\zeta(t)\EE^w_{t,B}(f_t),
\end{equation}
reserving $\EE^{a,\zeta}_B(f)$ again for the case when the weights are given by the conductances. If~$B=\Z^d$, we denote the above energies simply by $\EE^{w}_t(f_t)$ and $\EE^{w,\zeta}(f)$.
We now claim the validity of the following family of inequalities:

\begin{lemma}[Sobolev inequalities]
\label{thm-Sobolev}
For each $d\ge2$, each $\alpha\in(1,2\frac{d-1}{d-2})$ and each $\beta\in(0,2)$ there is $\czero=\czero(d,\alpha,\beta)\in(0,\infty)$ such that for $r,s$ defined by
\begin{equation}
\label{E:1.4a}
\frac{\alpha-1}\alpha\frac{d-1}d+\frac1r=\frac12\quad\text{and}\quad\frac1{s}+\frac12=\frac1\beta,
\end{equation}
the inequality
\begin{equation}
\label{E:1.5a}
\Vert f\Vert_{\alpha\frac d{d-1},\beta;B,\zeta}
\le \czero\Vert w^{-1/2}\Vert_{r,s;E(B),\zeta}\,\,\EE_B^{w,\zeta}(f)^{1/2}
\end{equation}
holds for any finite~$B\subset\Z^d$ and any measurable $f\colon\R\times\Z^d\to\R$.
\end{lemma}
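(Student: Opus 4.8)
The plan is to derive the weighted Sobolev inequality \eqref{E:1.5a} from the unweighted $\ell^1$-Sobolev inequality \eqref{E:2.27a} by a two-stage H\"older argument: first in the spatial variable $x$ (to introduce the weights $w_t(e)^{-1/2}$ at fixed time $t$), then in the time variable $t$ (to distribute the integral against $\zeta(t)$ between the two factors on the right). First I would fix $t$ and apply \eqref{E:2.27a} to the function $x\mapsto|f_t(x)|^\alpha$, which has finite support since $B$ is finite. Since
\begin{equation}
\bigl||f_t(x)|^\alpha-|f_t(y)|^\alpha\bigr|\le\alpha\bigl(|f_t(x)|^{\alpha-1}+|f_t(y)|^{\alpha-1}\bigr)\bigl|f_t(x)-f_t(y)\bigr|,
\end{equation}
this produces
\begin{equation}
\Bigl(\sum_{x\in B}|f_t(x)|^{\alpha\frac d{d-1}}\Bigr)^{\frac{d-1}d}
\le c\sum_{(x,y)\in E(B)}\bigl(|f_t(x)|^{\alpha-1}+|f_t(y)|^{\alpha-1}\bigr)\bigl|f_t(x)-f_t(y)\bigr|.
\end{equation}
Now I would insert $w_t(x,y)^{1/2}w_t(x,y)^{-1/2}$ into each summand and apply the Cauchy–Schwarz inequality over $e\in E(B)$, splitting off $\EE^w_{t,B}(f_t)^{1/2}$ from a factor $\bigl(\sum_e w_t(e)^{-1}(|f_t(x)|^{\alpha-1}+|f_t(y)|^{\alpha-1})^2\bigr)^{1/2}$. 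The latter factor is then controlled by another H\"older step in $x$ with exponents matched to the spatial Sobolev exponent: writing $\frac{\alpha-1}{\alpha}\cdot\frac{d-1}{d}+\frac1r=\frac12$ as in \eqref{E:1.4a}, one bounds $\sum_e w_t(e)^{-1}|f_t(x)|^{2(\alpha-1)}$ by $\bigl(\sum_x|f_t(x)|^{\alpha\frac d{d-1}}\bigr)^{2\frac{\alpha-1}{\alpha}\frac{d-1}d}$ times $\bigl(\sum_e w_t(e)^{-r/2}\bigr)^{2/r}$ (using that each vertex has bounded degree to pass between edge and vertex sums). The term $\bigl(\sum_x|f_t(x)|^{\alpha\frac d{d-1}}\bigr)^{\frac{d-1}d\cdot\frac{\alpha-1}{\alpha}}$ can be absorbed back into the left-hand side because the exponent is strictly less than $\frac{d-1}d$ exactly under the constraint $\alpha<2\frac{d-1}{d-2}$, which is what forces that hypothesis; the condition $\alpha>1$ keeps the exponent positive. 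Rearranging yields, for a.e.\ fixed $t$,
\begin{equation}
\Bigl(\sum_{x\in B}|f_t(x)|^{\alpha\frac d{d-1}}\Bigr)^{\frac1\alpha\frac{d-1}d}
\le c\,\bigl\Vert w_t^{-1/2}\bigr\Vert_{\ell^r(E(B))}\,\EE^w_{t,B}(f_t)^{1/2}.
\end{equation}

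It remains to integrate this pointwise-in-$t$ estimate against $\zeta(t)\,\textd t$ in the right $L^q$ sense. Raising both sides to the power $\beta$ and integrating, the left side is $\Vert f\Vert_{\alpha\frac d{d-1},\beta;B,\zeta}^\beta$. On the right side I would apply H\"older's inequality in $t$ with exponents chosen so that one factor becomes $\EE^{w,\zeta}_B(f)^{1/2}=\bigl(\int\zeta\,\EE^w_{t,B}(f_t)\,\textd t\bigr)^{1/2}$ — i.e.\ with exponent $2/\beta$ on $\zeta^{\beta/2}\EE^w_{t,B}(f_t)^{\beta/2}$ — and the conjugate exponent, whose reciprocal is $\frac12-\frac1\beta+\ldots$, lands on $\zeta^{\beta/2}\Vert w_t^{-1/2}\Vert_{\ell^r}^\beta$; matching this conjugate exponent to the definition of $s$ via $\frac1s+\frac12=\frac1\beta$ gives precisely $\Vert w^{-1/2}\Vert_{r,s;E(B),\zeta}$. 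Here the hypothesis $\beta<2$ is what makes $2/\beta>1$ so the H\"older step is legitimate, and $\beta>0$ keeps $s$ finite; note $\zeta$ itself needs no integrability assumption since its powers are split consistently between the two factors. This gives \eqref{E:1.5a} with $\czero$ depending only on $d,\alpha,\beta$ (through the Sobolev constant $c(d)$, the absorption constant, and the combinatorial degree factor $2d$).

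The main obstacle is the bookkeeping of exponents in the first (spatial) stage: one must verify that the three H\"older/Cauchy–Schwarz applications in $x$ and $e$ combine to isolate exactly $\EE^w_{t,B}(f_t)^{1/2}$ and a pure power of $\Vert w_t^{-1/2}\Vert_{\ell^r}$, with the leftover power of $\bigl(\sum_x|f_t(x)|^{\alpha d/(d-1)}\bigr)$ strictly below the exponent on the left so it can be absorbed — and this absorption is exactly where the upper bound $\alpha<2\frac{d-1}{d-2}$ is consumed. A minor subtlety is the passage between sums over edges $e\in E(B)$ and sums over vertices $x\in B$: since each vertex of $\Z^d$ meets $2d$ edges, $\sum_{(x,y)\in E(B)}g(x)\le 2d\sum_{x\in B'}g(x)$ for a slightly enlarged $B'$, but since we only want a constant depending on $d$ this is harmless (and one can absorb the enlargement by noting $E(B)$ touches only $B$ and its outer boundary, whose size is comparable). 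Measurability of $t\mapsto\EE^w_{t,B}(f_t)$ and $t\mapsto\Vert w_t^{-1/2}\Vert_{\ell^r}$, needed to make the time integration meaningful, follows from the assumed Borel measurability of $t\mapsto w_t(e)$ and of $f$. Everything else is routine.
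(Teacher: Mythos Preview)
Your proof is correct and follows essentially the same strategy as the paper: apply the $\ell^1$-Sobolev inequality to $|f_t|^\alpha$, use H\"older in the spatial variable to split off the weighted Dirichlet form and the weight norm, absorb the leftover power of $\sum_x|f_t|^{\alpha d/(d-1)}$, and finally H\"older in time. The only organizational differences are that the paper performs a single three-way H\"older in space (with indices $(p,2,r)$ where $p(\alpha-1)=\alpha\frac{d}{d-1}$) rather than your Cauchy--Schwarz followed by a two-way H\"older, and the paper postpones the absorption step until \emph{after} the time integration and the time-H\"older; both orderings give the same result.

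One small correction to your commentary: the hypothesis $\alpha<2\frac{d-1}{d-2}$ is not consumed by the absorption step. The absorption inequality $\frac{\alpha-1}{\alpha}\frac{d-1}{d}<\frac{d-1}{d}$ holds for every finite $\alpha>1$. The upper bound on $\alpha$ is instead what guarantees that $r$ defined by \eqref{E:1.4a} is positive (equivalently, that your spatial H\"older exponent $a$ with $2(\alpha-1)a=\alpha\frac{d}{d-1}$ satisfies $a\ge1$), so that the H\"older step itself is legitimate. This does not affect the validity of your argument, only the explanation of where the hypothesis enters.
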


The quantity $2\frac{d-1}{d-2}$ should henceforth be interpreted as infinity when~$d=2$. We remark that Andres, Chiarini, Deuschel and Slowik~\cite{ACDS16} derive \eqref{E:1.5a} for the particular case when $\zeta(t):=T^{-1}1_{[0,T]}(t)$ and~$w_t(e)$ replaced by~$a_t(e)$. However, their parametrization is different from ours. 

\begin{remark}
The norm \eqref{eq:norm_pp'} is asymmetric in the sense that it puts integration with respect to the spatial variables before that with respect to time and so the reader may wonder whether setting the norms up the opposite way may give us  any advantage. To address this issue, define
\begin{equation}
\Vert f\Vert_{p,q;B,\zeta}^{\sim}:=\Biggl(\sum_{x\in B}\Bigl(\int\textd t\,\zeta(t)\bigl|f_t(x)\bigr|^p\Bigr)^{q/p}\Biggr)^{1/q}\,.
\end{equation}
Then a similar calculation to the one in the proof of Lemma~\ref{thm-Sobolev} below shows that, for each $\alpha\in(1,2\frac{d-1}{d-2})$ and each $\beta\in(1,2)$,
\begin{equation}
\label{E:1.6}
\Vert f\Vert_{\beta,\alpha\frac d{d-1};B,\zeta}^\sim
\le c\Vert w^{-1/2}\Vert_{r,s;E(B),\zeta}^\sim\,\,\EE_B^{w,\zeta}(f)^{1/2}
\end{equation}
holds for any finite~$B\subset\Z^d$ and any measurable $f\colon\R\times\Z^d\to\R$.
In particular, both ways to define space-time norms seem more or less equally powerful. 
\end{remark}

\begin{proofsect}{Proof of Lemma~\ref{thm-Sobolev}}
Let~$\tilde E(B)$ denote the set of ordered edges with at least one endpoint in~$B$. Pick~$\alpha$ and~$\beta$ from the allowed ranges. The $\ell^1$-Sobolev inequality along with the fact that
\begin{equation}
\label{E:1.7}
|x^\alpha-y^\alpha|\le\alpha (x^{\alpha-1}+y^{\alpha-1})|x-y|,\qquad x,y>0,\,\alpha>0,
\end{equation}
and simple symmetrization show
\begin{equation}
\label{E:1.8}
\Bigl(\,\sum_{x\in B}\bigl|f_t(x)\bigr|^{\alpha\frac d{d-1}}\Bigr)^{\frac{d-1}{\alpha d}\beta}
\le c\Bigl(\,\sum_{(x,y)\in\tilde E(B)}\bigl|f_t(x)\bigr|^{\alpha-1}\bigl|f_t(x)-f_t(y)\bigr|\Bigr)^{\beta/\alpha}\,.
\end{equation}
Let $p$ be defined by
\begin{equation}
\label{E:1.9}
p(\alpha-1)=\alpha\frac d{d-1}
\end{equation}
and notice that then $\frac1p+\frac12+\frac1r=1$ by the first equality in \eqref{E:1.4a}. H\"older's inequality with indices $(p,2,r)$ then bounds the right-hand side of \eqref{E:1.8} by
\begin{equation}
c\Bigl(\,\sum_{x\in B}\bigl|f_t(x)\bigr|^{\alpha\frac d{d-1}}\Bigr)^{\frac\beta{p\alpha}}
\Bigl(\,\sum_{(x,y)\in E(B)} w_t(x,y)\bigl|f_t(x)-f_t(y)\bigr|^{2}\Bigr)^{\frac\beta{2\alpha}}
\Bigl(\,\sum_{(x,y)\in E(B)}\!\!\!\!w_t(x,y)^{-r/2}\Bigr)^{\frac\beta{r\alpha}},
\end{equation}
where the constant~$c$ arises from rewriting the first sum from that over edges to that over sites and where the sums are now over unordered edges again. Now multiply the resulting inequality by~$\zeta(t)$ and integrate over~$t$. Since the second equality in~\eqref{E:1.4a} ensures that $(p\frac{d-1}d,2\alpha/\beta, s\alpha/\beta)$ are H\"older conjugate indices, another use of H\"older's inequality yields
\begin{multline}
\int\textd t\,\zeta(t)\Bigl(\,\sum_{x\in B}\bigl|f_t(x)\bigr|^{\alpha\frac d{d-1}}\Bigr)^{\frac{d-1}{\alpha d}\beta}
\\
\le
c\Biggl(\int\textd t\,\zeta(t)\Bigl(\,\sum_{x\in B}\bigl|f_t(x)\bigr|^{\alpha\frac d{d-1}}\Bigr)^{\frac{d-1}{\alpha d}\beta}\Biggr)^{\frac1p\frac{d}{d-1}}\, \Bigl[\EE_B^{w,\zeta}(f)^{1/2}\Vert w^{-1/2}\Vert_{r,s;E(B),\zeta}\Bigr]^{\beta/\alpha}\,.
\end{multline}
This now readily implies \eqref{E:1.5a}.
\end{proofsect}

Our later applications make it convenient to introduce normalized versions of the above norms. Assuming~$\zeta$ to be integrable and denoting by $\Vert\zeta\Vert_{L^1}$ its $L^1$-norm is with respect to Lebesgue measure, we thus set
\begin{equation}
\label{eq:norm_2}
\vvert f\vvert_{p,q;B, \zeta}:=|B|^{-1/p} \Vert \zeta \Vert_{L^1}^{-1/q} \Vert f\Vert_{p,q;B, \zeta}\,. 
\end{equation}
For the case~$q:=\infty$ we get
\begin{equation}
\label{eq:norm_3}
\vvert f\vvert_{p,\infty;B,\zeta}:= \text{esssup}\biggl( t\mapsto \Bigl(\, \frac{1}{|B|}  \sum_{x\in B}\bigl|f_t(x)\bigr|^p\Bigr)^{1/p}\biggr),
\end{equation}
where the essential supremum is with respect to the Lebesgue measure on~$\supp\zeta$.
We will write $\Vert f\Vert_{p,q;E(B), \zeta}$ and $\vvert f\vvert_{p,q;E(B), \zeta}$ to denote the corresponding norms for functions indexed by edges of~$\Z^d$.
For later reference, we note that, by Jensen's inequality,
\begin{equation}
\label{E:pq_monot}
\vvert f\vvert_{p,q;B, \zeta} \text{ is increasing in $p$ and $q$ for all $p,q >0$}.
\end{equation}
The norms $\vvert f\vvert_{p,q;B, \zeta}$ will be used heavily in Sections~\ref{sec6}-\ref{sec6a}.
The following form of \eqref{E:1.5a} is tailored to the purposes of that section.

\begin{corollary}
\label{C:Sobolev-tailored'}
For each $d\ge2$, each $\alpha\in(1,2\frac{d-1}{d-2})$ and each $\beta\in(0,2)$ and for~$r,s$ and $\czero$ as in Lemma \ref{thm-Sobolev}, defining
\begin{equation}
\label{eq:p_s}
\hat p =\hat p(\alpha):= \frac{\alpha}{2}\frac d{d-1} \quad\text{and}\quad
\hat q := \hat q(\beta)= \frac{\beta}{2},
\end{equation}
the bound
\begin{equation}
\label{E:1.5abcd'}
\vvert f^2\vvert_{\hat p,\hat q; B,\zeta}
\le 2d \czero^2\, |B|^{\frac{2}{d}} \, \,  \frac{\EE^{w,\zeta}_B({f})}{|B|}  \, \,  \vvert w^{-1}\vvert_{\frac r2, \frac s2; E(B),\zeta} \,.
\end{equation}
holds for all finite $B\subset\Z^d$ and all measurable $f\colon\R\times\Z^d\to\R$.
\end{corollary}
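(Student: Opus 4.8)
The plan is to derive Corollary~\ref{C:Sobolev-tailored'} from Lemma~\ref{thm-Sobolev} by the elementary observation that the left-hand side of \eqref{E:1.5a} applied to~$|f|$ already \emph{is} (up to normalization) the quantity $\vvert f^2\vvert_{\hat p,\hat q;B,\zeta}$. First I would note that, since \eqref{E:1.5a} depends on~$f$ only through~$|f|$, we may as well run the argument for~$|f|$; replacing~$f$ by~$|f|$ changes none of the energies $\EE^{w,\zeta}_B$. Then, with $\hat p=\frac\alpha2\frac d{d-1}$ and $\hat q=\frac\beta2$ as in \eqref{eq:p_s}, we have $\alpha\frac d{d-1}=2\hat p$ and $\beta=2\hat q$, so that
\begin{equation}
\label{E:plan-identity}
\Bigl(\sum_{x\in B}|f_t(x)|^{2\hat p}\Bigr)^{\hat q/\hat p}
=\Bigl(\sum_{x\in B}\bigl|f_t(x)^2\bigr|^{\hat p}\Bigr)^{\hat q/\hat p},
\end{equation}
and after integrating against~$\zeta(t)$ and taking the $2\hat q$-th root we get precisely
\begin{equation}
\label{E:plan-identity2}
\Vert f\Vert_{\alpha\frac d{d-1},\beta;B,\zeta}=\Vert f^2\Vert_{\hat p,\hat q;B,\zeta}^{1/2}.
\end{equation}
So squaring \eqref{E:1.5a} gives $\Vert f^2\Vert_{\hat p,\hat q;B,\zeta}\le\czero^2\,\Vert w^{-1/2}\Vert_{r,s;E(B),\zeta}^2\,\EE^{w,\zeta}_B(f)$.

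Next I would convert the three quantities $\Vert f^2\Vert_{\hat p,\hat q;B,\zeta}$, $\Vert w^{-1/2}\Vert_{r,s;E(B),\zeta}^2$ and (trivially) the energy into their normalized counterparts using the definition \eqref{eq:norm_2}. By \eqref{eq:norm_2} one has $\Vert f^2\Vert_{\hat p,\hat q;B,\zeta}=|B|^{1/\hat p}\Vert\zeta\Vert_{L^1}^{1/\hat q}\,\vvert f^2\vvert_{\hat p,\hat q;B,\zeta}$, while $\Vert w^{-1/2}\Vert_{r,s;E(B),\zeta}^2=\Vert w^{-1}\Vert_{r/2,s/2;E(B),\zeta}=|E(B)|^{2/r}\Vert\zeta\Vert_{L^1}^{2/s}\,\vvert w^{-1}\vvert_{r/2,s/2;E(B),\zeta}$; here the identity $\Vert w^{-1/2}\Vert_{r,s;E(B),\zeta}^2=\Vert w^{-1}\Vert_{r/2,s/2;E(B),\zeta}$ is immediate from \eqref{eq:norm_pp'}. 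Dividing through, the factors of $\Vert\zeta\Vert_{L^1}$ cancel because $\frac1{\hat q}=\frac2\beta=\frac2s+1$, i.e. $\frac1{\hat q}=\frac2s$ is \emph{not} what is needed --- rather one checks that the powers of $\Vert\zeta\Vert_{L^1}$ match: $\frac1{\hat q}$ on the left against $\frac2s$ from the weight and an implicit~$1$ from writing the energy as $|B|\cdot\EE^{w,\zeta}_B(f)/|B|$ carries no~$\zeta$-norm, so one uses instead that $\frac1{\hat q}-\frac2s=1$ would be wrong; the correct bookkeeping is $\frac1{\hat q}=\frac2\beta$ and $\frac1\beta=\frac1s+\frac12$, hence $\frac1{\hat q}=\frac2s+1$, and the extra~$\Vert\zeta\Vert_{L^1}^1$ is absorbed since $\EE^{w,\zeta}_B(f)$ already contains one factor of $\zeta$ integrated. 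I would spell this out carefully: the point is simply that $\EE^{w,\zeta}_B(f)$ scales like $\Vert\zeta\Vert_{L^1}^1$ under scaling of~$\zeta$, which balances the $\Vert\zeta\Vert_{L^1}$-powers on the two sides. For the spatial factors, $|E(B)|\le 2d|B|$ (each vertex has $2d$ incident edges), giving $|E(B)|^{2/r}\le(2d)^{2/r}|B|^{2/r}$, and one combines $|B|^{-1/\hat p}\cdot|B|^{2/r}\cdot|B|^{1}$ (the last from writing $\EE^{w,\zeta}_B(f)=|B|\cdot\frac{\EE^{w,\zeta}_B(f)}{|B|}$) and checks via the first relation in \eqref{E:1.4a} together with \eqref{E:1.9} that $\frac2r-\frac1{\hat p}+1=\frac2d$. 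Finally $(2d)^{2/r}\le 2d$ since $r\ge2$ (as $\frac1r=\frac12-\frac{\alpha-1}\alpha\frac{d-1}d<\frac12$), which yields the stated constant $2d\czero^2$ and the claimed power $|B|^{2/d}$.

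The only genuinely non-routine part is the arithmetic of exponents, so I would present it as two short verifications: (i) the power of $|B|$ works out to $\frac2d$, which reduces to checking $\frac2r+1-\frac1{\hat p}=\frac2d$ using $\frac1{\hat p}=\frac2\alpha\frac{d-1}d$ and $\frac1r=\frac12-\frac{\alpha-1}\alpha\frac{d-1}d$; and (ii) the powers of $\Vert\zeta\Vert_{L^1}$ cancel, which reduces to $\frac1{\hat q}=\frac2s+1$, i.e. $\frac2\beta=\frac2s+1$, which is exactly twice the second relation in \eqref{E:1.4a}. Both are one-line computations. I expect the main (mild) obstacle to be presenting the $\Vert\zeta\Vert_{L^1}$-bookkeeping transparently, since the normalized energy $\frac{\EE^{w,\zeta}_B(f)}{|B|}$ still carries a hidden factor of $\Vert\zeta\Vert_{L^1}$ through the time integral in \eqref{E:2.9}; the cleanest route is to observe that \eqref{E:1.5abcd'} is \emph{homogeneous of degree zero} under $\zeta\mapsto\lambda\zeta$ and of the correct homogeneity under $f\mapsto\lambda f$, so it suffices to verify it for, say, a probability density~$\zeta$, at which point $\Vert\zeta\Vert_{L^1}=1$ and the inequality is just the squared form of \eqref{E:1.5a} combined with $|E(B)|\le2d|B|$ and the exponent identity (i). That reduction removes all the $\zeta$-normalization clutter and makes the proof essentially two lines.
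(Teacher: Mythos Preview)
Your approach is essentially the paper's: square the Sobolev inequality \eqref{E:1.5a}, use $\Vert f\Vert_{\alpha\frac d{d-1},\beta;B,\zeta}^2=\Vert f^2\Vert_{\hat p,\hat q;B,\zeta}$, convert to normalized norms via \eqref{eq:norm_2}, bound $|E(B)|\le 2d|B|$, and verify the exponent of~$|B|$ comes out to~$2/d$. Your arithmetic check~(i) is exactly the paper's computation $-\frac1\alpha\frac{d-1}d+\frac12+\frac1r=\frac1d$.

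However, your proposed fix for the $\Vert\zeta\Vert_{L^1}$-bookkeeping via homogeneity is incorrect. The inequality \eqref{E:1.5abcd'} is \emph{not} homogeneous of degree zero under $\zeta\mapsto\lambda\zeta$: the left side is invariant (normalized norm), and so is $\vvert w^{-1}\vvert_{r/2,s/2;E(B),\zeta}$, but $\EE^{w,\zeta}_B(f)$ scales like~$\lambda$ by \eqref{E:2.9}. So the right side is degree~$1$, not~$0$, and reducing to $\Vert\zeta\Vert_{L^1}=1$ is not legitimate. Your own computation~(ii) shows the exponents of $\Vert\zeta\Vert_{L^1}$ differ by exactly~$1$: the honest bound carries an extra factor $\Vert\zeta\Vert_{L^1}^{-1}$ on the right. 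The paper's proof also drops this factor silently in passing from the second to the third line of \eqref{eq:sob107}; this is harmless in all later applications since~$\zeta$ (and hence $\Vert\zeta_n\Vert_{L^1}=\Vert\zeta\Vert_{L^1}$) is fixed once and for all and the missing factor can be absorbed into the constant, but you should not claim a structural reason (homogeneity) that does not hold.
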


\begin{proof}
An application of \eqref{E:1.5a} yields
\begin{equation}
\label{eq:sob107}
\begin{split}
\vvert f^2\vvert_{\frac{\alpha}{2}\frac d{d-1}, \frac{ \beta}2;B,\zeta} &= 
\Vert\zeta\Vert_{L^1}^{-2/\beta}\,|B|^{- \frac{2}{\alpha}\frac{d-1}{d}} \Vert f\Vert_{{\alpha}\frac d{d-1}, { \beta};B,\zeta}^2\\
&\leq \czero^2\, \Vert\zeta\Vert_{L^1}^{-2/\beta} |B|^{- \frac{2}{\alpha}\frac{d-1}{d} } \, \,  \EE_B^{w,\zeta}({f}) \, \, \Vert w^{-1/2}\Vert_{r,s;E(B),\zeta}^{2}\\
&\leq (2d)^{2/r} \czero^2\, |B|^{2[ -\frac{1}{\alpha}\frac{d-1}{d} +  \frac{1}{2} + \frac{1}{r}]} \,  \frac{\EE_B^{w,\zeta}({f})}{|B|}  \, \,  \vvert w^{-1}\vvert_{\frac r2, \frac s2;E(B),\zeta}.
\end{split}
\end{equation}
Now \eqref{E:1.4a} implies $\frac{1}{2} + \frac{1}{r} = 1- \frac{\alpha-1}\alpha\frac{d-1}d=1-\frac{d-1}d+\frac1\alpha\frac{d-1}d$ and so 
\begin{equation}
-\frac{1}{\alpha}\frac{d-1}{d} + \frac{1}{2} + \frac{1}{r} = 1-\frac{d-1}d = \frac1d.
\end{equation}
Using this in \eqref{eq:sob107},  and noting that $2/r \leq 1$, the claim follows.
\end{proof}

Our application of the above norm in Moser iteration requires a comparison between various instances of the norm~\eqref{eq:norm_pp'}. This is the content of the following lemma.

\begin{lemma}[Interpolation]
\label{lemma-bd}
Suppose $p,q,p_1,p_2,q_1,q_2\in(0,\infty)$ and $\theta\in(0,1)$ are such that
\begin{equation}
\label{eq:intpol1}
\frac1p=\frac\theta{p_1}+\frac{1-\theta}{p_2}\quad\text{and}\quad\frac1q=\frac\theta{q_1}+\frac{1-\theta}{q_2}.
\end{equation}
Then, for all measurable~$f\colon [0,\infty) \times \Z^d\to \mathbb{R}$ and all finite $B \subset \Z^d$,
\begin{equation}
\label{eq:intpol2}
\Vert f\Vert_{p,q;B,\zeta}\le\Vert f\Vert_{p_1,q_1;B,\zeta}^\theta\, \Vert f\Vert_{p_2,q_2;B,\zeta}^{1-\theta}.
\end{equation}
In particular, for all $q, q_1 \in (0,\infty)$ with $q_1<q$ and all $p, p_1, p_2\in (0,\infty)$ satisfying the first condition of \eqref{eq:intpol1} with  $\theta := \frac{q_1}{q}$, we have 
\begin{equation}
\label{eq:intpol3}
\vvert f\vvert_{p,q; B , \zeta}
\le
\vvert f\vvert_{p_1,q_1; B , \zeta}^{\frac{q_1}{q}}\, \vvert f\vvert_{p_2,\infty; B,\zeta }^{1-\frac{q_1}{q}}.
\end{equation}
\end{lemma}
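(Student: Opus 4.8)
The plan is to prove \eqref{eq:intpol2} first by a two-step application of Hölder's inequality, one in the spatial variable and one in time, and then deduce \eqref{eq:intpol3} as a specialization in which the second norm is pushed to $q_2=\infty$. For the spatial step, fix $t$ and write $|f_t(x)|^p = |f_t(x)|^{p\theta}\,|f_t(x)|^{p(1-\theta)}$; since the first relation in \eqref{eq:intpol1} says $\tfrac{p\theta}{p_1}+\tfrac{p(1-\theta)}{p_2}=1$, Hölder's inequality in $\ell^1(B)$ with exponents $\tfrac{p_1}{p\theta}$ and $\tfrac{p_2}{p(1-\theta)}$ gives
\begin{equation}
\label{E:interp-spatial}
\Bigl(\sum_{x\in B}|f_t(x)|^p\Bigr)^{1/p}
\le\Bigl(\sum_{x\in B}|f_t(x)|^{p_1}\Bigr)^{\theta/p_1}\Bigl(\sum_{x\in B}|f_t(x)|^{p_2}\Bigr)^{(1-\theta)/p_2}.
\end{equation}

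Next raise \eqref{E:interp-spatial} to the power $q$, multiply by $\zeta(t)$, and integrate. Writing $G_i(t):=\bigl(\sum_{x\in B}|f_t(x)|^{p_i}\bigr)^{q_i/p_i}$ for $i=1,2$, the integrand is bounded by $\zeta(t)\,G_1(t)^{q\theta/q_1}\,G_2(t)^{q(1-\theta)/q_2}$, and the second relation in \eqref{eq:intpol1} reads $\tfrac{q\theta}{q_1}+\tfrac{q(1-\theta)}{q_2}=1$. A second application of Hölder's inequality, now on the measure space $(\R,\zeta(t)\,\textd t)$ with exponents $\tfrac{q_1}{q\theta}$ and $\tfrac{q_2}{q(1-\theta)}$, yields
\begin{equation}
\label{E:interp-time}
\int\textd t\,\zeta(t)\,G_1(t)^{q\theta/q_1}G_2(t)^{q(1-\theta)/q_2}
\le\Bigl(\int\textd t\,\zeta(t)\,G_1(t)\Bigr)^{\theta q/q_1}\Bigl(\int\textd t\,\zeta(t)\,G_2(t)\Bigr)^{(1-\theta)q/q_2},
\end{equation}
and taking the $q$-th root gives exactly \eqref{eq:intpol2}.

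For \eqref{eq:intpol3}, take $\theta:=q_1/q\in(0,1)$ (legitimate since $q_1<q$) and let $q_2\to\infty$; the second relation in \eqref{eq:intpol1} then forces $\tfrac1{q_2}=\tfrac1{1-\theta}\bigl(\tfrac1q-\tfrac\theta{q_1}\bigr)=0$, consistently, while the first relation reduces to $\tfrac1p=\tfrac{q_1/q}{p_1}+\tfrac{1-q_1/q}{p_2}$, which is precisely the hypothesis. Applying \eqref{eq:intpol2} with these parameters and then dividing both sides by the normalizing factor $|B|^{1/p}\Vert\zeta\Vert_{L^1}^{1/q}$, one checks using $\tfrac1p=\tfrac\theta{p_1}+\tfrac{1-\theta}{p_2}$ and $\tfrac1q=\tfrac\theta{q_1}$ that the factor splits exactly as $\bigl(|B|^{1/p_1}\Vert\zeta\Vert_{L^1}^{1/q_1}\bigr)^{-\theta}\bigl(|B|^{1/p_2}\bigr)^{-(1-\theta)}$, matching the normalizations in the definitions \eqref{eq:norm_2} and \eqref{eq:norm_3} of $\vvert f\vvert_{p_1,q_1;B,\zeta}$ and $\vvert f\vvert_{p_2,\infty;B,\zeta}$; this gives \eqref{eq:intpol3}. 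Rather than pass to a limit, it is cleaner in the write-up to prove \eqref{eq:intpol3} directly: bound $\sum_x|f_t(x)|^{p}$ pointwise in $t$ via the spatial Hölder step \eqref{E:interp-spatial} with exponents tied to $p_1,p_2$, factor out the essential supremum $\vvert f\vvert_{p_2,\infty;B,\zeta}^{p(1-\theta)}$ over $\supp\zeta$, and integrate the remaining $\zeta(t)\bigl(\sum_x|f_t(x)|^{p_1}\bigr)^{\theta q/q_1\cdot(1/\theta)\cdots}$ term, recognizing it as $\Vert f\Vert_{p_1,q_1;B,\zeta}^{q_1}$ after matching powers. The only genuinely delicate point — and the main obstacle — is bookkeeping the exponents so that every Hölder pair is conjugate and the normalization constants in \eqref{eq:norm_2}–\eqref{eq:norm_3} recombine correctly; there is no analytic difficulty, but it is easy to misplace a factor of $d/(d-1)$-type weight, so I would verify the exponent identities symbolically before writing the final chain of inequalities.
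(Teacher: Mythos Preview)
Your proposal is correct and follows essentially the same route as the paper: a H\"older step in space with exponents $(p_1/(p\theta),\,p_2/(p(1-\theta)))$, then a H\"older step in time with exponents $(q_1/(q\theta),\,q_2/(q(1-\theta)))$, and finally the deduction of \eqref{eq:intpol3} by normalizing and letting $q_2\to\infty$ (using $\vvert f\vvert_{p_2,q_2;B,\zeta}\to\vvert f\vvert_{p_2,\infty;B,\zeta}$). The only cosmetic difference is that the paper states the $q_2\to\infty$ limit in one line rather than also offering the direct esssup-factoring alternative you sketch.
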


\begin{proofsect}{Proof}
Writing $|f_t(x)|^p=|f_t(x)|^{\theta p}|f_t(x)|^{(1-\theta)p}$ in \eqref{eq:norm_pp'} and invoking H\"older's inequality with conjugate exponents $(\frac{p_1}{\theta p},\frac{p_2}{(1-\theta)p})$ yields
\begin{equation}
\Vert f\Vert_{p,q;B,\zeta} \le \Biggl(\int\textd t\,\zeta(t)
\Bigl(\sum_{x\in B}\bigl|f_t(x)\bigr|^{p_1}\Bigr)^{\theta q/p_1} 
\Bigl(\sum_{x\in B} \bigl|f_t(x)\bigr|^{p_2}\Bigr)^{(1-\theta)q/p_2}
\Biggr)^{1/q}\,.
\end{equation}
H\"older's inequality with conjugate exponents $(\frac{q_1}{\theta q},\frac{q_2}{(1-\theta)q})$ then readily gives \eqref{eq:intpol2}. The inequality \eqref{eq:intpol3} follows from \eqref{eq:norm_2} and \eqref{eq:intpol2} by noting that $ \vvert f\vvert_{p_2,q_2;B, \zeta} \to  \vvert f\vvert_{p_2,\infty; B,\zeta}$ as $q_2 \to \infty$.
\end{proofsect}

\subsection{Edge weights and their growth}
Throughout the rest of this paper, the edge weights $w_t(e)$ we will work with always take the form~\eqref{E:1.16}. The choice of the function~$k\colon[0,\infty)\to(0,\infty)$ underlying \eqref{E:1.16} is tied to the choice of the function $\zeta\colon\R\to(0,\infty)$ governing the above norms by certain conditions we will now spell out. Other than having to obey the restricted set of constraints listed in \twoeqref{eq:kernel_conds}{E:2.2a}, the functions~$k$ and $\zeta$ can be chosen arbitrarily for the purposes we have~in~mind.

We assume that
the function $\zeta\colon\R\to[0,\infty)$ is supported in $[0,\infty)$ and is bounded, non-increasing, continuously differentiable on $(0,\infty)$ with both $t\mapsto\zeta(t)$ and $t\mapsto t\zeta(t)$ Lebesgue integrable. We also assume that $\Vert \zeta\Vert_{L^1}>0$ and
\begin{equation}
\label{eq:kernel_conds}
\inf_{t \in [0,2]} \zeta(t) > 0\quad\text{and}\quad
\Vert\dot{\zeta}/\zeta\Vert_{L^\infty(\mathbb{R}_+)}<\infty.
 \end{equation}
The function $k\colon[0,\infty)\to(0,\infty)$ is Borel measurable with both $t\mapsto k_t$ and $t\mapsto tk_t$ Lebesgue integrable on $[0,\infty)$. Moreover, setting
\begin{equation}
\label{E:Kt}
K_t:=k_t+\int_t^\infty \textd s\,(s-t) k_s,\qquad t\ge0,
\end{equation}
there exists a constant $\ci\in(0,\infty)$ such that for each $s\ge0$, 
\begin{equation}
\label{E:2.2a}
\int_0^s \textd t\,\zeta(t)\,K_{s-t}\le \ci \zeta(s).
\end{equation}

\begin{remark} 
\label{R:ass2}
The condition \eqref{E:2.2a} is needed for the conversions of Dirichlet forms mentioned in the Introduction, see in particular Lemmas \ref{L:MO_basic} and \ref{L:DIRICH_CONVERT} below. 
\end{remark}

That a pair of functions $\zeta,k$ satisfying the above requirements exists is ensured by:

\begin{lemma}
\label{lemma-2.2}
Let $\mu>4$ and $\nu\in(2,\mu-2)$. Then $k_t:=(1+t)^{-\mu}$ and $\zeta(t):=2^\nu(1+t)^{-\nu}\1_{[0,\infty)}(t)$ obey the above conditions and, in particular, \twoeqref{eq:kernel_conds}{E:2.2a}. In fact, for all $r \geq 1$, we have
\begin{equation}
\label{E:2.2aa}
\int_0^s \textd t\,\zeta(t/r)\,K_{s-t}\le \ci \zeta(s/r), \quad \text{for }s \geq 0.
\end{equation} 
\end{lemma}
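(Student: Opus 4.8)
The plan is to verify each stated requirement on $k$ and $\zeta$ in turn, the only genuinely substantive point being the convolution bound \eqref{E:2.2aa} (which contains \eqref{E:2.2a} as the case $r=1$). First I would dispense with the easy bookkeeping: with $k_t=(1+t)^{-\mu}$ and $\mu>4$, both $t\mapsto k_t$ and $t\mapsto tk_t$ are integrable on $[0,\infty)$ since $\mu>2$; likewise $\zeta(t)=2^\nu(1+t)^{-\nu}\1_{[0,\infty)}(t)$ with $\nu>2$ is bounded, non-increasing, $C^1$ on $(0,\infty)$, and $t\mapsto\zeta(t)$, $t\mapsto t\zeta(t)$ are integrable. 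The normalization gives $\zeta(0)=2^\nu$ so $\inf_{t\in[0,2]}\zeta(t)=\zeta(2)=2^\nu 3^{-\nu}=(2/3)^\nu>0$, and $\dot\zeta(t)/\zeta(t)=-\nu/(1+t)$ is bounded on $\R_+$, giving \eqref{eq:kernel_conds}. Also $\|\zeta\|_{L^1}=2^\nu/(\nu-1)>0$.

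Next I would estimate $K_t$ from \eqref{E:Kt}. Since $\int_t^\infty(s-t)k_s\,\textd s=\int_t^\infty(s-t)(1+s)^{-\mu}\,\textd s\le\int_t^\infty(1+s)^{-(\mu-1)}\,\textd s=(\mu-2)^{-1}(1+t)^{-(\mu-2)}$, and $k_t=(1+t)^{-\mu}\le(1+t)^{-(\mu-2)}$, we get a clean bound of the form $K_t\le c_\mu(1+t)^{-(\mu-2)}$ with $c_\mu:=1+(\mu-2)^{-1}$. Set $\lambda:=\mu-2>2$; note $\lambda>\nu$ by the hypothesis $\nu<\mu-2$. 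So it remains to show that for each $r\ge1$ and each $s\ge0$,
\begin{equation}
\label{E:plan-main}
\int_0^s \textd t\,(1+t/r)^{-\nu}\,\bigl(1+(s-t)\bigr)^{-\lambda}\le C\,(1+s/r)^{-\nu}
\end{equation}
with $C$ independent of $r$ and $s$; multiplying through by $2^\nu c_\mu$ then yields \eqref{E:2.2aa} with $\ci:=2^\nu c_\mu C$. The idea is the standard one for convolving a slowly-decaying factor against a faster-decaying one: split the integral at $t=s/2$. On $[0,s/2]$ one has $1+(s-t)\ge 1+s/2\ge\tfrac12(1+s)$, so $(1+(s-t))^{-\lambda}\le 2^\lambda(1+s)^{-\lambda}$, and $\int_0^{s/2}(1+t/r)^{-\nu}\,\textd t\le r\int_0^\infty(1+u)^{-\nu}\,\textd u=r/(\nu-1)$; since $r\le r+s\le r(1+s/r)$ and $(1+s)^{-\lambda}\le(1+s)^{-\nu}\le(1+s/r)^{-\nu}$ (using $r\ge1$ and $\lambda>\nu$), this piece is $\le \tfrac{2^\lambda}{\nu-1}\,(1+s/r)\cdot(1+s/r)^{-\nu}$ — wait, that carries an extra factor $(1+s/r)$, so I instead keep $(1+s)^{-\lambda}$ and bound $\int_0^{s/2}(1+t/r)^{-\nu}\textd t\le s/2+1\le(1+s)$, giving $\le 2^{\lambda-1}(1+s)^{1-\lambda}\le 2^{\lambda-1}(1+s)^{-\nu}\le 2^{\lambda-1}(1+s/r)^{-\nu}$ since $\lambda-1\ge\nu$ and $1+s\ge 1+s/r$. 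On $[s/2,s]$ one has $1+t/r\ge 1+s/(2r)\ge\tfrac12(1+s/r)$, so $(1+t/r)^{-\nu}\le 2^\nu(1+s/r)^{-\nu}$, while $\int_{s/2}^s(1+(s-t))^{-\lambda}\,\textd t\le\int_0^\infty(1+u)^{-\lambda}\,\textd u=1/(\lambda-1)$; this piece is $\le \tfrac{2^\nu}{\lambda-1}(1+s/r)^{-\nu}$. Adding the two pieces gives \eqref{E:plan-main} with $C:=2^{\lambda-1}+2^\nu/(\lambda-1)$.

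The only slightly delicate point — and thus the main obstacle, though a mild one — is making sure the constant in \eqref{E:plan-main} is genuinely uniform in $r\in[1,\infty)$; this is exactly why the hypotheses $\nu>2$ (integrability, and $\int_0^\infty(1+u)^{-\nu}\,\textd u<\infty$) and $\lambda=\mu-2>\nu$ (so that the slow decay $-\nu$ dominates and no stray power of $1+s$ survives the $[0,s/2]$ piece) are both needed, together with $\lambda>2$ so the $[s/2,s]$ tail integral converges. Everything else is routine; once \eqref{E:plan-main} is in hand, specializing $r=1$ gives \eqref{E:2.2a}, and the verification of the list of standalone conditions on $\zeta$ and $k$ was completed above.
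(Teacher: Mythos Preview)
Your overall strategy (bound $K_t\le c_\mu(1+t)^{-\lambda}$ with $\lambda:=\mu-2$, then split the convolution at $t=s/2$) is exactly the paper's, and the $[s/2,s]$ piece is handled correctly. The gap is in the $[0,s/2]$ piece: you assert $(1+s)^{1-\lambda}\le(1+s)^{-\nu}$ ``since $\lambda-1\ge\nu$'', but the hypothesis $\nu\in(2,\mu-2)$ only gives $\lambda>\nu$, not $\lambda-1\ge\nu$. For instance, with $\mu=5$ and $\nu=2.5$ one has $\lambda=3$ and $\lambda-1=2<\nu$, and then $(1+s)^{1-\lambda}=(1+s)^{-2}$ is \emph{not} dominated by $(1+s)^{-\nu}=(1+s)^{-2.5}$ for large~$s$. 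So your intermediate bound $2^{\lambda-1}(1+s)^{1-\lambda}$ for the $[0,s/2]$ contribution is too crude to yield \eqref{E:plan-main} uniformly.

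The fix is to return to your ``first attempt'' bound $\int_0^{s/2}(1+t/r)^{-\nu}\,\textd t\le r/(\nu-1)$, which gives the $[0,s/2]$ piece $\le \tfrac{2^\lambda}{\nu-1}\,r(1+s)^{-\lambda}$, and then argue as the paper does: for $s\ge r$ one writes $r(1+s)^{-\lambda}(1+s/r)^{\nu}=r^{1-\nu}(r+s)^{\nu}(1+s)^{-\lambda}$, notes $r^{1-\nu}\le1$ (since $\nu>1$), and uses $r+s\le2(1+s)$ to get $(r+s)^{\nu}(1+s)^{-\lambda}\le2^{\nu}(1+s)^{\nu-\lambda}\le2^{\nu}$ (since $\lambda>\nu$). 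The complementary range $s<r$ is trivial because then $(1+s/r)^{-\nu}\ge2^{-\nu}$ while the full integral is at most $\int_0^\infty(1+u)^{-\lambda}\,\textd u=1/(\lambda-1)$. This case split is precisely what the paper does, and it is where the hypotheses $\nu>1$ and $\lambda>\nu$ (rather than $\lambda-1\ge\nu$) are actually used.
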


\begin{proofsect}{Proof}
The integrability conditions are immediate from the fact that $\mu>2$ and $\nu>2$; \eqref{eq:kernel_conds} is checked directly. For \eqref{E:2.2aa} we note that $K_t\le c(1+t)^{-\tilde\mu}$ where $\tilde\mu:=\mu-2$ and then observe
\begin{equation}
\begin{aligned}
\int_0^s\textd t\,\bigl(1+(s-t)\bigr)^{-\tilde\mu}&(1+t/r)^{-\nu}
\\
&\le cs^{-\tilde\mu}\int_0^{s/2}\textd t\,(1+t/r)^{-\nu}
+c (s/r)^{-\nu}\int_0^{s/2}\textd t\,(1+t)^{-\tilde\mu}
\\
&\le\tilde c r s^{-\tilde\mu}+\tilde c (s/r)^{-\nu}.
\end{aligned}
\end{equation}
Since $\tilde\mu>\nu$ and $\nu>1$ (and $r\ge1$), both terms on the right are now less than a constant times~$(s/r)^{-\nu}$. This proves \eqref{E:2.2aa} for~$s\ge r$; in the complementary range of~$s$ values the claim is checked directly. 
\end{proofsect}

Unless specified otherwise, we will henceforth always tacitly assume that $\zeta$ and $k$ are a pair of functions satisfying \twoeqref{eq:kernel_conds}{E:2.2a}.
Some (but not all) calculations will require adapting our setting to diffusive scaling of space and time, i.e., choosing $\Vert f\Vert_{p,q;B, \zeta}$ in \eqref{eq:norm_pp'} with~$B$ replaced by
\begin{equation}
\label{E:2.27}
B_r:=[-r,r]^d\cap\Z^d\qquad \text{for } r \geq 1
\end{equation}
 and $\zeta$ replaced by
\begin{equation}
\label{eq:zeta_n}
\zeta_r(t):=\frac1{r^2}\zeta(t/r^2),\, \qquad \text{for } r \geq 1,
\end{equation}
with $\zeta$ as above. It is then natural to require \eqref{E:2.2aa}, instead of just \eqref{E:2.2a}, to hold. (Note that \eqref{E:2.2aa} is tantamount to saying that \eqref{E:2.2a} holds for all pairs $(\zeta_r, k)$,~$r \geq 1$.) When needed, the condition \eqref{E:2.2aa} will always be mentioned explicitly.

\smallskip
The diffusive scaling of time naturally underlies the following property that will be repeatedly used in the sequel:

\begin{lemma}
\label{lemma-sup}
For each~$p>\tilde p\ge1$ there is $c=c(p,\tilde p,\zeta)\in(0,\infty)$ such that for all $f\in L^p(\BbbP)$,
\begin{equation}
\label{E:sup-bd}
\Bigl\Vert\,\sup_{n\ge1}\,\int_0^\infty\textd t\,\,\zeta_n(t) f\circ\tau_{t,0}\Bigr\Vert_{L^{\tilde p}(\BbbP)}\le c\Vert f\Vert_{L^p(\BbbP)}
\end{equation}
In particular, the integrals converge absolutely for all~$n\ge1$.
\end{lemma}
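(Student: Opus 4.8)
\textbf{Proof proposal for Lemma~\ref{lemma-sup}.}
The plan is to reduce the uniform-in-$n$ supremum bound to a single maximal inequality for a discrete-time ergodic average, using the structure of $\zeta_n$. First I would normalize: since $\Vert\zeta_n\Vert_{L^1}=\Vert\zeta\Vert_{L^1}$ for all $n$, and $\zeta_n(t)=n^{-2}\zeta(t/n^2)$, a change of variables $t=n^2 u$ gives
\begin{equation}
\int_0^\infty\textd t\,\zeta_n(t)\,f\circ\tau_{t,0}
=\int_0^\infty\textd u\,\zeta(u)\,f\circ\tau_{n^2 u,0}\,.
\end{equation}
So we must bound $\sup_{n\ge1}\bigl|\int_0^\infty\zeta(u)\,f\circ\tau_{n^2u,0}\,\textd u\bigr|$ in $L^{\tilde p}$. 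Because $\zeta$ is non-increasing and integrable, one can dominate the inner integral pointwise by a Cesàro-type average: layer-cake in $u$, i.e. write $\zeta(u)=\int_u^\infty(-\dot\zeta(v))\,\textd v$ (plus a boundary term if $\zeta$ does not vanish at infinity, handled by integrability), interchange, and reduce to controlling $\sup_{n}\,\frac1v\int_0^v |f|\circ\tau_{n^2u,0}\,\textd u$ uniformly over $v>0$ and $n$. The key point is that the continuous-time average over $[0,v]$ of $|f|\circ\tau_{\cdot,0}$ along the stationary flow is itself dominated by the Hardy–Littlewood maximal function of the additive functional $v\mapsto\int_0^v|f|\circ\tau_{s,0}\,\textd s$, whose value at time $n^2v$ is what enters; this is where the time-ergodicity (via the stationarity of $\BbbP$ under $\{\tau_{t,0}\}$) and the continuous Dunford–Schwartz / Wiener maximal ergodic theorem come in.

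The key steps in order: (i) perform the change of variables to pass from $\zeta_n$ to $\zeta$ composed with the $n^2$-dilated flow; (ii) use monotonicity of $\zeta$ and a layer-cake / integration-by-parts argument to dominate $\int_0^\infty\zeta(u)g\circ\tau_{n^2u,0}\,\textd u$ (for $g=|f|$) by $c\,\sup_{T>0}\frac1T\int_0^T g\circ\tau_{s,0}\,\textd s$ evaluated along the orbit — crucially the dilation by $n^2$ only rescales the averaging window, so a single maximal operator over all windows $T>0$ dominates the supremum over $n$; (iii) invoke the continuous-parameter maximal ergodic inequality for the measure-preserving flow $\{\tau_{s,0}\}$ on $(\Omega,\BbbP)$: for $g\in L^p(\BbbP)$ with $p>1$,
\begin{equation}
\Bigl\Vert\,\sup_{T>0}\,\frac1T\int_0^T g\circ\tau_{s,0}\,\textd s\,\Bigr\Vert_{L^p(\BbbP)}\le \frac{p}{p-1}\,\Vert g\Vert_{L^p(\BbbP)}\,,
\end{equation}
which follows from the Hopf–Dunford–Schwartz maximal theorem; (iv) combine with the bound from step (ii), noting the constant depends only on $p$, $\tilde p$ and $\zeta$ (through $\Vert\zeta\Vert_{L^1}$ and the total variation / decay of $\zeta$), and observe $\tilde p<p$ is in fact more than enough (one could take $\tilde p=p$), the strict inequality being retained only for uniformity of the constant; absolute convergence of each integral for fixed $n$ is then immediate since the dominating maximal function is a.s.\ finite.

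The main obstacle I expect is step (ii): justifying the pointwise domination of the $\zeta$-weighted orbit integral by a genuine maximal function uniformly over the dilation parameter $n$. One must be a little careful that the layer-cake decomposition of $\zeta$ produces windows $[0,v]$ whose images under the $n^2$-dilation are $[0,n^2v]$, so the relevant averages are $\frac1{n^2v}\int_0^{n^2v}g\circ\tau_{s,0}\,\textd s$ — i.e. exactly Wiener-type averages at scale $n^2v$ — and one needs $\zeta$ integrable together with $v\mapsto v\zeta(v)$-type control (which is part of our standing assumptions on $\zeta$) to make the weighting against $(-\dot\zeta)$ converge and produce a finite constant. The rest is a routine application of the continuous ergodic maximal inequality and Fubini; the separate-ergodicity hypothesis is not even needed here — mere $\{\tau_{t,0}\}$-invariance of $\BbbP$ suffices for the maximal inequality, ergodicity being irrelevant to the $L^p$ bound.
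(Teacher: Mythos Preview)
Your proposal is correct and follows essentially the same route as the paper: write $\zeta$ (equivalently $\zeta_n$) via its layer-cake/integration-by-parts representation against $-\dot\zeta$, swap integrals by Tonelli, observe that the resulting inner integrals are dilated ergodic averages uniformly dominated by the maximal function of $|f|$ along the time flow, and conclude via the $L^p$ maximal ergodic inequality. The paper carries out the same computation without the preliminary change of variables and bounds by the discrete maximal function $h=\sup_{n\ge1}\frac1n\int_0^n f\circ\tau_{t,0}\,\textd t$ (plus a simple monotonicity argument to handle non-integer windows), whereas you go straight to the continuous $\sup_{T>0}$ version; your observation that one could even take $\tilde p=p$ is also right.
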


\begin{proofsect}{Proof}
Dominating $f$ by~$|f|$, we may assume without loss of generality that~$f\ge0$. The assumed properties of~$\zeta$ ensure that $\zeta_n(t)=-\frac1{n^2}\int_{t/n^2}^\infty\textd s\,\dot{\zeta}(s)$. Using that $-\dot{\zeta}$ is greater or equal to zero and Tonelli's Theorem yields
\begin{equation}
\int_0^\infty\textd t\,\,\zeta_n(t) f\circ\tau_{t,0} = \int_0^\infty\textd s\,\,(-\dot{\zeta}(s))\frac1{n^2}\int_0^{n^2 s}\textd t f\circ\tau_{t,0}\,.
\end{equation}
Denoting $h:=\sup_{n\ge1}\frac1n\int_0^n\textd t\, f\circ\tau_{t,0}$, straightforward monotonicity considerations show that the supremum over $n$ of the quantity on the right is at most
\begin{equation}
h\Bigl[\,\int_0^1\textd s\,(-\dot{\zeta}(s))+2\int_1^\infty\textd s\,(-\dot{\zeta}(s))s\Bigr]\,.
\end{equation}
The boundedness and integrability of~$\zeta$ imply that both integrals are finite. Jensen's inequality and the Maximal Ergodic Theorem in turn ensure $\Vert h\Vert_{L^{\tilde p}(\BbbP)}\le c\Vert f\Vert_{L^p(\BbbP)}$ for some~$c=c(p,\tilde p)\in(0,\infty)$ independent of~$f$. The claim follows.
\end{proofsect}

In order to use the Sobolev inequalities \eqref{E:1.5abcd'}, we will need a uniform bound on the norms of the weights~$w$ appearing on the right-hand side. This is the content of:
 
\begin{lemma}
\label{L:nu_w}
Under Assumption~\ref{ass1}, given $\E[T_e^{\vartheta}]< \infty$ for some~$\vartheta>0$ (cf. \eqref{E:1.5}), and for $k,\zeta$ satisfying \eqref{E:2.2aa} in addition to \twoeqref{eq:kernel_conds}{E:2.2a}, the following holds: For each $e\in E(\Z^d)$, the family $\{w_t(e)\colon t\in\R\}$ defined in \eqref{E:1.16} is stationary with respect to time-shifts. 
Moreover, if $k_t\ge 1\vee t^{-\mu}$ is true for all~$t\ge0$ and some~$\mu>0$, then
\begin{equation}
\label{E:2.12aa}
\E\bigl(w_0(e)^{-\vartheta/\mu}\bigr)<\infty
\end{equation}
and, in addition,
\begin{equation}
\label{E:2.13a}
\sup_{n\ge1}  \max_{m \in [n,2n]} \,\vvert w^{-1}\vvert_{\frac r2,\frac s2;E(B_m),\zeta_n}<\infty\quad\BbbP\text{-a.s.}
\end{equation}
is satisfied for all $1\le r\le s< 2\vartheta /\mu$.
\end{lemma}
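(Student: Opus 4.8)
The plan is to establish the three assertions in turn, using time-shift structure, a moment bound linking $w_0(e)^{-1}$ to $T_e$, and the maximal-type estimate from Lemma~\ref{lemma-sup}.

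First I would address stationarity. From the definition \eqref{E:1.16}, $w_t(e)=\int_0^\infty\textd u\,k_u\,a_{t+u}(e)=w_0(e)\circ\tau_{t,0}$ in the obvious sense, so stationarity of $\{w_t(e)\colon t\in\R\}$ under time shifts is immediate from invariance of $\BbbP$ under $\tau_{t,0}$ in Assumption~\ref{ass1}(2); the integral converges absolutely since $t\mapsto k_t$ is integrable and $a_s(e)\le1$.

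Next, the negative-moment bound \eqref{E:2.12aa}. The point is to bound $w_0(e)$ from below in terms of $T_e$. Using $k_t\ge 1\vee t^{-\mu}$ and restricting the integral defining $w_0(e)$ to $[0,T_e]$, one has, when $T_e\ge 1$,
\begin{equation}
w_0(e)=\int_0^\infty\textd s\,k_s\,a_s(e)\ge \int_0^{T_e}\textd s\,\bigl(1\vee s^{-\mu}\bigr)\,a_s(e)\ge T_e^{-\mu}\int_0^{T_e}\textd s\,a_s(e)= T_e^{-\mu},
\end{equation}
by the definition \eqref{E:1.5} of $T_e$ (the integral equals exactly $1$ at $t=T_e$ since $s\mapsto\int_0^s a_u(e)\,\textd u$ is continuous). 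When $T_e<1$, one gets $w_0(e)\ge\int_0^{T_e}a_s(e)\,\textd s=1$ directly, so in all cases $w_0(e)^{-1}\le T_e^{\mu}\vee 1\le 1+T_e^{\mu}$. Hence $\E\bigl(w_0(e)^{-\vartheta/\mu}\bigr)\le \E\bigl((1+T_e^{\mu})^{\vartheta/\mu}\bigr)\le c\,\bigl(1+\E(T_e^{\vartheta})\bigr)<\infty$.

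Finally, the uniform bound \eqref{E:2.13a}. Unpacking the normalized norm \eqref{eq:norm_2}, and using that on $E(B_m)$ there are of order $m^d$ edges while $|B_m|\asymp m^d$ and $m\in[n,2n]$, the quantity $\vvert w^{-1}\vvert_{r/2,s/2;E(B_m),\zeta_n}$ is controlled, up to constants depending only on $d$ and the ranges of $r,s$, by
\begin{equation}
\Bigl(\frac1{|B_n|}\sum_{e\in E(B_{2n})}\int_0^\infty\textd t\,\zeta_n(t)\,w_t(e)^{-s/2}\Bigr)^{2/s},
\end{equation}
after using \eqref{E:pq_monot} to pass from $(r/2,s/2)$ to $(s/2,s/2)$ (legitimate since $r\le s$) and then Jensen in the edge-average. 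Since $w_t(e)^{-s/2}=\bigl(w_0(e)^{-s/2}\bigr)\circ\tau_{t,0}$ and $s/2<\vartheta/\mu$, the function $f(e):=w_0(e)^{-s/2}$ lies in $L^{p}(\BbbP)$ for some $p>1$ by \eqref{E:2.12aa}; applying Lemma~\ref{lemma-sup} edgewise gives $g(e):=\sup_{n\ge1}\int_0^\infty\textd t\,\zeta_n(t)\,f(e)\circ\tau_{t,0}\in L^{\tilde p}(\BbbP)$ for some $\tilde p>1$. Thus
\begin{equation}
\sup_{n\ge1}\max_{m\in[n,2n]}\vvert w^{-1}\vvert_{r/2,s/2;E(B_m),\zeta_n}^{s/2}\le c\,\sup_{n\ge1}\frac1{|B_n|}\sum_{e\in E(B_{2n})}g(e),
\end{equation}
and the right-hand side is finite $\BbbP$-a.s.\ by the spatial ergodic theorem (applied along $B_{2n}$, whose cardinality is comparable to $|B_n|$), since $g(e)$ is a stationary field with $\E(g(e))<\infty$. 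Taking $2/s$-th powers completes the proof.

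The main obstacle is the third part: one must be careful that the supremum over $n$ \emph{and} the maximum over $m\in[n,2n]$ can be pulled outside before invoking the ergodic theorem. The device that makes this work is precisely the maximal bound of Lemma~\ref{lemma-sup}, which already absorbs the $\sup_{n\ge1}$ over the time-averaging operators into a single $L^{\tilde p}$ function $g$; the residual $\max_{m\in[n,2n]}$ over boxes is harmless because enlarging the box from $B_m$ to $B_{2n}$ only inflates the sum by a bounded factor while the normalization $|B_n|$ changes by a bounded factor, after which Birkhoff's theorem applies to the single integrable stationary field $g$.
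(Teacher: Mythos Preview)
Your proof is correct and follows essentially the same approach as the paper: stationarity from the definition, the lower bound $w_0(e)\ge T_e^{-\mu}$ (the paper phrases this as $w_0(e)\ge k_{T_e}$ using monotonicity of~$k$) to obtain \eqref{E:2.12aa}, and then the combination of \eqref{E:pq_monot}, Lemma~\ref{lemma-sup}, and the Spatial Ergodic Theorem for \eqref{E:2.13a}. Your handling of the $\max_{m\in[n,2n]}$ by enlarging to $B_{2n}$ is a little more explicit than the paper's write-up but amounts to the same thing.
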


\begin{proofsect}{Proof} 
The stationarity of $t\mapsto w_t(e)$ is clear from \eqref{E:1.16} and the assumed stationarity of $t\mapsto a_t(e)$. The definition \eqref{E:1.5} and monotonicity of~$t\mapsto k_t$ ensure that, for any $e\in E(\Z^d)$,
\begin{equation}
\label{E:2.31}
w_0(e)\ge k_{T_e}
\end{equation}
and so \eqref{E:2.12aa} directly follows from \eqref{E:q_ass} and the assumed bound on~$k_t$. For \eqref{E:2.13a}, we first note that, if~$r\le s$, then \eqref{E:pq_monot} implies
\begin{equation}
\label{E:2.33}
\bvvert w^{-1/2}\bvvert_{r,s;E(B_n),\zeta_n}^s\le
\bvvert w^{-1/2}\bvvert_{s,s;E(B_n),\zeta_n}^s
\le\Vert\zeta\Vert_{L^1}^{-1}\frac1{|E(B_{n})|}\sum_{x\in B_n}h_n\circ\tau_{0,x}\,,
\end{equation}
where
\begin{equation}
\label{E:2.34}
h_n:=\int_0^\infty \textd t\,\zeta_n(t)\,\sum_{z\colon|z|=1} w_{t}\bigl((0,z)\bigr)^{-s/2}\,.
\end{equation}
Under $s<2\vartheta/\mu$, \eqref{E:2.12aa} implies that $w_0(e)^{-s/2}\in L^p(\BbbP)$ for some~$p>1$. Lemma~\ref{lemma-sup} and stationarity of~$t\mapsto w_t$ then show $\sup_{n\ge1}|h_n|\in L^1(\BbbP)$. Bounding~$h_n$ in \eqref{E:2.33} by the supremum, the claim follows from the Spatial Ergodic Theorem.
\end{proofsect}

\subsection{Conversion of Dirichlet energies}
The usual way a regularity argument starts with the use of Sobolev inequality to bound the desired norm of a function by its Dirichlet energy. For a solution to Poisson or heat equation, the Dirichlet energy is in turn bounded by a lower-order norm, thus gaining regularity.  Unfortunately, our Sobolev inequality outputs a weighted Dirichlet energy and so we need an additional step in which we bound this Dirichlet energy by the ordinary one to which the rest of the argument can be applied. 

Recall the definition of the (finite volume) Dirichlet energy in \eqref{E:2.9.0}. The bound that achieves the stated goal is then as follows:

\begin{lemma}
\label{L:MO_basic}
Suppose~$t\mapsto a_t(e)$ are measurable and take values in~$[0,1]$. Let~$B\subset\Z^d$ be finite and set $\overline B:=B\cup\partial B$. If $u\colon\R\times\Z^d\to\R$ solves (weakly) the heat equation
\begin{equation}
\label{E:2.22}
\frac{\partial}{\partial t}u(t,x) =L_tu(t,x)+f(t,x),\qquad t\in\R,\,x\in\overline B,
\end{equation}
for some bounded measurable $f\colon\R\times\Z^d\to\R$, then for each~$t\in\R$,
\begin{equation}
\label{E:2.23}
\EE^w_{t,B}(u_t)\le 48d^2\int_t^\infty \textd s\,\, K_{s-t}\,\EE^a_{s,\overline B}(u_s)+24d\int_t^\infty\textd s \,K_{s-t}\sum_{x\in B}\bigl|f_s(x)\bigr|^2\,,
\end{equation}
where $K_t$ is as in \eqref{E:Kt}.
\end{lemma}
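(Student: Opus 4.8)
\smallskip
\noindent\emph{Proof strategy.} The plan is to run a purely deterministic, edge-by-edge argument in which the time--derivative of a gradient increment is replaced, via the heat equation \eqref{E:2.22}, by gradients across neighbouring edges, and then to integrate against the kernel $k$ so that $w_t(e)$ is reconstructed on the left and only ``naked'' energies survive on the right. Throughout fix $t\in\R$ and, for an edge $e=(x,y)$, write $\nabla_e g:=g(y)-g(x)$. Since $x,y\in\overline B$ and $u$ solves \eqref{E:2.22} weakly on $\overline B$, the map $r\mapsto u_r(z)$ is locally absolutely continuous for each $z\in\overline B$, with $\partial_r u_r(z)=L_ru_r(z)+f_r(z)$ for a.e.\ $r$; hence $r\mapsto\nabla_eu_r$ is locally absolutely continuous with a.e.\ derivative $\partial_r\nabla_eu_r=(L_ru_r(y)-L_ru_r(x))+(f_r(y)-f_r(x))$. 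For any $s>t$ the fundamental theorem of calculus gives $\nabla_eu_t=\nabla_eu_s-\int_t^s\partial_r\nabla_eu_r\,\textd r$, whence, by Cauchy--Schwarz,
\[
(\nabla_eu_t)^2\le 2(\nabla_eu_s)^2+2(s-t)\int_t^s(\partial_r\nabla_eu_r)^2\,\textd r .
\]

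\smallskip
Next I would multiply this inequality by $k_{s-t}\,a_s(e)$ and integrate over $s\in(t,\infty)$. By \eqref{E:1.16} the left side becomes $w_t(e)(\nabla_eu_t)^2$; in the last term I use $a_s(e)\le 1$ and then Tonelli's theorem to interchange the $s$- and $r$-integrations, producing the weight $\int_r^\infty k_{s-t}(s-t)\,\textd s=\int_{r-t}^\infty v\,k_v\,\textd v$ in front of $(\partial_r\nabla_eu_r)^2$. This is exactly the point where the definition \eqref{E:Kt} of $K$ is used: $K_{s-t}$ dominates $k_{s-t}$ (first term), and $\int_a^\infty v\,k_v\,\textd v=\int_a^\infty(v-a)k_v\,\textd v+a\int_a^\infty k_v\,\textd v\le K_a+a\int_a^\infty k_v\,\textd v$, the last summand being comparable to $K_a$ by the tail decay of $k$ (cf.\ the bounds underlying Lemma~\ref{lemma-2.2}). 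After enlarging constants this yields, for each $e\in E(B)$,
\[
w_t(e)(\nabla_eu_t)^2\le c\int_t^\infty K_{s-t}\,a_s(e)(\nabla_eu_s)^2\,\textd s+c\int_t^\infty K_{r-t}(\partial_r\nabla_eu_r)^2\,\textd r .
\]

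\smallskip
The remaining ingredient is a pointwise estimate on $(\partial_r\nabla_eu_r)^2$. Expanding the generator, $L_ru_r(y)-L_ru_r(x)$ is a sum over the at most $4d-1$ edges $e'$ meeting $\{x,y\}$ of terms $\pm a_r(e')\nabla_{e'}u_r$ (the coefficient of $e$ itself being $-2a_r(e)$); adding the source difference $f_r(y)-f_r(x)$ and applying Cauchy--Schwarz over $O(d)$ summands, together with $a_r(e')^2\le a_r(e')$ (valid since $a\le1$), gives
\[
(\partial_r\nabla_eu_r)^2\le c\,d\!\!\sum_{e'\colon e'\cap e\neq\emptyset}\!\! a_r(e')(\nabla_{e'}u_r)^2+c\,d\bigl(|f_r(x)|^2+|f_r(y)|^2\bigr).
\]
Summing over $e\in E(B)$ and using that each edge meets at most $4d-1$ edges and each vertex at most $2d$ edges, while all edges and vertices that occur lie in $E(\overline B)$, respectively $\overline B$, one gets $\sum_{e\in E(B)}(\partial_r\nabla_eu_r)^2\le c\,d^2\,\EE^a_{r,\overline B}(u_r)+c\,d\sum_x|f_r(x)|^2$; tracking the Cauchy--Schwarz and counting factors produces the constants $48d^2$ and $24d$.

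\smallskip
Finally, summing the second display over $e\in E(B)$ gives $\EE^w_{t,B}(u_t)$ on the left; on the right the first term is $\le c\int_t^\infty K_{s-t}\,\EE^a_{s,B}(u_s)\,\textd s\le c\int_t^\infty K_{s-t}\,\EE^a_{s,\overline B}(u_s)\,\textd s$, and the second term, via the pointwise bound, is $\le c\int_t^\infty K_{r-t}\bigl(d^2\EE^a_{r,\overline B}(u_r)+d\sum_x|f_r(x)|^2\bigr)\,\textd r$; collecting terms yields \eqref{E:2.23}. I expect the main obstacle to be the weight bookkeeping in the second paragraph, i.e.\ making rigorous that the combination chosen in \eqref{E:Kt} simultaneously absorbs the bare kernel and the ``accumulated error'' weight $\int_{r-t}^\infty v\,k_v\,\textd v$ coming from the Cauchy--Schwarz step (this is precisely why $K$ is defined as it is); the sharp numerical constants $48d^2,24d$ and the careful justification of the absolute continuity and Tonelli interchanges from the weak formulation of \eqref{E:2.22} are routine but require attention.
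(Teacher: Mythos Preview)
Your approach is correct and very close to the paper's; the one structural difference is where the time-splitting happens. You apply the fundamental theorem of calculus to $r\mapsto\nabla_e u_r$ and then bound $(\partial_r\nabla_e u_r)^2$ by differencing the generator across the edge~$e$ (coupling $\sim 4d$ neighbouring edges). The paper instead splits at the \emph{vertex} level, writing $u(t,x)=u(s,x)+[u(t,x)-u(s,x)]$ so that $[\nabla_e u_t]^2\le 3[\nabla_e u_s]^2+3[u(t,x)-u(s,x)]^2+3[u(t,y)-u(s,y)]^2$, and then controls each vertex increment via
\[
[u(t,x)-u(s,x)]^2\le 2(s-t)\int_t^s\bigl[|f_r(x)|^2+(L_ru_r(x))^2\bigr]\,\textd r
\]
together with $(L_ru_r(x))^2\le 2d\sum_{z\sim x}a_r(x,z)(\nabla u_r)^2$. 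This keeps $f$ as a vertex quantity and leaves the generator undifferenced (only $2d$ edges per vertex rather than your $4d-1$ per edge), which makes the stated constants $48d^2$, $24d$ fall out more directly than by your route; recovering those exact numbers with your extra Cauchy--Schwarz step would take some care. The weight bookkeeping you flag---that Tonelli yields $\int_{r-t}^\infty v\,k_v\,\textd v$ rather than $K_{r-t}$ on the nose---is indeed the delicate point; the paper's ``invoking the definition of~$K_t$'' elides exactly this, and in practice what matters downstream is only the $\zeta$-integrated consequence \eqref{E:2.46}, where the convolution hypothesis \eqref{E:2.2a} does the work.
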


\begin{proofsect}{Proof}
We follow the calculation in the proof of Proposition~4.6 in Mourrat and Otto~\cite{MO16}. The definition of the weights~$w_t(e)$ in \eqref{E:1.16} gives
\begin{equation}
\EE^w_{t,B}(u_t) = \int_t^\infty\textd s\,\, k_{s-t}\sum_{x\in B}\,\,\sum_{\begin{subarray}{c}
y\in\Z^d\\ (x,y)\in E(\Z^d)
\end{subarray}}
a_s(x,y)\bigl[u(t,x)-u(t,y)\bigr]^2.
\end{equation}
Writing $u(t,x) = u(s,x)+[u(t,x)-u(s,x)]$ and using that $(a+b+c)^2\le 3a^2+3b^2+3c^2$ and that $a_t(x,y)\le1$ then shows
\begin{equation}
\label{E:2.25}
\EE^w_{t,B}(u_t) \le \int_t^\infty\textd s\, k_{s-t} \Biggl(3\EE^a_{s,\overline B}(u_s)+
12d\sum_{x\in \overline  B}
\bigl[u(t,x)-u(s,x)\bigr]^2\Biggr).
\end{equation}
Concerning the second term in the parentheses, here \eqref{E:2.22} and $(a+b)^2\le 2a^2+2b^2$ yield
\begin{equation}
\label{E:2.52}
\begin{aligned}
\bigl[u(t,x)-u&(s,x)\bigr]^2 
=\Bigl[\int_s^t \textd r\,\bigl[f_r(x)+L_r u(r,x)\bigr]\Bigr]^2
\\
&=\Biggl[\int_s^t \textd r\,\biggl(\,f_r(x)+\sum_{\begin{subarray}{c}
y\in\Z^d\\ (x,y)\in E(\Z^d)
\end{subarray}} a_r(x,y)\bigl[u(r,y)-u(r,x)\bigr]\biggr)\Biggr]^2
\\
&\le 2(s-t)\int_t^s\textd r\,\biggl(\bigl|f_r(x)\bigr|^2+2d\sum_{\begin{subarray}{c}
y\in\Z^d\\ (x,y)\in E(\Z^d)
\end{subarray}} a_r(x,y)\bigl[u(r,y)-u(r,x)\bigr]^2\biggr)\,,
\end{aligned}
\end{equation}
where the last inequality follows by Cauchy-Schwarz and the bound $a_t(x,y)\le1$. Plugging this in \eqref{E:2.25} and invoking the definition of~$K_t$, we get \eqref{E:2.23}.
\end{proofsect}

\begin{remark}
The argument \eqref{E:2.52} uses crucially that the lattice gradient is a bounded operator. This is what makes the above proof fail in the continuum setting.
\end{remark}

Recall the definitions of $B_n$ and $\zeta_n$ from \eqref{E:2.27} and  \eqref{eq:zeta_n}.
Then we have:

\begin{corollary}
For~$a_t$ and~$u$ as in Lemma~\ref{L:MO_basic}, if \eqref{E:2.2aa} holds (in addition to \eqref{eq:kernel_conds}-\eqref{E:2.2a}), then for each~$n\ge1$,
\begin{equation}
\label{E:2.46}
\EE^{w,\zeta_n}_{B_n}(u)\le 48d^2\ci\,\EE^{a,\zeta_n}_{\overline B_n}(u)+24d \ci\Vert f\Vert_{2,2,B_n,\zeta_n}^2\,.
\end{equation} 
Moreover, under Assumption~\ref{ass1}, if $u$ and $f$ are such that $u(t,x,\cdot) = u(0,0,\cdot)\circ\tau_{t,x}$ and $f(t,x,\cdot)=f(0,0,\cdot)\circ\tau_{t,x}$ for each~$x\in\Z^d$, each~$t\in\R$, and \eqref{E:2.22} holds, then
\begin{equation}
\label{E:2.47}
\begin{aligned}
\E\biggl(\,\,&\sum_{e=e_1,\dots,e_d}w_0(e)\bigl|u(0,e,\cdot)-u(0,0,\cdot)\bigr|^2\biggr)
\\
&\quad\le 48d^2\ci\E\biggl(\,\,\sum_{e=e_1,\dots,e_d}a_0(e)\bigl|u(0,e,\cdot)-u(0,0,\cdot)\bigr|^2\biggr)
+24d \ci\E\bigl(|f(0,0,\cdot)|^2\bigr)\,.
\end{aligned}
\end{equation}
\end{corollary}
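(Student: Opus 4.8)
The plan is to derive \eqref{E:2.46} from Lemma~\ref{L:MO_basic} by integrating the pointwise bound \eqref{E:2.23} against $\zeta_n$ and using \eqref{E:2.2aa} to ``undo'' the kernel convolution, and then to obtain \eqref{E:2.47} from \eqref{E:2.46} by dividing by $|E(B_n)|$ (equivalently $|B_n|$) and passing to the limit $n\to\infty$ with the help of the (spatial, then time) ergodic theorem.

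For \eqref{E:2.46}: fix $n\ge1$ and write $\zeta:=\zeta_n$ throughout. By definition \eqref{E:2.9}, $\EE^{w,\zeta}_{B_n}(u)=\int_0^\infty\textd t\,\zeta(t)\,\EE^w_{t,B_n}(u_t)$. Insert the bound \eqref{E:2.23} from Lemma~\ref{L:MO_basic} (applicable with $B:=B_n$, so $\overline B=\overline B_n$, since $a_t\in[0,1]$ and $u$ solves the heat equation weakly on $\overline B_n$). This gives two terms, each of the form $\int_0^\infty\textd t\,\zeta(t)\int_t^\infty\textd s\,K_{s-t}\,g(s)$ with $g\ge0$ (namely $g(s)=48d^2\,\EE^a_{s,\overline B_n}(u_s)$ and $g(s)=24d\sum_{x\in B_n}|f_s(x)|^2$). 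By Tonelli's theorem this equals $\int_0^\infty\textd s\,g(s)\int_0^s\textd t\,\zeta(t)\,K_{s-t}$, and now \eqref{E:2.2aa} (which, as noted in the excerpt, is exactly the statement that \eqref{E:2.2a} holds for every pair $(\zeta_r,k)$, $r\ge1$, and in particular for $r=n$ via $\zeta_n(t)=n^{-2}\zeta(t/n^2)$) bounds the inner integral by $\ci\,\zeta(s)$. Hence each term is at most $\ci\int_0^\infty\textd s\,\zeta(s)\,g(s)$, i.e. $48d^2\ci\,\EE^{a,\zeta_n}_{\overline B_n}(u)$ and $24d\ci\int_0^\infty\textd s\,\zeta(s)\sum_{x\in B_n}|f_s(x)|^2=24d\ci\,\Vert f\Vert_{2,2;B_n,\zeta_n}^2$, which is \eqref{E:2.46}.

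For \eqref{E:2.47}: assume now the covariant structure $u(t,x,\cdot)=u(0,0,\cdot)\circ\tau_{t,x}$ and likewise for $f$. Divide \eqref{E:2.46} by $|E(B_n)|$. On the left, $|E(B_n)|^{-1}\EE^{w,\zeta_n}_{B_n}(u)=|E(B_n)|^{-1}\int_0^\infty\textd t\,\zeta_n(t)\sum_{(x,y)\in E(B_n)}w_t(x,y)[u_t(y)-u_t(x)]^2$; writing $w_t(x,y)[u_t(y)-u_t(x)]^2$ in covariant form as $\bigl(w_0(e)|u(0,e,\cdot)-u(0,0,\cdot)|^2\bigr)\circ\tau_{t,x}$ summed over the $d$ coordinate edges $e=e_1,\dots,e_d$ emanating from the origin (up to the usual boundary corrections that are $O(|B_n|^{d-1}/|B_n|)\to0$), and integrating against $\zeta_n$ using Lemma~\ref{lemma-sup} to pass from the $\zeta_n$-average to the $\BbbP$-expectation, the spatial ergodic theorem gives that $|E(B_n)|^{-1}\EE^{w,\zeta_n}_{B_n}(u)$ converges $\BbbP$-a.s.\ to the left-hand side of \eqref{E:2.47}. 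The same reasoning applied to the two terms on the right of \eqref{E:2.46} (noting $\Vert\zeta_n\Vert_{L^1}=\Vert\zeta\Vert_{L^1}$ is $n$-independent, and $|E(\overline B_n)|/|E(B_n)|\to1$) yields convergence to $48d^2\ci\,\E\bigl(\sum_{e=e_1,\dots,e_d}a_0(e)|u(0,e,\cdot)-u(0,0,\cdot)|^2\bigr)+24d\ci\,\E(|f(0,0,\cdot)|^2)$. Taking $n\to\infty$ in \eqref{E:2.46} (divided by $|E(B_n)|$) gives \eqref{E:2.47}.

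The routine parts are the Tonelli exchange and the bookkeeping with $|B_n|$, $|E(B_n)|$, $|\overline B_n|$ and boundary edges; the one genuinely delicate point is the passage to the limit in \eqref{E:2.47}, where one must justify both the time-average-to-expectation step (via Lemma~\ref{lemma-sup}, which requires that the relevant integrands lie in $L^p(\BbbP)$ for some $p>1$ — here $a_0(e)\le1$ makes the right-hand side terms trivially integrable, and for the left-hand side one uses boundedness of $w_t$, which follows from $a_t\le1$ and integrability of $k$, so $w_0(e)\le\Vert k\Vert_{L^1}$) and the spatial ergodic theorem along the sequence of boxes $B_n$. This is where the covariance hypotheses on $u$ and $f$ and Assumption~\ref{ass1}(2) are used; everything else is deterministic manipulation.
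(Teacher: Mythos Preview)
Your argument for \eqref{E:2.46} is correct and matches the paper's: integrate \eqref{E:2.23} against $\zeta_n$, swap the order of integration by Tonelli, and apply \eqref{E:2.2aa}.

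For \eqref{E:2.47}, however, you take a harder route than the paper and your justification has a gap. The paper simply \emph{takes expectation of \eqref{E:2.46} first}. Under the covariant hypotheses $u(t,x,\cdot)=u(0,0,\cdot)\circ\tau_{t,x}$ and $f(t,x,\cdot)=f(0,0,\cdot)\circ\tau_{t,x}$, together with the $\BbbP$-invariance of $\tau_{t,x}$, the expectation of each integrand is independent of $t$ and of the spatial location of the edge (depending only on its direction). Hence the time integral collapses to the factor $\Vert\zeta_n\Vert_{L^1}=\Vert\zeta\Vert_{L^1}$ on every term, which cancels, and the spatial sums become pure edge counts. Dividing by $|B_n|$ and sending $n\to\infty$ is then elementary bookkeeping (boundary edges of $B_n$ versus $\overline B_n$); no ergodic theorem is invoked at all.

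By contrast, you attempt an almost-sure convergence of the random quantity $|E(B_n)|^{-1}\EE^{w,\zeta_n}_{B_n}(u)$ to the deterministic expectation. Two problems: (i) Lemma~\ref{lemma-sup} gives only a uniform-in-$n$ $L^{\tilde p}$ bound on $\sup_n\int\zeta_n(t)\,g\circ\tau_{t,0}\,\textd t$, not a convergence statement, so it cannot ``pass from the $\zeta_n$-average to the $\BbbP$-expectation'' as you claim; (ii) the spatial ergodic theorem does not apply directly because the summand $\int\zeta_n(t)\,F\circ\tau_{t,x}\,\textd t$ depends on $n$ through $\zeta_n$, so you are not averaging a fixed $L^1$ function over $B_n$. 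To make your route rigorous you would need a genuine space-time (multi-parameter) ergodic theorem, or a two-step argument handling the $n$-dependence of the time kernel carefully. All of this is avoided by taking expectation first.
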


\begin{proofsect}{Proof}
In light of \eqref{E:2.23}, the first conclusion follows directly from \eqref{E:2.2aa}. For \eqref{E:2.47} take expectation of \eqref{E:2.46} (this eliminates the integrals over time), divide by~$|B_n|$ and take $n\to\infty$.
\end{proofsect}

We remark that, in the derivations underlying the Moser iteration, we will need to rederive variants of these estimates for powers of the solutions multiplied by suitable mollifiers. Besides illustrating the main ideas of our proofs, the above simpler versions will be used to define, and derive \emph{a priori} $L^1$-estimates, of the corrector in the next section.


\section{Construction of the corrector}
\label{sec2}\nopagebreak\noindent
The next task is the construction and derivation of the needed properties of the harmonic coordinate and the associated corrector. The natural setting for our proof is to require a certain moment condition for the weights~$w_t$ defined in \eqref{E:1.16}, see \eqref{E:2.12a} below. We will verify immediately that this condition is met under the assumptions of Theorem \ref{thm-1.2}.

Note that, whenever Assumption \ref{ass1} holds, the family $\{w_t(e)\colon t\in\R\}$ is stationary with respect to time-shifts for each $e\in E(\Z^d)$, as can be seen from \eqref{E:1.16} and the assumed stationarity of $t\mapsto a_t(e)$. This will be used frequently below.  Recall also that the functions $k,\zeta$ are assumed to satisfy \twoeqref{eq:kernel_conds}{E:2.2a}; $k$ enters through the definition of the weights $w$ and, although $\zeta$ does not appear explicitly in the following theorem, it will be used in its proof. Let $L^{p,\text{loc}}(\R)$ denote the space of measurable~$f\colon\R\to\R$ whose~$p$-th power is locally integrable with respect to the Lebesgue measure and~$\BbbP$. The main conclusion of this section is now as follows:

\begin{theorem}
\label{prop-2.1}
Suppose the law of the conductances~$\BbbP$ obeys Assumptions~\ref{ass1}, \eqref{E:2.2aa} holds and, with $w_t$ as defined in \eqref{E:1.16}, there exists $q > 1$ such that
\begin{equation}
\label{E:2.12a}
\E\bigl(w_0(e)^{-q}\bigr)<\infty, \quad \text{ for all } e \in E(\mathbb{Z}^d).
\end{equation}
Then there exists a measurable function $\psi\colon \R\times\Z^d\times\Omega\to\R^d$ such that the following~holds:
\settowidth{\leftmargini}{(11)}
\begin{enumerate}
\item[(1)] $\psi$ is a weak solution to the family of the ODEs
\begin{equation}
\label{E:3.1}
\frac{\partial}{\partial t}\psi(t,x,\cdot) +L_t\psi (t,x,\cdot) =0,\qquad t\in\R,\, x\in\Z^d,
\end{equation}
where~$L_t$ is the generator defined in \eqref{E:1.1}, and $L_t\psi(t,x,a):= (L_t\psi(t,\cdot,a))(x)$,
\item[(2)]
$\psi$ satisfies the cocycle conditions in space-time: for each $t,s\in\R$ and each~$x,y\in\Z^d$,
\begin{equation}
\label{E:cocycle}
\psi(t,x,\cdot)\circ \tau_{s,y} = \psi(t+s,x+y,\cdot)-\psi(s,y,\cdot)
\end{equation}
with $\psi(0,0,\cdot)=0$,
\item[(3)] $\psi$ is of finite specific energy in the sense that
\begin{equation}
\label{E:3.3}
\E\Bigl(\,\sum_{x\colon|x|=1} a_0(0,x)\bigl|\psi(0,x,a)\bigr|^2\Bigr)<\infty\,,
\end{equation}
\item[(4)] defining the corrector by $\chi(t,x,\cdot):=\psi(t,x,\cdot)-x$ and letting~$p:=2/(1+1/q)>1$,
\begin{equation}
\label{E:3.4}
\chi(t,x,\cdot)\in L^p(\BbbP),\quad \chi(\cdot,x,\cdot)\in \,L^{p,\text{\rm loc}}(\R)\otimes L^p(\BbbP)\quad\text{and}\quad \E\chi(t,x,\cdot)=0
\end{equation}
holds for each~$x\in\Z^d$ and each~$t\in\R$. 
\end{enumerate}
\end{theorem}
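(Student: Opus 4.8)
\textbf{Proof plan for Theorem~\ref{prop-2.1}.}
The plan is to construct $\psi$ by a regularization-and-limit procedure, solving a massive (regularized) version of the heat equation \eqref{E:3.1} in the environment space, extracting weak limits using the energy bound, and identifying the limit as the harmonic coordinate. More precisely, for $\lambda>0$ I would first solve, in the Hilbert space $L^2(\BbbP)$ together with a time integration against $\zeta$ (or rather a suitably scaled $\zeta_n$), the resolvent-type equation
\begin{equation}
\label{E:plan-lambda}
\lambda\chi^\lambda(t,x,\cdot)+\frac{\partial}{\partial t}\chi^\lambda(t,x,\cdot)+L_t\chi^\lambda(t,x,\cdot)=-L_t\,x,
\end{equation}
imposed together with the space-time cocycle structure \eqref{E:cocycle}. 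Working directly in the space of shift-covariant cocycles of finite specific energy, the operator $\partial_t+L_t$ is a generator of a contraction semigroup (the time-derivative part being antisymmetric, the spatial part $L_t$ being symmetric and nonnegative in the appropriate inner product), so \eqref{E:plan-lambda} has a unique solution $\chi^\lambda$ for every $\lambda>0$, obtained e.g.\ via Lax--Milgram on the Dirichlet form. Standard energy estimates give the a priori bound
\begin{equation}
\label{E:plan-energy}
\lambda\,\E\bigl(|\chi^\lambda(0,0,\cdot)|^2\bigr)+\E\Bigl(\,\sum_{x\colon|x|=1}a_0(0,x)\bigl|\psi^\lambda(0,x,a)\bigr|^2\Bigr)\le C,
\end{equation}
with $\psi^\lambda:=\chi^\lambda+x$ and $C$ depending only on $d$; the point is that the right-hand side $-L_t\,x$ is controlled by $\sum_{|e|=1}a_0(e)$, which is bounded by $2d$.

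Next I would pass to the limit $\lambda\downarrow0$. The gradients $\nabla\chi^\lambda$ (indexed by edges, in the sense of the cocycle increments $\psi^\lambda(t,e,\cdot)-\psi^\lambda(t,0,\cdot)$) are bounded in the weighted $L^2$-space with weight $a_t(e)\,\zeta(t)$ by \eqref{E:plan-energy}, hence have a weak limit $\Phi$ along a subsequence; one checks that $\Phi$ is again a gradient of a cocycle $\psi$ and that $\lambda\chi^\lambda\to0$ strongly, so that $\psi$ is a weak solution of \eqref{E:3.1}. This gives items (1), (2), (3) directly. The only genuinely new difficulty — and the step I expect to be the main obstacle — is item (4): promoting the \emph{gradient} bound \eqref{E:plan-energy}, which only controls $a_0(e)^{1/2}\nabla\chi$, to an $L^p$-bound on $\chi$ \emph{itself} with $p=2/(1+1/q)$. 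This is where the hypothesis \eqref{E:2.12a} on negative moments of $w_0(e)$ enters, via the conversion of Dirichlet energies. Concretely, I would apply the Sobolev inequality of Lemma~\ref{thm-Sobolev} (or its normalized form) on boxes $B_n$ to bound a space-time norm of $\chi^\lambda$ by $\Vert w^{-1/2}\Vert_{r,s;E(B_n),\zeta_n}$ times the \emph{weighted} energy $\EE^{w,\zeta_n}_{B_n}(\chi^\lambda)^{1/2}$; then use Lemma~\ref{L:MO_basic} and its Corollary \eqref{E:2.46} to replace the weighted energy by the ordinary energy $\EE^{a,\zeta_n}_{\overline B_n}(\chi^\lambda)$ (plus a bounded forcing term coming from $f=-L_t\,x$, which here is a bounded function of the environment), which is uniformly bounded by \eqref{E:plan-energy}. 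Choosing $r=s$ close enough to $1$ that $s<2q$ — permissible because \eqref{E:2.12a} holds with $q>1$, and the exponents in Lemma~\ref{L:nu_w} then give finiteness of $\vvert w^{-1}\vvert_{r/2,s/2;E(B_n),\zeta_n}$ — yields a uniform-in-$n$ bound on $\vvert\chi^\lambda\vvert_{p,q';B_n,\zeta_n}$ for suitable $p,q'$, hence (Hölder, and the exponent bookkeeping $1/p=1/2+1/(2q)$) an $L^p(\BbbP)$-bound on $\chi^\lambda(0,0,\cdot)$ and, by the cocycle property and stationarity, on all $\chi^\lambda(t,x,\cdot)$.

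Finally, with the uniform $L^p$-bound in hand and $p>1$, the family $\{\chi^\lambda(t,x,\cdot)\}_{\lambda>0}$ is uniformly integrable, so the weak limit $\chi=\psi-x$ inherits membership in $L^p(\BbbP)$ and in $L^{p,\mathrm{loc}}(\R)\otimes L^p(\BbbP)$; the mean-zero property $\E\chi(t,x,\cdot)=0$ follows because each $\chi^\lambda$ can be taken centered (the equation \eqref{E:plan-lambda} is preserved under subtracting the mean, and the Lax--Milgram solution in the space of centered cocycles is centered), and centeredness survives the weak $L^p$-limit. A small technical point I would flag is that the a priori estimates should be run with the scaled weights $\zeta_n$ so that \eqref{E:2.2aa} is available; the bounds obtained are uniform in $n$, which is exactly what is needed to extract the $L^p$-bound on the single random variable $\chi^\lambda(0,0,\cdot)$ by letting $n\to\infty$. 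Everything else — existence and uniqueness of $\chi^\lambda$, the elementary energy identity \eqref{E:plan-energy}, and the closedness of the class of cocycle-gradients under weak limits — is by now standard in the homogenization literature and I would only sketch it.
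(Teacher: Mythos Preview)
Your overall architecture matches the paper's: regularize the equation, derive an energy estimate uniform in the regularization parameter, and extract a weak limit. The paper constructs the regularized solution~$\varphi_\epsilon$ explicitly as an integral against the transition kernel of the walk in the uniformly elliptic perturbed environment $a_t^\epsilon:=\epsilon+a_t$ (formula \eqref{E:3.12}) rather than by Lax--Milgram, but it remarks that the functional-analytic construction is what~\cite{ACDS16} does, so your choice there is a legitimate variant. Your energy bound \eqref{E:plan-energy} is exactly the paper's \twoeqref{E:3.16}{E:3.17}.

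Where your plan diverges, and has a real gap, is the $L^p$-bound for item~(4). The paper does \emph{not} go through Sobolev inequalities on boxes here. It applies H\"older directly at the level of~$\BbbP$-expectation,
\[
\E\bigl|\varphi_\epsilon(0,\hate_i)-\varphi_\epsilon(0,0)\bigr|^p
\le \bigl(\E\, w_0(\hate_i)^{-q}\bigr)^{\frac{2-p}2}\Bigl(\E\sum_i w_0(\hate_i)\bigl|\varphi_\epsilon(0,\hate_i)-\varphi_\epsilon(0,0)\bigr|^2\Bigr)^{p/2}
\]
with $p/(2-p)=q$, and then invokes the \emph{expectation-level} energy conversion \eqref{E:2.47} --- not the finite-box version \eqref{E:2.46} --- to bound the $w$-weighted second moment by the $a$-weighted one plus a bounded forcing term; telescoping and integrating in time then handles general~$(t,x)$ (Lemma~\ref{lemma-3.5}). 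Your box-Sobolev route would need (i)~a spatial cutoff, since Lemma~\ref{thm-Sobolev} is proved via the compactly-supported $\ell^1$-Sobolev inequality \eqref{E:2.27a} and every later use in the paper inserts an~$\eta$, and (ii)~an argument converting a uniform-in-$n$ bound on $\vvert\chi^\lambda\vvert_{p,q';B_n,\zeta_n}$ into $\chi^\lambda(t,x,\cdot)\in L^p(\BbbP)$ for fixed~$(t,x)$, which is not automatic because $\chi^\lambda$ is a cocycle difference rather than a shift of a single function (note also that your stated target ``$L^p$-bound on $\chi^\lambda(0,0,\cdot)$'' is vacuous, as this vanishes by normalization). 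A further point you omit: after taking weak limits the paper devotes Lemma~\ref{lem2.cont} to constructing a version of the time-marginal with continuous sample paths, so that \eqref{E:3.1} and the cocycle relation hold for \emph{all}~$t$ rather than merely Lebesgue-a.e.; this step is not entirely routine.
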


\begin{remark}
Theorem~\ref{prop-2.1} fits the setting of Theorem~\ref{thm-1.2} for the choice $k_t:=(1+t)^{-\mu}$ with any $\mu\in (4, \vartheta/2)$ because \eqref{E:2.12aa} implies \eqref{E:2.12a} with $q:=\vartheta/\mu>1$. Such a choice of~$\mu$ can be made since $\frac\vartheta2 > 4$ when $\vartheta> 4d$ (and $d \geq 2$). 
\end{remark}

From Theorem~\ref{prop-2.1} and Lemma~\ref{lemma-2.2} we thus immediately obtain:
 
\begin{corollary}
Under the assumptions of Theorem \ref{thm-1.2}, there exists a measurable function $\psi\colon \R\times\Z^d\times\Omega\to\R^d$ satisfying {\rm (1--4)} in Theorem~\ref{prop-2.1} above.
\end{corollary}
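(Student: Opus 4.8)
The plan is to construct $\psi$ as a limit of regularized harmonic coordinates obtained by solving a massive (resolvent-type) version of the heat equation \eqref{E:3.1}, in the spirit of the standard Kipnis--Varadhan / corrector constructions but adapted to the space-time setting. Concretely, for each $\lambda>0$ and each coordinate direction $e_i$ I would look for a stationary solution $\phi^\lambda_i\colon\R\times\Z^d\times\Omega\to\R$ of
\begin{equation}
\lambda\phi^\lambda_i(t,x,\cdot)-\frac{\partial}{\partial t}\phi^\lambda_i(t,x,\cdot)-L_t\phi^\lambda_i(t,x,\cdot)=b_i(t,x,\cdot),
\end{equation}
where $b_i$ is the (stationary, bounded, square-integrable) ``drift'' field $b_i(t,x,\cdot):=\sum_{y\colon|y-x|=1}a_t(x,y)(y-x)_i$, so that $\psi^\lambda_i(t,x):=x_i+\phi^\lambda_i(t,x)-\phi^\lambda_i(0,0)$ is an approximate solution of \eqref{E:3.1}. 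Equivalently, working in the space of square-integrable gradient fields (cocycles) with inner product weighted by $a_0(e)$, I would solve the variational/resolvent problem on the environment process: the operator $-\partial_t-L_t$ generates a (non-self-adjoint, since no reversibility in time is assumed) Markov semigroup on $L^2(\BbbP)$, and the resolvent $(\lambda-\partial_t-L_t)^{-1}$ applied to $b_i$ gives $\phi^\lambda_i$. The antisymmetry of $\partial_t$ together with the symmetry of $L_t$ means that the symmetric part of the generator is just $L_t$, so the usual energy estimate $\E(\sum_{|x|=1}a_0(0,x)|\nabla\phi^\lambda_i(0,x)|^2)\le C$ uniformly in $\lambda$ still goes through by testing against $\phi^\lambda_i$ itself.

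Next I would extract a limit. The uniform energy bound gives, along a subsequence $\lambda\downarrow 0$, weak convergence of the gradient cocycles $\nabla\phi^\lambda_i$ in the $a_0$-weighted $L^2$ space to a limiting cocycle $\nabla\chi_i$; defining $\chi_i$ by integrating the cocycle and $\psi_i:=x_i+\chi_i$ gives items (2) (the cocycle property and $\psi(0,0,\cdot)=0$ are preserved under the limit because they are linear constraints) and (3) (weak lower semicontinuity of the $a_0$-weighted norm). Item (1), that $\psi$ is a weak solution of $\partial_t\psi+L_t\psi=0$, follows by passing to the limit in the weak formulation of the massive equation against smooth compactly supported test functions, using that $\lambda\phi^\lambda_i\to 0$ in $L^2(\BbbP)$ (a standard consequence of the spectral theorem / Kipnis--Varadhan argument: $\lambda\Vert\phi^\lambda_i\Vert_{L^2}^2\to0$). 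The mean-zero and stationarity statements in (4) are immediate from the construction, since each $\phi^\lambda_i$ is stationary with mean zero.

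The one genuinely new ingredient, and the main obstacle, is the $L^p$-integrability of the corrector claimed in (4) with $p=2/(1+1/q)>1$ --- the energy estimate only controls an $a_0$-weighted $L^2$ norm of the \emph{gradient}, whereas we need an \emph{unweighted} $L^p$ bound on $\chi$ itself. This is precisely where the weighted-to-unweighted conversion developed in Section~\ref{sec-2a} enters. The idea is: from the cocycle property, $\chi_i(t,e,\cdot)-\chi_i(t,0,\cdot)$ along an edge is a stationary field, and one bounds its $L^p(\BbbP)$ norm by first writing $|\chi_i(t,e)-\chi_i(t,0)|^p=\bigl(w_t(e)|\chi_i(t,e)-\chi_i(t,0)|^2\bigr)^{p/2}w_t(e)^{-p/2}$ and applying H\"older with exponents $(2/p,2/(2-p))$; since $p=2/(1+1/q)$ one checks $\frac{p}{2}\cdot\frac{2}{2-p}=q$, so the second factor is controlled by $\E(w_0(e)^{-q})<\infty$ from hypothesis \eqref{E:2.12a}, while the first factor is the $w$-weighted energy, which by the Dirichlet-energy conversion of Lemma~\ref{L:MO_basic} (or rather its stationary consequence \eqref{E:2.47}, applicable because $\psi$ solves the homogeneous heat equation so $f\equiv0$) is bounded by the ordinary $a$-weighted energy \eqref{E:3.3}, already shown finite. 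Integrating in $t$ against $\zeta$ and using \eqref{E:2.2aa} handles the $L^{p,\mathrm{loc}}(\R)$ part. Care is needed to perform this estimate first at the level of the approximations $\phi^\lambda_i$ (so that all quantities are manifestly finite and the heat equation holds exactly) and only then pass to the limit, using weak lower semicontinuity to transfer the bound to $\chi$; and to check that $\chi$, reconstructed by summing cocycle increments along a path from $0$ to $x$, inherits the $L^p$ bound from the edge-increment bounds via the triangle inequality.
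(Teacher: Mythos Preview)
You have misread the role of this corollary. Theorem~\ref{prop-2.1} is already proved in full just above, and this corollary merely records that its hypotheses are met under those of Theorem~\ref{thm-1.2}. The paper's proof is a one-liner: pick $k_t=(1+t)^{-\mu}$ with $\mu\in(4,\vartheta/2)$ (possible since $\vartheta>4d\ge8$) and $\zeta$ as in Lemma~\ref{lemma-2.2}, so that \twoeqref{eq:kernel_conds}{E:2.2a} and \eqref{E:2.2aa} hold; then the bound $w_0(e)\ge k_{T_e}$ from \eqref{E:2.31} together with $\E(T_e^\vartheta)<\infty$ give $\E(w_0(e)^{-\vartheta/\mu})<\infty$, which is \eqref{E:2.12a} with $q:=\vartheta/\mu>1$. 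Theorem~\ref{prop-2.1} now applies directly.

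What you have written instead is a sketch of a fresh proof of Theorem~\ref{prop-2.1} itself. Your outline is close in spirit to what the paper actually does for that theorem (resolvent regularization, uniform energy bound, weak limit, H\"older to pass from the $w$-weighted energy to an unweighted $L^p$ norm via \eqref{E:2.12a}, and the energy conversion \eqref{E:2.47}), so the ideas are not wrong, but they are answering a different question. A couple of your shortcuts would also need repair if this were really the task: the claim that $\lambda\phi^\lambda\to0$ in $L^2(\BbbP)$ ``by the spectral theorem / Kipnis--Varadhan'' is not available here because the time shift is not reversible and the generator is not self-adjoint---the paper only gets $\epsilon\E|\varphi_\epsilon|^2\le d$, hence $\epsilon\varphi_\epsilon\to0$ in $L^p$ for $p<2$, which is enough; and you omit the ellipticity regularization $a_t^\epsilon:=\epsilon+a_t$ that the paper uses to make the resolvent representation \eqref{E:3.12} well-defined in the first place.
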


The strategy of our proof of Theorem~\ref{prop-2.1} is as follows: similarly to all existing constructions of the harmonic coordinate, we will solve a suitably perturbed version of \eqref{E:3.1} and then control the removal of the perturbation. As usual, the latter step will be done using functional analytic methods. In~\cite{ACDS16}, which is closest to our setting, even the former step was based on functional analytic tools (namely, the Lax-Milgram lemma) but here we will proceed by more probabilistic arguments.

Let $p^{t,s}(x,y)$, for $t\le s$ and $x,y\in\Z^d$, denote the transition probability of the random walk~$X$ between times~$t$ and~$s$; i.e.,
\begin{equation}
p^{t,s}(x,y):=P(X_s=y|X_t=x).
\end{equation}
We begin by noting the following fact about uniformly elliptic situations:

\begin{lemma}
\label{lemma-3.3}
Let~$\epsilon\in(0,1)$ and suppose, for the moment, that the conductances $t\mapsto a_t(e)$ are Lebesgue measurable and taking values in~$[\epsilon,1/\epsilon]$. Let~$g\colon \R\times\Z^d\to\R$ be bounded and measurable. Then
\begin{equation}
\label{E:3.7}
h(t,x):=-\int_t^\infty\textd s\,\, \e^{-\epsilon(s-t)}\sum_{y\in\Z^d}p^{t,s}(x,y)g(s,y)
\end{equation}
is well defined with $t\mapsto h(t,x)$ continuously differentiable for each~$x\in\Z^d$. Moreover,~$h$ obeys
\begin{equation}
\frac{\partial}{\partial t}h(t,x)-(\epsilon-L_t)h(t,x)=g(t,x)
\end{equation}
at each~$t\in\R$ and~$x\in\Z^d$.
\end{lemma}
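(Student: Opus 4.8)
\textbf{Proof proposal for Lemma~\ref{lemma-3.3}.}

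The plan is to think of the right-hand side of \eqref{E:3.7} as the Feynman--Kac (resolvent) representation of the solution to the backward evolution equation $\partial_t h - (\epsilon - L_t)h = g$, and to verify directly that the stated formula has all the claimed regularity and solves the equation. First I would record the elementary a priori bounds that make everything converge: under the uniform ellipticity hypothesis $a_t(e)\in[\epsilon,1/\epsilon]$, the walk is non-explosive with total jump rate at most $2d/\epsilon$, so $\sum_y p^{t,s}(x,y)=1$ for all $t\le s$, and hence $|h(t,x)|\le \|g\|_\infty \int_t^\infty \e^{-\epsilon(s-t)}\,\textd s = \epsilon^{-1}\|g\|_\infty<\infty$. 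This shows $h$ is well defined and bounded. After the substitution $s=t+r$ we can rewrite
\begin{equation}
\label{E:plan-1}
h(t,x) = -\int_0^\infty \textd r\,\e^{-\epsilon r}\sum_{y\in\Z^d} p^{t,t+r}(x,y)\,g(t+r,y),
\end{equation}
which displays the $t$-dependence through the time shift in both $p$ and $g$; this is the convenient form for differentiation.

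Next I would establish the differentiability in $t$ and identify the derivative. The key structural input is the backward (Kolmogorov) equation for the inhomogeneous transition semigroup: for fixed $s$ and $y$, the map $t\mapsto p^{t,s}(x,y)$ is differentiable with $\partial_t p^{t,s}(x,y) = -L_t\bigl(p^{t,s}(\cdot,s\text{-arg }y)\bigr)(x)$, i.e. $\partial_t\sum_y p^{t,s}(x,y)g(s,y) = -L_t\bigl(\sum_y p^{t,\cdot}(\cdot,y)g(s,y)\bigr)(x)$, valid because the generator $L_t$ is a bounded operator (the state space is effectively finite on the time scale of a single jump, and the total rate is bounded by $2d/\epsilon$). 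Using this in \eqref{E:plan-1} together with the semigroup/Markov property $\sum_z p^{t,t'}(x,z)p^{t',s}(z,y) = p^{t,s}(x,y)$, differentiating under the integral sign (justified by the uniform bounds and dominated convergence, since the integrand and its $t$-derivative are bounded by $C\e^{-\epsilon r}$) yields two contributions: one from differentiating the exponential weight together with the $s=t+r$ shift, producing $\epsilon h(t,x)$ and a boundary term $-g(t,x)$ at $r=0$; and one from the backward equation, producing $-L_t h(t,x)$. Collecting terms gives $\partial_t h(t,x) = \epsilon h(t,x) - L_t h(t,x) + g(t,x)$, which is exactly $\partial_t h - (\epsilon - L_t)h = g$. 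Continuity of $t\mapsto \partial_t h(t,x)$ then follows from continuity of $t\mapsto h(t,x)$ (itself a consequence of the bounded representation and dominated convergence) and continuity of $t\mapsto L_t h(t,x)$; here one uses Lebesgue measurability of $t\mapsto a_t(e)$, so continuity should be read as continuity for a.e.\ $t$ in the appropriate weak sense, or one simply works with the integrated (weak) form throughout.

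I expect the main obstacle to be the rigorous justification of differentiating under the integral in $t$ and of the backward equation for $p^{t,s}$ when the rates $t\mapsto a_t(e)$ are merely Lebesgue measurable rather than continuous. The clean way around this is to avoid pointwise time derivatives altogether and instead prove the assertion in its integrated form: show that for all $t_1<t_2$, $h(t_2,x) - h(t_1,x) = \int_{t_1}^{t_2}\bigl[(\epsilon - L_s)h(s,x) - g(s,x)\bigr]\,\textd s$, which follows from Fubini and the (integrated) Chapman--Kolmogorov identity, and then note that the integrand is locally bounded and measurable, so $t\mapsto h(t,x)$ is absolutely continuous with the stated derivative for a.e.\ $t$; since the statement of the lemma is used only through this equation (as a weak solution), this suffices. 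The remaining bookkeeping — that the representation \eqref{E:plan-1} genuinely defines the unique bounded such $h$, via a Gr\"onwall/uniqueness argument for the linear equation — is routine given the uniform bound $2d/\epsilon$ on the total rate.
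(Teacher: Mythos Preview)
Your approach is essentially the paper's: establish boundedness of $h$, invoke the backward equation $\partial_t p^{t,s}(x,y)+L_t p^{t,s}(\cdot,y)(x)=0$ (which the paper derives explicitly from a first-jump decomposition of $p^{t,s}$), and differentiate under the integral with dominated convergence. One small muddle worth cleaning up: after your substitution $s=t+r$ the weight $\e^{-\epsilon r}$ no longer depends on~$t$ and the limits of integration are fixed, so your description of the $\epsilon h$ term and the ``boundary term at $r=0$'' does not match the form \eqref{E:plan-1} --- in that form the $t$-dependence sits in $p^{t,t+r}$ and in $g(t+r,\cdot)$, and the latter is not assumed differentiable. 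It is cleaner (and what the paper does) to differentiate \eqref{E:3.7} directly: the $\epsilon h$ term comes from $\partial_t\e^{-\epsilon(s-t)}$, the $g(t,x)$ term from the moving lower limit $s=t$, and the $-L_t h$ term from the backward equation applied to $p^{t,s}$. Your closing remark about measurable-in-$t$ rates versus the ``continuously differentiable'' claim is well taken; the paper glosses over this, and your proposed integrated (weak) formulation is the right way to make it honest.
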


\begin{proofsect}{Proof}
Since~$g$ is bounded, the sum in \eqref{E:3.7} converges absolutely and is bounded uniformly in~$s$, hence the integral over~$s$ converges absolutely as well and $h$ is well-defined. The transition probability admits the representation
\begin{equation}
p^{t,s}(x,y)= \delta(x,y) \e^{-\int_t^s\textd u\,\pi_u(x)}+\int_t^s\textd u\,\pi_u(x)\e^{-\int_t^u\textd r\,\pi_r(x)}\sum_{z\colon z\sim x}\frac{a_u(x,z)}{\pi_u(x)}p^{u,s}(z,y),
\end{equation}
where $\delta(x,y)=1$ if $x=y$ and vanishes otherwise, $z\sim x$ means that $(x,z)\in E(\Z^d)$ and $\pi_u(x):=\sum_{z:z\sim x}a_u(x,z)$. Thus, the function
$t,x\mapsto p^{t,s}(x,y)$ obeys the differential equation
\begin{equation}
\frac{\partial}{\partial t}p^{t,s}(x,y) +L_t p^{t,s}(\cdot,y)(x)=0.
\end{equation}
Since the conductances are nearest-neighbor and uniformly bounded, the sum of the derivatives (with respect to~$t$) of the terms in \eqref{E:3.7}, as well as the resulting integral, converge absolutely. Standard criteria permit us to exchange the time derivative with the integral over~$s$ and the sum over~$y$. The result then boils down to a simple calculation which we leave to the reader.
\end{proofsect}

Given a sample of the conductances~$\{a_t(e)\colon e\in E(\Z^d),\,t\in\R\}$ satisfying Assumption \ref{ass1}, we will apply Lemma~\ref{lemma-3.3} to the function~$g$ given by $(t,x)\mapsto -V(t,x,\cdot)$ where
\begin{equation}
\label{E:3.10}
V(t,x,a):=\sum_{z\colon |z|=1}a_t(x,x+z)z.
\end{equation}
However, in order to have the required ellipticity, the random walk will be driven by the collection of perturbed conductances $\{a_t^\epsilon(e)\colon t\in\R,\, e\in E(\Z^d)\}$, where
\begin{equation}
a_t^\epsilon(e):=\epsilon+a_t(e),\qquad e\in E(\Z^d).
\end{equation}
Writing $p_\epsilon^{t,s}(x,y,a)$ for the transition probability of the random walk among conductances~$a_t^\epsilon(e)$, Lemma~\ref{lemma-3.3} then ensures that
\begin{equation}
\label{E:3.12}
\varphi_\epsilon(t,x,\cdot):=\int_t^\infty\textd s\,\, \e^{-\epsilon(s-t)}\sum_{y\in\Z^d}p_\epsilon^{t,s}(x,y,\cdot)V(s,y,\cdot)
\end{equation}
obeys
\begin{equation}
\label{E:3.13}
\frac{\partial}{\partial t}\varphi_\epsilon(t,x,\cdot)+(\epsilon-\epsilon\Delta-L_t)\varphi_\epsilon(t,x,\cdot) = -V(t,x,\cdot),\qquad t\in\R,\,x\in\Z^d.
\end{equation}
Here~$\Delta$ is the discrete Laplacian on~$\Z^d$ acting as $\Delta f(x):=\sum_{y:y\sim x}[f(y)-f(x)]$ and~$L_t$ is the generator derived from the ``bare'' conductances~$a_t(e)$ as in \eqref{E:1.1}. The effect of the term $\epsilon\Delta$ is to make the generator uniformly elliptic; the term~$\epsilon$ (times identity) then represents a killing of the walk at uniform rate~$\epsilon$. 

Our aim is to show that $\varphi_\epsilon(t,x,\cdot)-\varphi_\epsilon(0,0,\cdot)$ converges, as~$\epsilon\downarrow0$, to the desired corrector~$\chi(t,x,\cdot)$ in a suitable sense. This will be done via a sequence of lemmas. First we note that~$\varphi_\epsilon$ satisfies the cocycle conditions in space-time:

\begin{lemma}
For each $\epsilon>0$, each $t\in\R$ and each~$x\in\Z^d$,
\begin{equation}
\label{E:3.14}
\varphi_\epsilon(t,x,\cdot) = \varphi_\epsilon(0,0,\cdot)\circ\tau_{t,x}.
\end{equation}
In particular, for each~$t,s\in\R$ and each~$x,y\in\Z^d$,
\begin{equation}
\label{E:3.15}
\varphi_\epsilon(t+s,x+y,\cdot)-\varphi_\epsilon(s,y,\cdot) = \varphi_\epsilon(t,x,\cdot)\circ\tau_{s,y}-\varphi_\epsilon(0,0,\cdot)\circ\tau_{s,y}.
\end{equation} 
and so $t,x\mapsto\varphi_\epsilon(t,x,\cdot)-\varphi_\epsilon(0,0,\cdot)$ satisfies \eqref{E:cocycle} for every $\varepsilon > 0$.
\end{lemma}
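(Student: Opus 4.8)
The plan is to reduce all three assertions to the space–time translation covariance of the two ingredients in the definition \eqref{E:3.12} of~$\varphi_\epsilon$, namely~$V$ from \eqref{E:3.10} and the perturbed transition kernel~$p_\epsilon^{t,s}$. First I would record the (trivial) group law $\tau_{t,x}\circ\tau_{s,y}=\tau_{t+s,x+y}$, read off directly from the definition of~$\tau$, together with the covariance of~$V$: from \eqref{E:3.10} and the definition of~$\tau_{s,x}$ one gets $V(t',x',\tau_{s,x}a)=V(t'+s,x'+x,a)$ in one line. Next I would prove the covariance of the perturbed transition probabilities,
\[
p_\epsilon^{t',s'}(x',y',\tau_{r,x}a)=p_\epsilon^{\,t'+r,\,s'+r}(x'+x,\,y'+x,\,a),\qquad t'\le s',
\]
for which the point is that the perturbed conductances are themselves covariant, $(\tau_{r,x}a)^\epsilon_{u}(w,w')=a^\epsilon_{u+r}(w+x,w'+x)$, so that the time-dependent generator in the environment~$\tau_{r,x}a$ at time~$u$ and site~$w$ equals (after the obvious spatial conjugation) the one in the environment~$a$ at time~$u+r$ and site~$w+x$. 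The displayed identity then follows either from uniqueness of the solution of the (time-inhomogeneous) Kolmogorov equations for $p_\epsilon^{t,s}(x,\cdot)$, or --- even more concretely --- by iterating the Duhamel representation written in the proof of Lemma~\ref{lemma-3.3} (with~$\pi$ and~$a$ replaced by their $\epsilon$-perturbed versions), which converges and transforms the right way term by term.

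Granting these two covariances, \eqref{E:3.14} is a change of variables. In $\varphi_\epsilon(0,0,\tau_{t,x}a)=\int_0^\infty\textd s'\,\e^{-\epsilon s'}\sum_{y'\in\Z^d}p_\epsilon^{0,s'}(0,y',\tau_{t,x}a)\,V(s',y',\tau_{t,x}a)$ I would substitute the two covariance identities (with $r:=t$), then set $s:=s'+t$, so that $\e^{-\epsilon s'}=\e^{-\epsilon(s-t)}$ and the lower limit of integration becomes~$t$, and $y:=y'+x$, which is a bijection of~$\Z^d$; the sum and integral then reassemble into exactly $\varphi_\epsilon(t,x,a)$. All interchanges are justified by absolute convergence, which is already part of Lemma~\ref{lemma-3.3} applied with $g:=-V$ (bounded, since $a_t(e)\in[0,1]$).

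Finally, for \eqref{E:3.15} I would use \eqref{E:3.14} twice together with the group law: $\varphi_\epsilon(t,x,\cdot)\circ\tau_{s,y}=\varphi_\epsilon(0,0,\cdot)\circ\tau_{t,x}\circ\tau_{s,y}=\varphi_\epsilon(0,0,\cdot)\circ\tau_{t+s,x+y}=\varphi_\epsilon(t+s,x+y,\cdot)$ and $\varphi_\epsilon(0,0,\cdot)\circ\tau_{s,y}=\varphi_\epsilon(s,y,\cdot)$, so that both sides of \eqref{E:3.15} equal $\varphi_\epsilon(t+s,x+y,\cdot)-\varphi_\epsilon(s,y,\cdot)$. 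Writing $\psi_\epsilon:=\varphi_\epsilon(\cdot,\cdot,\cdot)-\varphi_\epsilon(0,0,\cdot)$ and subtracting $\varphi_\epsilon(0,0,\cdot)$ from the four terms in \eqref{E:3.15} turns it into $\psi_\epsilon(t,x,\cdot)\circ\tau_{s,y}=\psi_\epsilon(t+s,x+y,\cdot)-\psi_\epsilon(s,y,\cdot)$, while $\psi_\epsilon(0,0,\cdot)=0$ holds by construction; this is exactly \eqref{E:cocycle}, valid for every $\epsilon>0$. The one point that needs a modicum of care is the covariance of $p_\epsilon^{t,s}$ --- and even that is a standard consequence of covariance of the generators plus uniqueness for the Kolmogorov equations; the rest is bookkeeping and a change of variables.
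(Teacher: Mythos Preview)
Your proof is correct and follows essentially the same approach as the paper: both derive \eqref{E:3.14} from the covariance identities $V(t,x,\cdot)=V(0,0,\cdot)\circ\tau_{t,x}$ and $p_\epsilon^{t,s}(x,y,\cdot)=p_\epsilon^{0,s-t}(0,y-x,\cdot)\circ\tau_{t,x}$ plugged into \eqref{E:3.12}, and then obtain \eqref{E:3.15} from \eqref{E:3.14} and the group law $\tau_{t+s,x+y}=\tau_{s,y}\circ\tau_{t,x}$. Your write-up is simply more explicit about the change of variables and about why the transition-kernel covariance holds.
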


\begin{proofsect}{Proof}
\eqref{E:3.14} follows from \eqref{E:3.12} and the identities $V(t,x,\cdot) = V(0,0,\cdot)\circ\tau_{t,x}$ and $p^{t,s}(x,y,\cdot) = p^{0,s-t}(0,y-x,\cdot)\circ\tau_{t,x}$. The second line follows from \eqref{E:3.14} and $\tau_{t+s,x+y}=\tau_{s,y}\circ\tau_{t,x}$.
\end{proofsect}

Next we observe the validity of some \emph{a priori} estimates:

\begin{lemma}
Under Assumption~\ref{ass1}, for each~$\epsilon>0$,
\begin{equation}
\label{E:3.16}
\epsilon\E\bigl|\varphi_\epsilon(0,0,\cdot)\bigr|^2
\le d
\end{equation}
and
\begin{equation}
\label{E:3.17}
\sum_{i=1,\dots, d}\E\Bigl(a_0(\hate_i)\bigl|\varphi_\epsilon(0,\hate_i,\cdot)-\varphi_\epsilon(0,0,\cdot)\bigr|^2\Bigr)
\le d\,.
\end{equation}
\end{lemma}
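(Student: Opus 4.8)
\emph{Proof plan.} I would obtain \eqref{E:3.16}--\eqref{E:3.17} by the usual energy estimate: evaluate \eqref{E:3.13} at $(t,x)=(0,0)$, take the $\R^d$-inner product with $\varphi_\epsilon(0,0,\cdot)$, and integrate over $\Omega$ with respect to $\BbbP$. Before that I would record the \emph{a priori} facts that make all of this legitimate: each coordinate of $V(t,x,\cdot)$ in \eqref{E:3.10} is a difference of two conductances lying in $[0,1]$, so $|V(t,x,\cdot)|\le\sqrt d$, and then \eqref{E:3.12} yields the crude pointwise bound $|\varphi_\epsilon(t,x,\cdot)|\le\sqrt d/\epsilon$; moreover, by Lemma~\ref{lemma-3.3} the map $t\mapsto\varphi_\epsilon(t,0,\cdot)$ is continuously differentiable with derivative bounded on $\Omega$ uniformly in $t$ (read off from \eqref{E:3.13}, every term of which is bounded). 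Consequently all expectations below are finite and differentiation under the expectation is justified.

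The crux is a pair of integration-by-parts identities for the stationary field $\varphi_\epsilon(0,0,\cdot)$ and its spatial increments. Set $\nabla_i:=\varphi_\epsilon(0,\hate_i,\cdot)-\varphi_\epsilon(0,0,\cdot)$ and $\mathcal E:=\sum_{i=1}^d\E\bigl[a_0(\hate_i)\,|\nabla_i|^2\bigr]$ (the quantity on the left of \eqref{E:3.17}). Using invariance of $\BbbP$ under the spatial shifts $\{\tau_{0,x}\colon x\in\Z^d\}$, the cocycle relation \eqref{E:3.14} (which gives $\varphi_\epsilon(0,z,\cdot)=\varphi_\epsilon(0,0,\cdot)\circ\tau_{0,z}$ for every unit vector $z$), and the bookkeeping identity $a_0(0,-\hate_i)\circ\tau_{0,\hate_i}=a_0(\hate_i,0)=a_0(\hate_i)$ (edges being unordered), a symmetrization of the sum over the $2d$ unit vectors into the $d$ pairs $\{\hate_i,-\hate_i\}$ would give
\[
\E\bigl[\varphi_\epsilon(0,0,\cdot)\cdot L_0\varphi_\epsilon(0,0,\cdot)\bigr]=-\mathcal E,
\qquad
\E\bigl[\varphi_\epsilon(0,0,\cdot)\cdot\Delta\varphi_\epsilon(0,0,\cdot)\bigr]=-\sum_{i=1}^d\E\bigl[|\nabla_i|^2\bigr]\le0,
\]
and, since $V(0,0,\cdot)=L_0 f(0)$ for the coordinate cocycle $f(y):=y$ (whose increment in direction $\hate_i$ is the constant vector $\hate_i$), the analogous identity
\[
\E\bigl[\varphi_\epsilon(0,0,\cdot)\cdot V(0,0,\cdot)\bigr]=-\sum_{i=1}^d\E\bigl[a_0(\hate_i)\,\hate_i\cdot\nabla_i\bigr].
\]
Finally, because $t\mapsto\tau_{t,0}$ preserves $\BbbP$ and $\varphi_\epsilon(t,0,\cdot)=\varphi_\epsilon(0,0,\cdot)\circ\tau_{t,0}$, the map $t\mapsto\E|\varphi_\epsilon(t,0,\cdot)|^2$ is constant, so the expectation of $\varphi_\epsilon(0,0,\cdot)$ against the time-derivative term in \eqref{E:3.13} (at $(0,0)$) vanishes.

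Combining these, \eqref{E:3.13} at $(0,0)$ paired with $\varphi_\epsilon(0,0,\cdot)$ collapses to
\[
\epsilon\,\E|\varphi_\epsilon(0,0,\cdot)|^2+\epsilon\sum_{i=1}^d\E\bigl[|\nabla_i|^2\bigr]+\mathcal E=\sum_{i=1}^d\E\bigl[a_0(\hate_i)\,\hate_i\cdot\nabla_i\bigr].
\]
By Cauchy--Schwarz --- first $|\hate_i\cdot\nabla_i|\le|\nabla_i|$, then $\E[a_0(\hate_i)|\nabla_i|]\le\E[a_0(\hate_i)]^{1/2}\,\E[a_0(\hate_i)|\nabla_i|^2]^{1/2}$ together with $\E[a_0(\hate_i)]\le1$, then Cauchy--Schwarz on the sum over $i$ --- the right-hand side is at most $\sqrt d\,\mathcal E^{1/2}$. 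Dropping the two nonnegative $\epsilon$-terms yields $\mathcal E\le\sqrt d\,\mathcal E^{1/2}$, i.e. $\mathcal E\le d$, which is \eqref{E:3.17}; substituting this back gives $\epsilon\,\E|\varphi_\epsilon(0,0,\cdot)|^2\le\sqrt d\,\mathcal E^{1/2}-\mathcal E\le\max_{u\ge0}(\sqrt d\,\sqrt u-u)=d/4\le d$, which is \eqref{E:3.16}.

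The only step with genuine content is the integration-by-parts identity, and there the delicate point is the correct handling of the conductance weight $a_0(\hate_i)$ under the spatial shifts $\tau_{0,\pm\hate_i}$ (recognizing $a_0(0,-\hate_i)\circ\tau_{0,\hate_i}$ as $a_0(\hate_i)$ precisely because the edge is unordered), together with the parallel treatment of the drift term $V$ via the coordinate cocycle $y\mapsto y$. The remaining ingredients --- nonnegativity of the Laplacian term, vanishing of the time-derivative term by stationarity, and the closing scalar optimization --- are routine.
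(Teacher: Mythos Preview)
Your proof is correct and follows essentially the same route as the paper: pair \eqref{E:3.13} at $(0,0)$ with $\varphi_\epsilon(0,0,\cdot)$, use stationarity to kill the time derivative, symmetrize the $L_0$, $\Delta$ and $V$ terms via \eqref{E:3.14} and shift-invariance, and close with Cauchy--Schwarz. The only cosmetic differences are that the paper keeps the $\epsilon\Delta$ and $L_t$ contributions bundled as the $a_0^\epsilon$-weighted energy, and that your final scalar optimization yields the slightly sharper $\epsilon\,\E|\varphi_\epsilon(0,0,\cdot)|^2\le d/4$.
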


\begin{proofsect}{Proof}
Recall that $\varphi_\epsilon$ is bounded; by \eqref{E:3.13} and the definition of~$V$ the same applies to its time derivative as well. This justifies exchanges of limits and expectations in
\begin{equation}
\begin{aligned}
\E\Bigl(\varphi_\epsilon(0,0,\cdot)\cdot\frac{\partial}{\partial t}\varphi_\epsilon(0,0,\cdot)\Bigr)
&=\lim_{t\downarrow0}\,\frac1t\E\Bigl(\varphi_\epsilon(0,0,\cdot)\cdot\bigl(\varphi_\epsilon(t,0,\cdot)-\varphi_\epsilon(0,0,\cdot)\bigr)\Bigr)
\\
&=\lim_{t\downarrow0}\,\frac1t\E\Bigl(\varphi_\epsilon(0,0,\cdot)\cdot\bigl(\varphi_\epsilon(-t,0,\cdot)-\varphi_\epsilon(0,0,\cdot)\bigr)\Bigr)
\\
&=-\E\Bigl(\varphi_\epsilon(0,0,\cdot)\cdot\frac{\partial}{\partial t}\varphi_\epsilon(0,0,\cdot)\Bigr)\,,
\end{aligned}
\end{equation}
where the middle equality follows from \eqref{E:3.14} and invariance of $\mathbb{P}$ under $\tau_{t,0}$. We thus have
\begin{equation}
\label{E:3.170}
\E\Bigl(\varphi_\epsilon(0,0,\cdot)\cdot\frac{\partial}{\partial t}\varphi_\epsilon(0,0,\cdot)\Bigr)=0.
\end{equation}
Taking the inner product of \eqref{E:3.13} at~$x=0$ and~$t=0$ with~$\varphi_\epsilon(0,0,\cdot)$ and then taking expectation yields, on account of~\eqref{E:3.170},
\begin{equation}
\label{E:3.171}
\begin{aligned}
\epsilon\E|\varphi_{\epsilon}(0,0,\cdot)|^2&
+\sum_{i=1,\dots,d}\E\Bigl(a_0^\epsilon(\hate_i)\bigl|\varphi_\epsilon(0,\hate_i,\cdot)-\varphi_\epsilon(0,0,\cdot)\bigr|^2\Bigr)
\\
&= - \E\bigl(V(0,0,\cdot)\cdot\varphi_\epsilon(0,0,\cdot)\bigr)
\\
&\qquad=\sum_{i=1,\dots,d}\E\Bigl(a_0(\hate_i) \,  \hate_i \cdot\bigl(\varphi_\epsilon(0,\hate_i,\cdot)-\varphi_\epsilon(0,0,\cdot)\bigr)\Bigr)
\\
&\qquad\qquad\le\biggl[ d\sum_{i=1,\dots,d}\E\Bigl(a_0(\hate_i)\bigl|\varphi_\epsilon(0,\hate_i,\cdot)-\varphi_\epsilon(0,0,\cdot)\bigr|^2\Bigr)\biggr]^{1/2},
\end{aligned}
\end{equation}
where we used \eqref{E:3.14} and simple symmetrization for the second term on the left hand side and also to obtain the middle equality, and then invoked the Cauchy-Schwarz inequality along with $a_0(e)\le1$ to get the last inequality. Since~$a_0^\epsilon(e)\ge a_0(e)$, foregoing the term $\epsilon\E|\varphi_{\epsilon}(0,0,\cdot)|^2 \geq 0$ yields \eqref{E:3.17} and, by plugging that in on the right-hand side of \eqref{E:3.171}, also \eqref{E:3.16}.
\end{proofsect}

These bounds have the following consequences:

\begin{lemma}
\label{lemma-3.5}
Under the assumptions of Theorem~\ref{prop-2.1}, for $p:= 2/(1+1/q)$ with~$q$ as in \eqref{E:2.12a},  the following holds uniformly on compact sets of $(t,x)\in\R\times\Z^d$\,:
\begin{equation}
\label{E:3.21}
\epsilon\varphi_\epsilon(t,x,\cdot)\,\underset{\epsilon\downarrow0}\longrightarrow\,0\quad\text{\rm in }L^p(\BbbP)
\end{equation}
and
\begin{equation}
\label{E:3.22}
\sup_{0<\epsilon<1}\E\bigl|\varphi_\epsilon(t,x,\cdot)-\varphi_\epsilon(0,0,\cdot)\bigr|^p<\infty.
\end{equation}
\end{lemma}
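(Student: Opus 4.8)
The plan is to derive both claims from the two \emph{a priori} estimates \eqref{E:3.16} and \eqref{E:3.17}, together with the cocycle representation \eqref{E:3.14} and the moment assumption \eqref{E:2.12a}. For \eqref{E:3.21}, the bound \eqref{E:3.16} gives $\E|\epsilon\varphi_\epsilon(0,0,\cdot)|^2 = \epsilon^2\E|\varphi_\epsilon(0,0,\cdot)|^2 \le \epsilon d \to 0$, so $\epsilon\varphi_\epsilon(0,0,\cdot)\to0$ in $L^2(\BbbP)$, hence in $L^p(\BbbP)$ since $p\le 2$. To pass from $(0,0)$ to a general $(t,x)$ uniformly on compacts, I would write $\varphi_\epsilon(t,x,\cdot)$ telescopically along a fixed finite lattice path from $0$ to $x$ and use the cocycle identity \eqref{E:3.14}: each spatial increment $\varphi_\epsilon(s,z+\hate_i,\cdot)-\varphi_\epsilon(s,z,\cdot)$ is, up to a shift $\tau_{s,z}$ that preserves $\BbbP$, one of the increments $\varphi_\epsilon(0,\hate_i,\cdot)-\varphi_\epsilon(0,0,\cdot)$ whose $L^2(\BbbP)$ norm is controlled (after dividing out the conductance, see below), and the time increment over $[0,t]$ is handled by integrating \eqref{E:3.13}, whose right-hand side and $\epsilon\Delta$, $L_t$ terms are all bounded once the spatial increments are. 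Multiplying by $\epsilon$ then kills everything; since the path has length $|x|_1$ and all constants are uniform in $s$, the convergence is uniform on compact sets.

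For \eqref{E:3.22}, the key point is to convert the \emph{weighted} $L^2$-control of the gradient in \eqref{E:3.17} — which only bounds $a_0(e)^{1/2}$ times the increment — into an unweighted $L^p$-bound on the increment itself. This is exactly where the moment hypothesis \eqref{E:2.12a} enters, but via $w_0(e)$ rather than $a_0(e)$: I would first upgrade \eqref{E:3.17} to its $w$-weighted analogue. Since $\varphi_\epsilon$ solves the heat-type equation \eqref{E:3.13}, which is \eqref{E:2.22} with $L_t$ replaced by $\epsilon\Delta+L_t$ (still nearest-neighbor, values in $[\epsilon,1/\epsilon]$) and $f := -V-\epsilon\varphi_\epsilon$, the corollary to Lemma~\ref{L:MO_basic} (in its expectation form \eqref{E:2.47}, using $u=\varphi_\epsilon$ which satisfies the stationarity $u(t,x,\cdot)=u(0,0,\cdot)\circ\tau_{t,x}$ by \eqref{E:3.14}) gives
\begin{equation}
\label{E:plan-w}
\E\Bigl(\,\sum_{i=1,\dots,d} w_0(\hate_i)\bigl|\varphi_\epsilon(0,\hate_i,\cdot)-\varphi_\epsilon(0,0,\cdot)\bigr|^2\Bigr)
\le c\,\E\Bigl(\,\sum_{i=1,\dots,d} a_0^\epsilon(\hate_i)\bigl|\varphi_\epsilon(0,\hate_i,\cdot)-\varphi_\epsilon(0,0,\cdot)\bigr|^2\Bigr)+c\,\E|V+\epsilon\varphi_\epsilon|^2,
\end{equation}
and the right-hand side is bounded uniformly in $\epsilon\in(0,1)$ by \eqref{E:3.17}, the boundedness of $V$, \eqref{E:3.16} (giving $\E|\epsilon\varphi_\epsilon|^2\le\epsilon d\le d$), and $a_0^\epsilon(e)=\epsilon+a_0(e)\le 1+a_0(e)\le 2$. (One small wrinkle: Lemma~\ref{L:MO_basic} is stated with conductances in $[0,1]$, so either one applies it after rescaling by $1/(1+\epsilon)$ and absorbing the $O(1)$ constant, or one notes the proof only used an upper bound on the rates, which here is $1+\epsilon\le 2$; I would spell this out.)

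With \eqref{E:plan-w} in hand, Hölder's inequality with exponents $q$ and $q':=q/(q-1)$ applied to $|\Delta_i\varphi_\epsilon|^p = \bigl(w_0(\hate_i)|\Delta_i\varphi_\epsilon|^2\bigr)^{p/2} w_0(\hate_i)^{-p/2}$ — where $\Delta_i\varphi_\epsilon := \varphi_\epsilon(0,\hate_i,\cdot)-\varphi_\epsilon(0,0,\cdot)$ and $p/2 = 1/(1+1/q) = q/(q+1)$ is chosen precisely so that $(p/2)\cdot 2 = p$ pairs with the conjugate exponent hitting $w_0^{-p/2}$ at power $q$ — yields
\begin{equation}
\E|\Delta_i\varphi_\epsilon|^p \le \Bigl(\E\bigl[w_0(\hate_i)|\Delta_i\varphi_\epsilon|^2\bigr]\Bigr)^{p/2}\Bigl(\E\bigl[w_0(\hate_i)^{-q}\bigr]\Bigr)^{1-p/2},
\end{equation}
which is finite and uniform in $\epsilon$ by \eqref{E:plan-w} and \eqref{E:2.12a}. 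Finally, for a general $(t,x)$ I again telescope along a lattice path from $0$ to $x$ and integrate \eqref{E:3.13} over $[0,t]$ as in the previous paragraph, using $\|g_1+\cdots+g_m\|_{L^p}\le m(\|g_1\|_{L^p}\vee\cdots\vee\|g_m\|_{L^p})$ (or subadditivity of $\|\cdot\|_{L^p}^{p\wedge1}$) to combine the finitely many increments, each controlled uniformly in $\epsilon$ by the shift-invariance of $\BbbP$. I expect the main obstacle to be the bookkeeping in this last step — tracking how the time-integration of \eqref{E:3.13} re-expresses $\varphi_\epsilon(t,x,\cdot)$ in terms of spatial increments over a time interval (so that one needs the increment bound not just at time $0$ but integrated over $[0,t]$, which is where the local-in-time $L^p$ statement $\chi(\cdot,x,\cdot)\in L^{p,\mathrm{loc}}(\R)\otimes L^p(\BbbP)$ will eventually come from) — but this is routine given the boundedness of $\varphi_\epsilon$ and $\partial_t\varphi_\epsilon$ already noted in the proof of \eqref{E:3.16}.
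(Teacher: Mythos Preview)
Your proposal is essentially correct and follows the same route as the paper: \eqref{E:3.16} gives \eqref{E:3.21}, while \eqref{E:3.22} is obtained by converting the $a$-weighted energy bound \eqref{E:3.17} into a $w$-weighted one via \eqref{E:2.47}, then unweighting by H\"older using \eqref{E:2.12a}, and finally telescoping in space and integrating \eqref{E:3.13} in time.

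Two small remarks. First, for \eqref{E:3.21} you are working much harder than necessary: by \eqref{E:3.14} one has $\varphi_\epsilon(t,x,\cdot)=\varphi_\epsilon(0,0,\cdot)\circ\tau_{t,x}$, and since $\BbbP$ is $\tau_{t,x}$-invariant this gives $\|\epsilon\varphi_\epsilon(t,x,\cdot)\|_{L^p(\BbbP)}=\|\epsilon\varphi_\epsilon(0,0,\cdot)\|_{L^p(\BbbP)}$ for \emph{every} $(t,x)$, so no telescoping is needed there at all. Second, the paper handles the wrinkle about $a_t^\epsilon\notin[0,1]$ differently from you: rather than applying \eqref{E:2.47} with the perturbed generator $L_t+\epsilon\Delta$, it keeps the original $L_t$ and pushes the extra term $\epsilon\Delta\varphi_\epsilon$ into the forcing $f$, then bounds $\epsilon^2\E|\Delta\varphi_\epsilon(0,0,\cdot)|^2\le c\,\epsilon^2\E|\varphi_\epsilon(0,0,\cdot)|^2\le c\epsilon d$ via \eqref{E:3.14} and \eqref{E:3.16}. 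Your rescaling fix also works, but the paper's version avoids touching the hypothesis of Lemma~\ref{L:MO_basic} altogether.
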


\begin{proofsect}{Proof}
As $p\in(1,2)$, the first part of the claim follows immediately from \eqref{E:3.16}, H\"older's inequality and \eqref{E:3.14}. For the second part we write $\varphi_\epsilon(t,x,\cdot)-\varphi_\epsilon(0,0,\cdot)= \int_0^t \partial_s \varphi_\epsilon(s,x,\cdot) \text{d}s+ \varphi_\epsilon(0,x,\cdot)- \varphi_\epsilon(0,0,\cdot)$, write the spatial difference as a telescoping sum, and then use \eqref{E:3.15} and \eqref{E:3.13} to obtain
\begin{multline}
\label{E:3.23}
\quad\Bigl(\E\bigl|\varphi_\epsilon(t,x,\cdot)-\varphi_\epsilon(0,0,\cdot)\bigr|^p\Bigr)^{1/p}
\le \epsilon t\Bigl(\E\bigl|\varphi_\epsilon(0,0,\cdot)\bigr|^p\Bigr)^{1/p}
\\+\bigl(|x|_1+2d(1+\epsilon)t\bigr)
\max_{i=1,\dots,d}\Bigl(\E\bigl|\varphi_\epsilon(0,\hate_i,\cdot)-\varphi_\epsilon(0,0,\cdot)\bigr|^p\Bigr)^{1/p}\,.
\quad
\end{multline}
The first term on the right is bounded thanks to \eqref{E:3.16}. For the expectations in the second term, we invoke the weights~$w_t(e)$ from \eqref{E:1.16} and Cauchy-Schwarz to get
\begin{multline}
\quad
\E\bigl|\varphi_\epsilon(0,\hate_i,\cdot)-\varphi_\epsilon(0,0,\cdot)\bigr|^p
\\
\le \Bigl(\E\bigl(w_0(\hate_i)^{-\frac{p}{2-p}}\bigr)\Bigr)^{\frac{2-p}2}\,\biggl(\,\,\E\sum_{i=1,\dots,d}w_0(\hate_i)\bigl|\varphi_\epsilon(0,\hate_i,\cdot)-\varphi_\epsilon(0,0,\cdot)\bigr|^2\biggr)^{p/2}.
\quad
\end{multline}
Since $\frac{p}{2-p} = q$, the first term on the right-hand side is bounded thanks to \eqref{E:2.12a}. Using \eqref{E:2.47}, \eqref{E:3.13}, \eqref{E:3.14} and the identity $(a+b+c)^2\le3a^2+3b^2+3c^2$, the second expectation on the right is no larger than
\begin{multline}
\label{E:3.25}
\quad
48d^2\ci\sum_{i=1,\dots,d}\E\Bigl(a_0(\hate_i)\bigl|\varphi_\epsilon(0,\hate_i,\cdot)-\varphi_\epsilon(0,0,\cdot)\bigr|^2\Bigr)
\\
+72d \ci\Bigl[\epsilon^2\E\bigl(|\varphi_\epsilon(0,0,\cdot)|^2\bigr)+\epsilon^2\E\bigl(|\Delta\varphi_\epsilon(0,0,\cdot)|^2\bigr)+\E\bigl(|V(0,0,\cdot)|^2\bigr)\Bigr].
\quad
\end{multline}
By \twoeqref{E:3.16}{E:3.17}, \eqref{E:3.14}, the fact that~$|V|\le 2d$, and bounding $|\Delta\varphi_\epsilon|$  in terms of $|\varphi_\epsilon|$ (noting that the lattice gradient is a bounded operator), this is bounded uniformly in~$\epsilon\in(0,1)$.
\end{proofsect}

We now set
\begin{equation}
\label{E:3.26chi}
\chi_\epsilon(t,x,\cdot):=\varphi_\epsilon(t,x,\cdot)-\varphi_\epsilon(0,0,\cdot)
\end{equation}
and note: 

\begin{proposition}
\label{lemma-3.6}
Under the assumptions of Theorem~\ref{prop-2.1}, and with~$p:=2/(1+1/q)>1$ for~$q$ as in \eqref{E:2.12a}, there is a sequence $\epsilon_n\downarrow0$ and a measurable function $\chi\colon\R\times\Z^d\times\Omega\to\R^d$ such that for each~$x\in\Z^d$,
\begin{equation}
\label{E:3.27z}
\chi_{\epsilon_n}(\cdot,x,\cdot)\,\underset{n\to\infty}\longrightarrow\,\chi(\cdot,x,\cdot)\quad\text{\rm weakly in }L^{p,\text{\rm loc}}(\R)\otimes L^p(\BbbP)
\end{equation}
and, for each~$t\in\R$,
\begin{equation}
\label{E:3.27a}
\chi_{\epsilon_n}(t,x,\cdot)\,\underset{n\to\infty}\longrightarrow\,\chi(t,x,\cdot)\quad\text{\rm weakly in }L^p(\BbbP).
\end{equation}
Moreover, on a set of full~$\BbbP$-measure, $\chi$ is normalized so that $\chi(0,0,\cdot)=0$, obeys the cocycle conditions
\begin{equation}
\label{E:3.28}
\chi(t+s,x+y,\cdot)-\chi(t,x,\cdot) = \chi(s,y,\cdot)\circ\tau_{t,x},\qquad t,s\in\R,\,x,y\in\Z^d,\,
\end{equation}
and $t\mapsto\chi(t,x,\cdot)$ is continuous and weakly differentiable with
\begin{equation}
\label{E:3.29}
\frac{\partial}{\partial t}\chi(t,x,\cdot) + L_t\chi(t,x,\cdot) = -V(t,x,\cdot)
\end{equation}
for all~$x\in\Z^d$ and all~$t\in\R$.
\end{proposition}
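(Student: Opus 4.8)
The plan is to obtain $\chi$ as a weak-$L^p(\BbbP)$ limit of a subsequence of the family $\chi_\epsilon$ from \eqref{E:3.26chi}, and then transport to $\chi$ each structural property of $\chi_\epsilon$ recorded above. Since $p>1$, $L^p(\BbbP)$ is reflexive, so norm-bounded subsets are relatively weakly compact and, $L^{p'}(\BbbP)$ being separable, the weak topology is metrizable on them. By \eqref{E:3.22}, $\{\chi_\epsilon(t,x,\cdot)\}_{0<\epsilon<1}$ is bounded in $L^p(\BbbP)$, uniformly for $(t,x)$ in compacts. For the time regularity I would rewrite \eqref{E:3.13} as $\partial_t\varphi_\epsilon(t,x,\cdot)=-V(t,x,\cdot)-\epsilon\varphi_\epsilon(t,x,\cdot)+\epsilon\Delta\varphi_\epsilon(t,x,\cdot)-L_t\varphi_\epsilon(t,x,\cdot)$ and bound the right-hand side in $L^p(\BbbP)$: $|V|\le 2d$; the terms $\epsilon\varphi_\epsilon(t,x,\cdot)$ and $\epsilon\Delta\varphi_\epsilon(t,x,\cdot)$ are $O(\sqrt{\epsilon})$ by \eqref{E:3.16} (together with the cocycle \eqref{E:3.14} and \eqref{E:3.22}); and $\|L_t\varphi_\epsilon(t,x,\cdot)\|_{L^p(\BbbP)}\le\sum_{z\sim x}\|\varphi_\epsilon(t,z,\cdot)-\varphi_\epsilon(t,x,\cdot)\|_{L^p(\BbbP)}$ is, using $a_t(e)\le1$, the cocycle and \eqref{E:3.22}, bounded uniformly in $\epsilon<1$ and $t\in\R$. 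Since $\partial_t\chi_\epsilon=\partial_t\varphi_\epsilon$, the maps $t\mapsto\chi_\epsilon(\cdot,x,\cdot)$ are equi-Lipschitz from $\R$ into $L^p(\BbbP)$ with $\chi_\epsilon(0,x,\cdot)$ bounded; the Arzel\`a--Ascoli theorem for maps into $(L^p(\BbbP),\text{weak})$ plus a diagonal extraction over $x\in\Z^d$ and over the compact intervals $[-T,T]$, $T\in\N$, then yield $\epsilon_n\downarrow0$ and a measurable $\chi$ with $\chi_{\epsilon_n}(t,x,\cdot)\to\chi(t,x,\cdot)$ weakly in $L^p(\BbbP)$, uniformly for $t$ in compacts; dominated convergence upgrades this to the space--time statement \eqref{E:3.27z}. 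Weak lower semicontinuity of the $L^p$ norm together with the equi-Lipschitz bound shows $t\mapsto\chi(t,x,\cdot)$ is Lipschitz, hence continuous, into $L^p(\BbbP)$, and $\chi(0,0,\cdot)=0$ since $\chi_\epsilon(0,0,\cdot)\equiv0$.

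Passing to the limit along $\epsilon_n$ transfers the remaining properties. For the cocycle, \eqref{E:3.15} rewrites as $\chi_\epsilon(t+s,x+y,\cdot)-\chi_\epsilon(t,x,\cdot)=\chi_\epsilon(s,y,\cdot)\circ\tau_{t,x}$ for every $\epsilon$; since $\tau_{t,x}$ preserves $\BbbP$, composition with it is a bounded (isometric) linear operator on $L^p(\BbbP)$, hence weakly continuous, so \eqref{E:3.27a} gives \eqref{E:3.28} $\BbbP$-a.s.\ for each fixed $(t,s,x,y)$. For \eqref{E:3.29}, test \eqref{E:3.13} (at the given $x$) against $\phi(t)g$ with $\phi\in C_c^\infty(\R)$ and $g\in L^{p'}(\BbbP)$, integrate by parts in $t$, and let $n\to\infty$: the contributions of $\epsilon\varphi_{\epsilon_n}$ and $\epsilon\Delta\varphi_{\epsilon_n}$ vanish by the $O(\sqrt{\epsilon})$ bounds, the term $V(t,x,\cdot)$ is fixed, and since $L_t\varphi_{\epsilon_n}(t,x,\cdot)=\sum_{z\sim x}a_t(x,z)\bigl[\chi_{\epsilon_n}(t,z,\cdot)-\chi_{\epsilon_n}(t,x,\cdot)\bigr]$ with $(t,\omega)\mapsto\phi(t)a_t(x,z)g(\omega)$ an admissible space--time test function, the drift term converges by \eqref{E:3.27z}. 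This identifies $\chi(\cdot,x,\cdot)$ as a weak solution of \eqref{E:3.29}; equivalently, using the $L^p$-continuity, $\chi(t,x,\cdot)=\chi(0,x,\cdot)-\int_0^t\bigl[V(s,x,\cdot)+L_s\chi(s,x,\cdot)\bigr]\,\textd s$ in $L^p(\BbbP)$. Choosing from this identity a jointly measurable version of $\chi$ that is continuous in $t$ for every $\omega$ in a full-measure set, and intersecting with the full-measure events carrying \eqref{E:3.28} for a countable dense set of $(t,s)$ and all $x,y\in\Z^d$, one gets a single full-measure event on which \eqref{E:3.28} holds for all real $t,s$ by continuity in $(t,s)$.

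I expect the crux to be the uniform-in-$\epsilon$ (and in $t$) control of $\|\partial_t\varphi_\epsilon(t,x,\cdot)\|_{L^p(\BbbP)}$: it is what allows the fixed-time weak convergence \eqref{E:3.27a} to be produced along a single sequence $\epsilon_n$ via Arzel\`a--Ascoli rather than merely along a dense set of times, and it supplies the $L^p$-Lipschitz modulus needed to promote both the cocycle and the PDE from holding $\BbbP$-a.s.\ for each fixed time to holding $\BbbP$-a.s.\ for all times simultaneously. That bound rests, through \eqref{E:3.22} and hence Lemma~\ref{lemma-3.5}, on the weighted-energy conversion \eqref{E:2.47} and the negative-moment assumption \eqref{E:2.12a}, so this is where the hypotheses of Theorem~\ref{prop-2.1} genuinely enter. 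A secondary point requiring care is the measure-theoretic bookkeeping: realizing the several modes of convergence along one subsequence, checking their mutual consistency, and exhibiting one common full-measure event on which both \eqref{E:3.28} and the integral form of \eqref{E:3.29} hold identically in $t$.
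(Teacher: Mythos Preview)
Your proposal is correct and takes a genuinely different route from the paper's own proof.

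The paper extracts two separate weak limits: a spatial part $\rho(x,\cdot)$ as the weak-$L^p(\BbbP)$ limit of $\chi_{\epsilon_n}(0,x,\cdot)$, and a temporal part $\phi(t,\cdot)$ as the weak limit of $\chi_{\epsilon_n}(\cdot,0,\cdot)$ in $L^{p,\text{loc}}(\R)\otimes L^p(\BbbP)$. The main work is then a dedicated lemma (Lemma~\ref{lem2.cont}) which introduces an auxiliary process $\Xi_\epsilon(t,\cdot)=\chi_\epsilon(t,0,\cdot)-\int_0^t L_s\chi_\epsilon(s,0,\cdot)\,\textd s$, shows it converges strongly to $-\int_0^t V(s,0,\cdot)\,\textd s$, and uses uniqueness of weak limits to force $\phi$ to satisfy an integral equation; this equation is then taken as the \emph{definition} of a pathwise continuous version $\tilde\phi$. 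Finally $\chi(t,x,\cdot):=\rho(x,\cdot)+\tilde\phi(t,\cdot)\circ\tau_{0,x}$.

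Your approach bypasses this decomposition entirely. You observe that $\partial_t\chi_\epsilon=\partial_t\varphi_\epsilon$ is bounded in $L^p(\BbbP)$ uniformly in $\epsilon$ and~$t$ (this uses exactly the same inputs --- \eqref{E:3.16}, \eqref{E:3.22}, the cocycle, and $a_t(e)\le1$ --- that the paper feeds into its auxiliary lemma), and then invoke Arzel\`a--Ascoli for maps into $(L^p(\BbbP),\text{weak})$ to obtain \eqref{E:3.27a} for \emph{every} $t$ along one subsequence, with the $L^p$-Lipschitz modulus inherited by the limit. The continuous pathwise version then comes from the integral form of \eqref{E:3.29}, exactly as you indicate. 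This is more streamlined: it delivers the fixed-time convergence \eqref{E:3.27a} directly, whereas in the paper's argument one has to recover it from the space--time limit and the integral equation. What the paper's route buys in return is the explicit splitting $\chi=\rho+\tilde\phi\circ\tau_{0,x}$, which is convenient for the later growth estimate (Lemma~\ref{lemma-growth}); in your framework that lemma would be proved by working with the integral equation for $\chi$ directly.

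Two small remarks: there is a harmless sign slip in your expression for $\partial_t\varphi_\epsilon$ (the $L_t\varphi_\epsilon$ term carries a plus sign in \eqref{E:3.13}); and the extension of the cocycle \eqref{E:3.28} from a countable dense set of $(t,s)$ to all of $\R^2$ via continuity is delicate on the right-hand side, since $t\mapsto\chi(s,y,\tau_{t,x}\omega)$ is not obviously continuous pointwise in~$\omega$ --- but the paper glosses over the same point, and for all subsequent uses of the cocycle in the paper the ``for each fixed $(t,s,x,y)$, $\BbbP$-a.s.'' version suffices.
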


The bounds of Lemma \ref{lemma-3.5} will readily allow us to take weak limits as $\varepsilon \downarrow 0$. A slightly subtle point, see Lemma \ref{lem2.cont} below, is to choose a version of the resulting limiting process which has continuous trajectories. Once this is achieved, the proof of Proposition~\ref{lemma-3.6} will quickly follow.

We start with a few observations. We are henceforth tacitly working under the assumptions of Proposition~\ref{lemma-3.6}. Let~$r$ be the H\"older conjugate of~$p$; the fact that~$p>1$ (and the fact that~$\Omega$ is a standard Borel space, hence separable) ensures that the dual space $L^r(\R)\otimes L^r(\BbbP)$ is separable. In light of the uniform bound \eqref{E:3.22}, Cantor's diagonal argument ensures the existence of a sequence~$\epsilon_n\downarrow0$ and functions~$ \phi\colon\R\times\Omega\to\R^d$ and $\rho\colon\Z^d\times\Omega\to\R^d$ such that for any~$\xi\in L^r(\R)\otimes L^r(\BbbP)$ with compact support in the first coordinate,
\begin{equation}
\label{E:3.30}
\int\textd t\,\E\bigl(\xi(t,\cdot)\cdot\chi_{\epsilon_n}(t,0,\cdot)\bigr)
\,\underset{n\to\infty}\longrightarrow\,\int\textd t\,\E\bigl(\xi(t,\cdot)\cdot \phi(t,\cdot)\bigr)
\end{equation}
and, for any~$\xi\in L^r(\BbbP)$ and any~$x\in\R^d$,
\begin{equation}
\label{E:3.31}
\E\bigl(\xi\cdot\chi_{\epsilon_n}(0,x,\cdot)\bigr)
\,\underset{n\to\infty}\longrightarrow\, \E\bigl(\xi\cdot\rho(x,\cdot)\bigr).
\end{equation}
Standard arguments give
\begin{equation}
\label{E:3.32}
\phi\in L^{p,\text{loc}}(\R)\otimes L^p(\BbbP)
\quad\text{and}\quad
\rho(x,\cdot)\in L^p(\BbbP)
\end{equation}
for every~$x\in\Z^d$. A key point in what follows is:

\begin{lemma} 
\label{lem2.cont}
The process $\{ \phi(t,\cdot)\colon \, t \in \mathbb{R} \}$ admits a version $\{ \tilde\phi(t,\cdot) \colon \, t \in \mathbb{R} \}$ which has $\mathbb{P}$-a.s. continuous sample paths. Moreover, on a set of full~$\BbbP$-measure, this version obeys
\begin{equation}
\label{eq:cont6a}
\tilde{\phi}(t,\cdot) = -  \int_0^t\textd s\Bigl( V(s,0, \cdot) + (L_0\rho)(0,\cdot)\circ\tau_{s,0}+\sum_{z: \, |z|=1}a_s(0,z)\tilde\phi(s,\cdot)\circ\tau_{0,z} \Bigr),
\end{equation}
for all~$t\in\R$.
\end{lemma}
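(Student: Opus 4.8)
The plan is to obtain $\tilde\phi$ as a limit of the maps $t\mapsto\chi_{\epsilon_n}(t,0,\cdot)$, along the sequence $(\epsilon_n)$ already extracted for \eqref{E:3.30}--\eqref{E:3.31} (refined below), and to read off \eqref{eq:cont6a} by passing to the limit in the time-integral of \eqref{E:3.13} at $x=0$. The preliminary step I would establish is a uniform-in-$\epsilon$ Lipschitz bound in time in $L^p(\BbbP)$. Integrating \eqref{E:3.13} at $x=0$ gives $\chi_\epsilon(t,0,\cdot)-\chi_\epsilon(t',0,\cdot)=-\int_{t'}^t R_\epsilon(s,\cdot)\,\textd s$ with $R_\epsilon(s,\cdot):=V(s,0,\cdot)+\epsilon\varphi_\epsilon(s,0,\cdot)-\epsilon\Delta\varphi_\epsilon(s,0,\cdot)-L_s\varphi_\epsilon(s,0,\cdot)$, and one checks $\sup_{0<\epsilon<1}\sup_{s\in\R}\Vert R_\epsilon(s,\cdot)\Vert_{L^p(\BbbP)}<\infty$: the term $V$ is bounded by $2d$; the two terms carrying an explicit factor $\epsilon$ are bounded in $L^2(\BbbP)\subseteq L^p(\BbbP)$ via \eqref{E:3.16} and the boundedness of the lattice gradient; and $L_s\varphi_\epsilon(s,0,\cdot)$ is bounded in $L^p(\BbbP)$ uniformly in $s$ and $\epsilon\in(0,1)$ by rerunning the Cauchy--Schwarz estimate from the proof of Lemma~\ref{lemma-3.5} (using $\E(w_0(e)^{-q})<\infty$, cf. \eqref{E:2.12a}, and the \emph{a priori} bound \eqref{E:3.17}), the stationarity \eqref{E:3.14}, $a_s(e)\le1$ and the triangle inequality. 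Hence $\Vert\chi_\epsilon(t,0,\cdot)-\chi_\epsilon(t',0,\cdot)\Vert_{L^p(\BbbP)}\le c|t-t'|$ uniformly in $\epsilon\in(0,1)$.

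Combining this with the uniform bound \eqref{E:3.22} and the separability of $L^r(\BbbP)$, an Arzel\`a--Ascoli argument in the weak-$L^p(\BbbP)$ topology on each interval $[-N,N]$, followed by a diagonal extraction, produces a further subsequence, still denoted $(\epsilon_n)$, and a map $\tilde\phi\colon\R\times\Omega\to\R^d$ with $\chi_{\epsilon_n}(t,0,\cdot)\to\tilde\phi(t,\cdot)$ weakly in $L^p(\BbbP)$ for every $t\in\R$, and with $\Vert\tilde\phi(t,\cdot)-\tilde\phi(t',\cdot)\Vert_{L^p(\BbbP)}\le c|t-t'|$ by weak lower-semicontinuity of the norm. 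Testing $\chi_{\epsilon_n}(\cdot,0,\cdot)$ against product functions $\eta(t)\xi(\cdot)$ and using dominated convergence in $t$ (with \eqref{E:3.22} supplying the majorant) shows that $\tilde\phi$ coincides with $\phi$ as an element of $L^{p,\mathrm{loc}}(\R)\otimes L^p(\BbbP)$, so it is a version of $\phi$.

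For the integral equation, fix $t$ and write $\chi_\epsilon(t,0,\cdot)=-\int_0^t R_\epsilon(s,\cdot)\,\textd s$, rewriting each increment $\varphi_\epsilon(s,z,\cdot)-\varphi_\epsilon(s,0,\cdot)$ entering $\Delta\varphi_\epsilon$ and $L_s\varphi_\epsilon$ in terms of $\chi_\epsilon(0,z,\cdot)$ and translates of $\chi_\epsilon(s,0,\cdot)$ via the cocycle \eqref{E:3.14}--\eqref{E:3.15}. For each fixed $s$: the summands carrying an explicit $\epsilon_n$ tend to $0$ strongly in $L^p(\BbbP)$ (by \eqref{E:3.16}, \eqref{E:3.21} and the uniform $L^p$-bounds), while each remaining summand is the fixed bounded coefficient $a_s(0,z)$ times a measure-preserving translate of $\chi_{\epsilon_n}(0,z,\cdot)$ or $\chi_{\epsilon_n}(s,0,\cdot)$; since multiplication by the $L^\infty$-function $a_s(0,z)$ and composition with $\tau_{0,z}$ and $\tau_{s,0}$ are bounded operators on $L^p(\BbbP)$, weak convergence is preserved and $R_{\epsilon_n}(s,\cdot)$ converges weakly in $L^p(\BbbP)$ to the integrand of \eqref{eq:cont6a}. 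Pairing with an arbitrary $\xi\in L^r(\BbbP)$ and applying dominated convergence to the scalar time-integrals (dominated by $\Vert\xi\Vert_{L^r}$ times the uniform bound of the first step) yields \eqref{eq:cont6a}, first in $L^p(\BbbP)$ for each $t$; defining $\tilde\phi(t,\omega)$ henceforth to be the right-hand side of \eqref{eq:cont6a}, which only alters the earlier $\tilde\phi$ on a Lebesgue-null set of $t$, the identity then holds for every $t\in\R$, $\BbbP$-a.s.

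Finally, the integrand in \eqref{eq:cont6a} is, as an $L^1(\BbbP)$-valued function of $s$, locally Lebesgue-integrable --- its $L^1(\BbbP)$-norm is locally bounded in $s$, using $|V|\le2d$, $\rho(z,\cdot)\in L^p(\BbbP)$ (cf. \eqref{E:3.32}) with stationarity, and the $L^p$-continuity of $t\mapsto\tilde\phi(t,\cdot)$ from the second step --- so $\int_0^T\textd s\,\E|\,\cdot\,|<\infty$ for each $T$, and Tonelli's theorem gives that, for $\BbbP$-a.e.\ $\omega$, $s\mapsto(\text{integrand})(s,\omega)\in L^1_{\mathrm{loc}}(\R)$; hence $t\mapsto\tilde\phi(t,\omega)$ is (absolutely) continuous for $\BbbP$-a.e.\ $\omega$. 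The main obstacle is the limit procedure of the third step: only \emph{weak} $L^p(\BbbP)$-convergence of the increments $\varphi_{\epsilon_n}(s,z,\cdot)-\varphi_{\epsilon_n}(s,0,\cdot)$ is available, so the $\epsilon\to0$ passage must be arranged so that these increments are multiplied only by the deterministic bounded coefficients $a_s(0,z)$, never by another $\epsilon$-dependent quantity, and the cocycle identities have to be tracked carefully to recognize the limiting integrand as the one in \eqref{eq:cont6a}. A secondary point is reconciling ``weak limit for a.e.\ $t$'' (which defines $\phi$) with ``weak limit for every $t$ with a Lipschitz modulus'' (which yields the continuous version), achieved precisely by the uniform time-Lipschitz estimate of the first step.
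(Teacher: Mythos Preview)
Your proposal is correct and follows a genuinely different route from the paper's proof.

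The paper introduces the auxiliary process $\Xi_\epsilon(t,\cdot):=\chi_\epsilon(t,0,\cdot)-\int_0^t(L_s\chi_\epsilon)(s,0,\cdot)\,\textd s$ and identifies its weak $L^{p,\text{loc}}(\R)\otimes L^p(\BbbP)$-limit in two separate ways: once directly as $-\int_0^\cdot V(s,0,\cdot)\,\textd s$ via \eqref{E:3.13} and strong $L^1$-convergence, and once by dualizing against test functions $\xi\in L^r(\R)\otimes L^r(\BbbP)$, moving the shifts onto~$\xi$ so that only the already-available convergences \eqref{E:3.30}--\eqref{E:3.31} are needed. Uniqueness of weak limits then yields the integral equation for~$\phi$ almost everywhere, and the continuous version~$\tilde\phi$ is \emph{defined} as the right-hand side of \eqref{eq:cont6a}. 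By contrast, you first upgrade the convergence: the uniform time-Lipschitz bound in $L^p(\BbbP)$ (which is essentially \eqref{E:3.23} with $x=0$) combined with Arzel\`a--Ascoli in the weak topology gives $\chi_{\epsilon_n}(t,0,\cdot)\rightharpoonup\tilde\phi(t,\cdot)$ for \emph{every}~$t$ along a further subsequence, and you then pass to the limit directly in the integrated form of \eqref{E:3.13}, term by term, for each fixed~$t$. What your approach buys is a more transparent, ODE-style argument in which the continuous version emerges from equicontinuity rather than being defined \emph{a posteriori}; the price is the extra diagonal extraction. The paper's approach buys economy: it works exclusively with the convergences \eqref{E:3.30}--\eqref{E:3.31} already on the table, at the cost of the auxiliary process~$\Xi_\epsilon$ and the bookkeeping of the dualized test functions~$\tilde\xi,\hat\xi$.

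One point to watch in your write-up: depending on which cocycle identity you use to decompose $\varphi_\epsilon(s,z,\cdot)-\varphi_\epsilon(s,0,\cdot)$, the limiting integrand you obtain may look formally different from the one in \eqref{eq:cont6a}; the two expressions are reconciled by the limit of the cocycle relation $\chi_\epsilon(0,z,\cdot)\circ\tau_{s,0}=\chi_\epsilon(0,z,\cdot)+\chi_\epsilon(s,0,\cdot)\circ\tau_{0,z}-\chi_\epsilon(s,0,\cdot)$, which passes to $\rho$ and $\tilde\phi$ under weak convergence. Make sure to track this when identifying your limit with the specific form \eqref{eq:cont6a}.
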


\begin{proof}
Consider the auxiliary process $\Xi_{\varepsilon}(t,\cdot)$ defined as
\begin{equation}
\label{E:cont1}
\Xi_{\varepsilon}(t,\cdot) := \chi_\epsilon(t,0,\cdot)-\int_0^t \textd s\, (L_s\chi_\epsilon)(s,0,\cdot). 
\end{equation}
First note that, since $\chi_\epsilon(0,0,\cdot)$ vanishes, \eqref{E:3.13} and \eqref{E:3.14} yield that
\begin{equation}
\Xi_{\varepsilon}(t,\cdot) +  \int_0^t \textd s \, [(\varepsilon -\varepsilon \Delta) \varphi_{\epsilon}](s,0, \cdot) =  -\int_0^t \textd s\,V(s,0,\cdot),
\end{equation}
Hence
\begin{equation}
\label{E:cont2}
\E\biggl|\,\Xi_\epsilon(t,\cdot)+\int_0^t \textd s\,V(s,0,\cdot)\biggr|
\le (2+4d)|t|\epsilon\E|\varphi_\epsilon(0,0, \cdot)|,
\end{equation}
for all $t$. On account of \eqref{E:3.21} and with $\epsilon_n$ as defined above \eqref{E:3.30}, we thus get, for any bounded interval $I \subset \mathbb{R}$ and with $\lambda_I$ denoting the Lebesgue measure on~$I$,
\begin{equation}
\label{E:cont3}
\bigg\Vert \, \Xi_{\epsilon_n}(\cdot ,\cdot) + \int_0^{\cdot} \textd s\,V(s,0,\cdot) \bigg\Vert_{L^1(\lambda_I \otimes \mathbb{P})} \,\underset{n \to \infty}\longrightarrow\, 0\,.
\end{equation}
In particular, $- \int_0^{\cdot} \textd s\,V(s,0,\cdot)$ is a weak limit in $L^p(\lambda_I \otimes \mathbb{P})$ of the sequence $\Xi_{\epsilon_n}(\cdot ,\cdot)$. 

Now pick any $\xi \in L^r(\R) \otimes L^r(\mathbb{P})$ with compact support in the first variable. We then claim the validity of
\begin{multline}
\label{eq:cont5}
\lim _{n \to \infty}\int\textd t\,\E\bigl(\xi(t,\cdot)\cdot \Xi_{\epsilon_n}(t,\cdot)\bigr)
\\= \int\textd t\,\E \Biggl( \xi(t,\cdot)\cdot \biggl[ \phi(t,\cdot) + \int_0^t\textd s\Bigl( (L_0\rho)(0,\cdot)\circ\tau_{s,0}+\sum_{z: |z|=1}a_s(0,z)\phi(s,\cdot)\circ\tau_{0,z} \Bigr) \biggr]\Biggr).
\end{multline}
Indeed, we first note the rewrite
\begin{equation}
\label{E:3.37}
(L_s\chi_\epsilon)(s,0,\cdot)=(L_0\chi_\epsilon)(0,0,\cdot)\circ\tau_{s,0}
+\sum_{z:|z|=1}a_s(0,z)\bigl(\chi_\epsilon(s,0,\cdot)\circ\tau_{0,z}\bigr).
\end{equation}
Abbreviating
\begin{equation}
\tilde{\xi}(\cdot) :=\int\textd t\int_0^t \textd s \sum_{z\colon|z|=1}a_0(0,z)\bigl[\xi(t,\cdot) \circ \tau_{-s,z}-\xi(t,\cdot) \circ \tau_{-s,0}\bigr],
\end{equation}
 the convergence statement \eqref{E:3.31} along with Fubini and the invariance of $\mathbb{P}$ under space-time shifts show
\begin{equation}
\label{E:cont5a}
\begin{aligned}
&\int\textd t\,\E\biggl(\, \xi(t,\cdot) \cdot\Big[ \int_0^t\textd s\bigl( (L_0\chi_{\epsilon_n})(0,0,\cdot)\circ\tau_{s,0} \bigr) \Big]\biggr)
=\E \big[ \, \tilde{\xi}(\cdot) \cdot\chi_{\epsilon_n}(0,0,\cdot) \big] \\
&\qquad\underset{n\to\infty}\longrightarrow  \,\,\E \big[ \, \tilde{\xi}(\cdot) \cdot \rho(0,0,\cdot) \big] = 
\int\textd t\,\E \biggl(\, \xi(t,\cdot) \cdot\Big[ \int_0^t\textd s\bigl( (L_0\rho)(0,\cdot)\circ\tau_{s,0} \bigr) \Big]\biggr),
\end{aligned}
\end{equation}
where to get the second line we also noted that $\tilde{\xi}\in L^q(\lambda_{\R}\otimes \mathbb{P})$, by invariance of $\mathbb{P}$ under time-shifts, Jensen's inequality, boundedness of $a_s(e)$ and the fact that~$\xi$ has compact support in the~$t$-variable. A similar computation applies to the term involving $\chi_\epsilon(s,0,\cdot)$ on the right of \eqref{E:3.37}. Indeed, setting
\begin{equation}
\hat\xi(s,\cdot):=\int \textd t\bigl(\1_{[0,t]}(s)-\1_{[-t,0]}(s)\bigr) \sum_{z\colon|z|=1}a_0(0,z)\bigl[\xi(t,\cdot) \circ \tau_{0,z}-\xi(t,\cdot)\bigr]
\end{equation}
we get
\begin{equation}
\label{E:cont5b}
\begin{aligned}
&\int\textd t\,\E \biggl(\, \xi(t,\cdot)\cdot \Big[ \int_0^t\textd s\Bigl(\,\sum_{z\colon|z|=1} a_s(0,z)\bigl(\chi_\epsilon(s,0,\cdot)\circ\tau_{0,z}\bigr) \Bigr) \Big]\biggr)
\\&\qquad= \int\textd s\,   \E \Big[ \, \hat{\xi}(s,\cdot) \cdot \chi_{\epsilon_n}(s,0,\cdot) \Big]
\,\underset{n\to\infty}\longrightarrow\,  \int\textd s\,   \E \Big[ \, \hat{\xi}(s,\cdot) \cdot \phi(s,\cdot) \Big] 
\\&\qquad= 
\int\textd t\,\E \biggl(\, \xi(t,\cdot)\cdot \Big[ \int_0^t\textd s\Bigl(\,\sum_{z\colon|z|=1} a_s(0,z)\bigl(\phi(s,0,\cdot)\circ\tau_{0,z}\bigr) \Bigr) \Big]\biggr),
\end{aligned}
\end{equation}
using \eqref{E:3.30} instead. In light of \eqref{E:cont1} and \eqref{E:3.37}, \twoeqref{E:cont5a}{E:cont5b} yield \eqref{eq:cont5}.

The weak limit in \eqref{eq:cont5} being unique (as implied by the Hahn-Banach theorem), \eqref{E:cont3} implies that, on a set of full $\lambda_{\R} \otimes \mathbb{P}$-measure, $- \int_0^{t} \textd s\,V(s,0,\cdot)$ agrees with the term in square brackets on the right-hand side of \eqref{eq:cont5}. It follows that~$\tilde\phi$ defined as
\begin{equation}
\label{eq:cont6}
\tilde{\phi}(t,\cdot) := -  \int_0^t\textd s\Bigl( V(s,0, \cdot) + (L_0\rho)(0,\cdot)\circ\tau_{s,0}+\sum_{z: \, |z|=1}a_s(0,z)\phi(s,\cdot)\circ\tau_{0,z} \Bigr) ,
\end{equation}
equals~$\phi$ on a set of full $\lambda_{\R} \otimes \mathbb{P}$-measure. But this also implies that we can substitute~$\tilde\phi$ for~$\phi$ in \eqref{E:cont5b} which shows that~$\tilde\phi$ obeys \eqref{eq:cont6a} $\lambda_{\R} \otimes \mathbb{P}$-almost everywhere. As $\tilde\phi$ has $\mathbb{P}$-a.s.\ continuous sample paths, a routine use of Fubini's Theorem shows that \eqref{eq:cont6a} extends to all~$t\in\R$ on a set of full~$\BbbP$-measure.
\end{proof}

We are now ready to complete:

\begin{proofsect}{Proof of Proposition~\ref{lemma-3.6}}
Letting~$\rho$ be as defined in \eqref{E:3.31} and writing $\tilde\phi$ for the continuous version of~$\phi$ as constructed in the proof of Lemma~\ref{lem2.cont}, we set
\begin{equation}
\label{eq:chi_DEF}
\chi(t,x,\cdot):=\rho(x,\cdot)+\tilde{\phi}(t,\cdot)\circ\tau_{0,x}
\end{equation}
and proceed to check the desired properties. The convergence statements \twoeqref{E:3.27z}{E:3.27a} follow directly from \twoeqref{E:3.30}{E:3.31} while \eqref{E:3.28} is a consequence of \eqref{E:3.15}. With the help of an analogue of \eqref{eq:cont5} (formulated for~$\tilde\phi$) and \eqref{E:3.28}, the equality \eqref{eq:cont6a}  translates into
\begin{equation}
\chi(t,x)=\rho(x)-\int_0^t\textd s\bigl( V(s,0,\cdot)+(L_s\chi)(s,x,\cdot)\bigr).
\end{equation}
Hereby \eqref{E:3.29} readily follows (with the derivative even in Lebesgue sense). 
\end{proofsect}

\begin{proofsect}{Proof of Theorem~\ref{prop-2.1}}
Let~$\chi$ be as constructed in Proposition~\ref{lemma-3.6} and set
\begin{equation}
\psi(t,x,\cdot):=x+\chi(t,x,\cdot).
\end{equation}
Then \eqref{E:3.1} follows from \eqref{E:3.31} while \eqref{E:cocycle} from \eqref{E:3.30}. The identity \eqref{E:3.3} is a consequence of \eqref{E:3.17} and the fact that weak convergence in~$L^{p}$ contracts $L^{p'}$-norms. The integrability conditions in \eqref{E:3.4} follow readily from \twoeqref{E:3.27z}{E:3.27a}.
Since $\E\chi_\epsilon(t,x,\cdot)=0$ for each~$\epsilon>0$, this implies also the last condition in \eqref{E:3.4}.
\end{proofsect}

We finish by a lemma that will be useful in some definitions below:

\begin{lemma}
\label{lemma-growth}
For each~$x\in\Z^d$, there is a random variable $C(x,\cdot)>0$ with $\BbbP(C(x,\cdot)<\infty)=1$ such that
\begin{equation}
\bigl|\chi(t,x,\cdot)\bigr|\le C(x,\cdot)\bigl(\,1+|t|\bigr),\qquad t\in\R.
\end{equation}
\end{lemma}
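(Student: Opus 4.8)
The plan is to reduce the statement to the case $x=0$ by means of the space-time cocycle, then to produce an integral representation of $t\mapsto\chi(t,0,\cdot)$ from \eqref{E:3.29}, and finally to control the resulting time-integral with the ergodic theorem.

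First I would use the cocycle property. Specializing \eqref{E:3.28} (with $t=0$, $y=0$, and renaming $s\mapsto t$) gives $\chi(t,x,\cdot)-\chi(0,x,\cdot)=\chi(t,0,\cdot)\circ\tau_{0,x}$, while \eqref{eq:chi_DEF} together with $\tilde\phi(0,\cdot)=0$ (from \eqref{eq:cont6a}) yields $\chi(0,x,\cdot)=\rho(x,\cdot)$. Hence
\begin{equation}
\chi(t,x,\cdot)=\rho(x,\cdot)+\chi(t,0,\cdot)\circ\tau_{0,x},\qquad t\in\R.
\end{equation}
Since $\rho(x,\cdot)\in L^p(\BbbP)$ is $\BbbP$-a.s.\ finite (cf.\ \eqref{E:3.32}) and $\tau_{0,x}$ preserves $\BbbP$, it suffices to prove the bound for $x=0$; the general case then follows with $C(x,\cdot):=1+|\rho(x,\cdot)|+C(0,\cdot)\circ\tau_{0,x}$, which is $\BbbP$-a.s.\ finite and strictly positive.

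For $x=0$ I would integrate \eqref{E:3.29}. By Proposition~\ref{lemma-3.6} the map $t\mapsto\chi(t,0,\cdot)$ is continuous and weakly differentiable; as $V$ is bounded and $t\mapsto L_t\chi(t,0,\cdot)$ is locally bounded (continuity of $\chi$ in $t$ and $a_t(e)\le1$), the trajectory is locally Lipschitz, so, using $\chi(0,0,\cdot)=0$,
\begin{equation}
\chi(t,0,\cdot)=-\int_0^t\textd s\,\bigl(V(s,0,\cdot)+L_s\chi(s,0,\cdot)\bigr),\qquad t\in\R.
\end{equation}
The decisive point is that the cocycle eliminates the spatial coupling: \eqref{E:3.28} gives $\chi(s,z,\cdot)-\chi(s,0,\cdot)=\chi(0,z,\cdot)\circ\tau_{s,0}=\rho(z,\cdot)\circ\tau_{s,0}$, whence $L_s\chi(s,0,\cdot)=\sum_{z\colon|z|=1}a_s(0,z)\,\rho(z,\cdot)\circ\tau_{s,0}$ no longer involves $\chi$ at time $s$. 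Using $|V(s,0,\cdot)|\le 2d$ and $a_s(0,z)\le1$, this yields for all $t\in\R$
\begin{equation}
\bigl|\chi(t,0,\cdot)\bigr|\le 2d|t|+\int_{-|t|}^{|t|}\textd s\,\,G(s,\cdot),\qquad G(s,\cdot):=\sum_{z\colon|z|=1}\bigl|\rho(z,\cdot)\circ\tau_{s,0}\bigr|.
\end{equation}

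It then remains to show that $\int_{-|t|}^{|t|}G(s,\cdot)\,\textd s$ grows at most linearly in $|t|$ with a \emph{single} $\BbbP$-a.s.\ finite random constant. Since each $\rho(z,\cdot)\in L^p(\BbbP)$ with $p>1$, the function $G(0,\cdot)=\sum_{z\colon|z|=1}|\rho(z,\cdot)|$ lies in $L^1(\BbbP)$, so Birkhoff's ergodic theorem for the $\BbbP$-preserving flow $\{\tau_{s,0}\colon s\in\R\}$ shows that $\frac1{2T}\int_{-T}^{T}G(s,\cdot)\,\textd s$ converges $\BbbP$-a.s.\ to a finite limit as $T\to\infty$. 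Since $T\mapsto\int_{-T}^{T}G(s,\cdot)\,\textd s$ is, for $\BbbP$-a.e.\ environment, a continuous non-decreasing function of $T\ge0$ vanishing at $T=0$, the quantity $C(0,\cdot):=1+2d+\sup_{T\ge0}(1+T)^{-1}\int_{-T}^{T}G(s,\cdot)\,\textd s$ is $\BbbP$-a.s.\ finite and positive, and $|\chi(t,0,\cdot)|\le C(0,\cdot)(1+|t|)$; combined with the reduction above, this gives the claim. I do not anticipate a serious obstacle here; the only point requiring mild care is securing one random constant valid for all $t$ simultaneously, which is precisely what the (maximal) ergodic control of the time-average of $G$ provides — a bare Fubini argument would only give absolute convergence of the integral for each fixed $t$.
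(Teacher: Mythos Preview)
Your proof is correct. Both you and the paper follow the same broad outline — reduce to the time-dependence via $\chi(t,x,\cdot)=\rho(x,\cdot)+\tilde\phi(t,\cdot)\circ\tau_{0,x}$, control $\tilde\phi(t,\cdot)=\chi(t,0,\cdot)$ through its integral representation, and finish with the ergodic theorem — but the key intermediate step is handled differently. The paper works with the representation \eqref{eq:cont6a} for $\tilde\phi$, which still contains the feedback term $\sum_z a_s(0,z)\tilde\phi(s,\cdot)\circ\tau_{0,z}$; it then bounds the $L^1$-norm of short-time increments $\sup_{0<s<r}|\tilde\phi(t+s,\cdot)-\tilde\phi(t,\cdot)|$ for $r<(2d)^{-1}$ (the restriction on $r$ absorbing the feedback), invokes stationarity of increments, and appeals to the ergodic theorem. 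Your route is more direct: the cocycle identity $\chi(s,z,\cdot)-\chi(s,0,\cdot)=\rho(z,\cdot)\circ\tau_{s,0}$ eliminates the feedback entirely, so that $L_s\chi(s,0,\cdot)$ is a pure time-shift of an $L^p$ function and Birkhoff applies immediately to $G(s,\cdot)=G(0,\cdot)\circ\tau_{s,0}$. This buys you a cleaner argument with no small-parameter contraction; the paper's version, on the other hand, yields an explicit $L^1$ bound on short increments, which could be of independent use.
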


\begin{proofsect}{Proof}
Pick~$r\in(0,(2d)^{-1})$. Using fact that $a_s(e)\le1$ in Lemma~\ref{lem2.cont} then shows, for each~$t\in\R$,
\begin{equation}
\E\Bigl(\,\sup_{0<s<r}\bigl|\tilde\phi(t+s,\cdot)-\tilde\phi(t,\cdot)\bigr|\Bigr)\le\frac{cr}{1-2dr}
\end{equation}
where $c\in(0,\infty)$ is a constant related to the $L^1$-norm of~$\rho(x, \cdot)$ for~$|x|=1$. Since the increments of~$t\mapsto\tilde\phi(t,\cdot)$ are also stationary due to \eqref{E:3.28}, the ergodic theorem implies
\begin{equation}
C_1(\cdot):=\sup_{t\in\R}\frac{|\tilde\phi(t,\cdot)|}{1+|t|}<\infty,\qquad\BbbP\text{-a.s.}
\end{equation}
As $\chi(t,x,\cdot)=\rho(x,\cdot)+\tilde\phi(t,\cdot)\circ\tau_{0,x}$, cf. \eqref{eq:chi_DEF}, the claim follows with the choice $C(x,\cdot):=|\rho(x,\cdot)|+C_1(\cdot)\circ\tau_{0,x}$.
\end{proofsect}


\section{Proof of invariance principle}
\label{sec3}\nopagebreak\noindent
The goal of this section is to give a proof of the main result, which involves showing that the corrector constructed in Section \ref{sec2} is sublinear in a strong ($L^{\infty}$) sense. We proceed by first showing a corresponding statement on average (i.e., in $L^1$-sense, with respect to the norms introduced in Section~\ref{sec-2a}), see Proposition~\ref{prop-3.7} below. This result is then boosted to a sublinearity result in $L^{\infty}$-sense in Theorem~\ref{P:sublin}, which is proved by obtaining a maximal inequality using a Moser iteration approach, see Proposition~\ref{P:chi_Linfty}, whose proof is deferred to Section~\ref{sec6}. Conditionally on Proposition~\ref{P:chi_Linfty}, the proof of Theorem \ref{thm-1.2} is completed at the end of the present section. 

We will occasionally invoke the Maximal Ergodic Theorem for commuting measure preserving transformations throughout the section. We refer to Krengel \cite[Section 6.2]{K85} for further details.
 
\subsection{Sublinearity on average}
\label{subl1}\noindent
We begin with an \emph{a priori} estimate on the $L^1$-norm of the corrector which constitutes a version of ``sublinearity on average.'' This will serve as a starting point for the Moser iteration developed in the next section. Recall the definitions of~$B_n$ and~$\zeta_n$ from \eqref{E:2.27}, \eqref{eq:zeta_n}, with $\zeta$ satisfying \twoeqref{eq:kernel_conds}{E:2.2a}, and the norms $\Vert\cdot\Vert_{p,q;B,\zeta}$ from \eqref{eq:norm_pp'}. The desired statement is as follows:

\begin{proposition}
\label{prop-3.7}
Let~$\chi$ be the corrector constructed in Theorem~\ref{prop-2.1}. Then
\begin{equation}
\lim_{n\to\infty}\,\frac1{n^{d+1}}\,\Vert\chi\Vert_{1,1;B_n,\zeta_n} = 0,\qquad\BbbP\text{\rm-a.s.}
\end{equation}
\end{proposition}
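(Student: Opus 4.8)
The plan is to exploit the representation $\chi(t,x,\cdot)=\rho(x,\cdot)+\tilde\phi(t,\cdot)\circ\tau_{0,x}$ from \eqref{eq:chi_DEF}, the cocycle property \eqref{E:3.28}, and the spatial/temporal ergodic theorems to show that the average of $|\chi|$ over the space-time box $B_n\times\supp\zeta_n$, rescaled by $n^{d+1}$, vanishes. Since $\Vert\chi\Vert_{1,1;B_n,\zeta_n}=\int\textd t\,\zeta_n(t)\sum_{x\in B_n}|\chi(t,x,\cdot)|$ and $|B_n|\asymp n^d$, $\Vert\zeta_n\Vert_{L^1}=\Vert\zeta\Vert_{L^1}$ is $n$-independent, the claim is equivalent to $\frac1{n}\vvert\chi\vvert_{1,1;B_n,\zeta_n}\to0$, i.e. a genuinely sublinear (not merely bounded) control of the normalized $L^1$-norm. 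This is the standard ``sublinearity on average of the corrector'' statement, here in a space-time form, and the usual route is: (i) prove the corresponding statement for the gradient of $\chi$ (which is stationary) is $O(1)$, hence the corrector itself is $o(n)$ by a general ergodic-theoretic lemma; or (ii) argue directly via a decomposition into a spatial piece $\rho$ and a temporal piece $\tilde\phi$ each of which is handled by its own ergodic theorem.

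Concretely, I would proceed as follows. First, decompose $\chi(t,x,\cdot)=\rho(x,\cdot)+\tilde\phi(t,\cdot)\circ\tau_{0,x}$ and treat the two terms separately. For the temporal term, Lemma~\ref{lemma-growth} gives $|\tilde\phi(t,\cdot)|\le C_1(\cdot)(1+|t|)$ with $C_1<\infty$ a.s., and more importantly its proof shows the increments of $t\mapsto\tilde\phi(t,\cdot)$ are stationary with integrable (indeed $L^1$) local oscillation; combined with the diffusive scaling $\zeta_n(t)=n^{-2}\zeta(t/n^2)$ one sees $\int\textd t\,\zeta_n(t)|\tilde\phi(t,\cdot)\circ\tau_{0,x}|$ is of order $n$ times a quantity that, after averaging $\frac1{|B_n|}\sum_{x\in B_n}$ and using the multiparameter ergodic theorem (Krengel~\cite[Section 6.2]{K85}), converges to $\E\int\textd t\,\zeta_1(t)|\tilde\phi(t,\cdot)|\,\cdot$(scaling) — one must be careful here: the correct statement is that $\frac1{n^2}\int_0^{cn^2}\textd s\,|\tilde\phi(s,\cdot)|/s$-type averages vanish because $\tilde\phi$ grows sublinearly in its own right, which in turn follows from the fact that $\tilde\phi$ solves the integral equation \eqref{eq:cont6a} whose driving terms are stationary mean-zero (so a Birkhoff-type argument forces $\tilde\phi(t,\cdot)/t\to0$, not just boundedness). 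For the spatial term $\rho(x,\cdot)$, use the cocycle relation \eqref{E:3.28} at $t=s=0$, namely $\rho(x+y,\cdot)-\rho(x,\cdot)=\rho(y,\cdot)\circ\tau_{0,x}$, so the gradients $e\mapsto\rho(e,\cdot)-\rho(0,\cdot)$ form a stationary field with $\sum_{|e|=1}\E(a_0(e)|\rho(e,\cdot)|^2)<\infty$ by \eqref{E:3.3}; by Cauchy–Schwarz against $w_0(e)$ and \eqref{E:2.12a} one gets $\E|\rho(e,\cdot)|^p<\infty$ for the $p>1$ of the theorem, hence an $L^p$, stationary, mean-zero gradient field, and then the classical argument (reconstruct $\rho$ as a telescoping sum of gradients, apply the $L^p$ spatial ergodic theorem to the gradients and a Garsia-type / maximal-inequality estimate) yields $\frac1{n^{d+1}}\sum_{x\in B_n}|\rho(x,\cdot)|\to0$ a.s. The time-integration $\int\textd t\,\zeta_n(t)=\Vert\zeta\Vert_{L^1}$ just contributes a bounded factor to this term.

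An alternative, cleaner organization — and the one I would actually write up — is to invoke a single abstract lemma of the following shape: if $g:\Z^d\times\Omega\to\R^d$ is a stationary $L^p$ cocycle ($p>1$) with $\E g=0$, then $\frac1{n^{d+1}}\sum_{x\in B_n}|G(x,\cdot)|\to0$ a.s., where $G$ is the ``integral'' of $g$ (the potential with $G(x+y)-G(x)=g(y)\circ\tau_x$, $G(0)=0$); apply it in the spatial variable to $\rho$, and separately handle the time direction using the sublinear growth of $\tilde\phi$ established in (the proof of) Lemma~\ref{lemma-growth}. The main obstacle is the temporal piece: unlike the spatial corrector, $\tilde\phi$ is built through a limit procedure and only known a priori to grow \emph{at most} linearly (Lemma~\ref{lemma-growth}); upgrading this to \emph{strictly sublinear} growth of $\int\textd t\,\zeta_n(t)|\tilde\phi(t,\cdot)|=o(n)$ requires going back to the integral equation \eqref{eq:cont6a}, writing $\tilde\phi(t,\cdot)=-\int_0^t\textd s\,(V+L_0\rho\circ\tau_{s,0})- \int_0^t\textd s\sum_{|z|=1}a_s(0,z)\tilde\phi(s,\cdot)\circ\tau_{0,z}$, observing that the first integral is $o(t)$ by Birkhoff (the integrand is stationary; its mean vanishes because $\E\chi(t,x,\cdot)=0$ forces $\E[V(0,0,\cdot)+(L_0\rho)(0,\cdot)]=0$), and then running a Grönwall-type bootstrap on the second integral — but $a_s$ is only bounded, not bounded below, so the naive Grönwall loses; one instead couples this with the a priori linear bound of Lemma~\ref{lemma-growth} to close the estimate. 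Handling this interplay carefully (and keeping track of the diffusive rescaling $t\mapsto t/n^2$ throughout, which turns ``$o(t)$'' into the desired ``$o(n)$'' after the $\int\zeta_n$ averaging) is the technical heart of the proof; everything else is a routine application of the multiparameter maximal/Birkhoff ergodic theorems.
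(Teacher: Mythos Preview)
Your proposal has a genuine scaling gap in the temporal piece. After the substitution $t=n^2u$, the contribution of $\tilde\phi$ to $\frac1n\vvert\chi\vvert_{1,1;B_n,\zeta_n}$ becomes
\[
\frac1n\cdot\frac1{|B_n|}\sum_{x\in B_n}\frac1{\Vert\zeta\Vert_{L^1}}\int\textd u\,\zeta(u)\,\bigl|\tilde\phi(n^2u,\cdot)\circ\tau_{0,x}\bigr|,
\]
so you need $|\tilde\phi(t)|=o(\sqrt t)$ (after spatial averaging), not $o(t)$. Your Birkhoff argument on the integral equation \eqref{eq:cont6a} only gives the latter: writing $\tilde\phi(t,\cdot)=\chi(t,0,\cdot)$ as a time-cocycle with stationary, mean-zero, $L^p$ increments yields $\tilde\phi(t)/t\to0$ a.s., which is one full power of $t^{1/2}$ short. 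No Gr\"onwall bootstrap closes this, and coupling with the merely linear bound of Lemma~\ref{lemma-growth} cannot help either. This is not a technicality --- it is exactly the place where the diffusive scaling of time forces the argument beyond one-parameter ergodic theory.

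The paper avoids this mismatch by \emph{not} decomposing $\chi$ into spatial and temporal parts. Instead it (i) proves that the \emph{signed} space-time average $\frac1{n^{d+1}}\int\textd t\,\zeta_n(t)\sum_{x\in R_n}\chi(t,x,\cdot)$ vanishes over arbitrary rectangles $R_n$ (Corollary~\ref{cor-4.5}), relying on the differential equation \eqref{E:3.29} and a separate approximation lemma (Lemma~\ref{lemma-approx}) showing $\chi$ is an $L^p$-limit of gradients $h_\epsilon-h_\epsilon\circ\tau_{t,x}$; and then (ii) passes from signed averages to $\Vert\chi\Vert_{1,1}$ via the $\ell^1$-Poincar\'e inequality of Lemma~\ref{lemma-Sob}, picking up a weighted Dirichlet energy $\EE^{w,\zeta_n}_{\tilde B_n}(\chi)$ which is then converted to $\EE^{a,\zeta_n}$ by \eqref{E:2.46} and bounded uniformly via the finite specific energy \eqref{E:3.3}. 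Step (ii) is precisely what absorbs the diffusive time scale: the Dirichlet energy is $O(n^d)$ by stationarity, so after Cauchy--Schwarz and partitioning $B_n$ into $L^d$ sub-boxes, the oscillation term is $O(n^{d+1}/L)$, while step (i) kills the mean on each sub-box. Your ``abstract cocycle lemma'' for $\rho$ is essentially the spatial half of step (i), but nothing in your outline replaces the energy estimate in step (ii), and that is what makes the argument go through at the diffusive scale.
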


Although we could in principle follow the proof of Proposition~3.3 in~\cite{ACDS16}, we found a different argument. We begin with two lemmas, both of which are formulated for rectangles of the form
\begin{equation}
\label{E:4.2ua}
R_n:=\bigl([a_1n,b_1n)\times\dots\times[a_dn,b_dn)\bigr)\cap\Z^d\,,
\end{equation}
where $a_1,\dots,a_d,b_1,\dots,b_d\in\R$ are numbers that obey~$a_i<b_i$, $i =1,\dots,d$.
Without further mention, we assume in the remainder of Section \ref{subl1} that $\chi$ is the object constructed in Theorem~\ref{prop-2.1}, and we implicitly work under the assumptions of that theorem.

\smallskip
The starting point of the proof is the following observation:

\begin{lemma}
\label{lemma-3.8}
For any sequence~$\{R_n\}$ as above,
\begin{equation}
\label{E:4.3ua}
\frac1{n^{d+1}}\int_0^\infty\textd t\,\frac1n\int_0^n\textd s\,\,\zeta_n(t)\sum_{x\in R_n}\bigl[\chi(t,x,\cdot)-\chi(t+s,x,\cdot)\bigr]
\,\underset{n\to\infty}\longrightarrow\,0,\quad\BbbP\text{\rm-a.s.}
\end{equation}
\end{lemma}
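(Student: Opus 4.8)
The plan is to eliminate the $t$-dependence by combining the space-time cocycle with the heat equation, reducing \eqref{E:4.3ua} to a multiparameter ergodic-averaging statement for a single stationary, mean-zero, $L^p$-function. First I would observe that, by the cocycle \eqref{E:3.28} (and $\chi(0,0,\cdot)=0$), for $|z|=1$ one has $\chi(s,x+z,\cdot)-\chi(s,x,\cdot)=\chi(0,z,\cdot)\circ\tau_{s,x}$, so that both $L_s\chi(s,x,\cdot)$ and $V(s,x,\cdot)$ (with $V$ as in \eqref{E:3.10}) are of the form $(\text{a fixed function})\circ\tau_{s,x}$; consequently \eqref{E:3.29} reads $\tfrac{\partial}{\partial r}\chi(r,x,\cdot)=-W\circ\tau_{r,x}$ with $W:=L_0\chi(0,0,\cdot)+V(0,0,\cdot)=\sum_{z\colon|z|=1}a_0(0,z)\,\psi(0,z,\cdot)$. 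Integrating gives
\[
\chi(t,x,\cdot)-\chi(t+s,x,\cdot)=\int_0^s W\circ\tau_{t+r,x}\,\textd r .
\]
By \eqref{E:3.4}, $\psi(0,z,\cdot)=z+\chi(0,z,\cdot)\in L^p(\BbbP)$ with $p>1$, so $W\in L^p(\BbbP)$; and since $\E\chi(r,x,\cdot)\equiv0$ while $\partial_r\chi(r,x,\cdot)=-W\circ\tau_{r,x}\in L^1(\BbbP)$, Fubini and stationarity force $\E W=0$.

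Plugging this into \eqref{E:4.3ua} and performing the $(s,r)$-integral by Fubini, the left-hand side becomes $\kappa_n\,\langle W\rangle_n$, where $\kappa_n=\tfrac12\Vert\zeta\Vert_{L^1}\,|R_n|/n^d$ stays bounded and $\langle W\rangle_n$ is the average of $W\circ\tau_{u,x}$ over $x\in R_n$ and over $u$ with respect to the probability density proportional to $\psi_n:=\zeta_n*\theta_n$, with $\theta_n(r):=(1-r/n)_+\1_{[0,n]}(r)$. It thus suffices to show $\langle W\rangle_n\to0$ $\BbbP$-a.s. The obstacle is that the spatial average runs over scale $n$ while the temporal average runs over scale $n^2$ and against a non-uniform kernel, so neither Birkhoff's theorem nor the spatial ergodic theorem can be applied to $\langle W\rangle_n$ directly.

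To get around this I would reuse the device behind Lemma~\ref{lemma-sup}: the identity $\zeta(v)=\int_v^\infty(-\dot\zeta(w))\,\textd w$ exhibits $\zeta_n$ as the mixture $\int_0^\infty(-\dot\zeta(w))\,n^{-2}\1_{[0,n^2w)}\,\textd w$, hence $\psi_n$ as a mixture over $w$ of the trapezoids $\1_{[0,n^2w)}*\theta_n$, each of which coincides with $\tfrac n2\1_{[0,n^2w)}$ outside two edge layers of width $\le n$. Carrying out the $w$-integration, $\langle W\rangle_n$ equals $\Vert\zeta\Vert_{L^1}^{-1}\int_0^\infty(-\dot\zeta(w))\,w\,\beta_n(w)\,\textd w$ up to edge errors, where $\beta_n(w)$ is the average of $W\circ\tau_{u,x}$ over the space-time box $[0,n^2w)\times R_n$. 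Splitting $R_n$ into rectangles anchored at the origin by inclusion--exclusion over its faces, each $\beta_n(w)$ is a bounded linear combination of averages over boxes all of whose side-lengths tend to $\infty$; the multiparameter pointwise ergodic theorem together with the joint ergodicity in Assumption~\ref{ass1}(2) yields $\beta_n(w)\to\E W=0$ $\BbbP$-a.s. for every fixed $w>0$, while the multiparameter maximal ergodic theorem (see \cite{K85}) dominates $|\beta_n(w)|$, uniformly in $n$ and $w$, by a fixed function in $L^p(\BbbP)$; dominated convergence against the finite measure $(-\dot\zeta(w))\,w\,\textd w$ then disposes of the main term. For the edge errors, the fixed left-edge layer contributes $O(n^{-1})$ times the multiparameter maximal function of $|W|$, and the moving right-edge layer contributes, using $\Vert\dot\zeta/\zeta\Vert_{L^\infty}<\infty$ from \eqref{eq:kernel_conds} and Lemma~\ref{lemma-sup}, a quantity of order $n^{-2}\,|R_n|^{-1}\sum_{x\in R_n}\Theta\circ\tau_{0,x}$ with $\Theta:=\sup_m\int_0^\infty\zeta_m(u)\,|W|\circ\tau_{u,0}\,\textd u\in L^1(\BbbP)$; both tend to $0$ $\BbbP$-a.s.\ by the spatial ergodic theorem.

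The main difficulty is precisely this mismatch of the spatial ($n$) and temporal ($n^2$) scales together with the non-uniformity of the temporal kernel $\psi_n$: the heart of the matter is recasting $\langle W\rangle_n$ as a mixture of genuine space-time box averages anchored at the origin, where the multiparameter ergodic theorem applies, and the only genuinely technical point is the bookkeeping for the two edge layers of the trapezoids. (Alternatively, one could invoke a weighted multiparameter ergodic theorem directly.)
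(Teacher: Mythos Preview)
Your plan is correct and uses the same core ingredients as the paper---the stationary function $W:=V(0,0,\cdot)+L_0\chi(0,0,\cdot)\in L^p(\BbbP)$ with $\E W=0$, the multiparameter ergodic and maximal theorems, and the decomposition $\zeta(v)=\int_v^\infty(-\dot\zeta)\,\textd w$ that underlies Lemma~\ref{lemma-sup}---but organizes them differently, which makes the execution heavier. The paper applies the cocycle \emph{before} the heat equation: writing $\chi(t,x,\cdot)-\chi(t+s,x,\cdot)=-\chi(s,0,\cdot)\circ\tau_{t,x}$ turns the left side of \eqref{E:4.3ua} into $-\int_0^\infty\zeta_n(t)\,g_n\circ\tau_{t,0}\,\textd t$ with
\[
g_n:=\frac1{n^{d+2}}\sum_{x\in R_n}\int_0^n\chi(s,0,\cdot)\circ\tau_{0,x}\,\textd s,
\]
a \emph{pure scale-$n$} object. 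Only now does \eqref{E:3.29} enter, giving $g_n=-n^{-(d+2)}\sum_{x\in R_n}\int_0^n(n-s)\,W\circ\tau_{s,x}\,\textd s$, to which the $(d{+}1)$-parameter ergodic theorem applies directly; one then upgrades $g_n\to0$ to $\sup_{m\ge n}|g_m|\to0$ in $L^{\tilde p}$ via the maximal theorem and invokes Lemma~\ref{lemma-sup} once to dispose of the $\zeta_n$-integral. This clean separation of the $n$- and $n^2$-scales avoids your trapezoid and edge-layer bookkeeping entirely. One caution on your version: the moving right-edge layer sits at $[n^2w,n^2w+n]$, so after the $w$-integration what actually emerges is (up to constants) $n^{-2}|R_n|^{-1}\sum_{x\in R_n}\Theta\circ\tau_{n,x}$, i.e.\ with an extra time-shift $\tau_{n,0}$, and the spatial ergodic theorem you invoke does not apply verbatim. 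This is fixable---e.g.\ bound by $n^{-2}\Phi\circ\tau_{n,0}$ with $\Phi$ the spatial maximal function of $\Theta\in L^{\hat p}$, $\hat p>1$, and use stationarity plus Borel--Cantelli---but it illustrates why the paper's ordering is the tidier one.
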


\begin{proofsect}{Proof}
Let~$f_n$ denote the quantity on the left-hand side. The cocycle property gives
\begin{equation}
\label{E:4.4new}
-f_n=\int_0^\infty\textd t\,\zeta_n(t)\,g_n\circ\tau_{t,0}
\end{equation}
where
\begin{equation}
\label{E:4.5new}
g_n:=\frac1{n^{d+2}}\sum_{x\in R_n}\int_0^n\textd s\,\,\chi(s,0,\cdot)\circ\tau_{0,x}\,.
\end{equation}
We first claim
\begin{equation}
\label{E:4.6new}
g_n\,\underset{n\to\infty}\longrightarrow\,0,\quad\BbbP\text{-a.s.}
\end{equation}
For this we invoke \eqref{E:3.29} and some elementary integration to write
\begin{equation}
\label{E:4.7new}
\int_0^n\textd s\,\,\chi(s,0,\cdot) = -\int_0^n\textd s\,\,(n-s)\,\Bigl[V(0,0,\cdot)+L_0\chi(0,0,\cdot)\Bigr]\circ\tau_{s,0}\,.
\end{equation}
Plugging this into \eqref{E:4.5new}, noting that~$V(0,0,\cdot)$ is bounded (and thus in~$L^1(\BbbP)$) and while the boundedness of~$a_s(e)$ and \eqref{E:3.4} ensure $L_0\chi(0,0,\cdot)\in L^1(\BbbP)$, the Spatial Ergodic Theorem shows
\begin{equation}
g_n\,\underset{n\to\infty}\longrightarrow\,\,c\,\E\Bigl[V(0,0,\cdot)+L_0\chi(0,0,\cdot)\Bigr]\quad\BbbP\text{-a.s.}
\end{equation}
where~$c:=\prod_{i=1}^d (b_i-a_i)$. The stationarity of~$\BbbP$ with respect to spatial shifts ensures that the expectation vanishes and so we get \eqref{E:4.6new} as claimed.

Let now~$p>1$ be such that \eqref{E:3.4} holds. Next we claim that, for each~$\tilde p\in(1,p)$,
\begin{equation}
\label{E:4.10new}
\sup_{m\ge n}|g_m|\,\underset{n\to\infty}\longrightarrow\,0,\quad\text{in }L^{\tilde p}(\BbbP).
\end{equation}
In light of \eqref{E:4.6new} and the Dominated Convergence Theorem, for this it suffices to show
\begin{equation}
\label{E:4.11new}
\sup_{n\ge1}|g_n|\in L^{\tilde p}(\BbbP).
\end{equation}
Here one more use of \eqref{E:4.7new} shows
\begin{equation}
|g_n|\le\frac1{n^{d+1}}\sum_{x\in R_n}
\sum_{k=0}^{n-1}\biggl(\int_0^1\textd s\,\Bigl|V(0,0,\cdot)+L_0\chi(0,0,\cdot)\Bigr|\circ\tau_{s,0}\biggr)
\circ\tau_{k,x}
\end{equation}
The quantity in the large parentheses is in~$L^p(\BbbP)$ thanks to \eqref{E:3.4} and so \eqref{E:4.11new} follows from the Maximal Ergodic Theorem for space-time shifts.

In order to prove the desired result, fix $1<\hat p<\tilde p<p$ for~$p$ as in \eqref{E:3.4}. For each~$N\in\N$ \eqref{E:4.4new} then gives
\begin{equation}
|f_n|\le\int_0^\infty \textd t\,\zeta_n(t)\,\bigl(\sup_{m\ge N}|g_m|\bigr)\circ\tau_{t,0},\quad n\ge N.
\end{equation}
Lemma~\ref{lemma-sup} ensures
\begin{equation}
\bigl\Vert \sup_{n\ge N}|f_n|\bigr\Vert_{L^{\hat p}(\BbbP)}
\le c \bigl\Vert \sup_{m\ge N}|g_m|\bigr\Vert_{L^{\tilde p}(\BbbP)}
\end{equation}
for some $c=c(\hat p,\tilde p)$. The norm on the right tends to zero as~$N\to\infty$ by \eqref{E:4.10new}. The Dominated Convergence Theorem then implies $\limsup_{n\to\infty}|f_n|=0$ a.s.\ as desired.
\end{proofsect}

For the rest of the proof, we will work with the quantity
\begin{equation}
\tilde\chi_n(t,x,\cdot):=\int_0^\infty\textd u\,\,\zeta_n(u)\chi(t+u,x,\cdot),
\end{equation}
where the integral converges absolutely thanks to Lemma~\ref{lemma-growth} and our assumption of integrability of~$t\mapsto (1+|t|)\zeta(t)$.
We then have:

\begin{lemma}
\label{lemma-3.9}
For any~$\{R_n\}$ as above,
\begin{equation}
\label{E:4.6ua}
\frac1{n^{d+2}}\sum_{x\in R_n}\int_0^n\textd s\,\tilde\chi_n(s,x,\cdot)
\,\underset{n\to\infty}\longrightarrow\,0,\quad\BbbP\text{\rm-a.s.}
\end{equation}
\end{lemma}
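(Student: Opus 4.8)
The plan is to reduce the claim to Lemma~\ref{lemma-3.8} together with the a priori $L^1$-type control furnished by the weighted energy estimates. First I would unfold the definition of $\tilde\chi_n$ and apply Fubini's theorem to rewrite the quantity on the left-hand side of \eqref{E:4.6ua} as
\begin{equation}
\label{E:plan1}
\int_0^\infty\textd u\,\,\zeta_n(u)\,\frac1{n^{d+2}}\sum_{x\in R_n}\int_0^n\textd s\,\chi(s+u,x,\cdot).
\end{equation}
The goal is to compare this with the corresponding object \emph{without} the extra time shift $u$, namely $\frac1{n^{d+2}}\sum_{x\in R_n}\int_0^n\textd s\,\chi(s,x,\cdot)$, which is exactly $g_n$ from \eqref{E:4.5new} (up to the constant normalization) and hence tends to $0$ a.s.\ by \eqref{E:4.6new}. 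The difference between \eqref{E:plan1} and $\int_0^\infty\textd u\,\zeta_n(u)\,g_n$ is, after changing variables $s\mapsto s+u$ inside the inner integral, a telescoping expression in which only the ``boundary'' contributions from the intervals $[0,u]$ and $[n,n+u]$ survive; these are controlled using Lemma~\ref{lemma-growth} (the linear growth bound $|\chi(t,x,\cdot)|\le C(x,\cdot)(1+|t|)$) and the integrability of $u\mapsto(1+u)\zeta(u)$, which makes the corresponding sum over $x\in R_n$, after division by $n^{d+2}$, vanish in the limit by the spatial ergodic theorem applied to $C(\cdot,\cdot)$.

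Alternatively — and this is probably the cleaner route — I would observe that
\begin{equation}
\frac1{n^{d+2}}\sum_{x\in R_n}\int_0^n\textd s\,\bigl[\tilde\chi_n(s,x,\cdot)-\chi(s,x,\cdot)\bigr]
=\frac1{n^{d+2}}\sum_{x\in R_n}\int_0^n\textd s\int_0^\infty\textd u\,\zeta_n(u)\bigl[\chi(s+u,x,\cdot)-\chi(s,x,\cdot)\bigr],
\end{equation}
and that the $u$-integral of $\zeta_n(u)\bigl[\chi(s+u,x,\cdot)-\chi(s,x,\cdot)\bigr]$ is precisely (minus) the integrand appearing in Lemma~\ref{lemma-3.8} with the roles of the variables matched: indeed, writing $\chi(s+u,x,\cdot)-\chi(s,x,\cdot)=-\bigl[\chi(s,x,\cdot)-\chi(s+u,x,\cdot)\bigr]$ and recalling that in \eqref{E:4.3ua} one integrates $\frac1n\int_0^n\textd s$ against $\int_0^\infty\textd t\,\zeta_n(t)$ of the quantity $\chi(t,x,\cdot)-\chi(t+s,x,\cdot)$, a relabeling $t\leftrightarrow s$, $s\leftrightarrow u$ and one extra factor $n$ in the normalization identifies the two expressions. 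Hence the difference between the left-hand side of \eqref{E:4.6ua} and $\frac1{n^{d+2}}\sum_{x\in R_n}\int_0^n\textd s\,\chi(s,x,\cdot)$ converges to $0$ a.s.\ by Lemma~\ref{lemma-3.8}, and since the latter equals $n\,g_n\cdot n^{-1}=g_n$ in the notation of \eqref{E:4.5new}, which also tends to $0$ a.s.\ by \eqref{E:4.6new}, the claim follows.

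The main obstacle is bookkeeping: making sure the normalizations by powers of $n$, the two averaging integrals $\frac1n\int_0^n\textd s$ and $\int_0^\infty\textd u\,\zeta_n(u)$, and the relabeling of variables are matched correctly so that Lemma~\ref{lemma-3.8} can be quoted verbatim. One must also verify that all applications of Fubini are legitimate; this is where Lemma~\ref{lemma-growth} together with the integrability of $u\mapsto(1+u)\zeta_n(u)$ — which follows from the assumed integrability of $t\mapsto(1+|t|)\zeta(t)$ and the scaling \eqref{eq:zeta_n} — does the work, ensuring absolute convergence of all the iterated integrals uniformly enough in $n$ to pass limits. No new ergodic-theoretic input beyond Lemmas~\ref{lemma-3.8} and the spatial ergodic theorem (already used for \eqref{E:4.6new}) is needed.
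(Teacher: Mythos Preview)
There are two concrete errors that make both routes fail.

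First, the identification of $\frac1{n^{d+2}}\sum_{x\in R_n}\int_0^n\textd s\,\chi(s,x,\cdot)$ with $g_n$ from \eqref{E:4.5new} is incorrect. The cocycle condition gives $\chi(s,x,\cdot)=\chi(s,0,\cdot)\circ\tau_{0,x}+\chi(0,x,\cdot)$, so your ``Term 2'' equals $g_n+\frac1{n^{d+1}}\sum_{x\in R_n}\chi(0,x,\cdot)$. The extra purely spatial average $\frac1{n^{d+1}}\sum_{x\in R_n}\chi(0,x,\cdot)$ is precisely the kind of sublinearity statement we are working towards; under mere \emph{joint} ergodicity of~$\BbbP$ (Assumption~\ref{ass1}(2)) it does not follow from the spatial ergodic theorem, and controlling it here would be circular.

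Second, your relabeling of variables does not identify Term~1 with the expression in Lemma~\ref{lemma-3.8}. In \eqref{E:4.3ua} the $\zeta_n$-integration is over the \emph{base} time and the $\frac1n\int_0^n$-average is over the increment, whereas in your Term~1 it is the increment $u$ that carries the $\zeta_n$-weight and the base time $s$ that is averaged over $[0,n]$. These are different objects (by the cocycle property, Term~1 equals a space-time average of $h_n:=\int_0^\infty\zeta_n(u)\chi(u,0,\cdot)\,\textd u$, which is $n$-dependent and not controlled by Lemma~\ref{lemma-3.8}). Your first route also breaks: since $\zeta_n$ concentrates on times of order $n^2$, the ``boundary'' pieces $\int_n^{n+u}$ with $u\sim n^2$ are not lower order, and the linear-growth bound from Lemma~\ref{lemma-growth} yields a contribution that diverges rather than vanishes.

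The paper proceeds quite differently: it invokes the approximation Lemma~\ref{lemma-approx}, which produces $h_\epsilon\in L^p(\BbbP)$ with $(1-\tau_{t,x})h_\epsilon$ close to $\chi(t,x,\cdot)$ in an integrated $L^p$-sense. Subtracting the coboundary $h_\epsilon-h_\epsilon\circ\tau_{t,x}$ from $\tilde\chi_n$ kills the problematic spatial piece while leaving a quantity whose space-time averages are handled by the Maximal Ergodic Theorem and Lemma~\ref{lemma-sup}; sending $\epsilon\downarrow0$ at the end finishes the argument. This additional input (approximability by gradients) is what your plan is missing.
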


For the proof we will need the following fact:

\begin{lemma}
\label{lemma-approx}
Suppose~$p\ge1$ is such that \eqref{E:3.4} holds.
For each~$\epsilon>0$ there is a measurable $h_\epsilon\colon\Omega\to\R^d$ with $h_\epsilon\in L^{ p}(\BbbP)$ such that for all~$x\in\Z^d$,
\begin{equation}
\label{E:4.7ui}
\E \Bigl| \int_0^1\textd t\,\big(h_\epsilon-h_\epsilon\circ\tau_{t,x}-\chi(t,x,\cdot)\big)\Bigr|^{ p}\,\underset{\epsilon\downarrow0}\longrightarrow\,0.
\end{equation}
\end{lemma}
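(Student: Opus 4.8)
The plan is to take $h_\epsilon$ to be (minus) the regularized solution $\varphi_\epsilon(0,0,\cdot)$ built in Section~\ref{sec2}, for which the corrector increment $\chi_\epsilon(t,x,\cdot)$ is \emph{literally} a space-time coboundary, and then to upgrade the weak convergence $\chi_{\epsilon_n}\to\chi$ provided by Proposition~\ref{lemma-3.6} to the $L^p$-convergence required here.

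Concretely, I would set $h_\epsilon:=-\varphi_\epsilon(0,0,\cdot)$, which is bounded (recall from the proof of \eqref{E:3.16} that $\varphi_\epsilon$ is bounded) and measurable, hence lies in $L^p(\BbbP;\R^d)$. By the cocycle identity \eqref{E:3.14} and the definition \eqref{E:3.26chi} of $\chi_\epsilon$, for every $t\in\R$ and $x\in\Z^d$,
\[
h_\epsilon-h_\epsilon\circ\tau_{t,x}=\varphi_\epsilon(t,x,\cdot)-\varphi_\epsilon(0,0,\cdot)=\chi_\epsilon(t,x,\cdot),
\]
so that $\int_0^1\textd t\,(h_\epsilon-h_\epsilon\circ\tau_{t,x})=\int_0^1\textd t\,\chi_\epsilon(t,x,\cdot)$. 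Testing the weak convergence \eqref{E:3.27z} against test functions of the form $\xi(t,a)=\1_{[0,1]}(t)\,g(a)$ with $g\in L^r(\BbbP;\R^d)$ (where $r$ is conjugate to $p$), which has compact support in $t$ and lies in $L^r(\R)\otimes L^r(\BbbP)$, one obtains, for each fixed $x$, that $\int_0^1\textd t\,\chi_{\epsilon_n}(t,x,\cdot)\to\int_0^1\textd t\,\chi(t,x,\cdot)$ weakly in $L^p(\BbbP;\R^d)$; moreover \eqref{E:3.23} together with the bounds established in the proof of Lemma~\ref{lemma-3.5} gives $\Vert\int_0^1\textd t\,\chi_{\epsilon_n}(t,x,\cdot)\Vert_{L^p}\le c(1+|x|_1)$ uniformly in $n$ (by Jensen's inequality over $t\in[0,1]$).

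The substantive step is to pass from weak to strong convergence while producing a \emph{single} $h$ valid for all $x$ at once. I would fix weights $w_x>0$, $x\in\Z^d$, with $\sum_x w_x(1+|x|_1)^p<\infty$, and work in the normed space $Y$ of families $(v_x)_{x\in\Z^d}$, $v_x\in L^p(\BbbP;\R^d)$, with $\Vert (v_x)_x\Vert_Y:=\bigl(\sum_x w_x\Vert v_x\Vert_{L^p}^p\bigr)^{1/p}$. Writing $V^{(n)}:=\bigl(\int_0^1\textd t\,\chi_{\epsilon_n}(t,x,\cdot)\bigr)_x$ and $V:=\bigl(\int_0^1\textd t\,\chi(t,x,\cdot)\bigr)_x$, the previous paragraph shows $V^{(n)},V\in Y$ and $\sup_n\Vert V^{(n)}\Vert_Y<\infty$. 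Since $Y^*$ is the $w$-weighted $\ell^r$-sum of the duals $L^r(\BbbP;\R^d)$, in which the finitely supported families are dense, coordinatewise weak convergence together with the uniform $Y$-bound gives $V^{(n)}\to V$ weakly in $Y$. Hence for each $N$ the point $V$ lies in the weak closure of $\{V^{(n)}:n\ge N\}$, a fortiori of its convex hull, which by Mazur's lemma coincides with the norm closure; so there is a finite convex combination $\sum_m\lambda^{(N)}_m V^{(m)}$, with $m\ge N$, such that $\Vert\sum_m\lambda^{(N)}_m V^{(m)}-V\Vert_Y\le 1/N$.

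Setting $h_N:=-\sum_m\lambda^{(N)}_m\varphi_{\epsilon_m}(0,0,\cdot)$ — a finite convex combination of bounded functions, hence bounded — the identity of the second paragraph yields $\int_0^1\textd t\,(h_N-h_N\circ\tau_{t,x})=\sum_m\lambda^{(N)}_m\int_0^1\textd t\,\chi_{\epsilon_m}(t,x,\cdot)$, and therefore
\[
\E\Bigl|\int_0^1\textd t\,\bigl(h_N-h_N\circ\tau_{t,x}-\chi(t,x,\cdot)\bigr)\Bigr|^p\le w_x^{-1}\,\Bigl\Vert\,\sum_m\lambda^{(N)}_m V^{(m)}-V\Bigr\Vert_Y^p\le w_x^{-1}N^{-p}\,\underset{N\to\infty}\longrightarrow\,0
\]
for every $x\in\Z^d$. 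Relabelling, e.g. $h_\epsilon:=h_{\lceil 1/\epsilon\rceil}$, then gives the family asserted in the lemma. I expect the only genuine difficulty to be this weak-to-strong passage (the Mazur step, arranged so as to handle all $x\in\Z^d$ simultaneously, since only weak convergence of $\chi_{\epsilon_n}$ is available and the finite-$\epsilon$ energy identity \eqref{E:3.16}--\eqref{E:3.17} does not obviously close up in the limit); the identification $h_\epsilon=-\varphi_\epsilon(0,0,\cdot)$ and the requisite $L^p$-bounds are immediate from Section~\ref{sec2}.
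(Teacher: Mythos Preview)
Your argument is correct and takes a genuinely different route from the paper. The paper never touches~$\varphi_\epsilon$ here; instead it works intrinsically with the corrector increments $u_k:=\chi(0,\hat e_k,\cdot)$ for $k=1,\dots,d$ and $u_{d+1}:=\int_0^1\textd t\,\chi(t,0,\cdot)$, introduces the commuting shift operators $\hat T_k$ and their resolvents $(1+\epsilon-\hat T_k)^{-1}$, and sets
\[
h_\epsilon:=\sum_{k=1}^{d+1}\epsilon^{k-1}\prod_{j=1}^k(1+\epsilon-\hat T_j)^{-1}u_k.
\]
The cocycle identity $(1-\hat T_j)u_k=(1-\hat T_k)u_j$ then collapses $(1-\hat T_\ell)h_\epsilon$ into a telescoping sum equal to $u_\ell$ minus an error $\epsilon^{d+1}\prod_j(1+\epsilon-\hat T_j)^{-1}u_\ell$, which is killed using the ergodic-theorem identity $\epsilon(1+\epsilon-\hat T_k)^{-1}\to\hat P_k$ together with $\hat P_k u_k=0$ (the latter deduced from joint ergodicity). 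By contrast, you recycle the approximants $\varphi_\epsilon$ from Section~\ref{sec2}, for which the coboundary identity is exact, and handle the weak-to-strong passage by Mazur's lemma applied in a weighted $\ell^p$-sum of copies of $L^p(\BbbP)$ so that a single convex combination works for all~$x$ simultaneously. Your approach is arguably more transparent and in fact answers the authors' own remark (just below the lemma) that they ``do not presently see a way to boost'' the weak convergence of $\chi_{\epsilon_n}$ to strong; the trade-off is that the paper's argument is intrinsic to~$\chi$ (it uses only the cocycle property, $L^p$-integrability, and ergodicity) and would apply to any corrector satisfying Theorem~\ref{prop-2.1}, whereas yours is tied to the particular approximation scheme.
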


We note that in earlier constructions of the corrector (including the one in~\cite{ACDS16}) the property in Lemma~\ref{lemma-approx} follows more or less directly. Although we also obtain~$\chi(t,x,\cdot)$ as a limit of the quantities $\varphi_\epsilon(0,0,\cdot)\circ\tau_{t,x}-\varphi_\epsilon(0,0,\cdot)$, this limit is only in the weak sense and we do not presently see a way to boost it to a strong convergence as required above. 

An alternative approach would be to regard~$x\mapsto\chi(0,x,\cdot)$ as an element of the $L^2$-space of cocycle vector fields with inner product $(u,v):=\E\sum_{e\colon|e|=1}a_0(e)u(e,\cdot)\cdot v(e,\cdot)$ and show that it can be approximated by a potential field; i.e., one of the form $h_\epsilon-h_\epsilon\circ\tau_{t,x}$. Even if the existence of these approximations could be checked, we would still not know how to proceed as we no longer have a direct way to convert weighted $L^2$-norms into $L^1$-norms. (Indeed, the energy conversion applies only to solutions of the inhomogenous heat equation.) Our proof of Lemma ~\ref{lemma-approx}, which we defer to the Appendix, proceeds by a direct argument inspired (with some necessary corrections) by derivations in Biskup and Spohn~\cite{BS11}.

\smallskip

\begin{proofsect}{Proof of Lemma~\ref{lemma-3.9}}
Fix~$p>1$ as appearing above \eqref{E:3.4}. The conclusion of Lemma~\ref{lemma-approx} holds and, given~$\epsilon>0$, let~$h_\epsilon$ be as in \eqref{E:4.7ui}. Define
\begin{equation}
\tilde\chi_{n,\epsilon}(t,x,\cdot):=\int_0^\infty\textd u\,\,\zeta_n(u)\int_0^1\textd s\,\Bigl[\chi(t+s+u,x,\cdot) - \bigl(h_\epsilon\circ\tau_{t+s+u,x}-h_\epsilon\bigr)\Bigr]
\end{equation}
where the integrals again converge absolutely by Lemma~\ref{lemma-growth} and the assumed integrability conditions on~$\zeta$.
Abbreviating also
\begin{equation}
\tilde h_{n,\epsilon}:=\int_0^\infty\textd u\,\,\zeta_n(u)\int_0^1\textd s\,\, h_\epsilon\circ\tau_{s+u,0},
\end{equation}
which converges absolutely by the last clause of Lemma~\ref{lemma-sup}, it is now easy to check
\begin{multline}
\qquad\frac1{n^{d+2}}\sum_{x\in R_n}\int_0^n\textd s\,\tilde\chi_n(s,x,\cdot)
=\frac1{n^{d+2}}\sum_{x\in R_n}\sum_{t=0}^{n-1}\,\tilde\chi_{n,\epsilon}(t,x,\cdot)
\\
+\frac1{n^{d+2}}\sum_{x\in R_n}\sum_{t=0}^{n-1}\,\tilde h_{n,\epsilon}\circ\tau_{t,x}
-\frac{|R_n|}{n^{d+1}}\Vert\zeta\Vert_{L^1}\,h_\epsilon(\cdot).
\qquad
\end{multline}
Since~$h_\epsilon\in L^p(\BbbP)$ for~$p>1$, the same holds true for $\int_0^1\textd s\,\, h_\epsilon\circ\tau_{s,0}$, and Lemma~\ref{lemma-sup} gives us that $\sup_{n\ge1}|\tilde h_{n,\epsilon}|\in L^1(\BbbP)$. The Spatial Ergodic Theorem then shows that the second term on the right tends to zero as~$n\to\infty$. The same also applies trivially to the last term, and so we just need to control the first term on the right.

Let $\FF:=\sigma(a_t(e)\colon t\in\R,\,e\in E(\Z^d))$ be the $\sigma$-algebra generated by the conductances and enlarge the probability space to include independent random variables $T,X_1,\dots,X_d$, independent of~$\FF$, with~$T$ uniform on~$[0,1)$ and $X_i$ uniform on $[a_i,b_i)$ for each $i=1,\dots,d$. Writing $\lfloor X n\rfloor$ to abbreviate the vector $(\lfloor X_1n\rfloor,\dots,\lfloor X_d n\rfloor)$ and denoting $|\bar R|:=\prod_{i=1}^d(b_i-a_i)$, we get
\begin{multline}
\label{E:4.13uiu}
\quad
\biggl|
\frac1{n^{d+2}}\sum_{x\in R_n}\sum_{t=0}^{n-1}\,\tilde\chi_{n,\epsilon}(t,x,\cdot)
-\frac{|\bar R|}n\E\Bigl(\tilde\chi_{n,\epsilon}\bigl(\lfloor Tn\rfloor,\lfloor Xn\rfloor,\cdot)\,\Big|\,\FF\Bigr)
\biggr|
\\
\le\frac1{n^{d+2}}\sum_{(x,y)\in E(R_n)}\,\sum_{t=0}^{n-1}\,\bigl|\tilde\chi_{n,\epsilon}(t,y,\cdot)-\tilde\chi_{n,\epsilon}(t,x,\cdot)\bigr|
\le \frac1n\int_0^\infty\zeta_n(u) f\circ\tau_{u,0}\,,
\quad
\end{multline}
where
\begin{equation}
f:=\sup_{n\ge1}\,\frac1{n^{d+1}}\sum_{(x,y)\in E(R_n)}\,\sum_{t=0}^{n-1}
 \int_0^1\textd s\, \Bigl| \chi(0,y-x,\cdot) + (h_{\epsilon} - h_{\epsilon} \circ \tau_{0,y-x})  \Bigr|\circ\tau_{s+t,x}.
\end{equation}
and where the last step follows by invoking the definition of~$\tilde\chi_{n,\epsilon}$ along with the cocycle property.
The Maximal Ergodic Theorem for space-time shifts gives $f\in L^{\tilde p}(\BbbP)$ for some~$\tilde p\in(1,p)$ and Lemma~\ref{lemma-sup} then ensures that \eqref{E:4.13uiu} converges, as~$n\to\infty$, to zero~$\BbbP$-a.s. Thus, if we can show
\begin{equation}
\label{E:4.8ue}
\lim_{n\to\infty}\,\,\frac1n\,\E\Bigl(\chi_{n,\epsilon}\bigl(Tn,\lfloor Xn\rfloor,\cdot)\,\Big|\,\FF\Bigr) =0,\quad\BbbP\text{\rm-a.s.}
\end{equation}
the claim will follow. 

The advantage of working in this ``continuum'' representation is that it makes telescoping arguments more manageable. Indeed, by the cocycle property we can write
\begin{equation}
\tilde\chi_{n,\epsilon}\bigl(\lfloor Tn\rfloor,\lfloor Xn\rfloor,\cdot)
=\sum_{k=0}^{n-1}\tilde\chi_{n,\epsilon}\Bigl(\lfloor T(k+1)\rfloor-\lfloor Tk\rfloor,\lfloor X(k+1)\rfloor-\lfloor Xk\rfloor,\cdot\Bigr)\circ\tau_{\lfloor Tk\rfloor,\lfloor Xk\rfloor}
\end{equation}
Now note that $\lfloor T(k+1)\rfloor-\lfloor Tk\rfloor\in\{0,1\}$ while $\lfloor X(k+1)\rfloor-\lfloor Xk\rfloor$ has $\ell^\infty$-norm bounded by some $r\in\N$ independent of~$n$. Introducing
\begin{equation}
g_\epsilon:=\sup_{n\ge1}\,\sum_{t=0,1}\sum_{z\colon|z|_\infty\le r}\bigl|\tilde\chi_{n,\epsilon}(t,z,\cdot)\bigr|,
\end{equation}
and denoting $\Lambda_k:=\{0,\dots,k\}\times R_k$, we thus have
\begin{equation}
\frac1n\biggl|\,\E\Bigl(\tilde\chi_{n,\epsilon}\bigl(Tn,\lfloor Xn\rfloor,\cdot)\,\Big|\,\FF\Bigr)\biggr|
\le\frac cn\sum_{k=1}^{n-1}\frac1{k^d}\sum_{(t,x)\in\Lambda_k} g_\epsilon\circ\tau_{t,x}.
\end{equation}
Lemma~\ref{lemma-sup} and \eqref{E:3.4} ensure that $g_\epsilon\in L^{\tilde p}(\BbbP)$ for~$\tilde p\in(1,p)$.
By the Spatial Ergodic Theorem, the normalized second sum on the right converges to $\E g_\epsilon$ and so does the Cezaro average over $k=0,\dots,n-1$. But \eqref{E:sup-bd}, Jensen's inequality along with the cocycle property and the triangle inequality for the $L^{\tilde p}$-norm show
\begin{equation}
\E g_\epsilon\le \Vert g_\epsilon\Vert_{L^{\tilde p}(\BbbP)}\le
c\sum_{z\colon|z|_\infty\le r}\,\biggl[\E \Bigl| \int_0^1\textd s \bigl( h_\epsilon-h_\epsilon\circ\tau_{s,x}-\chi(s,x,\cdot)\bigl)\Bigr|^p\biggr]^{1/p}
\end{equation}
for some~$c\in(0,\infty)$ depending only on~$p$, $\tilde p$, $d$ and $\zeta$. Lemma~\ref{lemma-approx} then gives $\E g_\epsilon\to0$ as~$\epsilon\downarrow0$ thus proving \eqref{E:4.8ue} as desired.
\end{proofsect}

As an immediate consequence we get:

\begin{corollary}
\label{cor-4.5}
For any~$\{R_n\}$ as above,
\begin{equation}
\frac1{n^{d+1}}\int_0^\infty\textd t\,\,\zeta_n(t)\sum_{x\in R_n}\chi(t,x,\cdot)
\,\underset{n\to\infty}\longrightarrow\,0,\quad\BbbP\text{\rm-a.s.}
\end{equation}
\end{corollary}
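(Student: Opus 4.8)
The plan is to recognize the quantity in the corollary as a sum of the two expressions appearing in Lemmas~\ref{lemma-3.8} and~\ref{lemma-3.9}, both of which have already been shown to vanish $\BbbP$-a.s.\ as $n\to\infty$.

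First I would rewrite the target. Interchanging the finite sum over $x\in R_n$ with the $t$-integral — legitimate since, by the growth bound of Lemma~\ref{lemma-growth} together with the integrability of $t\mapsto(1+|t|)\zeta(t)$, the integrand is absolutely integrable — the quantity in the corollary equals $\frac1{n^{d+1}}\sum_{x\in R_n}\tilde\chi_n(0,x,\cdot)$, with $\tilde\chi_n$ as defined just above Lemma~\ref{lemma-3.9}. Since $\frac1n\int_0^n\textd s=1$, this in turn equals $\frac1{n^{d+2}}\sum_{x\in R_n}\int_0^n\textd s\,\tilde\chi_n(0,x,\cdot)$, and I would then split it as
\[
\frac1{n^{d+2}}\sum_{x\in R_n}\int_0^n\textd s\,\bigl[\tilde\chi_n(0,x,\cdot)-\tilde\chi_n(s,x,\cdot)\bigr]\;+\;\frac1{n^{d+2}}\sum_{x\in R_n}\int_0^n\textd s\,\tilde\chi_n(s,x,\cdot).
\]

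The second term is exactly the left-hand side of \eqref{E:4.6ua}, hence tends to $0$ a.s.\ by Lemma~\ref{lemma-3.9}. For the first term I would expand $\tilde\chi_n(0,x,\cdot)-\tilde\chi_n(s,x,\cdot)=\int_0^\infty\textd u\,\zeta_n(u)\bigl[\chi(u,x,\cdot)-\chi(s+u,x,\cdot)\bigr]$ from the definition of $\tilde\chi_n$ and apply Fubini's theorem to exchange the $s$- and $u$-integrals; absolute convergence is again guaranteed by Lemma~\ref{lemma-growth}, the integrability of $(1+|t|)\zeta(t)$, and the finiteness of $R_n$. After renaming $u$ as $t$, the first term becomes precisely the expression in \eqref{E:4.3ua}, which tends to $0$ a.s.\ by Lemma~\ref{lemma-3.8}. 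Adding the two contributions yields the claim.

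There is no substantive obstacle here — the argument is pure bookkeeping. The only points requiring care are the justification of the two Fubini interchanges (handled by the linear growth of $\chi$ in time) and matching the normalizing powers of $n$, noting that $\frac1{n^{d+1}}\cdot\frac1n=\frac1{n^{d+2}}$ is exactly the prefactor in Lemma~\ref{lemma-3.8}.
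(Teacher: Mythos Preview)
Your argument is correct and is precisely the decomposition the paper has in mind when it says ``combine \eqref{E:4.3ua} with \eqref{E:4.6ua}''; you have simply spelled out the Fubini-and-split step that the paper leaves implicit. The justification of the interchanges via Lemma~\ref{lemma-growth} and the integrability of $t\mapsto(1+|t|)\zeta(t)$ is exactly the right thing to invoke.
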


\begin{proofsect}{Proof}
This follows by combining \eqref{E:4.3ua} with \eqref{E:4.6ua}.
\end{proofsect}

We are now ready to give:

\begin{proofsect}{Proof of Proposition~\ref{prop-3.7}}
We adapt part of the argument from page~227 in Sidoravicius and Sznitman~\cite{SS04}. (The argument cannot be used directly as it relies on square integrability of the corrector as well as separate ergodicity.) Denote $\bar\chi_B(t,\cdot):=|B|^{-1}\sum_{x\in B}\chi(t,x,\cdot)$ and, given $L\ge1$, let~$\{R_{n,i}\colon i=1,\dots,m(n)\}$ be the enumeration of sets of the form $(\lfloor n (z/L+[0, 1/L)^d) \rfloor\cap\Z^d$ with $z\in\Z^d$ that have a non-empty intersection with~$B_n$. Denote $\tilde B_n:=\bigcup_{i=1}^{m(n)}R_{n,i}$. Then Lemma~\ref{lemma-Sob} and a routine (by now) use of Cauchy-Schwarz inequality show
\begin{equation}
\Vert\chi\Vert_{1,1;R_{n,i},\zeta_n}\le \Vert\bar\chi_{R_{n,i}}\Vert_{1,1;R_{n,i},\zeta_n}+c\frac nL\Bigl(\Vert w^{-1}\Vert_{1,1;E(R_{n,i}),\zeta_n}
\,\EE^{w,\zeta_n}_{R_{n,i}}(\chi)\Bigr)^{1/2}.
\end{equation}
Now sum this over~$i=1,\dots,m(n)$ and apply Cauchy-Schwarz inequality one more time to get
\begin{equation}
\label{E:3.42}
\Vert\chi\Vert_{1,1;B_n,\zeta_n}
\le \sum_{i=1}^{m(n)}\Vert\bar\chi_{R_{n,i}}\Vert_{1,1;R_{n,i},\zeta_n}
+c\frac nL\,\Vert w^{-1}\Vert_{1,1;E(\tilde B_n),\zeta_n}^{1/2}\,\EE^{w,\zeta_n}_{\tilde B_n}(\chi)^{1/2}\,.
\end{equation}
Corollary~\ref{cor-4.5}  and the fact that~$m(n)$ is at most order~$L^d$ ensures 
\begin{equation}
\frac1{n^{d+1}}\sum_{i=1}^{m(n)}\Vert\bar\chi_{R_{n,i}}\Vert_{1,1;R_{n,i},\zeta_n}\,\underset{n\to\infty}\longrightarrow\,0,\quad\BbbP\text{-a.s.}
\end{equation}
Lemma~\ref{L:nu_w} in turn gives
\begin{equation}
\sup_{n\ge1}\,\,\frac1{n^d}\Vert w^{-1}\Vert_{1,1;E(\tilde B_n),\zeta_n}<\infty,\quad\BbbP\text{-a.s.}
\end{equation}
Since~$\chi$ solves \eqref{E:3.29} with~$V$ bounded, \eqref{E:2.46} and \eqref{E:3.3} also show
\begin{equation}
\sup_{n\ge1}\,\,\frac1{n^d}\EE^{w,\zeta_n}_{\tilde B_n}(\chi)<\infty,\quad\BbbP\text{-a.s.}
\end{equation}
The claim now follows from \eqref{E:3.42} by taking~$n\to\infty$ followed by~$L\to\infty$.
\end{proofsect}

\subsection{Sublinearity everywhere and proof of main result}
Recall the definition of the corrector from the previous section. Our next goal is to boost the $L^1$-sublinearity to an $L^\infty$-version. Define the diffusive space-time cylinder
\begin{equation}
\label{E:Q(n)}
Q(n):=\bigl\{(x,t)\colon x\in B_n,\,0\le t\le n^2\bigr\}.
\end{equation}
We now claim that the corrector is sublinear on diffusive scale of space and time:

\begin{theorem}\label{P:sublin}
Suppose Assumption~\ref{ass1} holds and assume, in addition, \eqref{E:q_ass}. Then 
\begin{equation}\label{eq:sublin}
\lim_{n\to \infty} \max_{(t,x) \in Q(n)}\frac{|\chi(t,x, \cdot)|}{n}=0, \quad \BbbP\text{-a.s.}.
\end{equation}
\end{theorem}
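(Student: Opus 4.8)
The plan is to upgrade the $L^1$-sublinearity of Proposition~\ref{prop-3.7} to the pointwise bound \eqref{eq:sublin} by inserting it into a deterministic maximal inequality for solutions of the inhomogeneous heat equation. Throughout I fix $k_t:=(1+t)^{-\mu}$ and $\zeta(t):=2^\nu(1+t)^{-\nu}\1_{[0,\infty)}(t)$ with $\mu\in(4,\vartheta/2)$ and $\nu\in(2,\mu-2)$, as in the Remark following Theorem~\ref{prop-2.1}; this is legitimate because $\vartheta>4d$, and it secures \eqref{eq:kernel_conds}--\eqref{E:2.2a}, \eqref{E:2.2aa}, the moment bound \eqref{E:2.12a} with $q=\vartheta/\mu>1$, and hence the conclusions of Lemma~\ref{L:nu_w}. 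Since $\chi$ is $\R^d$-valued, it suffices to establish \eqref{eq:sublin} with $|\chi|$ replaced by $|v\cdot\chi|$ for each coordinate direction $v$; by \eqref{E:3.29} the scalar $u:=v\cdot\chi$ is a weak solution of $\partial_t u+L_t u=-\,v\cdot V$ with bounded source ($\|v\cdot V\|_\infty\le 2d$), which is precisely the setting of Lemma~\ref{L:MO_basic}.

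The heart of the matter is the following maximal inequality, which is Proposition~\ref{P:chi_Linfty} and whose proof I would carry out in Section~\ref{sec6}: there are exponents $1\le r\le s<2\vartheta/\mu$ as in Lemma~\ref{thm-Sobolev}, a power $\theta=\theta(d)>0$, and $c=c(d)\in(0,\infty)$ such that for every $n\ge1$,
\begin{equation*}
\max_{(t,x)\in Q(n)}\frac{|u(t,x,\cdot)|}{n}\ \le\ c\,\Bigl(1+\vvert w^{-1}\vvert_{\frac r2,\frac s2;E(B_{2n}),\zeta_n}\Bigr)^{\theta}\Bigl(\vvert u/n\vvert_{1,1;B_{2n},\zeta_n}+\tfrac1n\Bigr)^{\theta}\,.
\end{equation*}
Its proof is a Moser iteration: one tests the heat equation against a space-time-mollified power $|u|^{2(\beta-1)}u$, applies the weighted Sobolev inequality of Corollary~\ref{C:Sobolev-tailored'} to gain integrability at each step, converts the emerging \emph{weighted} Dirichlet energy back to the ordinary one by the cut-off/power analogue of Lemma~\ref{L:MO_basic} (this is where \eqref{E:2.2a}--\eqref{E:2.2aa} absorbs the forward time-average carried by the kernel $K$), turns space-time norms into $L^\infty$-in-time norms via the Kru\v zkov--Kolodi\u{\i} inequality, and closes the iteration over a geometric sequence of exponents using the interpolation Lemma~\ref{lemma-bd}. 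The factor $1+\vvert w^{-1}\vvert$ is the sole place where the degeneracy of $a$ intervenes ($a_t(e)\le1$ serves as a trivial bound everywhere else), and the $1/n$ records the bounded source $-\,v\cdot V$.

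Granting this inequality, the theorem follows. By Lemma~\ref{L:nu_w} (applicable thanks to the above choice of $k,\zeta$ and $s<2\vartheta/\mu$), the quantity $\sup_{n\ge1}\vvert w^{-1}\vvert_{\frac r2,\frac s2;E(B_{2n}),\zeta_n}$ is $\BbbP$-a.s.\ finite --- the enlargement of the box from $B_n$ to $B_{2n}$ being exactly what the $\max$ over $m\in[n,2n]$ in \eqref{E:2.13a} provides --- so the prefactor stays bounded along $n\to\infty$ for $\BbbP$-a.e.\ environment. For the second factor, $\vvert u/n\vvert_{1,1;B_{2n},\zeta_n}\le\vvert\chi/n\vvert_{1,1;B_{2n},\zeta_n}$, and this tends to $0$ $\BbbP$-a.s.: applying Proposition~\ref{prop-3.7} with $n$ replaced by $2n$ yields $(2n)^{-d-1}\Vert\chi\Vert_{1,1;B_{2n},\zeta_{2n}}\to0$, and then the pointwise bound $\zeta_n\le 4\zeta_{2n}$ (a consequence of the monotonicity of $\zeta$), the normalization \eqref{eq:norm_2}, and $|B_{2n}|$ being of order $n^d$ convert this into $\vvert\chi/n\vvert_{1,1;B_{2n},\zeta_n}\to0$. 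Since $1/n\to0$ too, the right-hand side of the maximal inequality vanishes as $n\to\infty$ for $\BbbP$-a.e.\ environment; summing over the $d$ coordinate directions gives \eqref{eq:sublin}.

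The main obstacle is the maximal inequality itself --- running the Moser iteration to completion even though the Sobolev inequality only controls a \emph{weighted} energy. The delicate part is to interleave, at \emph{every} stage of the iteration, the energy-conversion lemma in a form valid for $|u|^{\beta}$ times a mollifier (which spawns an extra source term from differentiating the mollifier), while keeping the accumulated constants --- powers of $\vvert w^{-1}\vvert$ with slowly varying exponents --- controlled enough for the geometric series in the iteration to converge; and to handle the non-local-in-time character of $w$ (through \eqref{E:2.2a}) simultaneously with the Kru\v zkov--Kolodi\u{\i} step that produces the time-supremum. A secondary technicality is that $\chi$ is only weakly differentiable in $t$, so all manipulations must be carried out in the weak formulation; the continuity of $t\mapsto\chi(t,x,\cdot)$ from Lemma~\ref{lem2.cont} and the linear-growth bound of Lemma~\ref{lemma-growth} are what make the pointwise maximum in \eqref{eq:sublin} meaningful and identify it as the limit of the regularized quantities produced by the iteration.
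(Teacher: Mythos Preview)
Your proposal is correct and follows essentially the same route as the paper: feed the $L^1$-sublinearity of Proposition~\ref{prop-3.7} into the $L^1\to L^\infty$ maximal inequality of Proposition~\ref{P:chi_Linfty} (established via Moser iteration in Sections~\ref{sec6}--\ref{sec6a}), using Lemma~\ref{L:nu_w} to control the $w^{-1}$-norms. The only cosmetic discrepancy is the shape of the maximal inequality --- the paper's version carries a variable exponent $\gamma_2(n,u)\in[1,c']$ on the $(1,1)$-norm rather than your fixed $\theta$ with additive $1/n$, and the relevant equation for~$u$ is \eqref{E:5.4} (backward form) rather than Lemma~\ref{L:MO_basic} --- but both formulations give the conclusion once the right-hand side tends to zero.
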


Recalling the notation $\vvert\cdot\vvert_{p,q;B,\zeta}$ for the normalized norms from \eqref{eq:norm_2}, the key point of the proof of this claim is the following proposition valid for general solutions to the heat equation. We state it in a form which will be sufficient to deduce Theorem \ref{thm-1.2}. A more general version of the following result can be found in Corollary \ref{C:itera100}.

\begin{proposition}[$L^1$ to $L^\infty$ bootstrap]
\label{P:chi_Linfty}
Suppose Assumption \ref{ass1} as well as the moment bound \eqref{E:q_ass} hold. There exist functions $k,\zeta$ satisfying \eqref{eq:kernel_conds}-\eqref{E:2.2aa} and constants $\gamma_1 \in (0,\infty)$ and $c,c'\in(1,\infty)$ (all depending on $d$ and the choice of $k,\zeta$) such that, if $u \colon \R\times\Z^d\times\Omega\to\R^d$ is a (measurable) weak solution to 
\begin{equation}
\label{eq:chi_Linfty1}
\frac\partial{\partial t} u(t,x,\cdot) + L_t u(t,x,\cdot) = L_t f,
\end{equation}
for some bounded $f\colon \R\times\Z^d\times\Omega\to\R$ satisfying $|f(y)-f(x)| \leq \frac1n$ for all $(x,y)\in E(B_n)$ and all~$n\ge1$, then for all $r \in (2d, \frac{\vartheta}{2})$,
\begin{equation}
\label{eq:maxbd}
 \max_{(t,x) \in Q(n)}|u(t,x)| \leq c\, W(r)^{\gamma_1} \vvert  u \vvert_{1,1; B_{2n},\zeta_n}^{\gamma_2(n,u)}
 \end{equation}
 where $1\le\gamma_2(n,u)\le c'$ and $\zeta_n$ is defined in \eqref{eq:zeta_n} and
 \begin{equation}
 W(r) := \sup_{n \geq1} \max_{m\in [n,2n]}  \bvvert w^{-1} \bvvert_{r,r;E(B_{m}),\zeta_n}
 \end{equation}
satisfies
\begin{equation}
\label{eq:Wfinite}
W(r) < \infty, \quad  \text{for all $r \in (2d, \vartheta/2)$.}
\end{equation}
\end{proposition}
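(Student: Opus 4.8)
The plan is to fix the kernels first and then bootstrap by Moser iteration on the nested diffusive cylinders $Q(n)\subset Q(2n)$. For $k,\zeta$ I would take $k_t:=(1+t)^{-\mu}$ and $\zeta(t):=2^\nu(1+t)^{-\nu}\1_{[0,\infty)}(t)$ with $\mu\in(4,\vartheta/2)$ (a nonempty range since $\vartheta>4d\ge 8$) and $\nu\in(2,\mu-2)$; Lemma~\ref{lemma-2.2} then delivers \eqref{eq:kernel_conds}--\eqref{E:2.2aa}. Since $k_t\ge c(1\wedge t^{-\mu})$, Lemma~\ref{L:nu_w} applies and gives $\E(w_0(e)^{-\vartheta/\mu})<\infty$ together with the a.s.\ bound \eqref{E:2.13a}; taking $\mu$ close enough to its lower endpoint for the larger values of $r$, this is precisely \eqref{eq:Wfinite}. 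Since $u$ is $\R^d$-valued and $L_t$ generates a Markov semigroup, it suffices to prove \eqref{eq:maxbd} for each scalar coordinate of $u$ (or for $|u|$, which is a subsolution of the heat equation), so below $u$ denotes a scalar solution of \eqref{eq:chi_Linfty1}.

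The next step is the energy (Caccioppoli) estimate. Fix $\lambda\ge 1$ and a space--time cutoff $\eta=\eta^{\rm sp}\eta^{\rm t}$, with $\eta^{\rm sp}$ interpolating between two nested boxes and $\eta^{\rm t}$ a time cutoff, and test the weak form of \eqref{eq:chi_Linfty1} against $\eta^2 u|u|^{2(\lambda-1)}$ (truncating $u$ to legitimate the computation and removing the truncation at the end). The discrete Leibniz rule for lattice gradients and the inequality $|x^\alpha-y^\alpha|\le\alpha(x^{\alpha-1}+y^{\alpha-1})|x-y|$ lead, schematically, to
\[
\esssup_t\,\zeta_n(t)\,\bigl\|\eta_t\,u_t^{\lambda}\bigr\|_{\ell^2(B)}^2+\EE^{a,\zeta_n}_{B}\bigl(\eta u^{\lambda}\bigr)\le C\lambda^2\bigl(\|\nabla\eta\|_\infty^2+\|\partial_t\eta\|_\infty\bigr)\,\EE^{a,\zeta_n}_{\overline B}\bigl(\hat u\bigr)+R_f,
\]
with $\hat u$ a lower power of $u$, where the term $R_f$ coming from the right-hand side $L_t f$ is handled by discrete summation by parts: it equals $\int\zeta_n(s)\sum_e a_s(e)\,\nabla f(e)\,\nabla(\eta^2 u|u|^{2\lambda-2})(e)\,\textd s$, so by $|\nabla f|\le 1/n$ and Young's inequality part of it is absorbed into $\EE^{a,\zeta_n}_{B}(\eta u^\lambda)$, leaving lower-order terms and a harmless contribution $\lesssim(\lambda/n)^2|B|$.

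The heart of the argument is to convert this into a self-improving inequality. I would apply Corollary~\ref{C:Sobolev-tailored'} with $v:=\eta u^\lambda$ to bound $\vvert v^2\vvert_{\hat p,\hat q;B,\zeta_n}$ by a constant times $|B|^{2/d}\,|B|^{-1}\,\EE^{w,\zeta_n}_{B}(v)\,\vvert w^{-1}\vvert_{\frac r2,\frac s2;E(B),\zeta_n}$, and then convert the \emph{weighted} energy $\EE^{w,\zeta_n}_{B}(v)$ back into the ordinary energy $\EE^{a,\zeta_n}_{\overline B}(v)$ plus controllable remainders. This last conversion is the step that genuinely needs new input: Lemma~\ref{L:MO_basic} (i.e.\ \cite[Prop.~4.6]{MO16}) applies to solutions of the heat equation, but $v=\eta u^\lambda$ is not one, so I would re-run the computation \eqref{E:2.52} for $v(t,x)-v(s,x)$, inserting the equation solved by $u$ and the Leibniz rule and using $a_s(e)\le1$ and boundedness of the lattice gradient, which costs only extra powers of $\lambda$, the cutoff norms, the $L_tf$-term, and an $\EE^a$-energy of $u$ on a slightly enlarged cylinder. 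Feeding back the Caccioppoli bound then yields, on nested boxes $B'\subset B''$, an inequality of the form $\vvert u^{2\lambda}\vvert_{\hat p,\hat q;B',\zeta_n}\le C\lambda^2 4^{j}W(r)\,\vvert u^{2\lambda}\vvert_{1,1;B'',\zeta_n}+(\text{lower order})$, and the interpolation Lemma~\ref{lemma-bd} in the form \eqref{eq:intpol3} combines the spatial Sobolev gain with the $L^\infty$-in-time control into an honest improvement of a full space--time norm from exponent $\lambda$ to $\kappa\lambda$ for some fixed $\kappa>1$ (after the parabolic bookkeeping with $\zeta_n$ and \eqref{E:2.2aa}).

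Finally I would iterate with $\lambda_j:=\lambda_0\kappa^{j}$ and cutoffs supported on cylinders shrinking from $Q(2n)$ down to $Q(n)$, so that the cutoff norms are $O(2^{j}/n)$ and $O(2^{j}/n^{2})$ respectively; the recursion $A_{j+1}\le(C\lambda_j^2 4^{j}W(r))^{1/\lambda_j}A_j$, with $A_j$ an appropriate space--time norm of $|u|$ over the $j$-th cylinder raised to the power $1/(2\lambda_j)$, has convergent prefactor product since $\sum_j\lambda_j^{-1}<\infty$ and $\sum_j j\lambda_j^{-1}<\infty$, giving $\max_{Q(n)}|u|\le c\,W(r)^{\gamma_1}\,\vvert u\vvert_{p_0,q_0;B_{2n},\zeta_n}$ with $\gamma_1=\tfrac12\sum_j\lambda_j^{-1}$ and fixed $p_0,q_0>1$. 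A last interpolation \eqref{eq:intpol3}, $\vvert u\vvert_{p_0,q_0;B_{2n},\zeta_n}\le\vvert u\vvert_{1,1;B_{2n},\zeta_n}^{\theta}\vvert u\vvert_{\infty,\infty;B_{2n},\zeta_n}^{1-\theta}$, combined with absorption of $\vvert u\vvert_{\infty,\infty;B_{2n},\zeta_n}=\max_{Q(2n)}|u|$ via the maximal bound at a slightly larger radius (iterated over $n,\tfrac32 n,\dots$, or through a standard iteration lemma), lowers the right-hand side to a power $\gamma_2(n,u)\in[1,c']$ of $\vvert u\vvert_{1,1;B_{2n},\zeta_n}$, with $\gamma_2=1$ as soon as $\vvert u\vvert_{1,1;B_{2n},\zeta_n}\ge1$ (and $x^{\gamma_2}\le x$ anyway when $x\le1$). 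I expect the main obstacle to be exactly the weighted-to-ordinary energy conversion for $v=\eta u^\lambda$ with all constants kept explicit in $\lambda$, so that the infinite Moser product converges and $\gamma_1,c'$ remain finite and independent of $n$ and $u$; closely intertwined with it is arranging the exponents $r,s,\hat p,\hat q$ to be compatible with the available negative moment $\vartheta/\mu$ of $w^{-1}$, which is what forces $\vartheta>4d$.
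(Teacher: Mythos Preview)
Your proposal is correct and follows essentially the same route as the paper: choose the kernels via Lemma~\ref{lemma-2.2}, combine a Caccioppoli-type energy estimate (the paper's Lemma~\ref{lemma-energy}) with the weighted Sobolev inequality (Corollary~\ref{C:Sobolev-tailored'}) and the weighted-to-ordinary energy conversion for $\eta u^\lambda$ (the paper's Lemma~\ref{L:DIRICH_CONVERT}, which is exactly the extension of Lemma~\ref{L:MO_basic} you describe), iterate geometrically over shrinking cylinders, and then run the second interpolation/absorption iteration over nested radii (the paper's Lemma~\ref{C:itera100}) to descend to the $(1,1)$-norm. One small correction to your sketch: the right-hand side of the Caccioppoli estimate should consist of $(1,1)$-norms of $|u|^{2\lambda-k}$ for $k=0,1,2$ rather than a Dirichlet energy of a lower power of $u$, which is what makes the one-step estimate close as a norm-to-norm inequality; also note the paper fixes $\mu\in(4,2\vartheta/r)$ depending on~$r$ to get \eqref{eq:Wfinite} for the full range, matching your remark about pushing $\mu$ down for large~$r$.
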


Deferring the proof of Proposition~\ref{P:chi_Linfty} to Section~\ref{sec6}, let us show how it implies our main result. We begin with:

\begin{proofsect}{Proof of Theorem~\ref{P:sublin}}
Since the corrector obeys the equation \eqref{E:3.29}, this is immediate from Lemma~\ref{L:nu_w}, Proposition~\ref{prop-3.7},  Proposition~\ref{P:chi_Linfty} and the boundedness of~$V$.
\end{proofsect}

Next we note the standard fact:

\begin{lemma}
\label{lemma-ergodic}
Suppose Assumption~\ref{ass1} and, given a sample of $a=\{a_t(e)\colon e\in E(\Z^d),\, t\in\R\}$, let~$\{X_t\colon t\ge0\}$ be a sample of the random walk. The process $t\mapsto \tau_{t,X_t}a$ on~$\Omega$ is then Markov with a unique stationary measure~$\BbbP$. Moreover, the process is ergodic in the sense that, for any function~$f\in L^1(\BbbP)$, we have
\begin{equation}
\label{E:4.37ui}
\frac1t\int_0^t \textd t\,f(\tau_{t,X_t}a)\,\underset{t\to\infty}\longrightarrow\, \E f
\end{equation}
for $\BbbP$-a.e.\ $a\in\Omega$ and $P^0$-a.e.\ sample of~$\{X_t\colon t\ge0\}$.
\end{lemma}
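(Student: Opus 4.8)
\emph{Proof proposal.} The plan is the classical ``environment seen from the particle'' argument, the one genuinely new point being to accommodate the degeneracy of the conductances. Write $\omega_t:=\tau_{t,X_t}a$. Because the time-shift component of $\tau$ already records the time-inhomogeneity of the walk, $\{\omega_t\colon t\ge0\}$ is a \emph{time-homogeneous} Markov process on $\Omega$; its generator acts on bounded measurable $g$ by $\scrL g=Ag+Sg$, where $Ag(\omega):=\frac{\partial}{\partial s}g(\tau_{s,0}\omega)\big|_{s=0}$ is the generator of the time-shift semigroup $g\mapsto g\circ\tau_{s,0}$ on $L^2(\BbbP)$ and
\[
Sg(\omega):=\sum_{z\colon|z|=1}\omega_0(0,z)\bigl[g(\tau_{0,z}\omega)-g(\omega)\bigr].
\]
Since $\omega_0(0,z)\le1$, the operator $S$ is bounded on $L^2(\BbbP)$. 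I would first record stationarity of $\BbbP$ by checking $\E(\scrL g)=0$ for $g$ in a core: the $A$-part gives $\frac{d}{ds}\E(g\circ\tau_{s,0})\big|_{s=0}=0$ by $\tau_{s,0}$-invariance of $\BbbP$, while the $S$-part vanishes after the substitution $\omega\mapsto\tau_{0,-z}\omega$, the symmetry $\omega_0(0,z)=\omega_0(z,0)$ and the relabelling $z\mapsto-z$ --- the same symmetrization used already in the proof of \eqref{E:3.16}.

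The core of the argument is ergodicity, which I would reduce to showing that every bounded $g$ with $P_tg=g$ for all $t\ge0$ is $\BbbP$-a.s.\ constant. For such $g$ one has $\scrL g=0$ in $L^2(\BbbP)$, and since $S$ is bounded this gives $g\in D(\scrL)=D(A)$. Pairing $\scrL g=0$ with $g$ and using the skew-adjointness $\langle g,Ag\rangle_{L^2(\BbbP)}=0$ together with $-\langle g,Sg\rangle_{L^2(\BbbP)}=\tfrac12\E\bigl(\sum_{z\colon|z|=1}\omega_0(0,z)(g\circ\tau_{0,z}-g)^2\bigr)$, I obtain $\omega_0(0,z)\bigl(g(\tau_{0,z}\omega)-g(\omega)\bigr)=0$ $\BbbP$-a.s.\ for each unit $z$. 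In a uniformly elliptic setting this would at once force $g\circ\tau_{0,z}=g$; here the conductance can vanish, so a priori $g\circ\tau_{0,z}=g$ only on $\{\omega_0(0,z)>0\}$. \textbf{This is the main obstacle.} To get past it, observe that the last display shows $Sg\equiv0$ $\BbbP$-a.s., whence $Ag=\scrL g-Sg=0$, i.e.\ $g\circ\tau_{s,0}=g$ for all $s$: the function $g$ is invariant under all time shifts. Now fix a unit $z$ and set $e=(0,z)$. Since $T_e<\infty$ $\BbbP$-a.s.\ (Assumption~\ref{ass1}(3)) and $\int_0^{T_e}\omega_s(e)\,\textd s=1>0$, for $\BbbP$-a.e.\ $\omega$ the set $\{s\ge0\colon\omega_s(e)>0\}$ has positive Lebesgue measure; applying $g\circ\tau_{0,z}=g$ on $\{\omega_0(e)>0\}$ to $\tau_{s,0}\omega$ for such an $s$ and using time-shift invariance twice yields $g(\tau_{0,z}\omega)=g(\tau_{s,0}\omega)=g(\omega)$. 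Hence $g$ is invariant under $\tau_{0,z}$ for every unit $z$, hence under all of $\{\tau_{t,x}\colon t\in\R,\,x\in\Z^d\}$; joint ergodicity (Assumption~\ref{ass1}(2)) then forces $g=\E g$ $\BbbP$-a.s. By the usual $L^2$-characterization this shows that the stationary process $\{\omega_t\}$ is ergodic, and uniqueness of the stationary measure belongs to the same standard circle of ideas.

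Finally, Birkhoff's ergodic theorem for the stationary, ergodic, continuous-time process $\{\omega_t\}$ gives, for any $f\in L^1(\BbbP)$, that $\tfrac1t\int_0^t f(\omega_s)\,\textd s\to\E f$ on a set of full measure for the stationary path law (under which $\omega_0\sim\BbbP$ and then $\{X_s\}$ runs from the origin). Disintegrating this path law over the initial environment $\omega_0=a\sim\BbbP$ and recalling $\omega_s=\tau_{s,X_s}a$ turns this into \eqref{E:4.37ui}: the convergence holds for $\BbbP$-a.e.\ $a$ and, for each such $a$, for $P^0$-a.e.\ realization of $\{X_s\colon s\ge0\}$. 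The residual measure-theoretic points --- that $g\in D(\scrL)$ legitimizes the above computations (using that $S$ is bounded), and that the invariant $\sigma$-field of the path space is, modulo null sets, generated by $\omega_0$ --- are standard, and I would either cite them or dispatch them by a routine mollification argument.
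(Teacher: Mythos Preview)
Your argument is correct and self-contained, and the handling of the degeneracy is exactly right: once the Dirichlet-form identity forces $Sg=0$, you recover $Ag=0$ and then leverage $T_e<\infty$ to upgrade the conditional equality $g\circ\tau_{0,z}=g$ on $\{\omega_0(e)>0\}$ to an unconditional one. The only place to be a touch more careful is the Fubini step behind ``find an $s$ with $\omega_s(e)>0$ and apply the a.s.\ identity there''; the clean way is to note that $h:=g\circ\tau_{0,z}-g$ is itself time-shift invariant, so $\omega_s(e)\,h(\omega)=0$ holds $\BbbP$-a.s.\ for every fixed~$s$, and integrating in~$s$ against $T_e<\infty$ yields $h=0$ a.s.

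The paper takes a different, more elementary route: it works with invariant \emph{events} rather than $L^2$-invariant functions, and uses explicit lower bounds on transition probabilities in place of your generator/Dirichlet-form machinery. From $\1_A=\sum_x p^{0,t}(0,x)\,\1_A\circ\tau_{t,x}$ one gets $\E\bigl(\1_{A^\cc}\,p^{0,t}(0,x)\,\1_A\circ\tau_{t,x}\bigr)=0$ for every~$x$. The bound $p^{0,t}(0,0)\ge\e^{-2dt}$ (from $a_t\le1$) then forces $\1_A\circ\tau_{t,0}=\1_A$, and the bound $p^{0,t}(0,e)\ge\e^{-2dt}$ on $\{T_e<t\}$ gives $\1_A\circ\tau_{0,e}=\1_A$; joint ergodicity finishes. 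This avoids any discussion of domains, strong continuity of the time-shift semigroup, or skew-adjointness of~$A$. Your approach, by contrast, is the natural ``spectral'' one --- it explains structurally why the argument works (vanishing Dirichlet form plus time-drift invariance) and would generalize more readily, at the cost of the functional-analytic overhead you flag at the end.
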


\begin{proofsect}{Proof}
The stationarity and reversibility of~$\BbbP$ is verified easily by a standard generator calculation and the limit in \eqref{E:4.37ui} exists by the Ergodic Theorem. The only item where caution is needed is ergodicity which ensures that the limit value in \eqref{E:4.37ui} is constant $\BbbP$-a.s., and thus equal to~$\E f$. This boils down to showing that any event~$A\subset\Omega$ which is invariant under the Markov shift $t\mapsto \tau_{t,X_t}a$ is a zero-one event.

We build on an argument in Andres~\cite[Proposition~2.1]{A14}. Let~$A$ be as above. For each~$t\ge0$, we then have $1_A = \sum_{x\in\Z^d}p^{0,t}(0,x)\1_A\circ\tau_{t,x}$ $\BbbP$-a.s.\ and so
\begin{equation}
0=\1_{A^\cc}\1_A=\sum_{x\in\Z^d}\1_{A^\cc}p^{0,t}(0,x)\1_A\circ\tau_{t,x}\,.
\end{equation}
Taking expectation and dropping all but one term from the sum yields
\begin{equation}
\label{E:4.39ui}
\E\bigl(1_{A^\cc}p^{0,t}(0,x)\1_A\circ\tau_{t,x}\bigr)=0,\qquad x\in\Z^d,\,t\ge0.
\end{equation}
Since $a_t(e)\le1$ implies $p^{0,t}(0,0)\ge\e^{-2dt}$, choosing~$x=0$ gives $\E(1_{A^\cc}\1_A\circ\tau_{t,0})=0$ for all~$t\ge0$. Applying $\tau_{-t,0}$ under expectation and swapping the roles of~$A$ and~$A^\cc$ then shows $\1_A\circ\tau_{t,0}=\1_A$ $\BbbP$-a.s.\ for each~$t\in\R$, i.e.,~$A$ is time-shift invariant $\BbbP$-a.s.

Next pick a neighbor~$e$ of the origin and apply \eqref{E:4.39ui} to~$x:=e$. Injecting the restriction $T_e<t$ into the expectation, we thus get
\begin{equation}
\E\bigl(\1_{\{T_e<t\}}1_{A^\cc}p^{0,t}(0,e)\1_A\circ\tau_{t,e}\bigr)=0.
\end{equation}
But time-shift invariance of~$A$ shows $\1_A\circ\tau_{t,e} = \1_A\circ\tau_{0,e}$ $\BbbP$-a.s.\ and, on $\{T_e<t\}$, we have
\begin{equation}
p^{0,t}(0,e)\ge \int_0^t \textd s\, \e^{-2ds} a_s(e)  \e^{-2d(t-s)} \ge \e^{-2d t}.
\end{equation}
It follows that $\E(\1_{\{T_e<t\}}1_{A^\cc}\1_A\circ\tau_{0,e})=0$. Taking~$t\to\infty$ and using that $T_e<\infty$ $\BbbP$-a.s.\ (by Assumption~\ref{ass1}(3)) we now again get $\1_A\circ\tau_{0,e}=\1_A$ $\BbbP$-a.s. Hence $A$ is invariant with respect to all space-time shifts a.s.; ergodicity of~$\BbbP$ then implies that $\BbbP(A)\in\{0,1\}$ as desired.
\end{proofsect}


We are now ready to give the:

\begin{proofsect}{Proof of Theorem~\ref{thm-1.2}}
Let~$\psi$ be the harmonic coordinate constructed in Theorem~\ref{prop-2.1} and let~$\{X_t\colon t\ge0\}$ be a sample of the random walk. Let $\FF_t:=\sigma(X_s\colon 0\le s\le t)$. The equation~\eqref{E:3.1} then implies that $\{\psi(t,X_t,\cdot),\FF_t\}_{t\ge0}$ is a martingale. Letting~$v\in\R^d$, the quadratic variation process of~$t\mapsto v\cdot\psi(t,X_t,\cdot)$ is given by
\begin{equation}
\bigl\langle v\cdot\psi(t,X_t,\cdot)\bigr\rangle_t=\int_0^t\textd s\, f\circ\tau_{s,X_s}(a)\,,
\end{equation}
where
\begin{equation}
f(a):=E^0\biggl(\,\sum_{z\colon|z|=1}a_0(0,z)\bigl[v\cdot\psi(0,z,\cdot)\bigr]^2\biggr).
\end{equation}
In light of \eqref{E:3.3} and Lemma~\ref{lemma-ergodic}, the conditions of the Lindeberg-Feller Martingale Functional Central Limit Theorem are satisfied. Hence $t\mapsto n^{-1}\psi(tn^2,X_{tn^2},\cdot)$ scales as~$n\to\infty$ to a Brownian motion with variance as in~\eqref{E:1.14}. 

In order to complete the proof of convergence of $t\mapsto n^{-1}X_{tn^2}$ to Brownian motion, it suffices to show that, for $\BbbP$-a.e.\ realization of the environment,
\begin{equation}
\label{E:4.24}
\sup_{0\le s\le t}\frac1{\sqrt t}\bigl|\psi(s,X_s,\cdot)-X_s\bigr|\,\underset{t\to\infty}\longrightarrow\,0,\quad\text{in $P^0$-probability}\,.
\end{equation}
This is shown by noting that, for any $M\ge1$ and any~$\epsilon>0$,
\begin{multline}
\qquad
P^0\Bigl(\,\sup_{0\le s\le t}\bigl|\psi(s,X_s,\cdot)-X_s\bigr|>\epsilon\sqrt t\Bigr)
\\
\le P^0\Bigl(\,\sup_{0\le s\le t}\bigl|\psi(s,X_s,\cdot)\bigr|>M\sqrt t\Bigr)+1_{\{\max_{(s,x)\in Q(M\sqrt t)}|\chi(s,x,\cdot)|>\epsilon\sqrt t\}}.
\qquad
\end{multline}
For any fixed $M\ge1$ and $\epsilon>0$, Theorem~\ref{P:sublin} ensures that the indicator is zero for~$t$ sufficiently large~$\BbbP$-a.s. On the other hand, in the limit as~$t\to\infty$ followed by~$M\to\infty$, the probability on the right tends to zero by the above convergence of $t\mapsto n^{-1}\psi(X_{tn^2})$ to Brownian motion. This implies \eqref{E:4.24}. 

In order to show that the limiting covariance~$\Sigma$ is non-degenerate suppose that $v\cdot\Sigma v=0$ for some~$v\in\R^d$. Then \eqref{E:1.14} and the cocycle conditions imply $L_t(v\cdot\psi)(t,x,\cdot)=0$ for all $t$ and $x$ and thus by the differential equation, see \eqref{E:3.1}, the function $t\mapsto v\cdot\psi(t,x,\cdot)$ is constant for each $x\in \Z^d$. However, Assumption \ref{ass1}(3) ensures that $t\mapsto a_t(e)$ is positive eventually and so this means that $v\cdot\psi(0,x,\cdot)=0$ $\BbbP$-a.s. If~$v\ne0$, this violates the sublinearity of~$\chi$ from Theorem~\ref{P:sublin} and so we must have~$v=0$ after all.
\end{proofsect}


\section{Maximal inequality via Moser iteration}
\label{sec6}\noindent
The aim of this section is give a proof of the maximal inequality for the corrector stated in Proposition~\ref{P:chi_Linfty}. The proof is based on Moser-iteration technique whose main input is the ``one-step estimate'' stated in  Proposition~\ref{P:one-step} below. 
In this section we provide the proof of Proposition~\ref{P:chi_Linfty} conditional on the one-step estimate; this estimate is then proved in Section~\ref{sec6a}.

\subsection{Cut-offs and the one-step estimate}
Let us start with the statement of the one-step estimate. This will require working under spatial and temporal mollifiers (or smooth cut-offs), denoted by $\eta$ and~$\xi$ respectively, that will be assumed to obey the following conditions:

\begin{definition}
Given finite sets $B_1 \subset B_2\subset\mathbb{Z}^d$, and parameters $\delta \in (0,1)$, $\rho \geq 1$ and $M \geq 1$, we say that the (cut-off) functions $\kappa_1,\kappa_2 \colon [0,\infty)\times \mathbb{Z}^d \to [0,1]$ are \textit{$(B_1,B_2)$-adapted with parameters $(\delta, \rho, M)$} if, for $i=1,2$, these functions take the form
\begin{equation}
\label{E:onestep1}
\kappa_i (t,x)= \xi_i(t) \eta_i(x), \qquad t \geq 0 , \ x \in \Z^d,
\end{equation}
with  $\xi_i\colon [0, \infty) \to [0,1]$ and $\eta_i\colon \Z^d \to [0,1]$ satisfying
\begin{equation}
\textnormal{supp}(\eta_i) \subset B_i  \text{ for } i=1,2 \quad\text{and}\quad  \eta_2(x) = 1 \text{ for } x\in B_1,  \label{E:onestep2}
\end{equation}
and
\begin{equation}
\xi_i \in C^1  \text{ for } i=1,2 \quad\text{and}\quad  
\xi_1(t ) \leq M \xi_2 (t)^{\rho} , \ |\bigdot{\xi_1}(t)| \leq \delta M \xi_2 (t)^{\rho}, \quad t \geq 0. \label{E:onestep3}
\end{equation}
\end{definition}

The spatial mollifier $\eta_i$ should be thought of as a ``smooth'' version of the indicator of~$1_{B_i}$. Note that the conditions in~\eqref{E:onestep2} imply $\eta_1 \leq \eta_2$. An explicit construction of functions $\eta_i$ and~$\xi_i$ is provided below in Lemma~\ref{L:cutoff}.

In order to state the one-step estimate, we need some more notation. Given $e \in {E}(\Z^d)$ (and recalling that edges are unoriented), specify one of its endpoints as its initial vertex~$x_e$ and the other as $y_e$ (the choice will not matter in the sequel). Then abbreviate
\begin{equation}
\label{E:grad}
\nabla f(e) := f(y_e) - f(x_e).
\end{equation}
In what follows, we will write $\Vert \cdot \Vert_{\ell^p(K)}$ to denote the $\ell^p$-norm with respect to the \textit{counting} measure, for any $K \subseteq \mathbb{Z}^d$ (or, if functions on edges are considered, $K \subseteq E(\mathbb{Z}^d)$) and any $p >0$. We denote by $L^\infty(\R_+)$ the set of Lebesgue-a.e.~bounded functions supported in~$[0,\infty)$.
Recall also the notation $\hat p(\alpha):= \frac{\alpha}{2}\frac d{d-1}$ and $\hat q(\beta):= \frac{\beta}{2}$ for  the ``Sobolev exponents'' from \eqref{eq:p_s}, and the normalized norms $\vvert \cdot \vvert_{p,q; B, \zeta}$ from \eqref{eq:norm_2}. We assume throughout that $\zeta$ satisfies \twoeqref{eq:kernel_conds}{E:2.2a} and we are interested in weak solutions to the inhomogenous equation
\begin{equation}
\label{E:5.4}
\frac\partial{\partial t} u(t,x) + L_t u(t,x) = L_tf(t,x), \quad t \geq 0, \, x\in \Z^d.
\end{equation}
We are only interested in the specific case $f(t,x)=x$, see \eqref{E:3.29} and \eqref{E:3.10}, for which a (weak) solution to \eqref{E:5.4} has been constructed in Proposition \ref{lemma-3.6}, but the following results only require that $ \Vert\nabla f\Vert_{\ell^\infty(E)}$ be finite. The ``one-step estimate'' is now the content of:

\begin{proposition}[One-step Moser iteration]
\label{P:one-step}
Let $d\ge2$ and suppose Assumption~\ref{ass1} holds. For all $\alpha \in (2, 2\frac{d-1}{d-2})$, all $\beta \in (0,2)$ and all $q > 1$ and~$p$ defined by
 \begin{equation}
  \label{E:onestep4}
 \frac1{p} := \frac{\theta}{\hat p(\alpha)} + 1-\theta,\quad\text{ where } \theta := \frac{\hat q(\beta)}{q} \in (0,1)\,,
 \end{equation} 
there is $\cii=\cii(d, \alpha, \beta, q)\in (0,\infty)$ such that the following holds for any weak solution~$u$ of inhomogenous heat equation \eqref{E:5.4}: For all finite $B_1 \subset B_2 \subset \Z^d$, all $\delta > 0$, all $\rho \in [1, p\wedge q)$, all $M \geq 1$, all $\lambda_1 \geq 2 $ and all $(B_1,B_2)$-adapted functions $\kappa_1, \kappa_2$ with parameters $(\delta, \rho,M)$ we have
\begin{equation}
\label{E:onestep5}
\vvert \,  \kappa_1^{2/\lambda_1} u \, \vvert_{ \lambda_1 p , \, \lambda_1 q ; \, \, B_1, \zeta
} \le (A_{1,2})^{1/\lambda_1} \, \vvert \, \kappa_2^{2/\lambda_2} u \,\vvert_{  \lambda_2 p , \, \lambda_2 q ; \, \, B_2, \zeta}^\gamma\,,
\end{equation}
where $\lambda_2 := \lambda_1 / \rho$,
\begin{equation}
\label{E:onestep5.1}
\gamma := 
\begin{cases}
 1-\frac{2}{\lambda_1}, &\text{if }\ \vvert \, \xi_2^{2\rho} |u|^{\lambda_1} \, \vvert_{1,1;B_1, \zeta} < 1, \\ 1 &\text{otherwise}
 \end{cases}
\end{equation}
(in particular, $\gamma  \in [0,1]$) and the prefactor $A_{1,2}$ takes the explicit form
\begin{equation}
\label{E:onestep5.2}
A_{1,2} :=  \cii(\lambda_1^2M)^2  \, \Vert \zeta \Vert_{L^1}\, \bigl( 1\vee \vvert \,w^{-1} \,\vvert_{ \frac r2 ,  \, \frac s2 ; \, \, B_1, \zeta}\bigr) \frac{|B_2|}{|B_1|} \Biggl[(\Gamma + \delta)\biggl( \frac1{\inf_{t \in \Sigma_1} \zeta(t)} + |B_1|^{\frac2d}\biggr) \Biggr]  
\end{equation}
with $r,s$ related to~$\alpha,\beta$ as in \eqref{E:1.4a}, $ \Sigma_1 :=  \text{\rm supp}(\xi_1)$  and
\begin{equation}
\label{E:onestep6}
\Gamma :=  \Vert\nabla f\Vert_{\ell^\infty(E)}^2 + \bigl\Vert(\nabla f) (\nabla \eta_1)\bigr\Vert_{\ell^\infty(E)} + \Vert \nabla \eta_1 \Vert_{\ell^\infty(E)}^2 + \Vert \bigdot{\zeta}/ \zeta\Vert_{L^{\infty}(\mathbb{R}_+)}\,. 
\end{equation}
Here~$f\colon\Z^d\to\R$ is the function on the right of \eqref{E:5.4}.
\end{proposition}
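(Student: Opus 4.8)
\smallskip\noindent\emph{Sketch of the argument.}\;
The plan is to carry out a single step of a Moser iteration in the spirit of Andres, Chiarini, Deuschel and Slowik~\cite{ACDS16}, with two modifications: every constant is tracked explicitly and kept polynomial in~$\lambda_1$ and~$M$, and the one negative power of a conductance that the scheme requires is supplied through the \emph{weighted} Sobolev inequality of Corollary~\ref{C:Sobolev-tailored'}, so that $w^{-1}$ (and not $a^{-1}$) appears in the prefactor $A_{1,2}$. Throughout, write $v_t(x):=\kappa_1(t,x)\,|u(t,x)|^{\lambda_1/2}$, with $|\cdot|$ the Euclidean norm on~$\R^d$, and recall $\lambda_2=\lambda_1/\rho$.

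First I would establish the \emph{energy inequality}. After a routine mollification to justify its use, test the weak form of~\eqref{E:5.4} against the (bounded, space‑time compactly supported) function $\kappa_1^2|u|^{\lambda_1-2}u$. The time derivative produces $\frac1{\lambda_1}\frac{\dm}{\dm t}\sum_{x\in B_1}\kappa_1^2|u|^{\lambda_1}$ plus a term controlled by $\|\dot{\zeta}/\zeta\|_{L^\infty(\R_+)}$ times $\sum_{x\in B_1}\kappa_1^2|u|^{\lambda_1}$ (after using \eqref{E:onestep3} to replace $\dot\xi_1$ by $\delta M\xi_2^\rho$); the spatial part, after summation by parts on $E(\overline B_1)$ and the elementary bounds~\eqref{E:1.7} together with $\bigl|\,|a|^{\lambda_1-2}a-|b|^{\lambda_1-2}b\,\bigr|\le\lambda_1\bigl(|a|^{\lambda_1/2-1}+|b|^{\lambda_1/2-1}\bigr)\bigl|\,|a|^{\lambda_1/2-1}a-|b|^{\lambda_1/2-1}b\,\bigr|$, is bounded below by $c\,\lambda_1^{-2}\,\EE^a_{t,\overline B_1}(v_t)$ minus Young's‑inequality cross terms carrying the coefficients $\|\nabla\eta_1\|_{\ell^\infty(E)}^2$, $\|(\nabla f)(\nabla\eta_1)\|_{\ell^\infty(E)}$ and $\|\nabla f\|_{\ell^\infty(E)}^2$; the inhomogeneity $L_tf$ on the right of~\eqref{E:5.4} contributes cross terms of the same kind. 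Integrating against~$\zeta$, invoking~\eqref{E:onestep2}--\eqref{E:onestep3} to replace $\xi_1$ by $M\xi_2^{\rho}$ and $\eta_1\le\eta_2$, and observing that $\eta_2\equiv1$ on $B_1$ so that there $\kappa_2^{2\rho}|u|^{\lambda_1}=(\kappa_2^{2/\lambda_2}|u|)^{\lambda_1}$, I would reach an estimate of the form
\[
\bvvert\kappa_1^{2/\lambda_1}u\bvvert_{\lambda_1,\infty;B_1,\zeta}^{\lambda_1}+\frac1{\lambda_1^2|B_1|}\,\EE^{a,\zeta}_{\overline B_1}(v)\;\le\;c\,(\lambda_1^2M)^2(\Gamma+\delta)\Bigl(\tfrac1{\inf_{t\in\Sigma_1}\zeta(t)}+|B_1|^{2/d}\Bigr)\|\zeta\|_{L^1}\;Z^{\lambda_1},\qquad Z:=\bvvert\kappa_2^{2/\lambda_2}u\bvvert_{\lambda_1,\lambda_1;B_1,\zeta},
\]
with $\Gamma$ as in~\eqref{E:onestep6}; here a careful accounting of the chain‑rule constants, the Young splittings and the $\dot\xi_1$‑term delivers precisely the factor $(\lambda_1^2M)^2$. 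Note that, since $\eta_2\equiv1$ on $B_1$, $Z^{\lambda_1}=\vvert\xi_2^{2\rho}|u|^{\lambda_1}\vvert_{1,1;B_1,\zeta}$, which is the quantity appearing in~\eqref{E:onestep5.1}.

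Second, I would trade Dirichlet energy for higher integrability. Applying Corollary~\ref{C:Sobolev-tailored'} to $v$ gives
\[
\bvvert\kappa_1^{2/\lambda_1}u\bvvert_{\lambda_1\hat p(\alpha),\lambda_1\hat q(\beta);B_1,\zeta}^{\lambda_1}=\bvvert v^2\bvvert_{\hat p(\alpha),\hat q(\beta);B_1,\zeta}\le 2d\czero^2\,|B_1|^{2/d}\,\frac{\EE^{w,\zeta}_{B_1}(v)}{|B_1|}\,\vvert w^{-1}\vvert_{\frac r2,\frac s2;E(B_1),\zeta}.
\]
Since $v$ does not solve a heat equation, Lemma~\ref{L:MO_basic} does not apply directly; I would prove the needed cut‑off/power variant along the same lines: expand $w_t(e)$ via~\eqref{E:1.16}, write $v_t(x)=v_s(x)+[v_t(x)-v_s(x)]$, use $a_s(e)\le1$ and $(a+b+c)^2\le3(a^2+b^2+c^2)$, and control $[v_t(x)-v_s(x)]^2$ by $|t-s|\int_t^s|\partial_rv_r(x)|^2$, with $\partial_rv_r=\dot\kappa_1|u_r|^{\lambda_1/2}+\tfrac{\lambda_1}{2}\kappa_1|u_r|^{\lambda_1/2-1}\bigl(u_r/|u_r|\bigr)\cdot(L_ru_r+L_rf)$. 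The delicate contributions are those with $|u_r|^{\lambda_1/2-1}|L_ru_r|$: using $a_r(e)\le1$ and $|u_r(x)|^{\lambda_1/2-1}|u_r(y)-u_r(x)|\le c\lambda_1\bigl|\,|u_r(y)|^{\lambda_1/2-1}u_r(y)-|u_r(x)|^{\lambda_1/2-1}u_r(x)\bigr|$, these are reabsorbed (again via Young) into $\lambda_1^{-2}\EE^a$‑terms and into the source terms already present, after using~\eqref{E:2.2aa} to pass from the kernel $K$ to a constant multiple of $\zeta$ exactly as in the Corollary following Lemma~\ref{L:MO_basic}. The outcome is that $\EE^{w,\zeta}_{B_1}(v)$ is dominated by $c\,\ci\,\lambda_1^2$ times the right‑hand side of the energy inequality; combined with the display above (and using the ``$1\vee$'' to cover the case $\vvert w^{-1}\vvert<1$) this yields $\bvvert\kappa_1^{2/\lambda_1}u\bvvert_{\lambda_1\hat p(\alpha),\lambda_1\hat q(\beta);B_1,\zeta}^{\lambda_1}\le A_{1,2}\,Z^{\lambda_1}$ with $A_{1,2}$ of the form~\eqref{E:onestep5.2}. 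I expect this second step to be the main obstacle: handling the $L_tu$ term in $\partial_tv$ for the genuinely nonlinear object $v=\kappa_1|u|^{\lambda_1/2}$, while forcing the constant to be a fixed polynomial in $\lambda_1$ (any worse $\lambda_1$‑dependence would prevent the subsequent iteration of Proposition~\ref{P:one-step} from converging), is the real work in Section~\ref{sec6a}.

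Finally, interpolation and the exponent $\gamma$. By~\eqref{eq:intpol3} of Lemma~\ref{lemma-bd}, applied to the exponent pairs $(\lambda_1\hat p(\alpha),\lambda_1\hat q(\beta))$ and $(\lambda_1,\infty)$ with $\theta=\hat q(\beta)/q$ --- which is exactly the definition of $p$ in~\eqref{E:onestep4} and forces $\tfrac1q=\tfrac\theta{\hat q(\beta)}$ --- one has $\bvvert\kappa_1^{2/\lambda_1}u\bvvert_{\lambda_1p,\lambda_1q;B_1,\zeta}\le\bvvert\kappa_1^{2/\lambda_1}u\bvvert_{\lambda_1\hat p(\alpha),\lambda_1\hat q(\beta);B_1,\zeta}^{\theta}\,\bvvert\kappa_1^{2/\lambda_1}u\bvvert_{\lambda_1,\infty;B_1,\zeta}^{1-\theta}$. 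By the first two steps each factor, raised to the $\lambda_1$‑th power, is at most $A_{1,2}\,Z^{\lambda_1}$, hence $\bvvert\kappa_1^{2/\lambda_1}u\bvvert_{\lambda_1p,\lambda_1q;B_1,\zeta}\le A_{1,2}^{1/\lambda_1}\,Z$. Since $\rho\in[1,p\wedge q)$ forces $\lambda_1<\lambda_2p\wedge\lambda_2q$, the monotonicity~\eqref{E:pq_monot} together with $B_1\subset B_2$ gives $Z\le(|B_2|/|B_1|)^{1/\lambda_1}\,\bvvert\kappa_2^{2/\lambda_2}u\bvvert_{\lambda_2p,\lambda_2q;B_2,\zeta}=:(|B_2|/|B_1|)^{1/\lambda_1}Y$, and absorbing $|B_2|/|B_1|$ into $A_{1,2}$ gives~\eqref{E:onestep5} with $\gamma=1$. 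If moreover $\vvert\xi_2^{2\rho}|u|^{\lambda_1}\vvert_{1,1;B_1,\zeta}<1$, i.e.\ $Z<1$, then $Z^{\lambda_1}=Z^2\cdot Z^{\lambda_1-2}<Z^{\lambda_1-2}\le(|B_2|/|B_1|)\,Y^{\lambda_1-2}$; feeding this sharper bound for $Z^{\lambda_1}$ into the energy inequality upgrades the conclusion to $\bvvert\kappa_1^{2/\lambda_1}u\bvvert_{\lambda_1p,\lambda_1q;B_1,\zeta}\le A_{1,2}^{1/\lambda_1}\,Y^{1-2/\lambda_1}$, i.e.~\eqref{E:onestep5} with $\gamma=1-\tfrac2{\lambda_1}$, exactly as in~\eqref{E:onestep5.1}. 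Collecting $\cii$, the power $(\lambda_1^2M)^2$, the factors $\|\zeta\|_{L^1}$, $1\vee\vvert w^{-1}\vvert_{r/2,s/2;B_1,\zeta}$, $|B_2|/|B_1|$ and $(\Gamma+\delta)\bigl(\tfrac1{\inf_{\Sigma_1}\zeta}+|B_1|^{2/d}\bigr)$ gives $A_{1,2}$ in the stated form~\eqref{E:onestep5.2}.
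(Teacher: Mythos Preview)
Your overall architecture matches the paper's proof in Section~\ref{sec6a} closely: interpolation via~\eqref{eq:intpol3} between the $(\hat p(\alpha),\hat q(\beta))$-norm (handled by Sobolev~\eqref{E:1.5abcd'} plus a cut-off/power analogue of the Dirichlet conversion, the paper's Lemma~\ref{L:DIRICH_CONVERT}) and the $(1,\infty)$-norm (handled by the energy estimate, the paper's Lemma~\ref{lemma-energy}), followed by the passage from $B_1$ to $B_2$ using $\eta_2\equiv 1$ on~$B_1$ and monotonicity~\eqref{E:pq_monot}. You also correctly identify the conversion of $\EE^{w,\zeta}$ to $\EE^{a,\zeta}$ for the nonlinear object $\kappa_1\tilde u^\lambda$ as the technical heart of the argument.

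There is, however, a genuine gap in your handling of the exponent~$\gamma$. Your energy inequality with right-hand side proportional to $Z^{\lambda_1}$ cannot hold as stated: testing~\eqref{E:5.4} against $\kappa_1^2|u|^{\lambda_1-2}u$ unavoidably produces the lower-order terms $\bigl\Vert\xi_1^2|u|^{2\lambda-1}\bigr\Vert_{1,1;B_1,\zeta}$ and $\bigl\Vert\xi_1^2|u|^{2\lambda-2}\bigr\Vert_{1,1;B_1,\zeta}$ (from the $\nabla f$ contributions), and these are \emph{not} dominated by $Z^{\lambda_1}$ when $Z<1$. No Young-type splitting helps here without introducing an additive constant, which would destroy the multiplicative form of~\eqref{E:onestep5}. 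The paper deals with this in~\eqref{E:onestep10} by applying Jensen's inequality to each such term, yielding
\[
\bigl\Vert\xi_1^2|u|^{2\lambda-k}\bigr\Vert_{1,1;B_1,\zeta}\le \|\zeta\|_{L^1}\,|B_1|\,M^2\,\bvvert\xi_2^{2\rho}|u|^{2\lambda}\bvvert_{1,1;B_1,\zeta}^{1-k/(2\lambda)},\qquad k=0,1,2,
\]
so that the right-hand side of the energy estimate is $\lesssim Z^{\lambda_1\gamma}$ directly, with $\gamma$ as in~\eqref{E:onestep5.1} arising as the worst of the three exponents. Your post-hoc ``upgrade'' $Z^{\lambda_1}<Z^{\lambda_1-2}$ when $Z<1$ is a valid weakening, but it is applied to an inequality whose premise is already false in that regime; the $\gamma$ must be built into the energy bound from the start, not recovered afterwards.
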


The proof also exhibits the following estimate, which we record for later purposes:

\begin{corollary}
\label{R:one-step}
For the setting, notations and under the conditions of Proposition~\ref{P:one-step},
\begin{equation}
\label{E:onestep14}
\bvvert \,  \kappa_1^2  |u|^{\lambda_1} \, \bvvert_{1 , \,  \infty ; \, \, B_1,\zeta}^{1/\lambda_1} \leq (A_{1,2})^{1/\lambda_1} \  \vvert \, \kappa_2^{2/\lambda_2} u \, \vvert_{\lambda_2p,\,\lambda_2q \,;\,B_2, \zeta}^\gamma.
\end{equation}
\end{corollary}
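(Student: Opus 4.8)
\begin{proofsect}{Proof of Corollary~\ref{R:one-step}}
The plan is to read this estimate off from the proof of Proposition~\ref{P:one-step}, of which it is the endpoint ($q=\infty$) counterpart, stopping one step short of the conclusion \eqref{E:onestep5}. That proof will run in three stages: (i) a Caccioppoli-type energy inequality for the weak solution~$u$, obtained by testing \eqref{E:5.4} against $\kappa_1^2\,|u|^{\lambda_1-2}u$ (componentwise), using the discrete Leibniz rule for $\nabla(\kappa_1^2\,\cdot\,)$ and Young's inequality to absorb the $\nabla\kappa_1$-cross-terms and to move the inhomogeneity $L_tf$ to the right-hand side; (ii) an application of the Sobolev inequality in the form of Corollary~\ref{C:Sobolev-tailored'}, after converting the weighted Dirichlet energy produced in~(i) into an ordinary one by the variant of Lemma~\ref{L:MO_basic} for powers of solutions times cut-offs established in Section~\ref{sec6a}; and (iii) the interpolation Lemma~\ref{lemma-bd}, combining the resulting $\vvert\,\cdot\,\vvert_{\hat p,\hat q}$ bound with a $\vvert\,\cdot\,\vvert_{1,\infty}$ bound. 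The point is that \eqref{E:onestep14} is exactly this $\vvert\,\cdot\,\vvert_{1,\infty}$ bound, which already falls out of stage~(i) alone; so once stages (i)--(ii) are in place, no further work is needed.

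Concretely, I would integrate the energy inequality from stage~(i) in time --- using that $\xi_1$ may be taken to vanish at~$0$, together with the adaptedness bounds $\xi_1\le M\xi_2^\rho$, $|\bigdot{\xi_1}|\le\delta M\xi_2^\rho$ and $\Vert\bigdot{\zeta}/\zeta\Vert_{L^\infty(\R_+)}<\infty$ from \eqref{eq:kernel_conds} --- and then drop the non-negative Dirichlet-energy term. This leaves a bound of the form
\begin{equation*}
\sup_{T\ge0}\ \zeta(T)\sum_{x\in B_1}\kappa_1(T,x)^2\,|u_T(x)|^{\lambda_1}
\ \le\ c\,(\lambda_1^2M)^2(\Gamma+\delta)\Bigl(1\vee\vvert w^{-1}\vvert_{\frac r2,\frac s2;B_1,\zeta}\Bigr)\int\textd t\,\zeta(t)\sum_{x\in B_2}\kappa_2(t,x)^{2\rho}\,|u_t(x)|^{\lambda_1},
\end{equation*}
where the $\Gamma$-terms (cf.\ \eqref{E:onestep6}) record the cost of the cross-terms and the inhomogeneity, the factor $\lambda_1^2$ is the standard power-law bookkeeping, and the factor $1\vee\vvert w^{-1}\vvert_{\frac r2,\frac s2;B_1,\zeta}$, which is $\ge1$, enters through the energy conversion exactly as in \eqref{E:2.46}. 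Dividing the left-hand side by $|B_1|$ and by $\inf_{t\in\Sigma_1}\zeta(t)>0$ (legitimate since $\kappa_1$ is supported in $\Sigma_1$) turns it into $\bvvert\kappa_1^2|u|^{\lambda_1}\bvvert_{1,\infty;B_1,\zeta}$ --- this is the origin of the factor $1/\inf_{t\in\Sigma_1}\zeta(t)$ in $A_{1,2}$. Normalizing the right-hand side by $|B_1|\Vert\zeta\Vert_{L^1}$ produces the volume ratio $|B_2|/|B_1|$ and the quantity $\vvert\kappa_2^{2\rho}|u|^{\lambda_1}\vvert_{1,1;B_2,\zeta}$; re-expressing the latter in terms of $\vvert\kappa_2^{2/\lambda_2}u\vvert_{\lambda_2p,\lambda_2q;B_2,\zeta}$ by Jensen's inequality and \eqref{E:pq_monot} (using $\lambda_2=\lambda_1/\rho$, $\rho<p\wedge q$ and $\kappa_2\le1$, which give $\vvert\kappa_2^{2\rho}|u|^{\lambda_1}\vvert_{1,1;B_2,\zeta}\le\vvert\kappa_2^{2/\lambda_2}u\vvert_{\lambda_2p,\lambda_2q;B_2,\zeta}^{\lambda_1}$), and invoking the case distinction of \eqref{E:onestep5.1} to lower the exponent to $\gamma=1-\frac2{\lambda_1}$ whenever $\vvert\xi_2^{2\rho}|u|^{\lambda_1}\vvert_{1,1;B_1,\zeta}<1$, I obtain $\bvvert\kappa_1^2|u|^{\lambda_1}\bvvert_{1,\infty;B_1,\zeta}\le A_{1,2}\bigl(\vvert\kappa_2^{2/\lambda_2}u\vvert_{\lambda_2p,\lambda_2q;B_2,\zeta}^{\gamma}\bigr)^{\lambda_1}$. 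Taking $\lambda_1$-th roots gives \eqref{E:onestep14}.

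The main obstacle is therefore not in the Corollary at all but upstream, inside the proof of Proposition~\ref{P:one-step}: obtaining the energy inequality of stage~(i) with exactly the stated dependence on $(\lambda_1,M,\delta,\Gamma)$, and --- the more delicate point --- proving the weighted-to-ordinary Dirichlet-energy conversion for the functions $\kappa^2|u|^{\lambda_1}$ rather than for the solution~$u$ itself (as in Lemma~\ref{L:MO_basic}); the argument \eqref{E:2.52} there uses that $u$ solves the heat equation, which $\kappa^2|u|^{\lambda_1}$ does not, so this requires the adaptation carried out in Section~\ref{sec6a}.
\end{proofsect}
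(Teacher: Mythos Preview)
Your approach is correct and matches the paper's: the corollary is read off from the $(1,\infty)$ bound obtained inside the proof of Proposition~\ref{P:one-step} --- specifically from \eqref{E:onestep11} together with the norm comparison \eqref{E:onestep13} --- before interpolation with the Sobolev bound.

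Two small inaccuracies are worth flagging. First, the factor $1\vee\vvert w^{-1}\vvert_{\frac r2,\frac s2;E(B_1),\zeta}$ does \emph{not} enter the $(1,\infty)$ bound at all: that bound comes purely from the energy estimate (Lemma~\ref{lemma-energy}), with no Dirichlet-form conversion and hence no weights~$w$; the factor appears in~$A_{1,2}$ only because it is needed for the Sobolev/$(\hat p,\hat q)$ part, and since it is $\ge1$ the claimed inequality still holds. Second, you need not (and cannot) take $\xi_1$ to vanish at~$0$ --- the cut-offs used later satisfy $\xi_{n,k}(0)=1$. The paper instead integrates \eqref{E:energy2} from~$s$ to~$\infty$; the boundary term at infinity drops out by compact support of~$\xi_1$, and the boundary term at~$s$ is precisely $\zeta(s)\Vert(\xi_1(s)\eta_1\tilde u_s^\lambda)^2\Vert_{\ell^1(B_1)}$, which after dividing by $\inf_{\Sigma_1}\zeta$ gives the desired $(1,\infty)$ norm.
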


\begin{remark}
\label{R:onestep001}
The allowed range of $\alpha$ implies that $\hat p(\alpha)\in (\frac{d}{d-1}, \frac{d}{d-2})$ and, in particular, that $\hat p(\alpha)>1$. It follows that $p$ defined in \eqref{E:onestep4} satisfies $p > 1$. As a consequence, $\rho \in (1, p\wedge q)$ can always be found so that $\lambda_2<\lambda_1$ (as will be desired).

The prefactor $A_{1,2}$ collects all dependencies on the cut-off functions as well as the norm $\vvert \,w^{-1} \,\vvert_{ \frac r2 ,  \, \frac s2 ; \, \, E(B_1), \zeta}$, which we will control via Lemma~\ref{L:nu_w}. Our choices of parameters will eventually ensure that the term in square brackets on the right-hand side of \eqref{E:onestep5.2} is of order unity, and so~$A_{1,2}$ is basically order-$(\lambda_1^2 M)^2$. Both~$\lambda_1$ and~$M$ will change through iterations, but in such a way that the overall product of prefactors of the type $(A_{1,2})^{1/\lambda_1}$ arising from subsequent iterations remains bounded. 
\end{remark}

Proposition~\ref{P:one-step} is where the principal novel ingredients of the present work enter the proof of Moser iteration; the rest is more or less just an adaptation of the arguments in~\cite{ACDS16}. Deferring the proof of Proposition~\ref{P:one-step} to Section~\ref{sec6a}, we now proceed to discuss these adaptations and give the proof of Proposition~\ref{P:chi_Linfty}.

\subsection{Iteration}
The fact that $\lambda_1$ in Proposition~\ref{P:one-step} can be rather arbitrary, and~$\rho$ can be set to a quantity in excess of one (see Remark~\ref{R:onestep001}), offers the possibility to apply the inequality in \eqref{E:onestep5} iteratively to bound high-$(p,q)$-norms of the solution to the Poisson equation by low-$(p,q)$-norms thereof. As we also need to keep the quantity in \eqref{E:onestep5.2} bounded, this means that the underlying domains, and thus also the mollifiers, will have to vary  throughout the iteration. The discrete nature of the underlying lattice only allows us to run the iteration a limited number of times, albeit increasing with the size of the initial domain. Another iterative argument will thus have to be invoked afterwards to convert the high-$(p,q)$-norm to the maximum over the space-time box~$Q(n)$. This will then readily yield Proposition~\ref{P:chi_Linfty}.

Let us begin by introducing the needed notation. We will consider underlying domains that depend on two adjustable real-valued parameters $\sigma$ and~$\sigma'$ which satisfy
\begin{equation}
\label{E:itera0}
1 \leq \sigma' < \sigma \leq 2. 
\end{equation}
These parameters are introduced only for the sake of the second iteration and they will remain unchanged throughout the first iteration.
Given $n \geq 1$, consider a decreasing sequence of boxes $(B_{n,k})_{k\geq 0}$ such that
\begin{equation}
\label{E:itera1}
B_{n,k}:= B(0,\sigma_k n) ,\quad \text{where}\quad \sigma_k := \sigma'+2^{-k}(\sigma-\sigma').
\end{equation}
We then have
\begin{equation}
\label{eq:itera2}
B_{n}:= B(0,n) \subseteq B_{\sigma' n} \subseteq B_{n,k}\subseteq B_{n,k-1} \subseteq B_{\sigma n} \subseteq B_{2n},\qquad k\ge0.
\end{equation} 
Next we introduce the cut-off functions (depending implicitly on the choice of $\sigma$ and $\sigma'$)
\begin{equation}
\label{E:itera2.1}
\kappa_{n,k}(t,x) := \xi_{n,k}(t) \eta_{n,k}(x)
\end{equation}
as follows: For all $k \geq 0$, the function $\eta_{n,k}\colon\Z^d\to [0,1]$ satisfies
\begin{equation}
\begin{aligned}
\label{eq:itera3}
\text{supp}(\eta_{n,k}) \subset B_{n,k}, \quad &\eta_{n,k}=1 \text{ on $B_{n,k+1}$}\quad\text{and}\quad
\Vert \nabla \eta_{n,k} \Vert_{\ell^{\infty}(E)} \leq \frac{1}{(\sigma_k - \sigma_{k+1})n}\,.
\end{aligned}
\end{equation}
(This can be achieved by interpolating linearly between $B_{n,k+1}$ and $B_{n,k}^\cc$.) Denoting
\begin{equation}
\label{eq:itera5}
\mathfrak b(t):=
\begin{cases}
1, &\text{ if } t\leq 0\\
\exp\Big( 1 - \frac{1}{1-t^2} \Big) , &\text{ if } t \in ( 0,1)\\
0, &\text{ if } t\geq 1,
\end{cases}
\end{equation}
the function $\xi_{n,k}\colon [0,\infty) \to [0,1]$ is defined as 
\begin{equation}\label{eq:itera4}
\xi_{n,k}(t):= \mathfrak b\bigg( \, \frac{(t/n^2) -  \tau_k }{\Delta_{\sigma,\sigma'} }  \,\bigg) = \mathfrak b\bigg( \, \frac{t -  \tau_k n^2}{(\tau_k+ \Delta_{\sigma,\sigma'} )n^2 -  \tau_k n^2}  \,\bigg), \ 
\end{equation}
where
\begin{equation}\label{eq:itera4.1}
\Delta_{\sigma,\sigma'}:= \frac{\sigma-\sigma'}{2} \quad\text{and}\quad\tau_k := \sigma' +  \Delta_{\sigma,\sigma'} \sum_{\ell=k+1}^{\infty}\delta_{\ell} \quad  \text{with}\quad \delta_{\ell} := \frac{6}{\pi^2} \ell^{-2}.
\end{equation}
As seen from the rewrite in \eqref{eq:itera4}, $\xi_{n,k}$ equals~$1$ on $[0,\tau_k n^2]$ and then drops smoothly to~$0$ over the interval $[\tau_k n^2, (\tau_k+ \Delta_{\sigma,\sigma'} )n^2]$. Observe in addition that $\delta_\ell\in[0,1)$ are such that $\sum_{\ell\ge1}\delta_\ell=1$ and that $k\mapsto\tau_k$ is decreasing with $\tau_0= \frac{\sigma + \sigma'}2$ and $\lim_{k\to\infty} \tau_k = \sigma'$. For later purposes we also record that for \text{all} $n,k \geq 0$,
\begin{align}
& \xi_{n,k}(t) = 1, \quad\text{ for } t\leq \sigma' n^2 \label{eq:itera4.2}\\
&  \xi_{n,k}(t) = 0, \quad\text{ for } t\geq \sigma n^2.\label{eq:itera4.3}
\end{align}
Note that $\eta_{n,k}$, $\xi_{n,k}$, $\tau_k$ all depend  implicitly on the choice of $\sigma'$ and $\sigma$ satisfying \eqref{E:itera0}.

\smallskip
To see that the above choices are reasonable, we note:

\begin{lemma}
\label{L:cutoff} For all $\sigma',\sigma$ satisfying \eqref{E:itera0}, all $n,k \geq 1$ and all $\rho \geq 1$, the functions $\kappa_{n,k}$, $\kappa_{n,k-1}$ defined by \eqref{E:itera2.1}, \eqref{eq:itera3} and \eqref{eq:itera4} are $(B_{n,k}, B_{n,k-1})$-adapted with parameters $( \frac{1}{n^2}, \rho , M_k)$, where
\begin{equation}
M_k := (1 \vee   \Delta_{\sigma,\sigma'} ^{-1} \Vert\bigdot{\mathfrak b} \Vert_{L^\infty}) \text{\rm e}^{\rho/ \delta_k}.
\end{equation}
\end{lemma}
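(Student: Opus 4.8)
The plan is to verify, in turn, the three defining clauses \eqref{E:onestep1}--\eqref{E:onestep3} of $(B_{n,k},B_{n,k-1})$-adaptedness with parameters $(\tfrac1{n^2},\rho,M_k)$, under the identification $\kappa_1:=\kappa_{n,k}$, $\kappa_2:=\kappa_{n,k-1}$ (so $\xi_1:=\xi_{n,k}$, $\eta_1:=\eta_{n,k}$, $\xi_2:=\xi_{n,k-1}$, $\eta_2:=\eta_{n,k-1}$). The product form \eqref{E:onestep1} holds by \eqref{E:itera2.1}, and the spatial conditions \eqref{E:onestep2} are read off the construction \eqref{eq:itera3}: $\textnormal{supp}(\eta_{n,k})\subset B_{n,k}$, $\textnormal{supp}(\eta_{n,k-1})\subset B_{n,k-1}$ and, applying \eqref{eq:itera3} with index $k-1$, $\eta_{n,k-1}\equiv1$ on $B_{n,(k-1)+1}=B_{n,k}$. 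For the $C^1$-clause of \eqref{E:onestep3} it suffices that the bump function $\mathfrak b$ of \eqref{eq:itera5} lies in $C^\infty(\R)$ — the only junctions to check are $t=0$, where $1-\tfrac1{1-t^2}=-\tfrac{t^2}{1-t^2}\to0$ as $t\to0$ forces $\mathfrak b$ and all its derivatives to match the constant $1$, and $t=1$, where $\mathfrak b(t)$ and all its derivatives decay to $0$ by the usual $\exp(-1/s)$-argument — together with the observation that $\xi_{n,k}$ is $\mathfrak b$ pre-composed with an affine map in $t$, hence itself $C^\infty$.

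The substance of the proof is the pair of pointwise bounds in \eqref{E:onestep3} comparing $\xi_{n,k}$ with $\xi_{n,k-1}^{\,\rho}$. Writing $D:=\Delta_{\sigma,\sigma'}\in(0,\tfrac12]$ and, for $t\ge0$, $u:=\tfrac1D\bigl(\tfrac t{n^2}-\tau_k\bigr)$, the definition \eqref{eq:itera4} together with the identity $\tau_{k-1}-\tau_k=D\delta_k$ (immediate from \eqref{eq:itera4.1}) yields $\xi_{n,k}(t)=\mathfrak b(u)$, $\xi_{n,k-1}(t)=\mathfrak b(u-\delta_k)$ and $\bigdot{\xi_{n,k}}(t)=\tfrac1{n^2D}\mathfrak b'(u)$. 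Thus, with $\delta=\tfrac1{n^2}$ and $M=M_k$, the two requirements of \eqref{E:onestep3} reduce to proving, for every $u\in\R$,
\[
\mathfrak b(u)\le M_k\,\mathfrak b(u-\delta_k)^{\rho}\qquad\text{and}\qquad |\mathfrak b'(u)|\le D\,M_k\,\mathfrak b(u-\delta_k)^{\rho}.
\]
Both hold trivially when $u\ge1$ (the left sides vanish) and when $u\le\delta_k$ (then $u-\delta_k\le0$ so $\mathfrak b(u-\delta_k)=1$, while $\mathfrak b(u)\le1\le M_k$ and $|\mathfrak b'(u)|\le\Vert\bigdot{\mathfrak b}\Vert_{L^\infty}\le DM_k$, the last step using $M_k\ge D^{-1}\Vert\bigdot{\mathfrak b}\Vert_{L^\infty}\e^{\rho/\delta_k}$). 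So only $u\in(\delta_k,1)$ remains; there $u$ and $u-\delta_k$ both lie in $(0,1)$, where $\mathfrak b(v)=\exp(-v^2/(1-v^2))$. For the first bound, after taking logarithms it suffices to check $\tfrac{\rho(u-\delta_k)^2}{1-(u-\delta_k)^2}-\tfrac{u^2}{1-u^2}\le\log M_k$; since $(u-\delta_k)^2<1$ and $1-(u-\delta_k)^2\ge1-(1-\delta_k)^2=\delta_k(2-\delta_k)\ge\delta_k$ (using $\delta_k=\tfrac6{\pi^2}k^{-2}<1$), the left side is at most $\rho/\delta_k\le\log M_k$, by $M_k\ge\e^{\rho/\delta_k}$. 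For the second bound, monotonicity of $\mathfrak b$ gives $\mathfrak b(u-\delta_k)\ge\mathfrak b(1-\delta_k)=\exp(1-\tfrac1{\delta_k(2-\delta_k)})\ge\e^{-1/\delta_k}$ (again $\delta_k(2-\delta_k)\ge\delta_k$), hence $\mathfrak b(u-\delta_k)^{\rho}\ge\e^{-\rho/\delta_k}$ and so $DM_k\,\mathfrak b(u-\delta_k)^{\rho}\ge DM_k\,\e^{-\rho/\delta_k}\ge\Vert\bigdot{\mathfrak b}\Vert_{L^\infty}\ge|\mathfrak b'(u)|$, once more by $M_k\ge D^{-1}\Vert\bigdot{\mathfrak b}\Vert_{L^\infty}\e^{\rho/\delta_k}$.

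The only step calling for genuine care — the ``main obstacle'' of the argument — is the exponent bookkeeping in the first $\mathfrak b$-bound: one must see that raising $\mathfrak b(u-\delta_k)$ to the power $\rho\ge1$ costs no more than the factor $\e^{\rho/\delta_k}$ \emph{uniformly over $u\in(\delta_k,1)$}, which rests on the clean lower bound $1-(u-\delta_k)^2\ge\delta_k$ valid on that interval, and this is precisely where the normalization $\delta_k=\tfrac6{\pi^2}k^{-2}\in(0,1)$ (so that $\delta_k(2-\delta_k)\ge\delta_k$) is used. Everything else is either immediate from the definitions \eqref{eq:itera3}--\eqref{eq:itera4} or a one-line monotonicity estimate for $\mathfrak b$; collecting the three verified clauses gives the assertion for all $n,k\ge1$ and all $\rho\ge1$.
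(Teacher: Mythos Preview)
Your proof is correct and follows essentially the same route as the paper's: the same three-regime split (here made transparent by the substitution $u=(t/n^2-\tau_k)/\Delta_{\sigma,\sigma'}$, which turns the paper's ranges $t\ge(\tau_k+\Delta)n^2$, $t\le\tau_{k-1}n^2$, and $t/n^2\in(\tau_{k-1},\tau_k+\Delta)$ into $u\ge1$, $u\le\delta_k$, $u\in(\delta_k,1)$), and the same pivotal lower bound $1-(1-\delta_k)^2=\delta_k(2-\delta_k)\ge\delta_k$ driving the factor $\e^{\rho/\delta_k}$.

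One small inaccuracy (which the paper also commits): $\mathfrak b$ is \emph{not} $C^\infty$ at $t=0$. From $1-\tfrac1{1-t^2}=-t^2-t^4-\cdots$ one gets $\mathfrak b(t)=1-t^2+O(t^4)$ for $t\downarrow0$, so $\mathfrak b''(0^+)=-2\ne0=\mathfrak b''(0^-)$; thus $\mathfrak b$ is only $C^1$ there. This is entirely harmless, since the adaptedness condition \eqref{E:onestep3} asks only for $\xi_i\in C^1$, which your computation (and the chain rule) do establish.
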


\begin{proof}
The conditions \eqref{E:onestep2} hold on account of \eqref{eq:itera3} (in particular, note that $\eta_{n,k-1}=1$ on $B_{n,k}$). As for \eqref{E:onestep3}, first note that $\xi_{n,k} \in C^{\infty}$. It thus remains to show that 
\begin{equation}
\label{eq:itera6}
 \xi_{n,k}(t ) \leq M_k \xi_{n,k-1}(t)^{\rho} \quad\text{and}\quad \bigl|\bigdot{\xi}_{n,k}(t)\bigr| \leq \frac1{n^2} M_k \xi_{n,k-1}(t)^{\rho},\qquad t \geq 0.
\end{equation}
For $t \geq  (\tau_k +  \Delta_{\sigma,\sigma'} )n^2$ we have $\xi_{n,k}(t) = \bigdot{\xi}_{n,k}(t)=0$ and so these bounds hold trivially. 
In the range $t \leq \tau_{k-1}n^2$, we have $\xi_{n,k-1}^\rho(t)=1$ and so the first bound is immediate, while the second follows from
\begin{equation}
\label{E:5.22uiu}
\bigl|\bigdot{\xi}_{n,k}(t)\bigr| \leq \Delta_{\sigma,\sigma'}^{-1}  n^{-2}\Vert \bigdot{\mathfrak b} \Vert_{L^\infty}\,.
\end{equation}
It remains to deal with the case $\frac t{n^2} \in ( \tau_{k-1},  \tau_k +  \Delta_{\sigma,\sigma'})$. For $\frac{t}{n^2}$ in this interval, we observe 
\begin{equation}
\frac{1}{ \xi_{n,k-1} (t)^{\rho}} \stackrel{\eqref{eq:itera4}}{=} \mathfrak b\bigg( \, \frac{(t/n^2) -  \tau_{k-1} }{\Delta_{\sigma,\sigma'} }  \,\bigg)^{-\rho}
\leq \sup_{s\in (0,1-\delta_k)} \big[\mathfrak b(s)^{-\rho} \big]= \e^{ \frac{\rho}{1-(1-\delta_k)^2}-\rho} \leq \e^{\rho/\delta_k},
\end{equation}
and so $M_k\xi_{n,k-1}(t)\ge 1\vee\Delta_{\sigma,\sigma'}^{-1}\Vert \bigdot{\mathfrak b} \Vert_{\infty}$.
The first bound in \eqref{eq:itera6} then follows immediately since $\xi_{n,k}\leq 1$ while the second is obtained by invoking \eqref{E:5.22uiu} one more time. 
\end{proof}

Lastly, we recall the definition \eqref{eq:zeta_n} of $\zeta_n$, $n \geq1$, obtained from $\zeta$, cf. \twoeqref{eq:kernel_conds}{E:2.2a}, by a (diffusive) rescaling. Let
\begin{equation}
\label{eq:c31}
\ciii = \ciii(\zeta) := 1 \vee \Vert \zeta \Vert_{L^1} \vee \Vert \bigdot{\zeta}/ \zeta\Vert_{L^{\infty}(\mathbb{R}_+)} \vee \Bigl(\,\,\inf_{t \in [0,2]} \zeta(t)\Bigr)^{-1}. 
\end{equation}
A recursive application of Proposition~\ref{P:one-step} then yields:

\begin{proposition}[Moser iteration]
\label{P:iter1}
Suppose Assumption~\ref{ass1} and \eqref{E:2.2aa} hold. For all $d\ge2$, all $\alpha \in (2, 2\frac{d-1}{d-2})$, all $\beta \in (0,2)$, all $q > 1$ and $p$ as defined in \eqref{E:onestep4}, there is $\civ=\civ(\alpha,\beta, q, d) \in [1,\infty)$ such that, for all $\rho \in [1, p\wedge q)$, all integers $n \geq 1$, $k > N$, where
\begin{equation}
\label{eq:iteraN}
N:= N(\rho)= \inf \{  k \geq 1; \, \rho^k > 2 \}-1,
\end{equation}
and all  weak solutions $u$ of \eqref{E:5.4} with~$f$ on the right-hand side satisfying $\Vert\nabla f \Vert_{\ell^{\infty}(E)}\leq \frac1n$, we have
\begin{equation}
\label{eq:itera8}
\bvvert \,  \kappa_{n,k}^{2/ \rho^k} u \, \bvvert_{ \rho^k p , \, \rho^k q ; \, \, B_{n,k},\zeta_n} 
\le  \biggl[\frac{\cv W}{(\sigma-\sigma')^4} \biggr]^{\sum_{\ell=1}^k \rho^{-\ell}}\, 
\e^{3 \rho \sum_{\ell=1}^k \ell^2\rho^{-\ell}} \, 
\bvvert \, \kappa_{n,0}^{2} \, u \,\bvvert_{ p' , \,  q' ; \, \, B_{\sigma n} , \zeta_n}^{\bar\gamma (n,k)},
\end{equation}
where $p' = p'(\rho) = \rho^{N} p$, $q' = \rho^{N}q$, $\cv :=\civ(\ciii)^2\cii$, with $\ciii$ given by \eqref{eq:c31} and $\cii$ as in Proposition~\ref{P:one-step},
 \begin{equation}
  \label{eq:itera9}
 W:= 1 \vee \sup_{n\geq 1} \sup_{m \in [n,2n]} \vvert \,w^{-1} \,\vvert_{ \frac r2 ,  \, \frac s2 ; \, \, E(B_{m}), \zeta_n}
 \end{equation}
and where $\bar\gamma(n,k) \in (0,1]$ is defined as
 \begin{equation}
  \label{eq:itera10}
 \bar\gamma(n, k) := \prod_{\ell=N+1}^{k} \gamma_{n,\ell}, \  \text{ with } \gamma_{n,\ell} := 
 \begin{cases}
 1-2\rho^{-\ell}, &\text{if }\ \vvert \, \xi_{n,\ell-1}^{2\rho} |u|^{\rho^\ell} \, \vvert_{1,1;B_{n,\ell}, \zeta_n} < 1, \\ 1, &\text{else}.
 \end{cases}
 \end{equation}
\end{proposition}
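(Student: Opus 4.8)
The plan is to derive \eqref{eq:itera8} by a finite recursion of the one-step bound of Proposition~\ref{P:one-step}, run along the decreasing boxes $B_{n,k}$ of \eqref{E:itera1} and the cut-offs $\kappa_{n,k}$ of \twoeqref{E:itera2.1}{eq:itera4}, with the function $\zeta$ there taken to be $\zeta_n$ --- which is legitimate because \eqref{E:2.2aa} (together with \twoeqref{eq:kernel_conds}{E:2.2a}) guarantees that the pair $(\zeta_n,k)$ meets the standing requirements. Fix $\ell\in\{N+1,\dots,k\}$ and apply Proposition~\ref{P:one-step} with $B_1:=B_{n,\ell}$, $B_2:=B_{n,\ell-1}$, cut-offs $\kappa_1:=\kappa_{n,\ell}$, $\kappa_2:=\kappa_{n,\ell-1}$ --- which by Lemma~\ref{L:cutoff} are $(B_{n,\ell},B_{n,\ell-1})$-adapted with parameters $(\tfrac1{n^2},\rho,M_\ell)$ --- and $\lambda_1:=\rho^\ell$, $\lambda_2:=\lambda_1/\rho=\rho^{\ell-1}$. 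The hypothesis $\lambda_1\ge2$ is precisely $\rho^\ell\ge\rho^{N+1}>2$, which holds by the definition \eqref{eq:iteraN} of $N$ (the case $\rho=1$, where $N=\infty$, being vacuous), $\rho\in[1,p\wedge q)$ is assumed, and $\Vert\nabla f\Vert_{\ell^\infty(E)}\le\frac1n<\infty$. Thus \eqref{E:onestep5} gives, with $\gamma_{n,\ell}$ equal to the quantity in \eqref{E:onestep5.1} (i.e.\ as in \eqref{eq:itera10}) and $A_{\ell-1,\ell}$ the prefactor \eqref{E:onestep5.2} for this data,
\[
\bvvert\,\kappa_{n,\ell}^{2/\rho^\ell}u\,\bvvert_{\rho^\ell p,\,\rho^\ell q;\,B_{n,\ell},\zeta_n}\le (A_{\ell-1,\ell})^{1/\rho^\ell}\,\bvvert\,\kappa_{n,\ell-1}^{2/\rho^{\ell-1}}u\,\bvvert_{\rho^{\ell-1}p,\,\rho^{\ell-1}q;\,B_{n,\ell-1},\zeta_n}^{\,\gamma_{n,\ell}}.
\]

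Composing this for $\ell=k,k-1,\dots,N+1$ (a routine telescoping of nested exponents, using $\gamma_{n,j}\in(0,1]$) yields
\[
\bvvert\,\kappa_{n,k}^{2/\rho^k}u\,\bvvert_{\rho^k p,\,\rho^k q;\,B_{n,k},\zeta_n}\le\Bigl(\prod_{\ell=N+1}^k(A_{\ell-1,\ell})^{\frac1{\rho^\ell}\prod_{j=\ell+1}^k\gamma_{n,j}}\Bigr)\,\bvvert\,\kappa_{n,N}^{2/\rho^N}u\,\bvvert_{\rho^N p,\,\rho^N q;\,B_{n,N},\zeta_n}^{\,\bar\gamma(n,k)},
\]
with $\bar\gamma(n,k)=\prod_{\ell=N+1}^k\gamma_{n,\ell}$ as in \eqref{eq:itera10}. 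Two tasks remain: replace the endpoint norm by $\bvvert\kappa_{n,0}^2u\bvvert_{p',q';B_{\sigma n},\zeta_n}^{\bar\gamma(n,k)}$, and estimate the product of prefactors. For the first, since $\rho^N\le2$ we have $2/\rho^N\ge1$, hence $\kappa_{n,N}^{2/\rho^N}\le\kappa_{n,N}$; and on $B_{n,N}$ one has $\kappa_{n,N}\le\kappa_{n,0}^2$, because $\eta_{n,0}\equiv1$ on $B_{n,1}\supseteq B_{n,N}$ (by \eqref{eq:itera3}) while $\xi_{n,N}(t)\le\xi_{n,0}(t)^2$ for all $t\ge0$. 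This last inequality is trivial for $N=0$ ($\xi_{n,0}\le1$), and for $N\ge1$ it holds because $\sum_{\ell>N}\delta_\ell\le\sum_{\ell\ge2}\delta_\ell=1-\delta_1<\tfrac12$ (as $\delta_1=6/\pi^2>\tfrac12$), so the two arguments of $\mathfrak b$ in \eqref{eq:itera4} differ by less than $\tfrac12$, and the log-convexity of $\mathfrak b$ in \eqref{eq:itera5} turns this shift into the required squaring. Using in addition $B_{n,N}\subseteq B_{\sigma n}=B_{n,0}$, $p'=\rho^N p$, $q'=\rho^N q$ and $|B_{\sigma n}|/|B_{n,N}|\le c(d)$, one gets $\bvvert\kappa_{n,N}^{2/\rho^N}u\bvvert_{\rho^N p,\rho^N q;B_{n,N},\zeta_n}\le c(d)\,\bvvert\kappa_{n,0}^2u\bvvert_{p',q';B_{\sigma n},\zeta_n}$.

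For the prefactor one inserts the explicit data into \eqref{E:onestep5.2}: $\Vert\zeta_n\Vert_{L^1}=\Vert\zeta\Vert_{L^1}\le\ciii$; $\vvert w^{-1}\vvert_{\frac r2,\frac s2;E(B_{n,\ell}),\zeta_n}\le W$ with $W$ as in \eqref{eq:itera9}, since $B_{n,\ell}=B_m$ for an integer $m\in[n,2n]$ (as $\sigma_\ell\in(1,2]$); $|B_{n,\ell-1}|/|B_{n,\ell}|\le c(d)$ and $|B_{n,\ell}|^{2/d}\le c(d)n^2$; $\supp(\xi_{n,\ell})\subset[0,\sigma n^2]\subset[0,2n^2]$ (by \eqref{eq:itera4.3}) gives $\inf_{\Sigma_1}\zeta_n\ge(\ciii n^2)^{-1}$; $\Vert\dot\zeta_n/\zeta_n\Vert_{L^\infty}\le\ciii n^{-2}$; $\Vert\nabla f\Vert_{\ell^\infty(E)}\le n^{-1}$ and $\Vert\nabla\eta_{n,\ell}\Vert_{\ell^\infty(E)}\le2^{\ell+1}(\sigma-\sigma')^{-1}n^{-1}$ (from \eqref{eq:itera3}), so $\Gamma+\delta\le c(\ciii)4^\ell(\sigma-\sigma')^{-2}n^{-2}$; and $M_\ell\le c(\sigma-\sigma')^{-1}\e^{\rho\pi^2\ell^2/6}$, $\lambda_1=\rho^\ell$. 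Since $\prod_{j>\ell}\gamma_{n,j}\le1$ and $A_{\ell-1,\ell}\ge1$, the $\gamma$-exponents may be discarded, and the above combine to $A_{\ell-1,\ell}\le\cv W(\sigma-\sigma')^{-4}\e^{3\rho\ell^2}$ with $\cv:=\civ(\ciii)^2\cii$ for a suitable $\civ=\civ(\alpha,\beta,q,d)$; consequently $\prod_{\ell=N+1}^k(A_{\ell-1,\ell})^{1/\rho^\ell}\le[\cv W/(\sigma-\sigma')^4]^{\sum_{\ell=1}^k\rho^{-\ell}}\e^{3\rho\sum_{\ell=1}^k\ell^2\rho^{-\ell}}$ (enlarging the index set from $\{N+1,\dots,k\}$ to $\{1,\dots,k\}$, legitimate as the base is $\ge1$). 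Combining the three displayed bounds and absorbing the constant $c(d)$ from the endpoint step into $\cv$ gives \eqref{eq:itera8}. The bulk of the work --- and the only genuine subtlety --- is this last accounting: tracking the $\ell$-dependence through $M_\ell$, $\Vert\nabla\eta_{n,\ell}\Vert$ and $|B_{n,\ell}|$ so that everything collapses into the stated form (the $(\sigma-\sigma')^{-4}$ collecting one factor $(\sigma-\sigma')^{-2}$ from $\Delta_{\sigma,\sigma'}^{-1}$ in $M_\ell$ and one from $\Vert\nabla\eta_{n,\ell}\Vert^2$), together with the endpoint comparison $\xi_{n,N}\le\xi_{n,0}^2$, which is precisely why $\delta_\ell$ is chosen with $\sum_\ell\delta_\ell=1$ and $\delta_1>\tfrac12$.
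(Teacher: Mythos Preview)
Your overall strategy---iterating Proposition~\ref{P:one-step} along the boxes $B_{n,\ell}$ and cut-offs $\kappa_{n,\ell}$, bounding each prefactor $A_{\ell-1,\ell}$ by $\cv W(\sigma-\sigma')^{-4}\e^{3\rho\ell^2}$, and then telescoping---matches the paper's proof and is correct. The accounting of $\Gamma$, $M_\ell$, $\Vert\nabla\eta_{n,\ell}\Vert$ and the various $n$-dependencies is fine.

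The endpoint comparison, however, contains genuine errors. First, for $N=0$ the claim ``$\xi_{n,0}\le\xi_{n,0}^2$ because $\xi_{n,0}\le1$'' is backwards (if $\xi\le1$ then $\xi^2\le\xi$, not the reverse); fortunately this case is vacuous since $\kappa_{n,0}^{2/\rho^0}=\kappa_{n,0}^2$ already. Second, for $N\ge1$: the function $\mathfrak b$ is log-\emph{concave} on $(0,1)$ (indeed $\log\mathfrak b(t)=1-(1-t^2)^{-1}$ and $(1-t^2)^{-1}$ is convex), not log-convex, so your stated mechanism does not work. Moreover, the shift of arguments is $a_N-a_0=(\tau_0-\tau_N)/\Delta_{\sigma,\sigma'}=\sum_{\ell=1}^N\delta_\ell$, which for $N\ge1$ is at least $\delta_1=6/\pi^2>\tfrac12$; it is \emph{not} the tail $\sum_{\ell>N}\delta_\ell$ that you computed.

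The inequality $\xi_{n,N}\le\xi_{n,0}^2$ you are after is nonetheless true for $N\ge1$, but it needs a direct check: writing $D:=\sum_{\ell=1}^N\delta_\ell\ge6/\pi^2$, one must show $(a_0+D)^2(1+a_0^2)\ge2a_0^2$ for $a_0\in(0,1-D)$, which follows since $(a_0+D)^2\ge D^2>0.36$ while $2a_0^2-a_0^2(a_0+D)^2\le2(1-D)^2<0.31$. Alternatively---and more in the spirit of the paper, which simply absorbs a constant into~$\civ$---one can mimic the proof of Lemma~\ref{L:cutoff} with indices $0$ and $N$ to obtain $\xi_{n,N}\le\e^{2/\delta_1}\xi_{n,0}^2$ (the argument of $\mathfrak b$ in $\xi_{n,0}$ stays below $1-\delta_1$ on $\supp(\xi_{n,N})$), and then absorb the factor $\e^{2/\delta_1}=\e^{\pi^2/3}$ into the constant. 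Either route closes the gap.
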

 \begin{proof}
Let $n \geq 1$, $k > N$ be integers. In view of \eqref{E:2.2aa}, $\zeta_n$ satisfies conditions \twoeqref{eq:kernel_conds}{E:2.2a}, hence we may apply Proposition~\ref{P:one-step} for the choices $\zeta:= \zeta_n$, $B_1 := B_{n,k}$, $B_2 := B_{n,k-1}$, so that $B_1 \subset B_2$ by~\eqref{E:itera1}, the mollifiers $\kappa_1:= \kappa_{n,k}$ and $\kappa_2 :=\kappa_{n,k-1}$, which are $(B_{n,k}, B_{n,k-1})$-adapted with parameters $( \frac{1}{n^2}, \rho , M_k)$ by Lemma~\ref{L:cutoff}, and $\lambda_1 := \rho^k$, which satisfies $\lambda_1 > 2$ by \eqref{eq:iteraN} and since $k > N$. Noting that $\gamma_{n,k}$ as defined in \eqref{eq:itera10} corresponds precisely to $\gamma$ in \eqref{E:onestep5.1}, the one-step estimate \eqref{E:onestep5} reads
 \begin{equation}
  \label{eq:itera11}
  \bvvert \,  \kappa_{n,k}^{2/ \rho^k} u \, \bvvert_{  \rho^k p , \, \rho^k q ; \, \, B_{n,k}, \zeta_n
} \le  (A_{n,k})^{1/ \rho^k} \, \bvvert \, \kappa_{n,k-1}^{2/ \rho^{k-1}} u \,\bvvert_{\rho^{k-1} p , \, \rho^{k-1} q ; \, \, B_{n,k-1}, \zeta_n}^{\gamma_{n,k}}
 \end{equation}
where
\begin{multline}
  \label{eq:itera12}
\qquad 
A_{n,k} := \cii \rho^{4k} M_k^2  \, \Vert \zeta_n \Vert_{L^1} \bigl( 1 \vee \vvert \,w^{-1} \,\vvert_{ \frac r2 ,  \, \frac s2 ; \, \, E(B_{n,k}), \zeta_n}\bigr) 
\\
\times\frac{|B_{n,k-1}|}{|B_{n,k}|} \Biggl[\biggl(\Gamma_{n,k} + \frac1{n^2}\biggr)\biggl( \frac1{\inf_{t \in \Sigma_{n,k}} \zeta_n(t)} + |B_{n,k}|^{\frac2d}\biggr) \Biggr],
\qquad
\end{multline}
with
\begin{equation}
\Gamma_{n,k} :=  
\Vert\nabla f\Vert_{\ell^\infty(E)}^2 + \Vert(\nabla f) (\nabla \eta_{n,k})\Vert_{\ell^\infty(E)} + \Vert \nabla \eta_{n,k}\Vert_{\ell^\infty(E)}^2 + \Vert \bigdot{\zeta}_n/ \zeta_n\Vert_{L^{\infty}(\mathbb{R}_+)}
\end{equation}
and~$ \Sigma_{n,k} := \text{supp}(\xi_{n,k})$. As we will now demonstrate, $A_{n,k}$ is bounded uniformly in~$n$ by a quantity whose growth in~$k$ can be controlled. 

Clearly, $ \Vert \zeta_n \Vert_{L^1} =  \Vert \zeta \Vert_{L^1}\le \ciii$, while $1\vee\vvert \,w^{-1} \,\vvert_{ \frac r2 ,  \, \frac s2 ; \, \, E(B_{n,k}), \zeta_n}\leq W$ on account of \eqref{eq:itera9} and \eqref{eq:itera2}. Similarly, $|B_{n,k-1}|/|B_{n,k}| \leq |B_{2n}|/|B_n|$ is bounded uniformly in~$n$ and~$k$. Regarding the term in the large brackets in \eqref{eq:itera12}, by the assumption on $\nabla f$ and \eqref{eq:itera3}, and since $ \Vert \bigdot{\zeta}_n/ \zeta_n\Vert_{L^{\infty}(\mathbb{R}_+)} = \Vert \bigdot{\zeta}/ \zeta\Vert_{L^{\infty}(\mathbb{R}_+)} /n^2$, we obtain, recalling also \eqref{E:itera1} and \eqref{eq:c31},
\begin{equation}
\Gamma_{n,k} \leq \frac{1}{n^2}\big(1+\Vert \bigdot{\zeta}/ \zeta\Vert_{L^{\infty}(\mathbb{R}_+)} + (\sigma_{k}-\sigma_{k+1})^{-1}  + (\sigma_{k}-\sigma_{k+1})^{-2}\big) \leq \frac{4\ciii2^{2(k+1)}}{n^2(\sigma-\sigma')^2}.
\end{equation}
Finally, \eqref{E:itera0} and \eqref{eq:itera4.3}
show $\Sigma_{n,k}  \subset [0, 2n^2]$ and so
\begin{equation}
\Bigl(\,\,\inf_{t \in \Sigma_{n,k}} \zeta_n(t)\Bigr)^{-1} \leq n^2 \Bigl(\,\,\inf_{t \in [0,2]} \zeta(t)\Bigr)^{-1} \leq \ciii n^2,
\end{equation}
whilst $|B_{n,k}|^{\frac2d}\leq |B_{2n}|^{\frac2d} \leq c n^2$. Recalling that $M_k \le c(\sigma-\sigma')^{-1}\e^{\rho k^2}$, cf. Lemma \ref{L:cutoff} and \eqref{eq:itera4.1}, and noting that there is a numerical constant~$c$ such that $2^{2k}\rho^{4k}\le c\e^{\rho k^2}$ holds for all $\rho\ge1$ and all $k\ge0$, we thus obtain
\begin{equation}
  \label{eq:itera13}
   A_{n,k} \leq \cii(\ciii)^2\civ\frac{W}{(\sigma-\sigma')^4}\e^{ 3\rho k^2},
\end{equation}
where $\civ=\civ(\alpha, \beta, q, d)  \geq 1$ collects the various numerical prefactors in the above estimates. 

Substituting \eqref{eq:itera13} into \eqref{eq:itera11} and using that~$A_{n,k}\ge1$ while $\gamma_{n,k}\le1$, the claim \eqref{eq:itera8} readily follows by induction over~$k$ (starting at $k=N+1$), noting also for the very last step that $B_{\sigma n}/B_{n,N} \leq c(d)$, which can be absorbed by adapting the constant $\civ$, and extending the arising sums over $\ell$ to start at $1$ (rather than $N+1$; the term in square brackets on the right-hand side of \eqref{eq:itera8} is greater or equal to $1$).
\end{proof}
  
Following up on Corollary~\ref{R:one-step}, one also has the following bound:
\begin{corollary}
\label{R:itera}  Under the setting and assumptions of Proposition~\ref{P:iter1}, for all $n \geq 1$, $k >N $, all $\rho \in [1, p\wedge q)$ and all weak solutions $u$ of \eqref{E:5.4}, with~$f$ on the right satisfying $\Vert\nabla f \Vert_{\ell^{\infty}(E)}\leq \frac1n$,
\begin{equation}
\label{eq:itera14}
\bvvert \,  \kappa_{n,k}^2  |u|^{\rho^k} \, \bvvert_{1 , \,  \infty\, ; \, \, B_{n,k},\,\zeta_n}^{1/\rho^k} \leq  \bigg[ \cv \frac{W \e^{ 3 \rho k^2}}{(\sigma-\sigma')^4}\bigg]^{1/\rho^k} \  \bvvert \, \kappa_{n,k-1}^{2/\rho^{k-1}} u \, \bvvert_{\rho^{k-1}p,\,\rho^{k-1}q\,;\,B_{n, k-1}, \,\zeta_n}^{\gamma_{n,k}}.
\end{equation}
\end{corollary}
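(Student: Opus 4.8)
The plan is to run exactly the same argument as in the proof of Proposition~\ref{P:iter1}, but invoking Corollary~\ref{R:one-step} in place of the one-step estimate \eqref{E:onestep5}. Concretely, I would fix integers $n\ge1$ and $k>N$ and apply Corollary~\ref{R:one-step} with the same parameter choices that were used there: $\zeta:=\zeta_n$ (legitimate since $\zeta_n$ obeys \twoeqref{eq:kernel_conds}{E:2.2a} by virtue of \eqref{E:2.2aa}), $B_1:=B_{n,k}$ and $B_2:=B_{n,k-1}$ (nested by \eqref{E:itera1}), the mollifiers $\kappa_1:=\kappa_{n,k}$ and $\kappa_2:=\kappa_{n,k-1}$ --- which are $(B_{n,k},B_{n,k-1})$-adapted with parameters $(n^{-2},\rho,M_k)$ by Lemma~\ref{L:cutoff} --- and $\lambda_1:=\rho^k$, which satisfies $\lambda_1>2$ because $k>N$ (recall \eqref{eq:iteraN}). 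Since $\rho\in[1,p\wedge q)$, all hypotheses of Proposition~\ref{P:one-step}, hence of Corollary~\ref{R:one-step}, are met.

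With these substitutions, the left-hand side of \eqref{E:onestep14} becomes $\bvvert\kappa_{n,k}^2|u|^{\rho^k}\bvvert_{1,\infty;B_{n,k},\zeta_n}^{1/\rho^k}$ and the right-hand side becomes $(A_{n,k})^{1/\rho^k}\,\bvvert\kappa_{n,k-1}^{2/\rho^{k-1}}u\bvvert_{\rho^{k-1}p,\rho^{k-1}q;B_{n,k-1},\zeta_n}^{\gamma}$, where $A_{n,k}$ is precisely the quantity \eqref{eq:itera12} and $\gamma$ is as in \eqref{E:onestep5.1}. Here one notes, exactly as in the proof of Proposition~\ref{P:iter1}, that this $\gamma$ coincides with $\gamma_{n,k}$ from \eqref{eq:itera10}: the defining dichotomy in \eqref{E:onestep5.1} reads off $\xi_2=\xi_{n,k-1}$, $\lambda_1=\rho^k$ and $B_1=B_{n,k}$, which is exactly the condition $\vvert\xi_{n,k-1}^{2\rho}|u|^{\rho^k}\vvert_{1,1;B_{n,k},\zeta_n}<1$.

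It then remains only to plug in the bound on the prefactor already established in the course of proving Proposition~\ref{P:iter1}, namely $A_{n,k}\le \cv\,W\,\e^{3\rho k^2}/(\sigma-\sigma')^4$ (this is \eqref{eq:itera13}, and it is the step where the hypothesis $\Vert\nabla f\Vert_{\ell^\infty(E)}\le\frac1n$ is used, through the estimate on $\Gamma_{n,k}$). Since $x\mapsto x^{1/\rho^k}$ is non-decreasing on $[0,\infty)$, raising this inequality to the power $1/\rho^k$ and inserting it into the previous display yields \eqref{eq:itera14} directly. There is no genuine obstacle here: the statement is a routine corollary of Corollary~\ref{R:one-step} and the prefactor bound \eqref{eq:itera13}, and the only points requiring a moment's care --- admissibility of the parameter choices and the identification of $\gamma$ with $\gamma_{n,k}$ --- were already dealt with in the proof of Proposition~\ref{P:iter1}.
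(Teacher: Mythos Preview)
Your proposal is correct and follows exactly the paper's own approach: apply Corollary~\ref{R:one-step} (i.e., \eqref{E:onestep14}) with the same parameter choices used in the proof of Proposition~\ref{P:iter1}, identify $\gamma$ with $\gamma_{n,k}$, and then insert the prefactor bound \eqref{eq:itera13}. The paper's proof is a one-liner stating precisely this, and your write-up simply spells out the details.
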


\begin{proof}
We use the same setting as in the proof of Proposition~\ref{P:iter1} but invoke \eqref{E:onestep14} instead of~\eqref{E:onestep5}, and then apply \eqref{eq:itera13}.
\end{proof}

\subsection{Proof of maximal inequality}
Our next task is to ``upgrade'' the bound \eqref{eq:itera8} to an estimate on the maximum of the solution~$u$ over the space-time cylinder~$Q(n)$. First we state (in Lemma~\ref{P:itera15}) a rather immediate consequence of Proposition~\ref{P:iter1} which bounds the maximum of~$u$ in the space-time cylinder $B_{\sigma'n} \times [0, \sigma'n^2]$ in terms of the $(p',q')$-norm (for $p',q'$ as above) of~$u$ cut off outside of a slightly larger cylinder with spatial base $B_{\sigma n}$. Keeping all dependencies on~$\sigma,\sigma'$ explicit is crucial as these will be subsequently varied to replace the $(p',q')$-norm by the $(1,1)$-norm. 

\begin{lemma}
\label{P:itera15}
Suppose Assumption~\ref{ass1} and \eqref{E:2.2aa} hold. For all $d\ge2$ there is~$\cvi=\cvi(d, \rho)\in(0,\infty)$ such that for all $\alpha \in (2, 2\frac{d-1}{d-2})$, all $\beta \in (0,2)$, all $q > 1$ and $p$ as defined in \eqref{E:onestep4}, and for all integers $n \geq 1$, all $\rho \in (1, p\wedge q)$ and all weak solutions $u$ of~\eqref{E:5.4} with $f$ on the right-hand side satisfying $\Vert\nabla f \Vert_{\ell^{\infty}(E)}\leq \frac1n$ we have 
 \begin{equation}
 \label{eq:itera15}
\max_{(t,x) \in  [0, \sigma' n^2] \times B_{\sigma'n}} \bigl| \, u(t,x) \, \bigr|  
\le   \cvi\,\biggl[ \frac {\cv W}{(\sigma-\sigma')^4}\biggr]^{\frac{1}{\rho-1}} \, \, \bvvert \, \kappa_{n,0}^{2} \, u \,\bvvert_{ p' , \,  q' ; \, \, B_{\sigma n} ,\, \zeta_n}^{\bar\gamma\,' (\rho, n)},
 \end{equation}
 where $p' = \rho^Np$, $q'=\rho^Nq$, $\kappa_{n,0}$ is defined in \eqref{E:itera2.1}, $W$ is as in \eqref{eq:itera9}, $\cv $ is the constant from Proposition~\ref{P:iter1} and $\bar\gamma\,' (\rho, n) := \bar\gamma \, \big(n, (\lceil \log \log n / \log \rho \rceil)\vee (N+1)\big)$ for~$\bar\gamma \, (\cdot,\cdot)$ as defined in~ \eqref{eq:itera10} and with $N= N(\rho)$ given by \eqref{eq:iteraN}.
\end{lemma}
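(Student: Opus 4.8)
The plan is to iterate the one-step estimate \eqref{E:onestep5} of Proposition~\ref{P:one-step} a number of times that grows (slowly) with~$n$, and then convert the resulting high-$(p,q)$-norm into a pointwise maximum via a second, short iteration built from Corollary~\ref{R:itera}. Concretely, fix $\rho\in(1,p\wedge q)$ and set $k_\ast:=k_\ast(n):=\lceil \log\log n/\log\rho\rceil\vee(N+1)$, chosen so that $\rho^{k_\ast}\gtrsim \log n$, i.e. the number of iterations grows like $\log\log n$. The starting point is Proposition~\ref{P:iter1}, which already gives
\begin{equation}
\label{E:plan1}
\bvvert \,  \kappa_{n,k_\ast}^{2/ \rho^{k_\ast}} u \, \bvvert_{ \rho^{k_\ast} p , \, \rho^{k_\ast} q ; \, \, B_{n,k_\ast},\zeta_n}
\le  \biggl[\frac{\cv W}{(\sigma-\sigma')^4} \biggr]^{\sum_{\ell\ge1} \rho^{-\ell}}\,
\e^{3 \rho \sum_{\ell\ge1} \ell^2\rho^{-\ell}} \,
\bvvert \, \kappa_{n,0}^{2} \, u \,\bvvert_{ p' , \,  q' ; \, \, B_{\sigma n} , \zeta_n}^{\bar\gamma (n,k_\ast)},
\end{equation}
where the two sums over~$\ell$ are convergent (geometric-type series, since $\rho>1$), so the bracketed prefactor is at most $\cvi [\cv W/(\sigma-\sigma')^4]^{1/(\rho-1)}$ for a dimensional~$\cvi$ absorbing the $\e^{3\rho\sum\ell^2\rho^{-\ell}}$ factor; note also $\bar\gamma(n,k_\ast)=\bar\gamma\,'(\rho,n)$ by definition.

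Next I would run the complementary iteration that turns a space-averaged $(\rho^k p,\rho^k q)$-norm into an $L^\infty$ bound. The idea is standard: apply Corollary~\ref{R:itera} at level~$k$, raise both sides to the power $\rho^{-k}$, and telescope over $k>N$ up to $k_\ast$; the left-hand sides $\bvvert \kappa_{n,k}^2|u|^{\rho^k}\bvvert_{1,\infty;B_{n,k},\zeta_n}^{1/\rho^k}$ control the essential supremum in time of a spatial $\ell^{\rho^k}$-average over $B_{n,k}$, which as $k\to\infty$ (so $\rho^k\to\infty$) converges to the genuine supremum $\max_{t\le\sigma' n^2}\max_{x\in B_{\sigma' n}}|u(t,x)|$ (using that $\kappa_{n,k}\equiv1$ on $[0,\sigma' n^2]\times B_{n,k+1}$, cf. \eqref{eq:itera3}--\eqref{eq:itera4.2}, and that the boxes $B_{n,k}$ decrease to $B_{\sigma' n}$). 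Because the lattice is discrete, once $\rho^{k_\ast}$ exceeds $\log$ of the number of sites in $B_{2n}$, the $\ell^{\rho^{k_\ast}}$-norm and the $\ell^\infty$-norm over $B_{n,k_\ast}$ differ by a factor that stays bounded — this is exactly why $k_\ast(n)\sim \log\log n$ suffices and why we never need to send $k\to\infty$ with $n$ fixed. Collecting the telescoped prefactors $\prod_{k>N}[\cv W\e^{3\rho k^2}/(\sigma-\sigma')^4]^{\rho^{-k}}$ again produces convergent exponents $\sum_{k>N}\rho^{-k}$ and $\sum_{k>N}k^2\rho^{-k}$, so the whole product is bounded by $\cvi[\cv W/(\sigma-\sigma')^4]^{1/(\rho-1)}$, with $\cvi=\cvi(d,\rho)$.

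Finally I would patch the two iterations together. Feeding the output of the first iteration \eqref{E:plan1} into the last step of the telescoped second iteration (both are stated with the same cut-offs $\kappa_{n,k}$, the same $\zeta_n$, and the same exponent $\rho$, and the intermediate norm at level $k_\ast$ is literally the left side of \eqref{E:plan1}), and observing that each $\gamma_{n,k}\le1$ so the exponents multiply to $\bar\gamma\,'(\rho,n)\in(0,1]$ without ever enlarging the right-hand norm, yields \eqref{eq:itera15} with the stated $p'=\rho^N p$, $q'=\rho^N q$ and prefactor $\cvi[\cv W/(\sigma-\sigma')^4]^{1/(\rho-1)}$. Throughout one checks that $\zeta_n$ satisfies \twoeqref{eq:kernel_conds}{E:2.2a} by virtue of \eqref{E:2.2aa}, so Proposition~\ref{P:one-step}, Proposition~\ref{P:iter1} and Corollary~\ref{R:itera} all apply. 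The main obstacle is purely bookkeeping: verifying that the $\sigma,\sigma'$-dependence enters only through the single factor $(\sigma-\sigma')^{-4/(\rho-1)}$ (so that it survives the later variation of $\sigma,\sigma'$ to replace $(p',q')$ by $(1,1)$), and making sure that the number of iterations $k_\ast(n)$ is simultaneously large enough for the $\ell^{\rho^{k_\ast}}\!\to\ell^\infty$ comparison on $B_{2n}$ to cost only a bounded factor, yet small enough that all the geometric sums in the exponents remain uniformly convergent — both are arranged by the choice $k_\ast(n)=\lceil\log\log n/\log\rho\rceil\vee(N+1)$.
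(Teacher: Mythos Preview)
Your overall strategy matches the paper's: iterate Proposition~\ref{P:iter1} up to a level $k_\ast\sim\log\log n$, use Corollary~\ref{R:itera} to pass to a $(1,\infty)$-type norm, and exploit $|B_{2n}|^{1/\rho^{k_\ast}}\le c(\rho)$ to reach the pointwise maximum. The choice of $k_\ast$ and the collection of prefactors into $[\cv W/(\sigma-\sigma')^4]^{1/(\rho-1)}$ are exactly right.

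However, your description of the ``second iteration'' is confused. Corollary~\ref{R:itera} cannot be telescoped: its left-hand side is a $(1,\infty)$-norm of $\kappa_{n,k}^2|u|^{\rho^k}$, while its right-hand side is a $(\rho^{k-1}p,\rho^{k-1}q)$-norm of $\kappa_{n,k-1}^{2/\rho^{k-1}}u$; these do not chain. In the paper's proof Corollary~\ref{R:itera} is applied \emph{once}, at the top level $k$, and the resulting $(\rho^{k-1}p,\rho^{k-1}q)$-norm is then bounded by a \emph{single} application of Proposition~\ref{P:iter1} at index $k-1$. The product $\prod_{k>N}[\cv W\e^{3\rho k^2}/(\sigma-\sigma')^4]^{\rho^{-k}}$ you write down for the ``telescoped second iteration'' is precisely the prefactor that already appears in~\eqref{eq:itera8}; writing it a second time would be double-counting. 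Concretely, the paper's chain is
\[
\max_{[0,\sigma'n^2]\times B_{\sigma'n}}|u|
\;\le\; |B_{n,k}|^{1/\rho^k}\,\bvvert\kappa_{n,k}^2|u|^{\rho^k}\bvvert_{1,\infty;B_{n,k},\zeta_n}^{1/\rho^k}
\;\overset{\eqref{eq:itera14}}{\le}\;\cdots
\;\overset{\eqref{eq:itera8}}{\le}\;\cdots,
\]
with $k=\lceil\log\log n/\log\rho\rceil\vee(N+2)$ (note $N+2$, not $N+1$, so that Proposition~\ref{P:iter1} applies at index $k-1>N$). Once you drop the spurious ``telescoping'' of Corollary~\ref{R:itera}, your proof is the paper's proof.
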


\begin{proof}
Let $k \geq N+2$, with $N=N(\rho)$ given by \eqref{eq:iteraN}. For any $ k $, the function $\kappa_{n,k}^{2/\rho^k}$ is equal to $1$ on $B_{n,k+1} \times [0, \tau_k n^2] \supset B_{\sigma'n} \times [0,\sigma' n^2]$, cf. \eqref{eq:itera3} and \eqref{eq:itera4.2}. Using that $\text{supp}(\zeta_n) \supset [0,\sigma' n^2]$ by \eqref{E:itera0} and \eqref{eq:kernel_conds}, and applying Corollary~\ref{R:itera} and Proposition~\ref{P:iter1} (the latter for index~$k-1 \, (>N)$), we thus get
\begin{equation}
\begin{aligned}
\label{eq:itera16}
\max_{(t,x) \in  [0,\sigma' n^2] \times B_{\sigma'n}}& \bigl| \, u(t,x) \, \bigr|  
\le \max_{(t,x) \in  [0,\sigma' n^2] \times B_{\sigma'n}} \bigl| \,  (\kappa_{n,k}^{2/\rho^k}u)(t,x) \, \bigr| 
\\
&\leq \max_{t \in [0,\sigma'n^2]} \biggl[\,\sum_{x\in B_{n,k}}\bigl|(\kappa_{n,k}^{2}\tilde u^{\rho^k})(t,x)\bigr| \biggr]^{1/{\rho^k}}
\\
&\!\!\!\!\stackrel{\eqref{eq:norm_3}}{\leq} |B_{n,k}|^{1/{\rho^k}} \bvvert \,  \kappa_{n,k}^{2}\tilde u^{\rho^k} \, \bvvert_{1,\infty; \, B_{n,k}}^{1/{\rho^k}} 
\\
&\!\!\!\!\stackrel{\eqref{eq:itera14}}{\leq} |B_{n,k}|^{1/{\rho^k}}  \biggl[ \frac{\cv W\e^{3\rho k^2}}{(\sigma-\sigma')^4}\biggr]^{1/{\rho^k}} \  \bvvert \, \kappa_{n,k-1}^{2/\rho^{k-1}} u \, \bvvert_{\rho^{k-1}p,\rho^{k-1}q ;B_{n, k-1}, \zeta_n}^{\gamma_{n,k}}\\
&\!\!\!\!\stackrel{\eqref{eq:itera8}}{\leq} |B_{2n}|^{1/{\rho^k}} \biggl[ \frac{\cv W}{(\sigma-\sigma')^4}\biggr]^{\sum_{\ell=1}^k \rho^{-\ell}} \, \e^{3\rho \sum_{\ell=1}^{k-1} {\ell^2}\rho^{-\ell}} \ \bvvert \, \kappa_{n,0}^{2} \, u \,\bvvert_{ p , \,  q ; \, \, B_{\sigma n} , \zeta_n}^{\bar\gamma (n,k)}.\end{aligned}
\end{equation}
Choosing $k := \lceil \log \log n / \log \rho \rceil \vee (N(\rho)+2)$ ensures that $|B_{2n}|^{1/{\rho^k}}\le \tilde{c}(\rho)$ uniformly in~$n$. The claim follows upon defining $ \cvi(\rho)= \tilde{c}(\rho) \exp (3\frac{\rho^2(\rho+1)}{(\rho-1)^3})$ by noting that $\sum_{\ell=1}^\infty \ell^2\rho^{1-\ell} = \rho^2(\rho+1)(\rho-1)^{-3}$ and $\sum_{\ell=1}^\infty\rho^{-\ell}=(\rho-1)^{-1}$ for all~$\rho>1$.
\end{proof}

The replacement of the $(p,q)$-norm by the $(1,1)$-norm is the subject of the following lemma,  which is more or less drawn from~\cite{ACDS16}. The proof of Proposition \ref{P:chi_Linfty} will then quickly follow, using also Lemma~\ref{L:nu_w} to bound $W$.

\begin{lemma}
\label{C:itera100}
For the setting of Lemma~\ref{P:itera15}, there are $\cvii= \cvii(\alpha, \beta, \rho, p,q,d, \ciii(\zeta))$, $\cviii=\cviii(\rho, p,q)$, and $\cix=\cix(\alpha, \beta, \rho, p,q,d, \ciii(\zeta))$,
 \begin{equation}
 \label{eq:itera17}
\max_{(t,x) \in  [0, n^2] \times B_{n}} \bigl| \, u(t,x) \, \bigr|  \le  \cvii  W^{\cviii}\  \bvvert \, 1_{[0,2n^2]} \, u \,\bvvert_{ 1 , \,  1 ; \, \, B_{2 n} , \zeta_n}^{\gamma_n(u)}.
 \end{equation}
where $1_{[0,2n^2]}$ abbreviates the indicator of $t\in[0,2n^2]$, and $\gamma_n(u)$ satisfies $1\leq \gamma_n(u) \leq \cix$ (and $\gamma_n(u)$ also implicitly depends on the same set of parameters as $\cix$). 
\end{lemma}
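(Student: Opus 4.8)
The plan is to start from Lemma~\ref{P:itera15} and eliminate the $(p',q')$-norm on the right-hand side by iterating over a sequence of nested scales $\sigma_j\downarrow 1$, exploiting the fact that the exponent $\bar\gamma\,'(\rho,n)$ in \eqref{eq:itera15} is strictly less than one whenever the relevant norms are small. This is the standard trick (cf.~\cite{ACDS16}, or Giaquinta--Giusti-type interpolation arguments) for converting a ``reverse H\"older with small exponent'' into an $L^1$-bound.

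First I would observe that the left-hand side of \eqref{eq:itera15}, call it $G(\sigma')$ for fixed $n$, controls (by monotonicity of the norms in~$\sigma$) the $(p',q')$-norm appearing on the right at scale~$\sigma'$, up to the volume factor: more precisely, for any $1\le\sigma'<\sigma\le2$,
\begin{equation}
\label{eq:plan1}
\bvvert\,\kappa_{n,0}^2\,u\,\bvvert_{p',q';B_{\sigma n},\zeta_n}\le c\,(1\vee G(\sigma))^{1-1/p'}\,\bvvert\,1_{[0,2n^2]}\,u\,\bvvert_{1,1;B_{2n},\zeta_n}^{1/p'},
\end{equation}
using $\vvert\cdot\vvert_{p',q'}\le\vvert\cdot\vvert_{\infty,\infty}^{1-1/p'}\vvert\cdot\vvert_{1,1}^{1/p'}$ (an interpolation of the form \eqref{eq:intpol3}, or rather its $(p,q)$ analogue via \eqref{eq:intpol2} with exponents chosen so that $\vvert\cdot\vvert_{1,1}$ appears with weight $1/p'$) together with the bound $\vvert\kappa_{n,0}^2 u\vvert_{\infty,\infty;B_{\sigma n},\zeta_n}\le G(\sigma)$ that follows from the definition of the cut-offs and \eqref{eq:itera4.3}. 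Plugging \eqref{eq:plan1} into \eqref{eq:itera15} gives a self-improving inequality of the shape
\begin{equation}
\label{eq:plan2}
G(\sigma')\le \frac{C\,W^{a}}{(\sigma-\sigma')^{b}}\,\bigl(1\vee G(\sigma)\bigr)^{\theta}\,\bvvert\,1_{[0,2n^2]}\,u\,\bvvert_{1,1;B_{2n},\zeta_n}^{\gamma'}
\end{equation}
with $\theta=\bar\gamma\,'(\rho,n)(1-1/p')<1$ uniformly in~$n$ (here using $p'=\rho^Np>1$ and $\bar\gamma\,'\le1$), with $a,b$ depending only on $d,\rho,p,q$, and with $\gamma'=\bar\gamma\,'(\rho,n)/p'$ bounded above and below by positive constants in the relevant regime.

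Next I would run the classical iteration lemma (e.g.\ \cite[Lemma~V.3.1]{G83}-type, which I would state and prove in one line): if $G$ is bounded on $[1,2]$ and satisfies \eqref{eq:plan2} with $\theta<1$, then $G(1)\le C'\,W^{a'}\,\bvvert 1_{[0,2n^2]}u\bvvert_{1,1;B_{2n},\zeta_n}^{\gamma'/(1-\theta)}$ with $C',a'$ depending only on $C,a,b,\theta$. Applying this with $\sigma'=1$ absorbs the diverging prefactor $(\sigma-\sigma')^{-b}$ into the constant and yields precisely \eqref{eq:itera17}, with $\cviii=a'$ and $\gamma_n(u)=\gamma'/(1-\theta)$; one checks $\gamma_n(u)\ge1$ (since $\bvvert 1_{[0,2n^2]}u\bvvert_{1,1}\ge$ something $\le1$ in the nontrivial case, or trivially if the norm is $\ge1$) and $\gamma_n(u)\le\cix$ from the uniform bounds on $\bar\gamma\,'$ and $p'$. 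The necessary a priori boundedness of $G$ on $[1,2]$ is immediate since $u$, being (a component of) the corrector, is locally bounded by Lemma~\ref{lemma-growth}, hence the left-hand side of \eqref{eq:itera15} is finite for each fixed~$n$. Finally one records $W<\infty$ via Lemma~\ref{L:nu_w}, though this is only needed for the subsequent application in Proposition~\ref{P:chi_Linfty} and not for \eqref{eq:itera17} itself.

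The main obstacle I anticipate is bookkeeping the exponent $\theta<1$ \emph{uniformly in~$n$}: the quantity $\bar\gamma\,'(\rho,n)$ is a product of factors $\gamma_{n,\ell}$ each of which is either $1$ or $1-2\rho^{-\ell}$, and a priori it could equal $1$ for every~$n$ (if all the cut-off $(1,1)$-norms happen to exceed~$1$), which would destroy the contraction $\theta<1$. The resolution is that this is exactly the harmless case: if $\bvvert\xi_{n,\ell-1}^{2\rho}|u|^{\rho^\ell}\vvert_{1,1;B_{n,\ell},\zeta_n}\ge1$ at \emph{some} scale~$\ell$, then by monotonicity the $(1,1)$-norm on the right of \eqref{eq:itera17} is automatically $\ge$ a constant, so one may simply take $\gamma_n(u)=1$ and bound the left-hand side directly by the corresponding power of a large norm (the bound \eqref{eq:itera17} then holds trivially, with the $L^\infty$-bound controlled by the one-step estimate applied once). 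So the argument splits into two cases according to whether or not all the relevant norms are $<1$; only in the former case does one genuinely iterate, and there $\theta=\bar\gamma\,'(1-1/p')\le(1-2\rho^{-k})(1-1/p')<1-1/p'<1$ is a bound independent of~$n$. Handling this dichotomy cleanly, and making sure the two cases glue into a single inequality of the stated form, is the part requiring the most care; everything else is routine interpolation and the deterministic iteration lemma.
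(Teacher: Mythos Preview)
Your overall approach---interpolate the $(p',q')$-norm in \eqref{eq:itera15} between $L^\infty$ and $L^1$, then iterate over a dyadic sequence of scales $\sigma_i\downarrow 1$---is exactly what the paper does (with $\bar\sigma_i:=2-2^{-i}$). But you have misidentified the source of the contraction, and the resulting case split is both unnecessary and would not work as stated.

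The contraction exponent $\theta$ in your self-improving inequality \eqref{eq:plan2} is $\theta=\bar\gamma\,'(\rho,n)\cdot(1-1/p')$. Since $\bar\gamma\,'\le 1$ always (it is a product of factors in $(0,1]$), you have $\theta\le 1-1/p'<1$ \emph{unconditionally}, uniformly in~$n$. The case $\bar\gamma\,'=1$ does not ``destroy the contraction''; on the contrary it gives $\theta=1-1/p'$, which is perfectly fine. The gain comes entirely from the interpolation step, not from~$\bar\gamma\,'$. So your dichotomy is a red herring, and the argument you sketch for the ``large-norm'' case (``the bound holds trivially'') is not actually a proof: having $\bvvert\xi_{n,\ell-1}^{2\rho}|u|^{\rho^\ell}\bvvert_{1,1}\ge 1$ for some~$\ell$ does not by itself bound the $L^\infty$-norm of~$u$.

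Two smaller points. First, the interpolation $\vvert\cdot\vvert_{p',q'}\le\vvert\cdot\vvert_{\infty,\infty}^{1-1/p'}\vvert\cdot\vvert_{1,1}^{1/p'}$ requires $p'=q'$, which fails here; the paper first passes to $\vvert\cdot\vvert_{p'\vee q',\,p'\vee q'}$ via \eqref{E:pq_monot} and then interpolates with $\theta=1-1/(p'\vee q')$. Second, your formula $\gamma_n(u)=\gamma'/(1-\theta)$ with $\gamma'=\bar\gamma\,'/p'$ gives $\gamma_n(u)\le 1$ (with equality iff $\bar\gamma\,'=1$), not $\ge 1$; so your justification of the lower bound is circular. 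What actually matters for the application (Theorem~\ref{P:sublin}) is only that $\gamma_n(u)$ is bounded above and bounded away from~$0$ uniformly in~$n$, and this follows directly from $\bar\gamma\,'\in[\bar\gamma_{\min},1]$ with $\bar\gamma_{\min}:=\prod_{\ell>N}(1-2\rho^{-\ell})>0$.
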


\begin{proof}
Define $\bar\sigma_i := 2-2^{-i}$ for $i \geq 0$, which is increasing in $i$ with $\bar\sigma_0=1$ and $\lim_{i\to\infty} \bar\sigma_i=2$. Abbreviate
\begin{equation}
\vvert f \vvert_{\infty, i}:= \max_{(t,x) \in  [0, \bar\sigma_i n^2] \times B_{\bar\sigma_i n}} \bigl| \, f(t,x) \, \bigr|.
\end{equation}
Our goal is to estimate $\vvert u\vvert_{\infty,0}$ by the right-hand side of \eqref{eq:itera17}.
We will apply \eqref{eq:itera15} repeatedly with $\sigma':= \bar\sigma_{i-1}$ and $\sigma:= \bar\sigma_i$. We will write $\kappa_{n,0}^{(i)}$ for the mollifier with these choices of~$\sigma'$ and~$\sigma$, and let $\bar\gamma^{\,\prime}_i := \bar\gamma^{\,\prime}_i(\rho, n)$ denote the quantity defined below \eqref{eq:itera15} for this pair, recalling the dependence of this quantity on $\sigma$ and $\sigma'$ via the cut-off function $\xi$ appearing in \eqref{eq:itera10}. (Since~$n$ will remain fixed, we will suppress it whenever possible.) Using \eqref{E:pq_monot} and \eqref{eq:intpol3} with $\theta := \theta(p,q,\rho)= 1- \frac{1}{p'\vee q'} \ (\in (0,1))$, where $p' =\rho^N p$ and $q'=\rho^N q$, we then have for each~$i\ge0$,
\begin{equation}
\label{eq:itera18}
\begin{aligned}
\bvvert \, (\kappa_{n,0}^{(i)})^{2} \, u \,\bvvert_{ p' , \,  q' ; \, \, B_{\bar\sigma_i n} , \zeta_n}
&\le \bvvert \, (\kappa_{n,0}^{(i)})^{2} \, u \,\bvvert_{ p'\vee q' , \,  p'\vee q' ; \, \, B_{\bar\sigma_i n} , \zeta_n}
\\
&\leq \bvvert \, (\kappa_{n,0}^{(i)})^{2} \, u \,\bvvert_{ 1 , \,  1 ; \, \, B_{\bar\sigma_i n} , \zeta_n}^{1-\theta} \bvvert \, (\kappa_{n,0}^{(i)})^{2} \, u \,\bvvert_{\infty,i}^\theta \\
&\leq c \bvvert \, 1_{[0,2n^2]} u \,\bvvert_{ 1 , \,  1 ; \, \, B_{2n} , \zeta_n}^{1-\theta}  \vvert  \, u \,\vvert_{\infty,i}^\theta
\end{aligned}
\end{equation}
for some $c=c(p,q,d)\in[1,\infty)$, where the second line follows from $\text{supp}(\kappa_{n,0}^{(i)}) \subset [0, \bar\sigma_i n^2] \times B_{\bar \sigma_i n} \subset [0, 2n^2] \times B_{2 n}$ and the fact that $|B_{2n}|/ |B_{\bar \sigma_i n}|\le |B_{2n}|/|B_n|$ is bounded uniformly in~$n$ and~$i$. Inserting \eqref{eq:itera18} into \eqref{eq:itera15} while noting that $\bar\sigma_i-\bar\sigma_{i-1} =2^{-i}$ yields, for all~$i\ge1$,
\begin{equation}
\label{eq:itera19}
\begin{split}
 \vvert  \, u \,\vvert_{\infty,i-1} 
 &= \max_{(t,x) \in  [0, \bar\sigma_{i-1} n^2] \times B_{\bar \sigma_{i-1}n}} \bigl| \, u(t,x) \, \bigr|
 \\
 & \le  c\bigl[ 2^{4i} W\bigr]^{\frac{1}{\rho-1}} \, \ \bvvert \, 1_{[0,2n^2]}  u \,\bvvert_{ 1 , \,  1 ; \, \, B_{2n} , \zeta_n}^{(1-\theta) \bar\gamma^{\,\prime}_i}\, \vvert  \, u \,\vvert_{\infty,i}^{\theta \bar\gamma^{\,\prime}_i}
 \end{split}
\end{equation}
for some $c\in[1,\infty)$ depending on the parameters $p$, $q$, and~$\rho$ but not on~$n$ or~$i$.
Iterating \eqref{eq:itera19}, we obtain, for all $m \geq 2$ and some constant $c\in[1,\infty)$ depending on the full set of parameters $\alpha, \beta, \rho, p,q,d, \ciii(\zeta)$,
\begin{equation}
\label{eq:itera20}
\begin{split}
\max_{(t,x) \in  [0,  n^2] \times B_{ n}} \bigl| \, u(t,x) \, \bigr| 
 &=  \vvert  \, u \,\vvert_{\infty,0}
 \\
 & \le  \bigl[ c W^{\frac{1}{\rho-1}} \bigr]^{1+\sum_{k=1}^m (\prod_{i=1}^k \bar\gamma^{\,\prime}_i) \theta^k} \bigl[ 2^{\frac{4}{\rho-1}} \bigr]^{1+\sum_{k=2}^m (\prod_{i=1}^{k-1} \gamma^{\,\prime}_i) k \theta^k} \\
 &\qquad \times \ \bvvert \, 1_{[0,2n^2]} u \,\bvvert_{ 1 , \,  1 ; \, \, B_{2 n} , \zeta_n}^{1+\sum_{k=1}^m (\prod_{i=1}^k \bar\gamma^{\,\prime}_i) (1-\theta)^k}  \vvert  \, u \,\vvert_{\infty,m}^{(\prod_{i=1}^m \bar\gamma^{\,\prime}_i) \theta^m}.
 \end{split}
\end{equation}
Now, since $\bar\gamma^{\,\prime}_i \leq 1$ for all $i\ge1$, see \eqref{eq:itera10} and below \eqref{eq:itera15}, and $\vvert  \, u \,\vvert_{\infty,m}$ is bounded uniformly in~$m$ (e.g., by the maximum of~$u$ over $[0, 2 n^2] \times B_{\bar 2n}$, which is finite by our assumptions on $u$) the last term on the right of \eqref{eq:itera20} tends to 1 as~$m\to\infty$. The claim \eqref{eq:itera17} follows from \eqref{eq:itera20} by letting $m \to \infty$ (the sums in the exponents all converge) and letting $\gamma_n(u) = 1+\sum_{k=2}^\infty (\prod_{i=1}^{k-1} \gamma^{\,\prime}_i) k \theta^k$.  
\end{proof}

We are now ready to prove the desired maximal inequality:

\begin{proof}[Proof of Proposition~\ref{P:chi_Linfty}]
The claim will follow by applying Lemma~\ref{C:itera100} for suitable choice of the parameters.
Fix $d\geq 2$ and $\vartheta > 4d$ as appearing in \eqref{E:q_ass} and any $r \in (2d, \frac{\vartheta}{2})$. Let $s:=r$ and let $\alpha$ and $\beta$ be defined by \eqref{E:1.4a} in terms of $r$ and $s$. Note in particular that $\beta \in (0,2)$ and $\alpha < 2\frac{d-1}{d-2}$, as follows plainly from \eqref{E:1.4a}. Moreover, since $ r> 2d$, 
$$
\frac1\alpha \stackrel{\eqref{E:1.4a}}{=} \Big( \frac12 + \frac1r -\frac1d\Big)\frac{d}{d-1}< \frac12\Big( 1-\frac1d \Big)\frac{d}{d-1} = \frac12,
$$
as required by Lemmas~\ref{P:itera15}--\ref{C:itera100}. Having selected $\alpha$ and $\beta$, the parameters $p$ and $q$ are defined by \eqref{E:onestep4} (and are both larger than $1$, as noted in Remark \ref{R:onestep001}), and we choose $\rho= \frac12(1+ (p\wedge q))$. The claim \eqref{eq:maxbd} is then an immediate consequence of \eqref{eq:itera17}. The (crucial!) fact that $W(r)< \infty$ can be arranged, cf. \eqref{eq:Wfinite}, follows from Lemma \ref{L:nu_w} by choosing $k_t:=2^{\mu} (1+t)^{-\mu}$ with any $\mu \in (4, 2\vartheta /r )$ (note that $ 2\vartheta /r > 4$ by our choice of $r$) and $\zeta(t)$ as in Lemma~\ref{lemma-2.2}, with $\nu := \mu/2$. 
\end{proof}

\section{Proof of one-step estimate}
\label{sec6a}\nopagebreak\noindent
Ouf final task is the proof of the one-step estimate in Proposition~\ref{P:one-step}. The proof hinges on three ingredients. The first one is the weighted Sobolev inequality proved in Lemma~\ref{thm-Sobolev} which bounds a suitable norm of~$f$ by the weighted Dirichlet form $\EE^w_t(f)$. The second ingredient is a comparison of the weighted Dirichlet form with its ``bare'' counterpart~$\EE^a_t(f)$. Lemma~\ref{L:MO_basic} provides such comparison when the argument is $u_t$, the solution to the Poisson equation \eqref{E:2.22}, inside a box; unfortunately, since we need to consider powers of the solution and invoke different (smoother) spatial and temporal truncations, we will have to prove the needed bound again. This is the content of (rather long) Lemma~\ref{L:DIRICH_CONVERT}. The final ingredient is a bound on the resulting ``bare'' Dirichlet energy in terms of a suitable norm of the solution. This is done in the second subsection; the proof of Proposition~\ref{P:chi_Linfty} is presented right afterwards. 

\subsection{Dirichlet energy comparison}
We begin by a comparison of the Dirichlet energies for powers of the solution of the inhomogeneous Poisson equation \eqref{E:2.22} mollified by spatial and temporal cut-off functions. While necessarily more involved, the mechanism behind the proofs is similar to that of Lemma~\ref{L:MO_basic}.

Let us introduce some more Dirichlet forms which will recurrently show up in what follows. Recall $ \EE^{\, w}_{t}(\cdot) $ and $\EE^{\, w, \zeta}(\cdot)$ from \eqref{E:2.9.0} and \eqref{E:2.9}, with weights $w$ as defined in \eqref{E:1.16}. For $f\colon E(\Z^d)\to \mathbb{R}$ and recalling our notation $x_e$ and~$y_e$ for (arbitrarily ordered) endpoints of edge~$e$, define
\begin{equation}
\label{eq:av}
\text{av}(f) (e):= \frac12\bigl(f(x_e) + f(y_e)\bigr), \quad e \in E(\Z^d).
\end{equation}
Using our earlier notation $\nabla f(e):=f(y_e)-f(x_e)$ for the gradient of~$f$, for all $g,h\colon \Z^d \to \mathbb{R}$, the discrete product rule reads
\begin{equation}
\label{eq:CR}
\nabla (gh) =  \text{av}(g) \nabla h +   \text{av}(h) \nabla g.
\end{equation}
Given~$\eta\colon \Z^d \to [0,1]$ with finite support and any $g\colon\Z^d\to\R$, let
\begin{equation}
\label{eq:Dirich9}
\EE^a_{t,\eta^2}(g):=  \sum_{e\in E(\Z^d)} \text{av}(\eta^2)(e) a_t(e) \bigl|\nabla g(e)\bigr|^2
 \end{equation}
and, similarly to \eqref{E:2.9}, for any $f\colon [0,\infty) \times \mathbb{Z}^d \to \mathbb{R}$ with compact (space-time) support, define
 \begin{equation}
 \label{eq:Dirich9.1}
 \EE^{\, a, \zeta}_{\eta^2}(f) := \int_0^\infty \textd t\,\zeta(t) \EE^a_{t,\eta^2}(f_t).
 \end{equation}
Recall the definition of the norms $\Vert \cdot \Vert_{p,q;B, \zeta}$ in \eqref{eq:norm_pp'}. We then have:

\begin{lemma}[Conversion of Dirichlet forms]
\label{L:DIRICH_CONVERT}
Suppose Assumption~\ref{ass1} and \eqref{E:2.2a} hold and let~$\ci$ be the constant from~\eqref{E:2.2a}. There is  $\cx=\cx(d)\in(0,\infty)$ such that the following holds: Let $u$ be a (weak) solution the equation \eqref{E:5.4} with $ \nabla f $ bounded uniformly on~$E:=E(\Z^d)$. Fix~$B\subset\Z^d$ finite and suppose $\eta\colon \Z^d \to [0,1]$ obeys $\text{supp}\, \eta \subset B$ and $\eta$ vanishes on the inner boundary of $B$ (i.e. the set $\{x \in \Z^d : \, x \in B, \exists y \in \Z^d \setminus B: \, x\sim y\}$). Let $\xi\colon[0,\infty)\to[0,1]$, with the value at~$t$ denoted by~$\xi_t$, be a $C^1$-function with compact support. Then for all~$\lambda\ge1$,
\begin{multline}
\quad
\label{E:Dirich10}
\EE^{w,\zeta}(\xi \eta \tilde u^\lambda) 
\le \ci\cx\lambda^2 \biggl[   \EE^{\,a,\zeta}_{\eta^2}(\xi \tilde u^\lambda) +\Vert\nabla f\Vert_{\ell^\infty(E)}^2 \bigl\Vert \xi^2 |u|^{2\lambda-2}\bigr\Vert_{1,1;B, \zeta} 
\\
+ \Vert \nabla \eta \Vert_{\ell^\infty(E)}^2 \bigl\Vert \xi^2 |u|^{2\lambda}\bigr\Vert_{1,1;B, \zeta} +  \bigl\Vert (\bigdot{\xi})^2 |u|^{2\lambda}\bigr\Vert_{1,1;B, \zeta} \biggr],
\quad
\end{multline}
where $\tilde u^{\lambda}:= \text{\rm sign}(u)|u|^{\lambda}$ and where $\bigdot{\xi}$ denotes the derivative of $\xi$.
\end{lemma}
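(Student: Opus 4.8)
\textit{Proof strategy.} The plan is to run the argument of Lemma~\ref{L:MO_basic} with $u$ replaced by the mollified power: write $v_r:=\xi_r\tilde u_r^\lambda$ (so that $\xi\eta\tilde u^\lambda=\eta\,v$), and keep careful track of where the cut-offs land. Since $\xi$ depends only on time and $\nabla$ acts only in space, $\nabla(\xi_t\eta\tilde u_t^\lambda)(e)=\nabla(\eta v_t)(e)$, so unfolding the definition \eqref{E:1.16} of the weights gives $\EE^{w,\zeta}(\xi\eta\tilde u^\lambda)=\int_0^\infty\textd t\,\zeta(t)\int_t^\infty\textd s\,k_{s-t}\sum_{e}a_s(e)\,|\nabla(\eta v_t)(e)|^2$. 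Freezing the inner state at time $s$ as in Lemma~\ref{L:MO_basic} — writing $\eta v_t=\eta v_s+\eta(v_t-v_s)$, using $(a+b+c)^2\le3a^2+3b^2+3c^2$, $a_s(e)\le1$, and that every vertex has $2d$ neighbours — bounds $\sum_e a_s(e)|\nabla(\eta v_t)(e)|^2$ by $3\sum_e a_s(e)|\nabla(\eta v_s)(e)|^2+6d\sum_{x\in B}\eta(x)^2|v_t(x)-v_s(x)|^2$ (the summands vanishing off $\supp\eta\subset B$).

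For the \emph{frozen term} I would apply the discrete product rule \eqref{eq:CR}, $\nabla(\eta v_s)=\text{av}(\eta)\nabla v_s+\text{av}(v_s)\nabla\eta$. Since $\text{av}(\eta)(e)^2\le\text{av}(\eta^2)(e)$ by Jensen, the first piece contributes $\le2\sum_e\text{av}(\eta^2)(e)a_s(e)|\nabla v_s(e)|^2=2\,\EE^a_{s,\eta^2}(\xi_s\tilde u_s^\lambda)$; for the second, the hypothesis that $\eta$ vanishes on the inner boundary of $B$ forces $\nabla\eta(e)=0$ unless both endpoints of $e$ lie in $B$, so using $\text{av}(v_s)(e)^2\le\text{av}(v_s^2)(e)$, $a_s\le1$, and summing over the $\le2d$ relevant edges at each vertex we get $\le cd\,\Vert\nabla\eta\Vert_{\ell^\infty(E)}^2\xi_s^2\sum_{x\in B}|u_s(x)|^{2\lambda}$. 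Multiplying by $\zeta(t)$, integrating, and interchanging the two time integrals (legitimate by Tonelli as all integrands are $\ge0$), this contribution becomes $\int_0^\infty\textd s\,\bigl(\int_0^s\textd t\,\zeta(t)k_{s-t}\bigr)(\cdots)_s$; since $k_\tau\le K_\tau$ by \eqref{E:Kt}, the bracket is $\le\ci\zeta(s)$ by \eqref{E:2.2a}, so the frozen term is dominated by $\ci c(d)\bigl[\EE^{a,\zeta}_{\eta^2}(\xi\tilde u^\lambda)+\Vert\nabla\eta\Vert_{\ell^\infty(E)}^2\Vert\xi^2|u|^{2\lambda}\Vert_{1,1;B,\zeta}\bigr]$ — the first and third terms on the right of \eqref{E:Dirich10}.

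The heart of the proof is the \emph{time-increment term} $\sum_{x\in B}\eta(x)^2|v_t(x)-v_s(x)|^2$. I would write $v_t(x)-v_s(x)=-\int_t^s\textd r\,\partial_r v_r(x)$ and apply Cauchy--Schwarz, which produces the factor $(s-t)$ exactly as in \eqref{E:2.52}: $\eta(x)^2|v_t(x)-v_s(x)|^2\le(s-t)\int_t^s\textd r\,\eta(x)^2|\partial_r v_r(x)|^2$. The time derivative of the power is clean although $\tilde u^\lambda$ is nonlinear: since $\lambda\ge1$, $\partial_r\tilde u_r^\lambda(x)=\lambda|u_r(x)|^{\lambda-1}\partial_r u_r(x)$, while \eqref{E:5.4} gives $\partial_r u_r(x)=(L_rf)(x)-(L_ru_r)(x)$, whence by Cauchy--Schwarz over the $2d$ neighbours and $a_r\le1$, $|\partial_r u_r(x)|^2\le4d\sum_{y\sim x}a_r(x,y)\bigl(|\nabla f(e)|^2+|\nabla u_r(e)|^2\bigr)$ with $e=\{x,y\}$. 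Combining with $\partial_r v_r(x)=\bigdot{\xi}_r\tilde u_r^\lambda(x)+\lambda\xi_r|u_r(x)|^{\lambda-1}\partial_r u_r(x)$ and $(a+b)^2\le2a^2+2b^2$ gives $\eta(x)^2|\partial_r v_r(x)|^2\le c\,\eta(x)^2\bigl[|\bigdot{\xi}_r|^2|u_r(x)|^{2\lambda}+d\lambda^2\xi_r^2|u_r(x)|^{2\lambda-2}\sum_{y\sim x}a_r(x,y)\bigl(|\nabla f(e)|^2+|\nabla u_r(e)|^2\bigr)\bigr]$. Summing over $x\in B$ and using $\eta\le1$: the $|\nabla f|^2$-piece is at once $\le cd^2\lambda^2\,\Vert\nabla f\Vert_{\ell^\infty(E)}^2\xi_r^2\sum_{x\in B}|u_r(x)|^{2\lambda-2}$; the $\bigdot{\xi}_r$-piece gives $c\sum_{x\in B}|\bigdot{\xi}_r|^2|u_r(x)|^{2\lambda}$; and for the $|\nabla u_r|^2$-piece I would symmetrize over the two endpoints of each edge (the boundary hypothesis again killing the one-sided edges), use $\eta(x_e)^2|u_r(x_e)|^{2\lambda-2}+\eta(y_e)^2|u_r(y_e)|^{2\lambda-2}\le2\,\text{av}(\eta^2)(e)\bigl(|u_r(x_e)|\vee|u_r(y_e)|\bigr)^{2\lambda-2}$, and then the signed variant of \eqref{E:1.7}, namely $\bigl(|a|\vee|b|\bigr)^{2\lambda-2}|a-b|^2\le c\,|\tilde a^\lambda-\tilde b^\lambda|^2$ for $\lambda\ge1$, to turn $\bigl(|u_r(x_e)|\vee|u_r(y_e)|\bigr)^{2\lambda-2}|\nabla u_r(e)|^2$ into $|\nabla\tilde u_r^\lambda(e)|^2$; this bounds the piece by $cd\lambda^2\,\EE^a_{r,\eta^2}(\xi_r\tilde u_r^\lambda)$.

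It then remains to assemble: the time-increment term contributes $cd\int_0^\infty\textd t\,\zeta(t)\int_t^\infty\textd s\,k_{s-t}(s-t)\int_t^s\textd r\,(\cdots)_r$, and interchanging the three integrals (Tonelli) turns this into $cd\int_0^\infty\textd r\,(\cdots)_r\int_0^r\textd t\,\zeta(t)\int_{r-t}^\infty\sigma k_\sigma\,\textd\sigma$. The only delicate point — already present in the proof of Lemma~\ref{L:MO_basic} — is the kernel estimate $\int_0^r\textd t\,\zeta(t)\int_{r-t}^\infty\sigma k_\sigma\,\textd\sigma\le c\,\ci\,\zeta(r)$: writing $\sigma=(r-t)+(\sigma-(r-t))$ bounds $\int_{r-t}^\infty\sigma k_\sigma\,\textd\sigma$ by $K_{r-t}+(r-t)\int_{r-t}^\infty k_\sigma\,\textd\sigma$, and for the kernels $k$ admissible here (in particular the polynomial ones $k_t=(1+t)^{-\mu}$ of Lemma~\ref{lemma-2.2} used throughout the applications) the tail term is again $\le cK_{r-t}$, so that \eqref{E:2.2a} applies. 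Collecting the prefactors — a $\lambda^2$ from the chain rule, the $\ci$ from \eqref{E:2.2a}, and a dimensional constant $\cx=\cx(d)$ — yields \eqref{E:Dirich10}. The main obstacle is exactly this bookkeeping: ensuring that each error term generated by differentiating the product $\xi\eta\tilde u^\lambda$ and by using the equation collapses onto precisely one of the four norms on the right-hand side, with the cut-offs in the correct positions.
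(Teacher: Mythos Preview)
Your argument is essentially the paper's: freeze the inner time, separate the frozen piece via the product rule into $\EE^{a}_{s,\eta^2}(\xi_s\tilde u_s^\lambda)$ plus a $\Vert\nabla\eta\Vert_{\ell^\infty}^2$-term, and control the time increment by the PDE, Cauchy--Schwarz, and the signed-power inequality (the paper records this as Lemma~\ref{lemma_tilde}); the only organizational difference is that the paper strips off~$\eta$ first (display~\eqref{E:Dirich12}) and freezes afterward (display~\eqref{E:Dirich13}), whereas you freeze first and product-rule second. Your handling of the kernel step is in fact more careful than the paper's: the paper's \eqref{E:Dirich19} implicitly asserts $\int_\tau^\infty u\,k_u\,\textd u\le K_\tau$, which is not literally true for the polynomial kernels of Lemma~\ref{lemma-2.2} (only $\le c(\mu)K_\tau$), and you correctly flag that the tail term $\tau\int_\tau^\infty k_u\,\textd u$ needs the specific decay of~$k$ to be absorbed into~$K_\tau$ --- the extra constant then lands in~$\cx$.
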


\begin{remark}\label{R:dirich1}
The precise form of \eqref{E:Dirich10} is tailored to our future purposes, in the sense that the Dirichlet form  $\EE^{\,a,\zeta}(\xi \tilde u^\lambda)$ naturally comes out of a later energy estimate, see Lemma~\ref{lemma-energy} below. It is important that these two quantities be matched.
\end{remark}

In the proof we will need:

\begin{lemma}
\label{lemma_tilde}
For all $a,b \in \mathbb{R}$ and all $\lambda \geq 1$, with $\tilde a^{\lambda} := \text{\rm sign}(a)|a|^{\lambda}$, we have
\begin{equation}
\label{E:Dirich11}
\bigl(|a|^{2\lambda-2}+|b|^{2\lambda-2}\bigr)(b-a)^2\le 8(\tilde b^\lambda- \tilde a^\lambda)^2.
\end{equation}
\end{lemma}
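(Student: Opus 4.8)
The plan is to prove the pointwise inequality \eqref{E:Dirich11} by reducing to the case $a,b>0$ (or more precisely to signed values) and then exploiting homogeneity. First I would dispose of trivial cases: if $a=b$ both sides vanish, and by symmetry (swapping $a$ and $b$) we may assume $|b|\ge|a|$. If $a=0$ the left side is $|b|^{2\lambda-2}b^2=|b|^{2\lambda}$ and the right side is $8|b|^{2\lambda}$, so the bound holds; hence assume $b\neq0$.

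Next, by scaling: both sides are homogeneous of degree $2\lambda$ in $(a,b)$ (note $|a|^{2\lambda-2}(b-a)^2$ and $(\tilde b^\lambda-\tilde a^\lambda)^2$ each scale like $t^{2\lambda}$ under $(a,b)\mapsto(ta,tb)$ for $t>0$), so I may normalize $b=1$ (using $|b|\ge|a|$, $b\neq0$, and replacing $(a,b)$ by $(a,b)/|b|$, then flipping the overall sign if needed so that $b=1$ — here one must check the sign flip $(a,b)\mapsto(-a,-b)$ leaves both sides unchanged, which it does since $\widetilde{(-a)}^\lambda=-\tilde a^\lambda$). Writing $x:=a\in[-1,1]$, the claim becomes
\begin{equation}
\label{E:dirich-reduced}
(1+|x|^{2\lambda-2})(1-x)^2\le 8\,(1-\tilde x^\lambda)^2,\qquad x\in[-1,1],\ \lambda\ge1,
\end{equation}
where $\tilde x^\lambda=\operatorname{sign}(x)|x|^\lambda$. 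I would then split into $x\in[0,1]$ and $x\in[-1,0]$. On $[0,1]$ the left side is at most $2(1-x)^2$, so it suffices to show $(1-x)\le 2(1-x^\lambda)$; this follows from $1-x^\lambda=(1-x)(1+x+\cdots+x^{\lambda-1})\ge(1-x)$ for integer $\lambda$, and for general real $\lambda\ge1$ from convexity of $t\mapsto t^\lambda$ which gives $x^\lambda\le x$ on $[0,1]$, hence $1-x^\lambda\ge 1-x\ge\tfrac12(1-x)$ trivially — actually the factor $2$ is not even needed here, so there is slack. On $[-1,0]$, write $x=-y$ with $y\in[0,1]$: the left side is $(1+y^{2\lambda-2})(1+y)^2\le 2\cdot 4=8$, while the right side is $8(1+y^\lambda)^2\ge 8$; done.

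There is no serious obstacle here — the statement is an elementary pointwise inequality and the only mild care needed is the bookkeeping of signs and the normalization argument (checking homogeneity and that the relevant symmetries preserve both sides). The one place to be slightly attentive is that $\lambda$ is allowed to be an arbitrary real $\ge1$, not an integer, so the factorization $1-x^\lambda=(1-x)(1+\cdots+x^{\lambda-1})$ must be replaced by a convexity/mean-value argument (e.g.\ $1-x^\lambda=\int_x^1 \lambda t^{\lambda-1}\,dt\ge \lambda x^{\lambda-1}(1-x)\ge(1-x)\cdot\mathbf 1_{\{\lambda x^{\lambda-1}\ge1\}}$, or simply $x^\lambda\le x$ on $[0,1]$ for $\lambda\ge1$). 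The constant $8$ is comfortably generous, which makes all the estimates above go through with room to spare; I would not attempt to optimize it.
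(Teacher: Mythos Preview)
Your proof is correct and follows essentially the same approach as the paper: both split into the same-sign and opposite-sign cases, use $x^\lambda\le x$ on $[0,1]$ (equivalently $1-x^\lambda\ge 1-x$) for the same-sign case, and crude upper/lower bounds for the opposite-sign case. Your explicit normalization to $b=1$ is a cosmetic streamlining of the paper's substitution $x:=|a|/|b|$, but the argument is the same.
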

\begin{proofsect}{Proof} Suppose first that~$a$ and~$b$ have the same sign. In this case, $(b-a)^2= (|b|-|a|)^2$ as well as $(\tilde b^\lambda-\tilde a^\lambda)^2 = (|b|^\lambda-|a|^\lambda)^2$ and so \eqref{E:Dirich11} can be recast as
\begin{equation}
\bigl(|a|^{2\lambda-2}+|b|^{2\lambda-2}\bigr)(|b|-|a|)^2\le 8(|b|^\lambda-  |a|^\lambda)^2.
\end{equation}
This is proved by setting $x:=|a|/|b|$ (assuming $|a|\le |b|$) and noting that $1-x^\lambda\ge 1-x$ for $x\in[0,1]$ and $\lambda\ge1$ (in fact the inequality even holds with~$2$ instead of~$8$ on the right-hand side).

Suppose now that $a$ and $b$ have opposite signs. By symmetry, it is enough to consider the case $a\geq 0$, $b < 0$, in which, using that $(a+ |b|)^2 \leq 2a^2 + 2|b|^2$,
\begin{equation}
\begin{split}
\bigl(|a|^{2\lambda-2}+|b|^{2\lambda-2}\bigr)(b-a)^2 &= \bigl(a^{2\lambda-2}+|b|^{2\lambda-2}\bigr)(a+ |b|)^2 \leq 8 (a \vee |b|)^{2\lambda}\\
& \leq 8 \bigl(a^{2\lambda} + |b|^{2\lambda}\bigr)\leq 8\bigl(a^{\lambda}+  |b|^{\lambda}\bigr)^2 = 8\bigl(\tilde a^{\lambda} - \tilde b^{\lambda}\bigr)^2.
\end{split}
\end{equation}
The claim follows.
\end{proofsect}

\begin{proofsect}{Proof of Lemma~\ref{L:DIRICH_CONVERT}}
We build on the argument from the proof of Lemma~\ref{L:MO_basic} which we hereby invite the reader to inspect first. To start, using the discrete product rule \eqref{eq:CR}, the definition of the weights $w$ in \eqref{E:1.16} along with the inequality $(a+b)^2\le 2a^2+2b^2$ and the bound $\text{av}(\eta)^2 \leq  \text{av}(\eta^2)$, and minding that $\xi$ is a function of $t$ alone, we obtain for all $t \geq 0$ that
\begin{equation}
\label{E:Dirich12}
\begin{aligned}
\EE^w_t(\xi_t \eta &\tilde{u}_t^\lambda) 
 = \xi_t^2 \int_t^\infty \textd s\,\,k_{s-t}\sum_e a_s(e) \bigl(\nabla (\eta \tilde u_t^\lambda)(e)\bigr)^2 
\\
&\leq  2   \int_t^\infty \textd s\,\, k_{s-t}\biggl[\,\sum_e a_s(e) \bigl(  \xi_t \, \text{av}(\eta)  \nabla \tilde u_t^\lambda \bigr)^2(e)  + \xi_t^2 \sum_e a_s(e) \bigr(\text{av}(\tilde u_t^\lambda) \nabla \eta \bigr)^2(e)\biggr] \\
&\leq 2 \EE^w_{t,\eta^2}(\xi_t \tilde u_t^\lambda) + 2d\Vert k\Vert_{L^1}\Vert \nabla \eta \Vert_{\ell^\infty(E)}^2 \xi_t^2 \bigl\Vert |u_t|^{2\lambda} \bigr\Vert_{\ell^1(B)}\,,
\end{aligned}
\end{equation}
where $\EE^w_{t,\eta^2}(\cdot)$ is the quantity from \eqref{eq:Dirich9} with~$w_t$ in place of~$a_t$ and where $\Vert\cdot\Vert_{L^1}$ abbreviates the $L^1$-norm on~$[0,\infty)$. Note that \eqref{E:2.2a} implies $\Vert k\Vert_{L^1}\le \ci$.
In view of \eqref{E:2.9} and \eqref{eq:Dirich9.1}, multiplying by $\zeta(t)$ on both sides of \eqref{E:Dirich12} and integrating over $t$, it thus suffices to show a bound of the form \eqref{E:Dirich10} for $\EE^{w, \zeta}_{\eta^2}( \xi \tilde u^\lambda) $ in place of $\EE^{w, \zeta}(\xi \eta \tilde u^\lambda)$. 

Using that $\text{av}(\eta^2) \leq 2 \text{av}(\eta)^2$, we write, for all $t \geq 0$,
\begin{equation}
\begin{aligned}
\label{E:Dirich13}
\EE^w_{t,\eta^2}( \xi_t \tilde u_t^\lambda) 
 &= \int_t^\infty \textd s\,\,k_{s-t}\sum_e \bigl(a_s\text{av}(\eta^2)\bigr) (e)  \bigl(\xi_t \nabla \tilde u_t^\lambda(e)\bigr)^2
\\
&\le
2\int_t^\infty \textd s\,\,k_{s-t}\sum_e\bigl(a_s\text{av}(\eta^2)\bigr)(e)\bigl( \xi_s \nabla  \tilde u_s^\lambda(e)\bigr)^2 
\qquad\qquad
\\
&\qquad\qquad+ 4\int_t^\infty \textd s\,\,k_{s-t}
\sum_e a_s(e)\Bigl(\xi_t\text{av}(\eta)  \nabla \tilde u_t^\lambda-\xi_s \text{av}(\eta)  \nabla \tilde u_s^\lambda\Bigr)(e)^2.
\end{aligned}
\end{equation}
Multiplying the first integral on the right by $\zeta(t)$ and integrating over~$t$, we get
\begin{multline}
\qquad
\int_0^\infty\textd t\,\zeta(t)\Bigl(\int_t^\infty \textd s\,\,k_{s-t}\sum_e \bigl(a_s\text{av}(\eta^2)\bigr)(e)\bigl(\xi_s \nabla \tilde u_s^\lambda(e)\bigr)^2 
\Bigr)
\\=\int_0^\infty \textd s\,\,\EE^a_{s, \eta^2}(\xi_s \tilde u_s^\lambda)\Bigl(\int_0^s\textd t\,\zeta(t)k_{s-t}\Bigr),
\qquad\end{multline}
which in light of the definition of~$K_t$ in \eqref{E:Kt} and \eqref{E:2.2a} is at most $\ci \EE^{\, a , \zeta}_{ \eta^2}( \xi \tilde u^\lambda)$. Hence, this term contributes directly to the first term on the right-hand side of \eqref{E:Dirich10}. Concerning the second integral on the right of \eqref{E:Dirich13}, the discrete product rule \eqref{eq:CR} implies
\begin{equation}
\begin{split}
&\xi_t\text{av}(\eta)  \nabla \tilde u_t^\lambda - \xi_s  \text{av}(\eta) \nabla \tilde u_s^\lambda\\
&\qquad = 
- \xi_t\text{av}( \tilde u_t^\lambda)  \nabla \eta
+\xi_s  \text{av}( \tilde u_s^\lambda) \nabla \eta
+\big(\xi_t  \nabla  (\eta \tilde u_t^\lambda) -\xi_s  \nabla (\eta \tilde u_s^\lambda)\big).\end{split}
\end{equation}
In conjunction with the inequality $(a+b+c)^2\le3a^2+3b^2+3c^2$, this yields
\begin{equation}
\label{E:Dirich14}
\begin{aligned}
&\int_t^\infty \textd s\,\,k_{s-t}
\sum_e a_s(e)\Bigl(\xi_t\text{av}(\eta)  \nabla \tilde u_t^\lambda-\xi_s \text{av}(\eta)  \nabla \tilde u_s^\lambda\Bigr)(e)^2 \leq 3\bigl[I^{(1)}_t + I^{(2)}_t + I^{(3)}_t\bigr], 
\end{aligned}
\end{equation}
where
\begin{equation}
\begin{aligned}
I^{(1)}_t &:= \int_t^\infty \textd s\,\,k_{s-t} \sum_e a_s(e) \xi_t^2 (\text{av}(\tilde u_t^\lambda) \nabla \eta ) (e)^2,\\
I^{(2)}_t &:= \int_t^\infty \textd s\,\,k_{s-t} \sum_e a_s(e) \xi_s^2 (\text{av}(\tilde u_s^\lambda) \nabla \eta ) (e)^2,\\
I^{(3)}_t &:=  \int_t^\infty \textd s\,\,k_{s-t}
\sum_e a_s(e)\Bigl( \xi_s \nabla (\eta\tilde u_s^\lambda)- \xi_t  \nabla (\eta\tilde u_t^\lambda)\Bigr)(e)^2 .
 \end{aligned}
\end{equation}
We will now show separately that, upon multiplication with $\zeta(t)$ and integration over~$t$, each of the three terms  $I^{(1)}_t$, $I^{(2)}_t$, $I^{(3)}_t$ in \eqref{E:Dirich14} is bounded by the right-hand side of \eqref{E:Dirich10}. 

Using that $a_s \leq 1$, we immediately get $I^{(1)}_t\leq2d \Vert k\Vert_{L^1}\Vert \nabla \eta \Vert_{\ell^\infty(E)}^2  \xi_t^2 \Vert |u_t|^{2\lambda}\Vert_{\ell^1(B)}$ for all $t \geq 0$. Since $\Vert k\Vert_{L^1}\le \ci$, this shows
\begin{equation}
\int_0^\infty\textd t\,\zeta(t)I^{(1)}_t
\le 2d\ci\Vert \nabla \eta \Vert_{\ell^\infty(E)}^2 \bigl\Vert \xi^2 |u|^{2\lambda}\bigr\Vert_{1,1;B, \zeta}\,.
\end{equation}
Some more care is needed to bound $\int_0^\infty \textd t\,\zeta(t) I^{(2)}_t$. Exchanging the order of integration  and using \eqref{E:2.2a} along with~$a_s(e)\le1$ again, we obtain
\begin{equation}
\begin{split}
\int_0^\infty \textd t\,\, \zeta(t) I^{(2)}_t&= \int_0^\infty \textd s\,\Bigl( \int_0^s \textd t\,\,\zeta(t) k_{s-t} \Bigr) \sum_e a_s(e) \xi_s^2 (\text{av}(\tilde u_s^\lambda) \nabla \eta )^2 (e) \\
& \leq 2d \ci  \Vert \nabla \eta \Vert_{\ell^\infty(E)}^2 \bigl\Vert \xi^2 |u|^{2\lambda} \bigr\Vert_{1,1;B, \zeta}.
\end{split}
\end{equation}
It remains to derive a suitable bound on $I^{(3)}_t$ which is considerably more involved. First, the assumption $a_s(e)\le1$ and elementary symmetrization arguments yield
\begin{equation}
\label{E:Dirich15}
I^{(3)}_t  \leq 4d \int_t^\infty \textd s\,\,k_{s-t}
\sum_x\big(\xi_s \tilde u_s^\lambda(x)- \xi_t \tilde u_t^\lambda(x)\bigr)^2\eta(x)^2
\end{equation}
with the summation effectively only over a finite set since~$\eta$ has finite support. We now use that~$u$ solves \eqref{E:5.4} along with the fact that $\partial_t \tilde u_t^{\lambda} = \lambda |u_t|^{\lambda-1} \partial_t u_t$ to get
\begin{equation}
\begin{aligned}
\label{E:Dirich16}
\xi_s \tilde u_s^\lambda(x)- \xi_t\tilde u_t^\lambda(x) 
&= \int_t^s  \textd r\, \frac{\textd }{\textd r} (\xi_r \tilde u_r^\lambda(x))
\\
&= \int_t^s  \textd r\, \bigdot{\xi}_r \tilde u_r^\lambda(x)  +\int_t^s \textd r\,\, \xi_r\lambda |u_r(x)|^{\lambda-1}(L_r f)(x) 
\\
&\qquad\qquad\qquad-\int_t^s \textd r\,\, \xi_r \lambda  |u_r(x)|^{\lambda-1}(L_r u_r)(x).
\end{aligned}
\end{equation}
Substituting \eqref{E:Dirich16} into \eqref{E:Dirich15}, using the Cauchy-Schwarz inequality and the standard inequality $(a+b+c)^2\le3a^2+3b^2+3c^2$, we thus get
\begin{equation}
\label{E:Dirich17}
I^{(3)}_t 
\le 12d\bigl[\,\hat A_t+\hat B_t + \hat C_t\bigr],
\end{equation}
where
\begin{equation}
\begin{split}
\label{E:Dirich18}
&\hat A_t:= \int_t^\infty \textd s\,\,k_{s-t}(s-t)\sum_x \Bigl(\int_t^s \textd r\,\,\eta(x)^2 (\bigdot{\xi}_r)^2 |u_r(x)|^{2\lambda}\Bigr),\\
&\hat B_t:= \int_t^\infty \textd s\,\,k_{s-t}(s-t)\sum_x \Bigl(\int_t^s\textd r\,\,\lambda^2 \eta(x)^2 \xi_r^2 |u_r(x)|^{2\lambda-2}(L_r f)(x)^2\Bigr) \\
&\hat C_t:= \int_t^\infty \textd s\,\,k_{s-t}(s-t)\sum_x \Bigl(\int_t^s\textd r\,\,\lambda^2 \eta(x)^2\xi_r^2 |u_r(x)|^{2\lambda-2}(L_r u_r)(x)^2\Bigr).
\end{split}
\end{equation}
The following consequence of our basic assumptions on~$\zeta$ and~$k$ will be useful for bounding all three quantities in \eqref{E:Dirich18}: For any measurable $g\colon\R\to[0,\infty)$, the definition of $K_t$ in \eqref{E:Kt} and condition \eqref{E:2.2a} imply
\begin{equation}
\label{E:Dirich19}
\begin{aligned}
\int_0^\infty\textd t\,\,\zeta(t)\biggl(\int_t^\infty &\textd s\,\,k_{s-t}(s-t)\Bigl(\,\int_t^s \textd r\,\,g(r)\Bigr)\biggr)
\\
&=\int_0^\infty \textd r\,\,g(r)\biggl(\int_0^r\textd t\,\,\zeta(t)\Bigl(\int_{r-t}^\infty \textd u\,\,k_{u}\,u\,\Bigr)\biggr)
\\
&\le \int_0^\infty \textd r\,\,g(r)\Bigl(\,\int_0^r\textd t\,\,\zeta(t) K_{r-t}\Bigr)
\le \ci\int_0^\infty \textd r\,\,g(r)\zeta(r).
\end{aligned}
\end{equation}
Indeed, applying this with $g(r):= (\bigdot{\xi}_r)^2 |u_r(x)|^{2\lambda}$ (which is indeed non-negative) yields
\begin{equation}
\label{E:Dirich20}
\int_0^\infty\zeta(t)\,  \hat A_t \, \textd t\leq \, \ci \bigl\Vert (\bigdot{\xi})^2 |u|^{2\lambda} \bigr\Vert_{1,1;B, \zeta},
\end{equation}
which in light of~$\lambda\ge1$ is bounded by a corresponding term on the right-hand side of \eqref{E:Dirich10}. For the term $\hat B_t$ we use $a_r(e)\le1$ to bound $(L_rf)^2\le 4d^2\Vert\nabla f\Vert_{\ell^\infty(E)}^2$. Then \eqref{E:Dirich19} shows
\begin{equation}
\label{E:Dirich22}
\int_0^\infty\textd t\,\,\zeta(t)\,  \hat B_t \leq 4d^2 \ci \lambda^2 \, \Vert\nabla f\Vert_\infty^2  \,\bigl\Vert {\xi}^2 |u|^{2\lambda-2} \bigr\Vert_{1,1;B, \zeta}\,.
\end{equation}
In order to bound $\hat C_t$, we first use the Cauchy-Schwarz inequality, $a_t \leq 1$, $\eta(x)^2 \leq \text{av}(\eta^2)(e)$ and Lemma~\ref{lemma_tilde} to get
\begin{equation}
\label{E:Dirich21}
\begin{aligned}
\sum_x \eta(x)^2 &|u_r(x)|^{2\lambda-2}(L_r u_r)(x)^2\\
&=\sum_x|u_r(x)|^{2\lambda-2} \eta(x)^2\Bigl( \sum_{e=(x,y)} a_r(e)\bigl[u_r(y)-u_r(x)\bigr]\Bigr)^2
\\
&\leq 2d \sum_x|u_r(x)|^{2\lambda-2} \sum_{e=(x,y)} \bigl(\text{av}(\eta^2)a_r\bigr)(e)\bigl[u_r(y)-u_r(x)\bigr]^2
\\
&\le
{2d}\sum_{e=(x,y)}\bigl(\text{av}(\eta^2)a_r\bigr)(e)\bigl[|u_r(y)|^{2\lambda-2}+|u_r(x)|^{2\lambda-2}\bigr] (\nabla u_r)^2(e).
\\
&\le 
 16d \sum_{e}\bigl(\text{av}(\eta^2)a_r\bigr)(e)(\nabla \tilde u_r^{\lambda})(e)^2= 16d \EE^a_{t,\eta^2}(\tilde u_r^{\lambda}).
\end{aligned}
\end{equation}
Plugging this in \eqref{E:Dirich18} and invoking \eqref{E:Dirich19} then yields
\begin{equation}
\begin{aligned}
\label{E:Dirich23}
\int_0^\infty\textd t\,\,\zeta(t)\,  \hat C_t \,  
&\le 16d\lambda^2 \int_0^\infty\textd t\,\,\zeta(t) \biggl(\int_t^\infty \textd s\,\,k_{s-t}(s-t)\Bigl(\int_t^s\textd r\,\,\EE^a_{r,\eta^2}(\xi_r \tilde u_r^\lambda)\Bigr)\biggr) 
\\
&\leq 16d\ci\lambda^2\int_0^\infty\textd r\,\,\EE^a_{r,\eta^2}(\xi_r \tilde u_r^\lambda)\zeta(r)
=16d \ci\lambda^2\EE^{\,a, \zeta}_{\eta^2}(\xi \tilde u^\lambda).
\end{aligned}
\end{equation}
It follows from \eqref{E:Dirich17}, \eqref{E:Dirich20},  \eqref{E:Dirich22} and~\eqref{E:Dirich23} that $\int_0^\infty\zeta(t) I_t^{(3)} \textd t$ admits the desired bound. The proof of \eqref{E:Dirich10} is complete.
\end{proofsect}

\subsection{Energy estimate}
Our next step is the so-called energy estimate which bounds the Dirichlet energy of powers of solution to the inhomogeneous Poisson equation (under truncation with respect to space and time) by a suitable norm thereof. The same calculation also produces a pointwise estimate (in time) of the $\ell^1$-norm of the (power of) solution weighted by~$\zeta$. The precise statement is as follows:

\begin{lemma}[Energy estimate]
\label{lemma-energy} Suppose Assumption~\ref{ass1} and \eqref{E:2.2a} hold. There is a numerical constant $\cxi\in(0,\infty)$ such that for all~$\lambda\ge1$ and for any solution~$u$ of \eqref{E:5.4}, we have 
\begin{equation}
\label{E:energy1}
\begin{split}
&\max \Big\{ \sup_{t\geq 0} \Big[ \zeta(t)\Vert ( \xi_t \eta \tilde{u}_t^{\lambda})^2\Vert_{\ell^1(B)} \Big], \, \, \EE^{\,a, \zeta}_{\eta^2}(\xi \tilde u^\lambda) \Big\}\\
&\qquad\begin{array}{rl}
\leq \cxi \lambda^2 &\hspace{-1ex}\Bigg[\Vert\nabla f\Vert_{\ell^\infty(E)}^2 \bigl\Vert \xi^2 |u|^{2\lambda-2}\bigr\Vert_{1,1;B, \zeta} + \bigl\Vert(\nabla f)( \nabla \eta) \bigr\Vert_{\ell^\infty(E)} \bigl\Vert \xi^2 |u|^{2\lambda-1}\bigr\Vert_{1,1;B, \zeta}\\[1em]
&\hspace{-1ex}+ \big( \Vert \nabla \eta \Vert_{\ell^\infty(E)}^2  + \Vert \bigdot{\zeta}/ \zeta\Vert_{L^{\infty}(\mathbb{R}_+)} \big) \bigl\Vert \xi^2 |u|^{2\lambda}\bigr\Vert_{1,1;B, \zeta} + \Bigl\Vert \frac{\textd \xi^2}{\textd t} \, |u|^{2\lambda}\Bigr\Vert_{1,1;B, \zeta} \Bigg].
\end{array}
\end{split}
\end{equation}
\end{lemma}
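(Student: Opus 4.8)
The plan is to derive \eqref{E:energy1} by testing the weak form of \eqref{E:5.4} against the function $v_t(x) := \xi_t(t)^2 \eta(x)^2 |u_t(x)|^{2\lambda-2} u_t(x)$ (up to the usual care about the sign convention encoded in $\tilde u^\lambda$) and integrating against $\zeta$ over a time interval $[0,\infty)$. First I would write, for fixed $T>0$,
\begin{equation}
\int_0^T \textd t\,\zeta(t)\sum_{x\in B}\frac{\partial}{\partial t}u_t(x)\,v_t(x)
= \int_0^T \textd t\,\zeta(t)\sum_{x\in B}\bigl[(L_t f)(x)-(L_t u_t)(x)\bigr]v_t(x),
\end{equation}
and handle the three resulting pieces. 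For the time-derivative term, the chain rule $\partial_t(\xi_t^2\eta^2|u_t|^{2\lambda})\sim 2\lambda\,\xi_t^2\eta^2|u_t|^{2\lambda-2}u_t\,\partial_t u_t + 2\xi_t\bigdot\xi_t\,\eta^2|u_t|^{2\lambda}$ converts the left side into $\tfrac{1}{2\lambda}\frac{\partial}{\partial t}\bigl(\xi_t^2\eta^2|u_t|^{2\lambda}\bigr)$ minus a lower-order term involving $\bigdot\xi$; integrating by parts in $t$ moves the derivative onto $\zeta$, producing the boundary term $\zeta(T)\Vert(\xi_T\eta\tilde u_T^\lambda)^2\Vert_{\ell^1(B)}$ (giving the supremum bound on the left of \eqref{E:energy1}) together with an interior term $\int\textd t\,|\bigdot\zeta(t)|\,\xi_t^2\eta^2|u_t|^{2\lambda}$, which is $\le \Vert\bigdot\zeta/\zeta\Vert_{L^\infty(\R_+)}\Vert\xi^2|u|^{2\lambda}\Vert_{1,1;B,\zeta}$ — this is where the assumption $\Vert\bigdot\zeta/\zeta\Vert_{L^\infty}<\infty$ from \eqref{eq:kernel_conds} enters.

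For the $(L_tu_t)$ term, which carries the good sign, a summation-by-parts identity gives
\begin{equation}
-\sum_{x\in B}(L_tu_t)(x)\,\eta(x)^2|u_t(x)|^{2\lambda-2}u_t(x)
= \sum_{e}a_t(e)\,\nabla u_t(e)\,\nabla\!\bigl(\eta^2|u_t|^{2\lambda-2}u_t\bigr)(e),
\end{equation}
and the key point is to bound this from below (modulo error terms) by a constant times $\EE^a_{t,\eta^2}(\tilde u_t^\lambda) = \sum_e \mathrm{av}(\eta^2)(e)a_t(e)\bigl(\nabla\tilde u_t^\lambda(e)\bigr)^2$. This is a standard but slightly delicate algebraic estimate: using the discrete product rule \eqref{eq:CR} to expand $\nabla(\eta^2 \cdot)$, using that $t\mapsto|t|^{2\lambda-2}t$ is monotone (so the "diagonal" part has a sign), using $(a^{2\lambda-2}+b^{2\lambda-2})(b-a)^2\le 8(\tilde b^\lambda-\tilde a^\lambda)^2$ from Lemma~\ref{lemma_tilde} in the reverse direction (we need a comparison the other way, namely $\nabla u\cdot\nabla(|u|^{2\lambda-2}u)\gtrsim\lambda^{-2}(\nabla\tilde u^\lambda)^2$, which again follows by the same elementary one-variable inequality with the roles interchanged), and absorbing the cross-term $\mathrm{av}(|u|^{2\lambda-2}u)\nabla\eta^2$ by Young's inequality at the cost of the term $\Vert\nabla\eta\Vert_{\ell^\infty}^2\Vert\xi^2|u|^{2\lambda}\Vert_{1,1;B,\zeta}$. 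This manipulation — extracting a clean lower bound on the Dirichlet energy from the bilinear form while controlling the $\eta$-gradient cross-terms — is the main obstacle, essentially the discrete Caccioppoli inequality, and is the step requiring the most care with constants (all absorbable into $\cxi\lambda^2$).

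Finally, the $(L_tf)$ term is estimated from above: write $-\sum_x(L_tf)(x)\eta^2|u_t|^{2\lambda-2}u_t = \sum_e a_t(e)\nabla f(e)\,\nabla(\eta^2|u_t|^{2\lambda-2}u_t)(e)$, expand the gradient via \eqref{eq:CR}, bound $a_t(e)\le1$, and split into a piece hitting $\nabla(|u_t|^{2\lambda-2}u_t)$ — controlled by Young's inequality against $\tfrac12\EE^a_{t,\eta^2}(\tilde u_t^\lambda)$ so that it can be absorbed into the left-hand side, leaving a remainder $\lesssim\lambda^2\Vert\nabla f\Vert_{\ell^\infty}^2\Vert\xi^2|u|^{2\lambda-2}\Vert_{1,1;B,\zeta}$ — and a piece hitting $\nabla\eta^2$, bounded by $\Vert(\nabla f)(\nabla\eta)\Vert_{\ell^\infty}\Vert\xi^2|u|^{2\lambda-1}\Vert_{1,1;B,\zeta}$ after using $|\,|u_t|^{2\lambda-2}u_t| = |u_t|^{2\lambda-1}$ and $\mathrm{av}(\eta)^2\le\mathrm{av}(\eta^2)$. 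Collecting all contributions, moving the absorbed terms to the left, multiplying through by $2\lambda$ and renaming constants, and then taking the supremum over $T\ge0$ on the boundary term and letting the other $T$-dependent quantities increase to their $[0,\infty)$ values, yields \eqref{E:energy1}. Note that the factor $\zeta$ entering the norms $\Vert\cdot\Vert_{1,1;B,\zeta}$ on the right is exactly the one used as the test-weight, so no mismatch arises; the constant $\cxi$ is numerical (independent of $\lambda$, of $\zeta$, of $B$ and of $u$), with all $\lambda$-dependence explicit as $\lambda^2$.
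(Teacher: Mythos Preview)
Your proposal is correct and follows essentially the same route as the paper's proof, which in fact outsources the core Caccioppoli-type computation (your summation-by-parts step extracting $\EE^a_{t,\eta^2}(\tilde u_t^\lambda)$ from $-\sum_x (L_tu_t)\,\eta^2|u_t|^{2\lambda-2}u_t$ up to $\Vert\nabla\eta\Vert^2$, $\Vert\nabla f\Vert^2$ and $(\nabla f)(\nabla\eta)$ errors) to \cite[(5.18)]{ACDS16} and then performs exactly the temporal integration-by-parts you describe to bring in $\bigdot\zeta/\zeta$ and $\partial_t\xi^2$.

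One small point of care: integrating over $[0,T]$ as you suggest produces not only the desired boundary term $\zeta(T)\Vert(\xi_T\eta\tilde u_T^\lambda)^2\Vert_{\ell^1(B)}$ but also an unwanted term $\zeta(0)\Vert(\xi_0\eta\tilde u_0^\lambda)^2\Vert_{\ell^1(B)}$ on the wrong side (recall $\xi_0$ need not vanish), which is not controlled by the right-hand side of \eqref{E:energy1}. The paper avoids this by integrating instead over $[s,\infty)$ and using that $\xi$ has compact support to kill the boundary at infinity, leaving only the good term $\zeta(s)\xi_s^2\Vert(\eta\tilde u_s^\lambda)^2\Vert_{\ell^1(B)}$; taking the supremum over $s\ge0$ then gives the first quantity in the maximum, and the choice $s=0$ (dropping that positive term) gives the Dirichlet-energy bound.
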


\begin{proof}
Repeating the argument leading to (5.18) of \cite{ACDS16} and using that $a_t \leq 1$ yields an absolute  constant~$c\in(0,\infty)$ such that the bound
\begin{multline}
\qquad
\label{E:energy2}
-\partial_t\bigl\Vert (\eta \tilde u_t^{\lambda})^2\bigr\Vert_{\ell^1(B)} + \EE^{\,a}_{t,\eta^2}( \tilde u_t^\lambda)  
\\
\leq c \lambda^2 \Bigl[ 
\Vert \nabla \eta \Vert_{\ell^\infty(E)}^2  \bigl\Vert  |u_t|^{2\lambda}\bigr\Vert_{\ell^1(B)} +  \Vert\nabla f\Vert_{\ell^\infty(E)}^2 \bigl\Vert  |u_t|^{2\lambda-2}\bigr\Vert_{\ell^1(B)} 
\\
+\bigl\Vert(\nabla f)( \nabla \eta) \bigr\Vert_{\ell^\infty(E)}  \bigl\Vert  |u_t|^{2\lambda-1}\bigr\Vert_{\ell^1(B)} \Bigr]
\qquad
\end{multline}
holds for all $t \geq 0$ and all $\lambda \geq 1$.
Next we observe that, for all $s \geq 0$,
\begin{equation}
\label{E:energy3}
\begin{aligned}
\int_s^{\infty} &\textd t \, \zeta(t) \xi_t^2 \big(-\partial_t \Vert (\eta \tilde u_t^{\lambda})^2\Vert_{\ell^1(B)} \big)\\
&\qquad = \zeta(s)  \xi_s^2 \bigl\Vert (\eta \tilde u_s^{\lambda})^2\bigr\Vert_{\ell^1(B)}+ \int_s^{\infty} \textd t \,  \bigl\Vert (\eta \tilde u_t^{\lambda})^2\bigr\Vert_{\ell^1(B)}\,\partial_t \bigl(\zeta(t) \xi_t^2 \bigr)\\
& \qquad \geq  \zeta(s)  \xi_s^2 \bigl\Vert (\eta \tilde u_s^{\lambda})^2\bigr\Vert_{\ell^1(B)}-  \Vert \bigdot{\zeta}/ \zeta\Vert_{L^{\infty}(\mathbb{R}_+)}\bigl \Vert \xi^2 |u|^{2\lambda} \bigr\Vert_{1,1; B, \zeta} - \Bigl\Vert \frac{\textd \xi^2}{\textd t} \, |u|^{2\lambda} \Bigr\Vert_{1,1; B, \zeta}\,.
\end{aligned}
\end{equation}
Multiplying both sides of \eqref{E:energy2} by $\zeta(t)\xi_t^2$, integrating over $t$ from $0$ to infinity, invoking \eqref{E:energy3} with $s=0$ and foregoing the term $\zeta(0)  \xi_0^2 \Vert (\eta \tilde u_0^{\lambda})^2\Vert_{\ell^1(B)}$, we find that $\EE^{\,a, \zeta}_{\eta^2}(\xi \tilde u^\lambda)$ is bounded by the right-hand side of \eqref{E:energy1}. Repeating the argument, but this time neglecting the term $\EE^{\,a}_{t,\eta^2}( \tilde u_t^\lambda)$ in \eqref{E:energy2}, and integrating from $s$ to infinity, we infer that $\zeta(s) \Vert ( \xi_s \eta \tilde u_s^{\lambda})^2\Vert_{\ell^1(B)}$ admits the same bound, for all $s \geq 0$. Hereby \eqref{E:energy1} follows.
\end{proof}

\subsection{Proof of one-step estimate}
The proof of Proposition~\ref{P:one-step}, which we are about to give, combines the Sobolev inequality of Corollary~\ref{C:Sobolev-tailored'} with Lemmas~\ref{L:DIRICH_CONVERT} and~\ref{lemma-energy}. The conversion lemma (Lemma~\ref{L:DIRICH_CONVERT}) will play a pivotal role in recovering the Dirichlet form that the energy estimate gives us information about; namely, the one naturally associated to the Poisson equation \eqref{E:5.4}, cf. Remark~\ref{R:dirich1}(1).

\begin{proof}[Proof of Proposition~\ref{P:one-step}]
Abbreviate $\lambda:=\lambda_1/2$ and note that $\lambda \geq 1$, as will be desired for applications of the previous two lemmas. In view of \eqref{E:onestep4} and the interpolation inequality \eqref{eq:intpol3}, we have
\begin{equation}
\begin{split}
\label{E:onestep7}
\vvert \,  \kappa_1^{2/\lambda_1} u \, \vvert_{ \lambda_1 p , \, \lambda_1 q ; \, \, B_1, \zeta}
&= \vvert \,  \kappa_1^{1/\lambda}  u \, \vvert_{ 2 \lambda p , \, 2\lambda q ; \, \, B_1, \zeta} 
= \bvvert \, ( \kappa_1  \tilde u^{\lambda})^2 \, \bvvert_{ p , \,  q ; \, \, B_1, \zeta
}^{\frac{1}{2\lambda}}\\
& \leq  \bvvert \, ( \kappa_1  \tilde u^{\lambda})^2 \, \bvvert_{ \hat p(\alpha) , \,  \hat q(\beta) ; \, \, B_1, \zeta}^{\frac{\theta}{2\lambda}} \,\,\bvvert \, ( \kappa_1  \tilde u^{\lambda})^2 \, \bvvert_{1 , \,  \infty ; \, \, B_1,\zeta }^{\frac{1-\theta}{2\lambda}}. 
\end{split}
\end{equation}
We will now estimate each of the arising norms individually. 

We begin with the second norm on the right of \eqref{E:onestep7} as its control is easier. The energy estimate \eqref{E:energy1} from Lemma~\ref{lemma-energy} along with $\supp(\zeta)\subseteq[0,\infty)$ readily yield
\begin{equation}
\begin{aligned}
\label{E:onestep8}
\bvvert \, ( \kappa_1  \tilde u^{\lambda})^2& \, \bvvert_{1 , \,  \infty ; \, \, B_1,\zeta} 
\le\, \sup_{t \geq 0} |B_1|^{-1}\bigl\Vert ( \xi_1(t) \eta_1 \tilde{u}_t^{\lambda})^2\bigr\Vert_{\ell^1(B_1)}
\\
&\le \Bigl[\,\inf_{t \in \text{supp}(\xi_1)} \zeta(t)\Bigr]^{-1}
|B_1|^{-1}\sup_{t \geq 0} \Bigl\{\zeta(t)\bigl\Vert ( \xi_1(t) \eta_1 \tilde{u}_t^{\lambda})^2\bigr\Vert_{\ell^1(B_1)}\Bigr\}
\\
&\leq \cxi \lambda^2 \Bigl[\,\inf_{t \in \text{supp}(\xi_1)} \zeta(t)\Bigr]^{-1} |B_1|^{-1} 
 \\
&\qquad\times\bigg[\, \Gamma \, \Big\Vert \xi_1^2 \big( |u|^{2\lambda-2} + |u|^{2\lambda-1} + |u|^{2\lambda}\big) \Big\Vert_{1,1;B_1, \zeta} + \ \Big\Vert \bigl| {\textstyle\frac{\textd \xi_1^2}{\textd t}} \bigr| \, |u|^{2\lambda}\Big\Vert_{1,1;B_1, \zeta} \bigg]\,,
\end{aligned}
\end{equation}
where $\Gamma$ is as defined in \eqref{E:onestep6}. Since the weights $\kappa_1,\kappa_2$ were assumed to be $(B_1,B_2)$-adapted with parameters $(\delta,\rho,M)$, \eqref{E:onestep3} shows that, for all $t \geq 0$,
\begin{equation}
\label{E:onestep9}
\Big| \frac{\textd \xi_1^2(t)}{\textd t}\Big| = 2 \xi_1(t) |\bigdot{\xi_1}(t)| \le 2 \delta M^2 \xi_2(t)^{2\rho}.
\end{equation}
With the help of Jensen's inequality we in turn get that, for $k =0,1,2$,
\begin{equation}
\begin{aligned}
\label{E:onestep10}
|B_1|^{-1} \bigl\Vert \xi_1^2 |u|^{2\lambda-k} \bigr\Vert_{1,1;B_1, \zeta} 
&= \Vert \zeta \Vert_{L^1} \bvvert \xi_1^2 |u|^{2\lambda-k} \bvvert_{1,1;B_1, \zeta} 
\\
&\leq  \Vert \zeta \Vert_{L^1} \bvvert \xi_1^{2\frac{2\lambda}{2\lambda-k}} |u|^{2\lambda} \bvvert_{1,1;B_1, \zeta}^{1-\frac{k}{2\lambda}} 
\leq \Vert \zeta \Vert_{L^1} M^2 \bvvert \xi_2^{2\rho} |u|^{2\lambda} \bvvert_{1,1;B_1, \zeta}^{1-\frac{k}{2\lambda}},
\end{aligned}
\end{equation}
where in the last step we used that $\xi_1^{2\frac{2\lambda}{2\lambda-k}} \leq \xi_1^2 \leq M^2 \xi_2^{2\rho}$ thanks to  $\xi_1\in[0,1]$ and~\eqref{E:onestep3}. Substituting \eqref{E:onestep9} and \eqref{E:onestep10} into \eqref{E:onestep8}, we find
\begin{equation}
\label{E:onestep11}
 \bvvert \, ( \kappa_1  \tilde u^{\lambda})^2 \, \bvvert_{1 , \,  \infty ; \, \, B_1,\zeta} \leq 
 3\cxi (\lambda M)^2  \Vert \zeta \Vert_{L^1} \frac{\Gamma + \delta}{  \inf_{\Sigma_1}\zeta} \ \bvvert \, \xi_2^{2\rho} |u|^{2\lambda} \, \bvvert_{1,1;B_1, \zeta}^{\gamma},
\end{equation}
where we also invoked the definition of $\gamma$ from \eqref{E:onestep5.1}.

We now turn to the first norm in the second line of \eqref{E:onestep7}. Using the Sobolev inequality from Corollary~\ref{C:Sobolev-tailored'}, whose conditions are met for the allowed range of $\alpha$ and $\beta$, cf. above \eqref{E:onestep4}, and subsequently applying the energy-conversion Lemma~\ref{L:DIRICH_CONVERT} yields
\begin{equation}
\label{E:onestep12}
\begin{aligned}
\bvvert \,& ( \kappa_1  \tilde u^{\lambda})^2 \, \bvvert_{ \hat p(\alpha) , \,  \hat q(\beta) ; \, \, B_1, \zeta}
\stackrel{\eqref{E:1.5abcd'}}{\leq} 2d \czero^2 |B_1|^{\frac2d} \vvert w^{-1}\vvert_{\frac r2,\frac s2,E(B_1),\zeta} \,\,\frac{\EE^{w,\zeta}(\xi_1 \eta_1  \tilde u^{\lambda})}{|B_1|}\\
&\qquad\stackrel{\eqref{E:Dirich10}}{\leq} 2d \czero^2\ci\cx \lambda^2 |B_1|^{\frac2d} \vvert w^{-1}\vvert_{\frac r2,\frac s2,E(B_1),\zeta} \Biggl[\frac{\EE^{a,\zeta}_{\eta_1^2}(\xi_1  \tilde u^{\lambda})}{|B_1|}  
\\
&\qquad\qquad\qquad+\Vert \zeta \Vert_{L^1}\Big( \Gamma \ \bvvert \, \xi_1^{2}\big( |u|^{2\lambda} + |u|^{2\lambda-2}\big) \bvvert_{1,1;B_1, \zeta} +  \bvvert (\bigdot{\xi}_1)^2 |u|^{2\lambda}\bvvert_{1,1;B_1, \zeta}\Big)\Biggr].
\end{aligned}
\end{equation}
The ``bare'' Dirichlet energy on the right is now bounded using Lemma~\ref{lemma-energy} exactly as above with the result
\begin{equation}
\frac{\EE^{a,\zeta}_{\eta_1^2}(\xi_1  \tilde u^{\lambda})}{|B_1|} \leq 3\cxi (\lambda M)^2 (\Gamma + \delta) \Vert \zeta \Vert_{L^1} \bvvert \, \xi_2^{2\rho} |u|^{2\lambda} \, \bvvert_{1,1;B_1, \zeta}^{\gamma}\,.
\end{equation}
The remaining terms are estimated directly with the help of \eqref{E:onestep10} and the bounds on the mollifiers in \eqref{E:onestep3}. This yields
\begin{multline}
\label{E:onestep12a}
\qquad
\bvvert \, ( \kappa_1  \tilde u^{\lambda})^2 \, \bvvert_{ \hat p(\alpha) , \,  \hat q(\beta) ; \, \, B_1, \zeta}
\\
\le 10d \czero^2\ci\cx \cxi (\lambda^2 M)^2 \vvert w^{-1}\vvert_{\frac r2,\frac s2,E(B_1),\zeta}\,\, \Vert \zeta \Vert_{L^1} \Big[ |B_1|^{\frac2d} (\Gamma + \delta) \Big] \  \bvvert \, \xi_2^{2\rho} |u|^{2\lambda} \, \bvvert_{1,1;B_1, \zeta}^{\gamma}
\qquad
\end{multline}
In order to covert the last norm to the desired form, we observe that, since $\eta_2 =1$ on $B_1$ by assumption, cf. \eqref{E:onestep2}, and minding that $p/\rho > 1$ and $q/\rho >1$, we have
\begin{multline}
\label{E:onestep13}
\qquad
\frac{|B_1|}{|B_2|}\, \bvvert \, \xi_2^{2\rho} |u|^{2\lambda} \, \bvvert_{1,1;B_1, \zeta} \stackrel{\eqref{E:onestep1}}{\leq}   \bvvert \, \kappa_2^{2\rho} |u|^{2\lambda} \, \bvvert_{1,1;B_2, \zeta} \\
\stackrel{\eqref{E:pq_monot}}{\leq}   \bvvert \, \kappa_2^{2\rho} |u|^{2\lambda} \, \bvvert_{\frac p\rho,\frac q\rho;B_2, \zeta} =  \vvert \, \kappa_2^{2/\lambda_2} u \, \vvert_{\lambda_2p,\lambda_2q ;B_2, \zeta}^{2\lambda},
\qquad
\end{multline}
where we also recalled that $\lambda_2 := \lambda_1/\rho = 2\lambda/\rho$.
Substituting this into \eqref{E:onestep11} and \eqref{E:onestep12a}, and then these back into \eqref{E:onestep7}, the claim follows by noting that~$\gamma\le1$.
\end{proof}

We also need to finish:

\begin{proof}[Proof of Corollary~\ref{R:one-step}]
This is due to \eqref{E:onestep11} (recalling that $2\lambda = \lambda_1$) and \eqref{E:onestep13}.
\end{proof}


\section*{Appendix}
\renewcommand{\thesection}{A}
\setcounter{subsection}{0}
\setcounter{theorem}{0}
\setcounter{equation}{0}
\renewcommand{\thesubsection}{A.\arabic{subsection}}
\renewcommand{\theequation}{\thesection.\arabic{equation}}
\noindent
This short section collects various calculations that were relegated here from the main text of the paper. Specifically, we give proofs of Lemma~\ref{lemma-moments} and Lemma~\ref{lemma-approx}.

\subsection{Moment comparisons}
We begin by a comparison of the ranges of parameters for negative moments of $a_0(e)$ with the positive moments of~$T_e$:

\begin{proofsect}{Proof of Lemma~\ref{lemma-moments}}
Let~$q>0$ be such that $\E(a_0(e)^{-q})<\infty$. (Otherwise there is nothing to prove.) The assumption of separate ergodicity and the Pointwise Ergodic Theorem then imply
\begin{equation}
\frac1t\int_0^t\textd s\,\, a_s(e)^{-q}\,\,\underset{t\to\infty}\longrightarrow\,\,\E\bigl(a_0(e)^{-q}\bigr).
\end{equation}
Next fix~$M>0$ large. Renewal considerations show
\begin{equation}
\frac1t\int_0^t\textd s\,\, a_s(e)^{-q}\,\,\underset{t\to\infty}\longrightarrow\,\,\frac1{\E(T_e\wedge M)}\E\biggl(\,\int_0^{T_e\wedge M}\textd t a_t(e)^{-q}\biggr).
\end{equation}
It follows that
\begin{equation}
\label{E:1.21ui}
\E\biggl(\,\int_0^{T_e\wedge M}\textd t\, a_t(e)^{-q}\biggr) = \E(T_e\wedge M)\E\bigl(a_0(e)^{-q}\bigr).
\end{equation}
Next we note that H\"oler's inequality shows, for any $r>1$,
\begin{equation}
T_e\wedge M=\int_0^{T_e\wedge M}\textd t \,\,1
\le\Bigl(\int_0^{T_e\wedge M}\textd t\, a_t(e)\textd t\Bigr)^{1/r}\Bigl(\int_0^{T_e\wedge M}\textd t\,a_t(e)^{-\frac1{r-1}}\Bigr)^{\frac{r-1}r}
\end{equation}
The definition of~$T_e$ ensures that the first term on the right is at most~$1$. Raising both sides of the resulting bound to power $\frac{r}{r-1}$ and setting $r:=1+1/q$, which is equivalent to $\frac1{r-1}=q$ and $\frac{r}{r-1}=q+1$, then gives
\begin{equation}
\E\bigl((T_e\wedge M)^{q+1}\bigr)
\le \E\biggl(\,\int_0^{T_e\wedge M}\textd t \, a_t(e)^{-q}\biggr).
\end{equation}
Plugging in \eqref{E:1.21ui} and bounding $\E(T_e\wedge M)$ by the $\frac{1}{q+1}$-power of $\E((T_e\wedge M)^{q+1})$ shows
\begin{equation}
\E\bigl((T_e\wedge M)^{q+1}\bigr)
\le\Bigl[\E\bigl(a_0(e)^{-q}\bigr)\Bigr]^{\frac{q+1}q}.
\end{equation}
Taking $M\to\infty$ and invoking the Monotone Convergence Theorem, the claim follows.
\end{proofsect}

\subsection{Approximating corrector by gradients}
Our next task is to complete the proof of Lemma~\ref{lemma-approx} showing that the corrector lies in the closed subspace generated by gradients of~$L^p$-functions. In order to avoid dealing with complicated summation formulas, we will cast the proof in functional-analytic notation and language. 

Fix~$p\ge1$ such that the integrability conditions in \eqref{E:3.4} apply. For each~$k=1,\dots,d$, consider the linear operator $\hat T_k\colon L^p(\BbbP)\to L^p(\BbbP)$ defined by
\begin{equation}
\hat T_k f:=f\circ\tau_{0,\hat{e}_k},
\end{equation}
 with $\hat{e}_k$ the $k$-th unit vector in $\Z^d$. We also set
\begin{equation}
\hat T_{d+1}f:=\int_0^1\textd t f\circ\tau_{t,0}
\end{equation}
for the corresponding time-shift.
The operators $\hat T_1,\dots,\hat T_{d+1}$ commute and they are all contractions (by Assumption \ref{ass1}). For any~$\epsilon>0$ and $k=1,\dots, d+1$, the operator $(1+\epsilon-\hat T_k)^{-1}$ is well defined and can be expressed as $\sum_{n\ge0}(1+\epsilon)^{-n-1}\hat T_k^n$. Let $\hat P_k\colon L^p(\BbbP)\to L^p(\BbbP)$ be defined by the $L^p$-limit
\begin{equation}
\label{E:4.8ia}
\hat P_k f:=\lim_{n\to\infty}\frac1n\sum_{\ell=0}^{n-1}\hat T_k^\ell f
\end{equation}
which exists by the Pointwise Ergodic Theorem, see \cite[p.9, Thm. 2.3]{K85}; the fact that the convergence is in $L^p$ follows in standard fashion by uniform integrability. 
Rewriting $\hat T_k^n= A_k^{n+1}-A_k^n$ with $A_k^n f:= \sum_{\ell=0}^{n-1}\hat T_k^\ell f$, simple resummation shows
\begin{equation}
\epsilon(1+\epsilon-\hat T_k)^{-1} f = \sum_{n \geq 1}\frac{n\epsilon^2}{(1+\epsilon)^{n+1}} \frac{1}{n}A_k^nf\,.
\end{equation}
From \eqref{E:4.8ia} and $ \sum_{n \geq 1}\frac{n\epsilon^2}{(1+\epsilon)^{n+1}} =1$,
we thus have
\begin{equation}
\label{eq:erg1}
\epsilon(1+\epsilon-\hat T_k)^{-1} f\, \overset{L^p}{\underset{\epsilon\downarrow0}\longrightarrow}\,\hat P_k f,\qquad f\in L^p(\BbbP).
\end{equation}
for each~$k=1,\dots,d+1$. 

Next, consider the (vector) valued functions $u_1,\dots,u_{d+1}$ defined by
\begin{equation}
u_k:=\chi(0,\hat{e}_k,\cdot),\qquad k=1,\dots,d,
\end{equation}
and
\begin{equation}
u_{d+1}:=\int_0^1\textd t\,\chi(t,0,\cdot).
\end{equation}
The cocycle condition then reads
\begin{equation}
\label{E:4.12ui}
(1-\hat T_j)u_k = (1-\hat T_k)u_j,\qquad j,k=1,\dots,d+1,
\end{equation}
By the cocycle property and \eqref{E:4.8ia} we also have
\begin{equation}
\hat P_ku_k = \lim_{n\to\infty}\frac{\chi(0,n\hat{e}_k,\cdot)}n, \quad k=1,\dots,d+1.
\end{equation}
The cocycle property then also gives, for each~$j\ne k$ and each~$t\in\R$,
\begin{equation}
\chi(0,n\hat{e}_k,\cdot)\circ\tau_{t,e_j} = \chi(0,n\hat{e}_k,\cdot) + \chi(t,e_j,\cdot)\circ\tau_{0,n\hat{e}_k}-\chi(t,e_j,\cdot).
\end{equation}
Upon division by~$n$, the last two terms on the right tend to zero in~$L^p(\BbbP)$ and so~$\hat P_ku_k$ is invariant under space-time shifts. A completely analogous argument applies to $u_{d+1}$; in light of the joint ergodicity of~$\BbbP$ with respect to the space-time shifts we thus get
\begin{equation}
\label{E:4.18ui}
\hat P_k u_k = 0,\qquad k=1,\dots,d+1.
\end{equation}
We are now ready to give:

\begin{proofsect}{Proof of Lemma~\ref{lemma-approx}}
 Define $h_\epsilon$ by
\begin{equation}
\label{E:A.18}
h_\epsilon:=\sum_{k=1}^{d+1}\epsilon^{k-1}\prod_{j=1}^k(1+\epsilon-\hat T_j)^{-1} u_k.
\end{equation}
Pick $\ell=1,\dots,d+1$ and use \eqref{E:4.12ui} along with the fact that $\hat T_1,\dots,\hat T_{d+1}$ commute, minding also the rewrite $1-\hat T_\ell = (1+\epsilon-\hat T_\ell)-\epsilon$, to get
\begin{equation}
\begin{aligned}
(1-\hat T_\ell)h_\epsilon
&=\sum_{k=1}^{d+1}\epsilon^{k-1}(1-\hat T_\ell)\prod_{j=1}^k(1+\epsilon-\hat T_j)^{-1} u_k
\\
&=\sum_{k=1}^{d+1}\epsilon^{k-1}(1-\hat T_k)\prod_{j=1}^k(1+\epsilon-\hat T_j)^{-1} u_\ell
\\
&=\sum_{k=1}^{d+1}\biggl[\,\epsilon^{k-1}\prod_{j=1}^{k-1}(1+\epsilon-\hat T_j)^{-1}-
\epsilon^{k}\prod_{j=1}^{k}(1+\epsilon-\hat T_j)^{-1}\biggr]\,u_\ell
\\
&=u_\ell-\epsilon^{d+1}\prod_{j=1}^{d+1}(1+\epsilon-\hat T_j)^{-1} u_\ell,
\end{aligned}
\end{equation}
where the last line follows by noting that the expression on the line before is a telescopic sum. Since $\Vert\epsilon(1+\epsilon-\hat T_j)\Vert_{L^p\to L^p}\le1$ for each~$j=1,\dots,d+1$, the norm of second term on the last line is at most that of $\epsilon(1+\epsilon-\hat T_\ell)^{-1}u_\ell$. But this term converges to $\hat P_\ell u_\ell$ by \eqref{eq:erg1} which vanishes thanks to \eqref{E:4.18ui}. This implies that, for all~$\ell=1,\dots,d+1$,
\begin{equation}
(1-\hat T_\ell)h_\epsilon\,\underset{\epsilon\downarrow0}\longrightarrow\,u_\ell,\qquad \text{in }L^p(\BbbP),
\end{equation}
which is now easily checked to give the desired claim.
\end{proofsect}

\begin{remark}
Under the assumption of separate ergodicity --- i.e., triviality of~$\BbbP$ on events~$A$ such that, for at least one~$k=1,\dots,d+1$, we have $T_k1_A=1_A$ --- we have $P_ku_\ell=0$ for all~$k,\ell=1,\dots,d+1$. It then suffices to take $h_\epsilon:=(1+\epsilon-\hat T_1)u_1$; cf Biskup and Spohn~\cite{BS11}. However, unlike erroneously concluded in~\cite{BS11}, this does not suffice for~$\BbbP$ that are only jointly ergodic where one has to use \eqref{E:A.18} instead.
\end{remark}

\section*{Acknowledgments}
\noindent
This research has been partially supported by NSF grant DMS-1407558 and GA\v CR project P201/16-15238S. We wish to thank Jean-Dominique Deuschel for useful suggestions at various stages of this work and an anonymous referee for his careful reading of the manuscript. P-F.R.\ wishes to thank Center for Theoretical Study in Prague for hospitality that made this project take off the ground.

\bibliographystyle{abbrv}

\end{document}